\def\jcdot{{\scriptscriptstyle\bullet}}
\def\invlim{\mathop{\vtop{\ialign{##\crcr$\hfill{\lim}\hfil$\crcr
\noalign{\kern1pt\nointerlineskip}\leftarrowfill\crcr\noalign
{\kern -3pt}}}}\limits}
\def\dirlim{\mathop{\vtop{\ialign{##\crcr$\hfill{\lim}\hfil$\crcr
\noalign{\kern1pt\nointerlineskip}\rightarrowfill\crcr\noalign
{\kern -3pt}}}}\limits}
\def\lomapr#1{\smash{\mathop{\relbar\joinrel\longrightarrow}\limits^{#1}}}
 \def\verylomapr#1{\smash{\mathop{\relbar\joinrel\relbar\joinrel\relbar\joinrel\longrightarrow}\limits^{#1}}}
\def\phi{\varphi}
\def\epsilon{\varepsilon}
\let\mathcal\mathscr
\renewcommand{\setminus}{\mathbin{\fgebackslash}}
\newtheorem{theorem}[equation]{Theorem}
 \newtheorem{lemma}[equation]{Lemma}
 \newtheorem{proposition}[equation]{Proposition}
 \newtheorem{corollary}[equation]{Corollary}
\newtheorem{fact}[equation]{Fact}
\theoremstyle{definition}
\theoremstyle{remark}
\newtheorem{remark}[equation]{Remark}
\newtheorem{example}[equation]{Example}
\newtheorem*{acknowledgments}{Acknowledgments}
\def\cdot{{\scriptscriptstyle\bullet}}
\renewcommand{\phi}{\varphi}
\newcommand{\R}{\mathrm {R} }
\newcommand{\F}{\mathrm {F} }
\newcommand{\LL}{\mathrm {L} }
 \newcommand{\qp}{{\mathbf Q}_{p}}
\newcommand{\ovk}{\overline{K} }
\newcommand{\gp}{\operatorname{gp} }
 \newcommand{\coker}{\operatorname{coker} }
 \newcommand{\coim}{\operatorname{coim} }
 \newcommand{\holim}{\operatorname{holim} }
  \newcommand{\rig}{\operatorname{rig} }
 \newcommand{\hocolim}{\operatorname{hocolim} }
 \newcommand{\cont}{\operatorname{cont}  } 
  \newcommand{\proeet}{\operatorname{pro\acute{e}t}  }
 \newcommand{\eet}{\operatorname{\acute{e}t} }
 \newcommand{\dlog}{\operatorname{dlog} }
 \newcommand{\an}{\operatorname{an} }
  \newcommand{\Res}{\operatorname{Res} }
   \newcommand{\conv}{\operatorname{conv} }
 \newcommand{\Spec}{\operatorname{Spec} } 
  \newcommand{\Sp}{\operatorname{Sp} }  
   \newcommand{\Spwf}{\operatorname{Spwf} }
 \newcommand{\Spf}{\operatorname{Spf} }
 \newcommand{\Ext}{\operatorname{Ext} }
 \newcommand{\Gal}{\operatorname{Gal} }
 \newcommand{\tr}{ \operatorname{tr} }
 \newcommand{\can}{ \operatorname{can} }
 \newcommand{\id}{ \operatorname{Id} }
\newcommand{\synt}{ \operatorname{syn} }
  \newcommand{\ddim}{ \operatorname{Dim} }
\newcommand{\st}{\operatorname{st} }
\newcommand{\hk}{\operatorname{HK} } 
\newcommand{\dr}{\operatorname{dR} }
 \newcommand{\crr}{\operatorname{cr} }
  \newcommand{\Pro}{\operatorname{Pro} }
 \newcommand{\gr}{\operatorname{gr} }
 \newcommand{\im}{\operatorname{im} }
 \newcommand{\ve}{ \varepsilon  }
 \newcommand{\kr}{^{\scriptscriptstyle\bullet}}
 \newcommand{\sff}{{\mathcal{F}}}
 \newcommand{\sy}{{\mathcal{Y}}}
 \newcommand{\sh}{{\mathcal{H}}}
 \newcommand{\sg}{{\mathcal{G}}}
 \newcommand{\sv}{{\mathcal{V}}} 
  \newcommand{\su}{{\mathcal{U}}}
 \newcommand{\sbb}{{\mathcal{B}}}
 \newcommand{\scc}{{\mathcal{C}}}
 \newcommand{\sll}{{\mathcal{L}}}
 \newcommand{\so}{{\mathcal O}}
 \newcommand{\sj}{{\mathcal J}}
 \newcommand{\sa}{{\mathcal{A}}}
 \newcommand{\sx}{{\mathcal{X}}}
 \newcommand{\sss}{{\mathcal{S}}}
\newcommand{\sd}{{\mathcal{D}}}
\newcommand{\Ind}{\operatorname{Ind}}
 \newcommand{\wt}{\widetilde}
 \newcommand{\wh}{\widehat}
 \newcommand{\Z}{ {\mathbf Z} }
   \newcommand{\Q}{ {\mathbf Q}}
   \newcommand{\N}{{\mathbf N}}
         \newcommand{\rg}{\R\Gamma}
   \def\B{{\bf B}}
      \def\A{{\bf A}}
\numberwithin{equation}{section}
\begin{document}
 \title[Cohomology of $p$-adic Stein spaces]{Cohomology of $p$-adic Stein spaces}
 \date{\today}

\author{Pierre Colmez}
\address{CNRS, IMJ-PRG, Sorbonne Universit\'e, 4 place Jussieu,
75005 Paris, France}
\email{pierre.colmez@imj-prg.fr}
\author{Gabriel Dospinescu}
\address{CNRS, UMPA, \'Ecole Normale Sup\'erieure de Lyon, 46 all\'ee d'Italie, 69007 Lyon, France}
\email{gabriel.dospinescu@ens-lyon.fr}
\author{Wies{\l}awa Nizio{\l}}
\address{CNRS, UMPA, \'Ecole Normale Sup\'erieure de Lyon, 46 all\'ee d'Italie, 69007 Lyon, France}
\email{wieslawa.niziol@ens-lyon.fr}

 \thanks{This work  was partially supported by the  project  ANR-14-CE25 as well as  by the grant 346300 for IMPAN from the Simons Foundation and the matching 2015-2019 Polish MNiSW fund.}
\begin{abstract}
We compute $p$-adic \'etale and  pro-\'etale cohomologies of Drinfeld half-spaces. In the pro-\'etale case, 
the main input is a  comparison theorem for 
$p$-adic Stein spaces; the cohomology
groups involved here are much bigger than in the case of \'etale cohomology of
algebraic varieties or proper analytic spaces considered in all previous works.
 In the \'etale case, the classical $p$-adic comparison theorems allow us to pass 
to a computation of integral differential forms cohomologies which can be done because the standard 
formal models of Drinfeld half-spaces are pro-ordinary and their differential forms are acyclic. 
\end{abstract}
 \setcounter{tocdepth}{2}

 \maketitle
 \tableofcontents
 \section{Introduction}Let $p$ be a prime. 
 Let  $\so_K$ be a complete discrete valuation ring of mixed characteristic $(0,p)$ with perfect residue field $k$ and fraction field $K$. Let $F$ be the fraction field of the ring of Witt vectors
 $\so_F=W(k)$ of $k$. Let $\ovk$ be an algebraic closure of $K$, $C=\wh{\ovk}$  its $p$-adic completion and  $\sg_K=\Gal(\ovk/K)$.

\subsection{The $p$-adic \'etale cohomology of Drinfeld half-spaces}
 This paper reports on some results of  our research project that aims at understanding the $p$-adic (pro-)\'etale cohomology of $p$-adic symmetric spaces. The main question of interest being: does this cohomology  realize the hoped  for  $p$-adic local Langlands correspondence in analogy with the $\ell$-adic situation ?
 When we started this project we did not know what to expect 
and local computations were rather discouraging: geometric $p$-adic \'etale cohomology groups of
affinoids and their interiors are huge and not invariant by base change 
to a bigger complete algebraically closed field.
 However there was one computation done long ago by Drinfeld~\cite{Drinfeld} that stood out.
Let us recall it. 

   Assume that  $[K:\Q_p]<\infty$ and let ${\mathbb H}_K={\mathbb P}^1_K\setminus {\mathbb P}^1(K)$ be the Drinfeld half-plane, thought of as a rigid analytic space. It admits a natural action of $G:={\rm GL}_2(K)$. We set ${\mathbb H}_C:={\mathbb H}_{K,C}$.
  \begin{fact}{\rm (Drinfeld)}
If $\ell$ is a prime number {\rm (including $\ell=p$ !)},
  there exists a natural isomorphism of $G\times \sg_K$-representations
  $$
  H^1_{\eet}({\mathbb H}_C,\Q_\ell(1))\simeq {\rm Sp}^{\cont}({\Q_\ell})^*,
  $$
  where ${\rm Sp}^{\cont}({\Q_\ell}):=\scc({\mathbb P}^1(K),\Q_\ell)/\Q_\ell$ is the continuous Steinberg representation of $G$ with coefficients in $\Q_\ell$ equipped with a trivial action of $\sg_K$
  and $(-)^*$ denotes the weak topological dual.
  \end{fact}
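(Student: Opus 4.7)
The plan is to compute $H^1_{\eet}({\mathbb H}_C,\Q_\ell(1))$ through Kummer theory, reducing to a description of global units on ${\mathbb H}_C$ in terms of the Bruhat--Tits tree of $\mathrm{PGL}_2(K)$. First, the Kummer exact sequence $1\to\mu_{\ell^n}\to\mathbb{G}_m\to\mathbb{G}_m\to 1$ on ${\mathbb H}_{C,\eet}$ gives
\begin{equation*}
0\to \so({\mathbb H}_C)^*/\ell^n\to H^1_{\eet}({\mathbb H}_C,\mu_{\ell^n})\to \Pic({\mathbb H}_C)[\ell^n]\to 0.
\end{equation*}
Since ${\mathbb H}_C$ is Stein, exhausted by affinoids of the form $U_n=\{|z|\le p^n,\,|z-a|\ge p^{-n}\text{ for }a\in\mathbb{P}^1(K),\,|a|\le p^n\}$ which are rational subdomains of $\mathbb{P}^1_C$ (each with trivial Picard), one checks $\Pic({\mathbb H}_C)=0$. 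Taking the inverse limit over $n$, inverting $\ell$, and noting that $C^*$ is $\ell$-divisible and so contributes nothing, the problem reduces to computing the $\ell$-adic completion of $\so({\mathbb H}_C)^*/C^*$.

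Second, I would introduce Drinfeld's semistable formal model $\what\Omega$ of ${\mathbb H}_K$ over $\so_K$, whose special fibre is an infinite union of copies of $\mathbb{P}^1_k$ glued along ordinary double points and has dual graph the Bruhat--Tits tree $T$ of $\mathrm{PGL}_2(K)$; the set of ends of $T$ is canonically $\mathbb{P}^1(K)$. For a global unit $f$, the valuation of $f$ along each irreducible component of the special fibre defines a function $h_f\colon V(T)\to\Z$ which is harmonic (since $f$ has no zeros or poles on ${\mathbb H}_C$), and its coboundary on oriented edges is a $\Z$-valued harmonic cochain. This yields a homomorphism
\begin{equation*}
\so({\mathbb H}_C)^*/C^* \longrightarrow \mathrm{Harm}(T,\Z),
\end{equation*}
whose surjectivity onto bounded harmonic cochains is realised concretely by the fractional-linear units $(z-a)/(z-b)$ for $a,b\in\mathbb{P}^1(K)$ (the associated cochains being supported on the geodesic from $a$ to $b$), and whose injectivity is elementary. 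The main obstacle is precisely this identification: showing that an arbitrary global unit produces a \emph{bounded} harmonic cochain and that every bounded harmonic cochain actually comes from a unit requires a delicate analysis of units on the affinoid exhaustion and the construction of convergent infinite products, which is the essence of Drinfeld's original argument.

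Third, I would identify the $\ell$-adic completion with the weak dual of the continuous Steinberg via the classical tree-boundary correspondence. Bounded $\Z_\ell$-valued harmonic cochains on $T$ are in canonical bijection with $\Z_\ell$-valued bounded measures on $\partial T=\mathbb{P}^1(K)$ of total mass zero, via the assignment $c(e)=\mu(U_e^+)-\mu(U_e^-)$ for the clopen partition $\mathbb{P}^1(K)=U_e^+\sqcup U_e^-$ cut out by an oriented edge $e$ (harmonicity at each vertex translating exactly to vanishing total mass). After inverting $\ell$, such measures are precisely the continuous $\Q_\ell$-linear functionals on $\scc(\mathbb{P}^1(K),\Q_\ell)/\Q_\ell$, yielding
\begin{equation*}
H^1_{\eet}({\mathbb H}_C,\Q_\ell(1)) \;\cong\; \bl\scc(\mathbb{P}^1(K),\Q_\ell)/\Q_\ell\br^* = {\rm Sp}^{\cont}(\Q_\ell)^*.
\end{equation*}
The Galois action on the right is trivial, consistent with $\sg_K$ fixing $\mathbb{P}^1(K)$ pointwise and the $(1)$-twist having been absorbed into the vanishing $C^*$ factor; the ${\rm GL}_2(K)$-equivariance is built into the compatible natural actions of $G$ on $T$ and on $\mathbb{P}^1(K)$.
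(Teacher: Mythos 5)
Your proposal is correct and follows essentially the same route the paper indicates: the paper's own ``proof'' of this Fact is the one-line remark that it follows from Kummer theory and the vanishing of Picard groups of the standard Stein covering, deferring the details to \cite{FDP} and \cite[1.4]{CDN1}, and those details are exactly your steps (Kummer sequence, $\Pic({\mathbb H}_C)=0$, van der Put's identification of $\so({\mathbb H}_C)^*/C^*$ with integral harmonic cochains on the Bruhat--Tits tree, and the reinterpretation of their $\ell$-adic completion as bounded measures of total mass zero on ${\mathbb P}^1(K)$). The only substantive point you leave to citation --- the exact sequence $0\to C^*\to\so({\mathbb H}_C)^*\to{\rm Harm}(T,\Z)\to 0$ --- is precisely the analytic core that the paper also takes from the literature, so this is consistent with the intended level of detail.
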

 The proof is very simple: it uses Kummer theory and vanishing of the Picard groups (of the standard Stein covering of ${\mathbb H}_K$) \cite[]{FDP}, \cite[1.4]{CDN1}. 
This result was encouraging because it showed that the $p$-adic \'etale cohomology 
was maybe not as pathological as one could fear.

Drinfeld's result was generalized, for $\ell\neq p$, to higher dimensions by
Schneider-Stuhler~\cite{SS}.
   Let $d\geq 1$ and let ${\mathbb H}^d_K$  be the Drinfeld half-space~\cite{Drinfeld1} of dimension $d$, 
i.e., 
$${\mathbb H}^d_K:={\mathbb P}^d_K\setminus \bigcup_{H\in \sh}H,$$ 
where $\sh$ denotes the set of $K$-rational hyperplanes. We set $G:={\rm GL}_{d+1}(K)$. 
If $1\leq r\leq d$, and if $\ell$ is a prime number,
denote by ${\rm Sp}_r(\Q_\ell)$  and ${\rm Sp}^{\cont}_r(\Q_\ell)$ 
the generalized 
locally constant and continuous Steinberg $\Q_\ell$-representations of $G$ (see Section \ref{intro5}),
 respectively,  equipped with a trivial action of $\sg_K$.
\begin{theorem}{\rm (Schneider-Stuhler)}
\label{SSIn}Let $r\geq 0$
and let $\ell\neq p$. There are natural $G\times\sg_K$-equivariant isomorphisms 
 $$
 H^r_{\eet}({\mathbb H}^d_C,\Q_{\ell}(r))\simeq {\rm Sp}^{\cont}_r(\Q_{\ell})^*,\quad H^r_{\proeet}({\mathbb H}^d_C,\Q_{\ell}(r))\simeq {\rm Sp}_r(\Q_{\ell})^*.
 $$
\end{theorem}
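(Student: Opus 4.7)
The proof I propose follows Schneider--Stuhler and exploits the standard semistable formal model $\wh{\Sigma}^d$ of ${\mathbb H}^d_K$ over $\so_K$ (the Deligne--Mumford formal model), whose combinatorics is governed by the Bruhat--Tits building $\mathrm{BT}$ of $\mathrm{PGL}_{d+1}(K)$. The irreducible components of the special fiber are iterated blow-ups of $\mathbb{P}^d_k$ along $k$-rational linear subspaces and are indexed by vertices of $\mathrm{BT}$; the multi-intersections are products of partial flag varieties for standard parabolics of $\mathrm{GL}_{d+1}(k)$, indexed by higher-dimensional simplices of $\mathrm{BT}$. The $G$-action on $\wh{\Sigma}^d$ is compatible with the combinatorial $G$-action on $\mathrm{BT}$.

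First, I would compute $H^{*}_{\eet}({\mathbb H}^d_C,\Q_\ell)$ via the nearby-cycles spectral sequence attached to $\wh{\Sigma}^d$ over $\Spf(\so_C)$, which is available precisely because $\ell\neq p$. Using semistability, $R^q\Psi\Q_\ell$ is described stratum by stratum, and the $\ell$-adic cohomology of products of partial flag varieties is purely of Tate type and concentrated in even degrees. Consequently, the $E_1$-page collapses into a bi-complex whose entries are $\Q_\ell(-j)$-valued cochains on the simplices of $\mathrm{BT}$. Contractibility of $\mathrm{BT}$ then kills most of the simplicial contributions, leaving a short exact sequence that presents $H^r_{\eet}({\mathbb H}^d_C,\Q_\ell(r))$ as a cokernel of restriction maps among spaces of functions on partial flag varieties $G/P_{r'}$ with $r'\le r$.

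The final step is to match this cokernel with the dual of the generalized Steinberg representation. By definition, $\mathrm{Sp}_r(\Q_\ell)$ is the quotient of locally constant $\Q_\ell$-valued functions on $G/P_r$ by the sum of pullbacks from $G/P$ with $P\supsetneq P_r$, and, after $G$-equivariant dualization, the complex above is exactly this presentation. The $\sg_K$-action comes out trivial because the strata are defined over $k$ and the Tate twists cancel after twisting by $\Q_\ell(r)$. The difference between the two statements is then purely topological: étale $\Q_\ell$-cohomology, being defined as $\invlim_n(-)\otimes_{\Z_\ell}\Q_\ell$, inherits a profinite topology on the function spaces and so realizes the continuous Steinberg ${\rm Sp}^{\cont}_r(\Q_\ell)$; on the other hand the pro-étale site admits $\Q_\ell$ as a genuine sheaf, producing only locally constant functions and hence ${\rm Sp}_r(\Q_\ell)$.

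The main obstacle is the nearby-cycles step: since $\wh{\Sigma}^d$ is Stein rather than proper, one must justify the convergence of the spectral sequence and handle carefully the ``boundary at infinity'' of $\mathrm{BT}$, typically by exhausting ${\mathbb H}^d_K$ by a $G$-stable family of quasi-compact opens, computing on each, and passing to the limit while controlling the transition maps. A secondary difficulty is to track the topology precisely when taking weak duals, so as to cleanly separate the continuous from the locally constant version of the Steinberg. Once these analytic points are under control, the combinatorial identification with Steinberg is a standard application of the Bruhat--Tits/Steinberg dictionary.
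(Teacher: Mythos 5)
Your proposal is a genuinely different route from the one in the paper. The paper follows Schneider--Stuhler's original argument on the generic fiber: one covers ${\mathbb P}^d_K$ by the complements $U_n(H)$ of tubular neighbourhoods of the rational hyperplanes, observes that finite unions $U_n(H_0)\cup\cdots\cup U_n(H_j)$ are locally trivial fibrations in open polydisks over projective spaces, invokes the homotopy-invariance axiom to compute their cohomology, and lets $n\to\infty$ to obtain a spectral sequence whose $E_2$-terms are expressed in terms of the simplicial profinite set $\mathcal{T}^{(r)}_{\jcdot}$ (a subcomplex of the Tits building at infinity); Proposition \ref{SSsimplicial} then identifies the abutment with ${\rm Sp}_r$. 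You instead go through the special fiber: nearby cycles on the standard semistable formal model, vanishing cycles computed stratum by stratum using that the strata are smooth proper $k$-varieties with pure Tate even-degree $\ell$-adic cohomology, and combinatorics of the Bruhat--Tits building ${\rm BT}$ rather than of $\mathcal{T}$. This is essentially the strategy of Orlik \cite{Orl}, which the paper explicitly cites as ``a different argument.'' What each route buys: the paper's approach is axiomatic and characteristic-free (it works verbatim for any cohomology theory with homotopy invariance, which is precisely why it cannot be run at $\ell=p$), while yours is tied to the model but gives finer information about weights and is closer in spirit to what the rest of the paper does at $p$ (where the Hyodo--Kato side plays the role of the nearby-cycles complex).

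One inaccuracy worth flagging: the multi-intersections $Y_\sigma$ of irreducible components of the special fiber of $\wh{{\mathbb H}}^d_K$ are not products of partial flag varieties for parabolics of ${\rm GL}_{d+1}(k)$. They are products of iterated blow-ups of projective spaces along rational linear subspaces, of the same type as the variety $T$ in Section \ref{etale} of the paper (see also the weight spectral sequence (\ref{weights}), whose $E_1$-terms involve precisely these intersections). This does not derail your argument, because the only properties you actually use --- smooth, proper, defined over $k$, $\ell$-adic cohomology pure of Tate type and concentrated in even degrees --- hold for such blow-ups just as they would for flag varieties; but the combinatorial bookkeeping that converts the $E_1$-page into functions on $G/P_J$ is more involved than the phrase ``partial flag variety'' suggests, and it is exactly this bookkeeping that constitutes the content of Orlik's computation. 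You correctly identify the other two nontrivial points (convergence over the non-quasi-compact ${\rm BT}$, handled by exhausting by ${\rm BT}_s$, and the $\Z_\ell$-vs-$\Q_\ell$ distinction accounting for ${\rm Sp}^{\cont}_r$ vs.\ ${\rm Sp}_r$), and your explanation of the latter matches the paper's discussion in Section 6.1.
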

The computations of Schneider-Stuhler work for any cohomology theory that satisfies certain axioms,
 the most important being the homotopy property with respect to the open unit ball,
which fails rather dramatically for the $p$-adic (pro-)\'etale cohomology since
the $p$-adic \'etale cohomology of the unit ball is huge.
Nevertheless, we prove the following result.
\begin{theorem}
 \label{intro2}Let $r\geq 0$.
  \begin{enumerate}
\item There is a natural isomorphism  of $G\times \sg_K$-locally convex topological vector spaces (over $\Q_p$).
$$H^r_{\eet}({\mathbb H}^d_C,\Q_p(r))\simeq  {\rm Sp}^{\cont}_r(\Q_p)^*.$$
These spaces  are weak duals of Banach spaces.
  \item 
 There is a strictly exact sequence of $G\times \sg_K$-Fr\'echet spaces
 $$
 \xymatrix{
 0\ar[r] & \Omega^{r-1}({\mathbb H}^d_C)/\ker d \ar[r] & H^r_{\proeet}({\mathbb H}^d_C,\Q_p(r))\ar[r]& {\rm Sp}_r(\Q_p)^*\ar[r] & 0.
} $$
\item The natural map $H^r_{\eet}({\mathbb H}^d_C,\Q_p(r))\to H^r_{\proeet}({\mathbb H}^d_C,\Q_p(r))$ identifies \'etale cohomology with the space of $G$-bounded vectors\footnote{ Recall
that a subset $X$ of a locally convex vector space over $\Q_p$ is called {\em bounded} if $p^nx_n\mapsto 0$ for all sequences $\{x_n\}, n\in\N,$ of elements of $X$. In the above, $x$ is called
a {\em $G$-bounded vector}
if its $G$-orbit is a bounded set.}  in the pro-\'etale cohomology. 
 \end{enumerate}
 \end{theorem}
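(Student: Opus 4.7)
The overall strategy, mirroring the two main inputs highlighted in the abstract, is to compute $H^r_{\eet}$ and $H^r_{\proeet}$ essentially separately and then compare them. For \'etale cohomology I would use the classical $p$-adic comparison theorems to reduce $H^r_{\eet}({\mathbb H}^d_C,\Q_p(r))$ to a Frobenius/filtration eigenspace of integral differential forms on the standard semistable formal model of ${\mathbb H}^d_K$; the pro-ordinarity of that model and the acyclicity of its differential forms (both emphasized in the abstract) then collapse the computation to a Schneider-Stuhler style combinatorial argument on the Bruhat-Tits building, yielding ${\rm Sp}^{\cont}_r(\Q_p)^*$. Since ${\rm Sp}^{\cont}_r(\Q_p)$ is a Banach space by construction, this proves (1), including the topological assertion.

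For (2), I would establish, for a smooth Stein space $X$ over $C$, a strictly exact sequence of $G\times\sg_K$-Fr\'echet spaces
\begin{equation*}
0 \longrightarrow \Omega^{r-1}(X)/\ker d \longrightarrow H^r_{\proeet}(X,\Q_p(r)) \longrightarrow \bigl(H^r_{\hk}(X)\otimes_{F^{\nr}}\B_{\st}^+\bigr)^{\phi=p^r,\,N=0} \longrightarrow 0,
\end{equation*}
arising from the fundamental exact sequence $\Q_p(r)\simeq\fiber(\B^{\phi=p^r}_{\Crris}\to\B_{\dr}/\Fil^r)$ on the pro-\'etale site combined with a Poincar\'e lemma on a Stein exhaustion of $X$ by affinoids. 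Strict exactness requires vanishing of coherent cohomology of each $\Omega^i$ on the exhausting affinoids (Kiehl) plus a Mittag-Leffler condition on the inverse system. Specializing to $X={\mathbb H}^d_C$, the Hyodo-Kato variant of the computation feeding into (1) gives $H^r_{\hk}({\mathbb H}^d_C)\simeq{\rm Sp}_r(F^{\nr})^*$ with $N=0$ and $\phi=p^r$, so the rightmost term is ${\rm Sp}_r(\Q_p)^*$, and (2) follows.

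For (3), the two preceding steps fit into a natural map of short exact sequences compatible with the \'etale-to-pro-\'etale comparison map. The image of $H^r_{\eet}\to H^r_{\proeet}$ is automatically $G$-bounded because it comes from the Banach space ${\rm Sp}^{\cont}_r(\Q_p)^*$. To obtain the converse, I would prove that $\Omega^{r-1}({\mathbb H}^d_C)/\ker d$ contains no nonzero $G$-bounded vector: testing the Fr\'echet topology against orbits of a unipotent radical of $G$ acting on explicit logarithmic differentials attached to apartments of the Bruhat-Tits building, every nonzero orbit escapes every bounded subset. Combined with the identification of the $G$-bounded vectors in ${\rm Sp}_r(\Q_p)^*$ with ${\rm Sp}^{\cont}_r(\Q_p)^*$, this forces any $G$-bounded class in $H^r_{\proeet}$ to come from \'etale cohomology.

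The main obstacle is the construction of the pro-\'etale fundamental Fr\'echet sequence as a \emph{strictly} exact sequence: this requires careful topological bookkeeping at each step of the comparison (syntomic complexes, $\B_{\st}^+$-completions, Poincar\'e lemma, inverse limits over an affinoid exhaustion), and at any of these steps one risks losing either exactness or strictness. A close second is the unboundedness of the $G$-action on $\Omega^{r-1}/\ker d$, which, though representation-theoretic in nature, depends on explicit features of the geometry of ${\mathbb H}^d$ and on a direct analysis of $G$-translates of residue forms.
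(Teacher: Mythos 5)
Your overall architecture (compute the two cohomologies separately, then compare via $G$-bounded vectors) matches the paper's, and your plan for (3) is close in spirit to the actual argument, but there are genuine gaps in the two middle steps. The serious one is in part (2): the strictly exact sequence does not ``arise from the fundamental exact sequence on the pro-\'etale site combined with a Poincar\'e lemma on a Stein exhaustion by affinoids.'' That procedure expresses $H^r_{\proeet}$ in terms of period-sheaf cohomology of the exhausting affinoids, and on affinoids the crystalline/Hyodo--Kato cohomology is pathological: it is huge, not invariant under base change, and does not assemble into the finite-rank-over-each-$U_s$ group $H^r_{\hk}$ of the special fiber of a semistable model. Producing the term $(H^r_{\hk}\wh{\otimes}_F\B^+_{\st})^{N=0,\phi=p^r}$ is precisely the content of the paper's main comparison theorem: one passes through Fontaine--Messing syntomic cohomology via the period map and then compares it with a Bloch--Kato syntomic cohomology built from Grosse-Kl\"onne's \emph{overconvergent} Hyodo--Kato cohomology; the comparison rests on K\"unneth maps with period rings realized as crystalline cohomology of base rings, rigidity of $\phi$-eigenspaces, and a change from crystalline to overconvergent convergence that works only because the irreducible components of the special fiber are proper. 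This is the technical heart of the result, not ``topological bookkeeping,'' and your sketch replaces it with an assertion.

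The second gap is in part (1): a ``Schneider--Stuhler style combinatorial argument'' is exactly what is unavailable for $p$-adic \'etale cohomology, since the homotopy property with respect to the open unit ball fails. After reducing to syntomic cohomology and using acyclicity of differentials plus pro-ordinarity, the paper identifies the answer with $H^0_{\eet}(\overline{Y},W\Omega^r_{\log})_{\Q_p}$ and then identifies this compact module with ${\rm Sp}^{\cont}_r(\Z_p)^*{\otimes}\Q_p$ by injecting it, via an integral Hyodo--Kato map, into $H^r_{\dr}({\mathbb H}^d_K)\simeq {\rm Sp}_r(K)^*$, observing the image lies in the $G$-bounded vectors, and deducing surjectivity from the irreducibility of ${\rm Sp}_r({\mathbf F}_p)$ --- a nontrivial theorem of Grosse-Kl\"onne which also gives uniqueness of the $G$-stable lattice. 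That mod-$p$ irreducibility is an essential input your outline omits. Finally, in (3) your unipotent-orbit argument for the vanishing of $G$-bounded vectors in $\Omega^{r-1}/\ker d$ is plausible but unsubstantiated; the paper instead identifies $\Omega^{r-1}({\mathbb H}^d_K)^{\text{$G$-bd}}$ with the integral forms $\Omega^{r-1}(X){\otimes}_{\so_K}K$ on the standard formal model and concludes from the injectivity of $H^{r-1}_{\dr}(X){\otimes}_{\so_K}K\to H^{r-1}_{\dr}({\mathbb H}^d_K)$.
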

Hence, the $p$-adic \'etale cohomology is given by the same dual of a Steinberg representation 
as its $\ell$-adic counterpart. 
However, the $p$-adic pro-\'etale cohomology 
is a nontrivial extension of the same dual of a Steinberg representation that describes  
its $\ell$-adic counterpart by a huge space.
\begin{remark}
In \cite{CDN1} we have generalized the above computation of Drinfeld in a different direction, namely, 
to the Drinfeld tower~\cite{Drinfeld1} in dimension $1$. We have shown that, if $K=\Q_p$,
 the $p$-adic local Langlands correspondence (see \cite{pLL}, \cite{CDP}) for de Rham Galois representations of dimension $2$ (of Hodge-Tate weights $0$ and $1$ and not trianguline)
 can be realized inside the $p$-adic \'etale cohomology of the Drinfeld tower (see \cite[Theorem 0.2]{CDN1} for a precise statement).  The two important cohomological inputs were:
 \begin{enumerate}
 \item  a $p$-adic comparison theorem that allows us to recover the $p$-adic pro-\'etale cohomology from the de Rham complex and the Hyodo-Kato cohomology; the latter being compared to the $\ell$-adic \'etale cohomology computed, in turn, by non-abelian Lubin-Tate theory,
 \item  the fact that the $p$-adic \'etale cohomology is equal to the space of $G$-bounded vectors in the $p$-adic pro-\'etale cohomology. 
\end{enumerate}
In contrast, here we obtain the third  part of Theorem \ref{intro2} only after proving the two previous parts. In fact, for a general rigid analytic variety, we do know that the natural map from $p$-adic \'etale cohomology to $p$-adic pro-\'etale cohomology does not have to be injective: this is the case, for example, for a unit open ball over a field that is not spherically complete.
\end{remark}

\begin{remark}
The proof of Theorem \ref{intro2} establishes a number of other isomorphisms (see Theorem~\ref{Steinberg})
refining results of~\cite{SS,IS,des}.
\end{remark}
\begin{remark}
{\rm (i)} For $r \geq d+1$, all spaces in Theorem \ref{intro2} are $0$.

{\rm (ii)} For $1\leq r\leq d$, 
the spaces on the left and on the right in the exact sequence in Theorem \ref{intro2} 
describing the pro-\'etale cohomology of ${\mathbb H}^d_C$, despite being huge spaces,
have some finiteness properties: they are both duals of admissible locally
analytic representations of $G$ (over $C$ on the left and $\Q_p$ on the right), of finite length
(on the left, this is due to Orlik and Strauch (\cite{Orl0} combined with~\cite{SO2})).
\end{remark}

\begin{remark}
For small Tate twists ($r\leq p-1)$, the Fontaine-Messing period map, which is an essential input in the proof of Theorem \ref{intro2},  is an isomorphism ``on the nose''. It is possible then that  our proof of Theorem \ref{intro2}, with a better control of the constants, 
could give the integral $p$-adic \'etale cohomology of the Drinfeld half-space for small Tate twists, that is, a 
topological isomorphism 
$$
H^r_{\eet}({\mathbb H}^d_C,{\mathbf F}_p(r))\simeq {\rm Sp}_r({\mathbf F}_p)^*.
$$
But, in fact, by combining the results of Chapter~6 
of this paper with the integral $p$-adic Hodge Theory of 
 Bhatt-Morrow-Scholze and \v{C}esnavi\v{c}ius-Koshikawa \cite{BMS1}, \cite{BMS2}, \cite{CK}
  one can prove such a result for all twists~\cite{CDN4}.
 \end{remark}
\subsection{A comparison theorem for $p$-adic pro-\'etale cohomology}
The proof of Theorem\,\ref{intro2} uses the result below, which is the
main theorem of this paper and generalizes the above mentioned comparison theorem 
to rigid analytic Stein spaces\footnote{ 
 Recall that  a rigid analytic space $Y$ is Stein if it has an admissible affinoid covering $Y=\cup_{i\in\N}U_i$ such that $U_i\Subset U_{i+1}$, i.e., the inclusion $U_i\subset U_{i+1}$ factors over the adic compactification of $U_i$. The key property we need is the acyclicity of cohomology of coherent sheaves.}  over $K$ with a semistable reduction. 
  Let the field $K$ be as stated at the beginning of the introduction. 
 \begin{theorem}
\label{intro1}Let $r\geq 0$. 
 Let $X$ be a semistable Stein weak formal scheme\footnote{See Section \ref{assumptions} for the definition.} over $\so_K$. There exists a commutative $\sg_K$-equivariant diagram of Fr\'echet spaces
 $$
 \xymatrix{
 0\ar[r] & \Omega^{r-1}({X_C})/\ker d \ar[r]\ar@{=}[d] & H^r_{\proeet}(X_C,\Q_p(r))\ar[r]\ar[d]^{\wt{\beta}} & (H^r_{\hk}(X_{{k}})\wh{\otimes}_{F}\B^+_{\st})^{N=0,\phi=p^r}\ar[r]\ar[d]^{\iota_{\hk}\otimes\theta} & 0\\
 0\ar[r] & \Omega^{r-1}({X_C})/\ker d \ar[r]^-{d} & \Omega^r({X_C})^{d=0}\ar[r]^-{\can} & H^r_{\dr}(X_C)\ar[r] & 0
 }
 $$
 The rows are strictly exact and the maps $\wt{\beta}$ and $\iota_{\hk}\otimes\theta$ are strict (and have closed images). Moreover, $$\ker(\wt{\beta})\simeq\ker(\iota_{\hk}\otimes\theta)\simeq (H^r_{\hk}(X_{{k}})\wh{\otimes}_{F}\B^+_{\st})^{N=0,\phi=p^{r-1}}.$$
 \end{theorem}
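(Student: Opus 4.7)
The plan is to produce the top row as the degree $r$ cohomology of a geometric syntomic complex, identifying $\wt\beta$ as the de Rham specialisation of the Fontaine--Messing period map $\alpha_{FM}$. Choose a Stein covering $X=\bigcup_n U_n$ with $U_n\Subset U_{n+1}$ and semistable weak formal models $\sy_n$. On each $\sy_n$, the fundamental exact sequence $0\to\Q_p(r)\to(\B^+_{\st})^{N=0,\phi=p^r}\to\B^+_{\dr}/\Fil^r\to 0$ gives a natural distinguished triangle
\[
\R\Gamma_{\synt}(\sy_n,\Q_p(r))\longrightarrow\bigl[\R\Gamma_{\hk}(\sy_{n,k})\wh\otimes_F\B^+_{\st}\bigr]^{N=0,\phi=p^r}\stackrel{\iota_{\hk}-\can}{\longrightarrow}\R\Gamma_{\dr}(U_{n,C})/\Fil^r,
\]
and previous comparison theorems (Fontaine--Messing, Tsuji, Colmez--Nizio\l) identify the source with $\tau_{\leq r}\R\Gamma_{\proeet}(U_{n,C},\Q_p(r))$ after inverting $p$.

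To globalise, Stein acyclicity makes each $\Omega^j(X_C)$ Fr\'echet and $\R\Gamma_{\dr}(X_C)/\Fil^r$ represented by the complex $\Omega^{\leq r-1}(X_C)$; Mittag-Leffler applied to the inverse system of the $U_n$ makes $R\varprojlim$ exact on cohomology. The local triangles then assemble into a global triangle in the Fr\'echet category whose long exact sequence in degrees $r-1,r$ yields the top row. Explicitly, the left-hand term $\Omega^{r-1}(X_C)/\ker d$ arises as the image of the connecting homomorphism out of $H^{r-1}(\Omega^{\leq r-1}(X_C))=\Omega^{r-1}(X_C)/d\Omega^{r-2}(X_C)$, while the right-hand term is the $\phi=p^r$ eigenspace modulo this image.

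The vertical map $\wt\beta$ is induced by the composition $(\B^+_{\st})^{N=0,\phi=p^r}\to\B^+_{\dr}\xrightarrow{\theta}C$ together with the Hyodo--Kato comparison $\iota_{\hk}\colon H^r_{\hk}(X_k)\wh\otimes_F C\simeq H^r_{\dr}(X_C)$. The square commutes because both $\alpha_{FM}$ and its de Rham realisation factor through the natural map $(\B^+_{\st})^{N=0,\phi=p^r}\to\B^+_{\dr}$. The bottom row is tautological for the Stein de Rham complex in degree $r$. Since $\ker\theta=t\B^+_{\dr}$ and $\phi(t)=pt$, multiplication by $t$ gives the isomorphism
\[
\bigl(H^r_{\hk}(X_k)\wh\otimes_F\B^+_{\st}\bigr)^{N=0,\phi=p^{r-1}}\stackrel{\sim}{\longrightarrow}\ker(\iota_{\hk}\otimes\theta)=\ker\wt\beta,
\]
giving the last assertion.

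The principal obstacle will be the topological analysis: proving that every term is genuinely Fr\'echet, that each horizontal map is strict with closed image, and that both rows are strictly exact. One must control the completed tensor product so that the $\phi,N$-eigenspace is closed (hence Fr\'echet) inside $H^r_{\hk}(X_k)\wh\otimes_F\B^+_{\st}$; verify that $\ker d$ is closed in $\Omega^{r-1}(X_C)$ (immediate from continuity of $d$ between Fr\'echet spaces); and lift the Fontaine--Messing quasi-isomorphism from each affinoid $U_n$ through the derived inverse limit while tracking all these topologies. Once surjectivity of $\wt\beta$ onto its image is established, the open mapping theorem in the Fr\'echet category supplies strictness and closedness.
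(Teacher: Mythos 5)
Your overall architecture mirrors the paper's: define a Bloch--Kato--style syntomic complex as the fiber of $\iota_{\hk}-\can$, assemble it over a Stein covering, and read off the top row from its long exact sequence. However, there is a genuine gap at the first step, and it is exactly the part the paper identifies as its technical heart.

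You write that the fundamental exact sequence gives the displayed distinguished triangle and that ``previous comparison theorems (Fontaine--Messing, Tsuji, Colmez--Nizio\l) identify the source with $\tau_{\leq r}\R\Gamma_{\proeet}(U_{n,C},\Q_p(r))$.'' This conflates two different objects. The triangle you write down \emph{defines} a new complex --- the paper's $\R\Gamma_{\synt}^{\rm BK}$ --- built from \emph{overconvergent} Hyodo--Kato cohomology, the period ring $\B^+_{\st}$ (or rather $\wh\B^+_{\st}$), and the filtered de Rham complex. The period map of Fontaine--Messing and the comparison theorems of Tsuji and Colmez--Nizio\l{} relate pro-\'etale cohomology to the \emph{Fontaine--Messing} syntomic complex, which is the derived $\phi$-eigenspace of absolute crystalline cohomology of $X_{\so_C}$. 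Bridging these requires two non-obvious quasi-isomorphisms: a K\"unneth-type map identifying crystalline cohomology of $X_{\so_C}$ with $[\R\Gamma_{\crr}(X/r^{\rm PD}_\varpi)\wh\otimes\wh\A_{\st}]^{N=0}$ (the paper's Proposition \ref{fini2}, which depends on the flatness of $\wh\A_{\st,n}$ over $r^{\rm PD}_{\varpi,n}$ and on a Frobenius-nilpotence argument killing the ideal $J=\ker(p_0)$), and a change-of-convergence map from overconvergent rigid to crystalline Hyodo--Kato cohomology which is only a quasi-isomorphism because $X$ is Stein, hence the irreducible components of $X_0$ are proper (the paper's Theorem \ref{fini3}, built on Grosse-Kl\"onne's comparison of rigid with convergent cohomology). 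Neither step is available off the shelf; without them, your ``source'' is not known to compute pro-\'etale cohomology, and the whole proof floats.

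A secondary but real gap: to deduce the top row from the long exact sequence in degrees $r-1,r$, you need $\iota_{\hk}\otimes\theta$ to be surjective in degree $r-1$. Your argument never addresses this. The paper proves it (Lemma \ref{lyon3}) by lifting the map to a morphism of finite-dimensional Banach--Colmez spaces and comparing Dimensions; the key input is the weight spectral sequence bound showing the Frobenius slopes on $H^{r-1}_{\hk}$ are at most $r-1$. Your identification of $\ker(\iota_{\hk}\otimes\theta)$ with the $\phi=p^{r-1}$ eigenspace via multiplication by $t$ is the right idea, but exactness of the resulting three-term sequence, and in particular the isomorphism $(F^1\B^+_{\crr})^{\phi=p^{j}}=(t\B^+_{\crr})^{\phi=p^{j}}$, also needs a short argument that you elide.

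If you fill in the two syntomic comparison maps and the slope/Dimension argument, your route to the fundamental diagram is essentially the paper's.
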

 Here $H^r_{\hk}(X_{{k}})$ is the overconvergent Hyodo-Kato cohomology of Grosse-Kl\"onne \cite{GK2},  
$$\iota_{\hk}:H^r_{\hk}(X_{{k}})\otimes_FK\stackrel{\sim}{\to} H^r_{\dr}(X_C)$$
is the Hyodo-Kato isomorphism, $\B^+_{\st}$ is the semistable ring of periods defined by Fontaine, and $\theta: \B^+_{\st}\to C$ is  Fontaine's projection. 
 \begin{example}
 In the case the Hyodo-Kato cohomology  vanishes we obtain a particularly simple formula. Take, for example, 
 the rigid affine space ${\mathbb A}^d_K$. For $r\geq 1$, we have  $H^r_{\dr}({\mathbb A}^d_K)=0$ and, by the Hyodo-Kato isomorphism, also  $H^r_{\hk}({\mathbb A}^d_K)=0$. Hence the above theorem   yields  
an isomorphism $$H^r_{\proeet}({\mathbb A}^d_C,\Q_p(r)) \stackrel{\sim}{\leftarrow}
\Omega^{r-1}({{\mathbb A}^d_C})/\ker d.$$ 

 This was our first proof of this fact but there is a more direct argument in \cite{CN2}. 
Another approach, using relative fundamental exact sequences in pro-\'etale topology and their pushforwards to \'etale topology, can be found in \cite{AC}. 
 \end{example}
\begin{remark}
{\rm (i)} We think of the above theorem as a one-way comparison theorem, i.e., the pro-\'etale cohomology $H^r_{\proeet}(X_{C},\Q_p(r))$ is the pullback of the diagram $$
(H^r_{\hk}(X_{k})\wh{\otimes}_F\B^+_{\st})^{N=0,\phi=p^r}\verylomapr{\iota_{\hk}\otimes\theta}H^r_{\dr}(X_K)\wh{\otimes}_K C \stackrel{\can}{\longleftarrow}\Omega^r({X_C})^{d=0}$$ built from the Hyodo-Kato cohomology and a piece of the de Rham complex. 

{\rm (ii)} 
When we started doing computations of pro-\'etale cohomology groups (for the affine line), 
we could not understand why
the $p$-adic pro-\'etale cohomology seemed to be so big
while the Hyodo-Kato cohomology was so small (actually $0$ in that case): this was against
what the proper case was teaching us.  
If $X$ is proper, $\Omega^{r-1}({X_K})/\ker d=0$ (since the Hodge-de Rham spectral sequence degenerates) and the upper line of the above diagram becomes
$$0\to H^r_{\proeet}(X_C,\Q_p(r))
\to (H^r_{\hk}(X_k)\wh{\otimes}_{F}\B^+_{\st})^{N=0,\phi=p^r}\to 
(H^r_{\dr}(X_K)\wh{\otimes}_K \B_{\dr}^+)/{\rm Fil}^r\to 0.$$
Hence the huge term on the left disappears, and an extra term
on the right shows up. This seemed to indicate that there was no real hope of computing
$p$-adic \'etale and pro-\'etale cohomologies of big spaces.  It was learning about Drinfeld's result
that convinced us to look further\footnote{Actually, as was pointed out to us by Grosse-Kl\"onne and Berkovich, the proof of Drinfeld, in the case $\ell=p$, is flawed but one can find a correct proof in \cite{FDP}.}.
\end{remark}

  \subsection{Proof of Theorem \ref{intro1}}
 The starting point of  computations of pro-\'etale and \'etale cohomologies in these theorems is the same: the classical comparison theorem between $p$-adic nearby cycles and syntomic sheaves \cite{Ts}, \cite{CN}. When applied to the Stein spaces we consider here it yields:
 \begin{proposition}
 Let $X$ be a semistable Stein formal scheme\footnote{See Section \ref{assumptions} for the definition.} over $\so_K$.  Then the Fontaine-Messing period morphisms
 \begin{align*}
 & \alpha^{\rm FM}: \R\Gamma_{\synt}({X}_{\so_C},\Q_p(r))\to \R\Gamma_{\proeet}(X_C,\Q_p(r)),\\
  & \alpha^{\rm FM}: \R\Gamma_{\synt}({X}_{\so_C},\Z_p(r))_{\Q_p}\to \R\Gamma_{\eet}(X_C,\Q_p(r))
\end{align*}
are strict quasi-isomorphisms after truncation  $\tau_{\leq r}$. 
 \end{proposition}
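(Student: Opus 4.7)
The plan is to deduce the Stein statement from the known \emph{sheaf-level} version of the Fontaine--Messing comparison, which holds in any semistable local setting and so can be globalized by a Stein exhaustion. The starting point is the theorem of Tsuji \cite{Ts}, refined in \cite{CN}: on the \'etale site of the special fibre of a semistable formal scheme over $\so_C$, the Fontaine--Messing map from the syntomic sheaves to $i^*\R j_*\mathbf{Z}/p^n(r)$ (with $i,j$ the inclusions of special and generic fibres) is a $p^{N(r)}$-quasi-isomorphism after truncation $\tau_{\leq r}$, where $N(r)$ depends only on $r$. Inverting $p$ produces a genuine quasi-isomorphism of the associated $\Q_p$-complexes after $\tau_{\leq r}$, so on any affinoid piece $U_{i,\so_C}$ of $X_{\so_C}$, taking $\R\Gamma$ yields a strict quasi-isomorphism of Banach complexes in degrees $\leq r$.

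To globalize, I would fix a Stein exhaustion $X=\bigcup_i U_i$ by semistable open formal subschemes with $U_i\Subset U_{i+1}$ on the rigid generic fibre. Both sides of the desired comparison should be compatible with such a cover: one has $\R\Gamma_{\synt}(X_{\so_C},\Q_p(r))\simeq \R\lim_i \R\Gamma_{\synt}(U_{i,\so_C},\Q_p(r))$, since syntomic cohomology is built from de Rham/crystalline complexes of the formal models, which are Stein-acyclic by the defining acyclicity of coherent sheaves, and similarly for $\R\Gamma_{\proeet}$ and $\R\Gamma_{\eet}$ via the corresponding cover by the $U_{i,C}$. Since the Fontaine--Messing comparison on each $U_i$ is a strict quasi-isomorphism after $\tau_{\leq r}$ by the previous step, the comparison in the limit will also be a strict quasi-isomorphism after $\tau_{\leq r}$, provided the inverse systems on degree $\leq r$ cohomology are Mittag--Leffler. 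The relative compactness $U_i\Subset U_{i+1}$ is precisely what guarantees this on the Hodge--de Rham side, and it transfers through the comparison to the syntomic and pro-\'etale sides.

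The main technical obstacle lies in the topological bookkeeping. One must pin down the natural Fr\'echet structures on both sides --- on the syntomic side arising from the de Rham/Hyodo--Kato complexes of the formal model, on the pro-\'etale side from the pro-\'etale site --- show that Fontaine--Messing is continuous and strict between them at finite level, and check that the two inverse-limit topologies on $\tau_{\leq r}\R\Gamma$ match. The key fact that makes this work is the uniformity of the constant $N(r)$ in the sheaf-level statement: it does not depend on the affinoid under consideration, so inverting $p$ yields \emph{uniformly} strict quasi-isomorphisms along the Stein exhaustion. Combined with the Mittag--Leffler vanishing of $\lim{}^1$ in degrees $\leq r$, this is enough to transport strictness into the limit and conclude the proposition in both the pro-\'etale and the \'etale cases, the latter being obtained by the same argument applied to $\R\Gamma_{\synt}(X_{\so_C},\Z_p(r))_{\Q_p}$ and the \'etale analogue of the Fontaine--Messing sheaf comparison.
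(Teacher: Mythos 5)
Your overall strategy---reduce to quasi-compact pieces where the sheaf-level comparison of \cite{Ts}, \cite{CN} applies, then globalize---is the same as the paper's (Corollary \ref{fini1} and Proposition \ref{fini11}), but the paper glues by sheafifying over an arbitrary cover by quasi-compact opens, so the statement holds for every semistable formal scheme and the Stein hypothesis plays no role. Your nested exhaustion with a Mittag--Leffler argument is both unnecessary and unjustified: unnecessary, because a termwise quasi-isomorphism of inverse systems of complexes induces a quasi-isomorphism on $\holim$ in degrees $\leq r$ via the $\varprojlim$--${\varprojlim}^1$ exact sequence, with no Mittag--Leffler hypothesis, and strictness in the limit comes not from ``uniformity of $N(r)$'' but softly from the fact that the limits are complexes of Fr\'echet spaces, where an algebraic quasi-isomorphism is automatically strict (Lemma \ref{kolobrzeg-winter}); unjustified, because the claim that Mittag--Leffler ``transfers through the comparison to the pro-\'etale side'' presupposes the comparison you are proving, and the pro-\'etale cohomology of affinoids is huge, so no such property can simply be asserted.

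The genuine gaps are two. First, the Tsuji/\cite{CN} comparison lands in ($p$-adic nearby cycles of) \emph{\'etale} cohomology; to obtain the first map of the proposition you must identify $\R\Gamma_{\eet}(U_C,\Q_p(r))$ with $\R\Gamma_{\proeet}(U_C,\Q_p(r))$ for quasi-compact $U$. This needs Scholze's comparison with $\Z/p^n$-coefficients, the identification of $\holim_n\Z/p^n(r)$ with $\varprojlim_n\Z/p^n(r)$ on the pro-\'etale site, and the coherence of $U_{C,\proeet}$ to commute inverting $p$ with cohomology; none of this appears in your argument, and it cannot be skipped since over the whole Stein space the two cohomologies genuinely differ. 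Second, and relatedly, you treat the two syntomic complexes interchangeably, but $\R\Gamma_{\synt}(\overline{X},\Z_p(r))_{\Q_p}$ (invert $p$ after gluing) and $\R\Gamma_{\synt}(\overline{X},\Q_p(r))$ (invert $p$ on each quasi-compact piece, then glue) are \emph{not} quasi-isomorphic for non-quasi-compact $X$; this is exactly why the proposition pairs the former with \'etale and the latter with pro-\'etale cohomology, and running one argument uniformly for both would prove a false statement for at least one of them. For the \'etale case your instinct about the uniform constant $N(r)$ is the right one: the $p^{N}$-inverse of $\tau_{\leq r}\wt{\alpha}_{\rm FM}$ in the integral derived category becomes an actual inverse after applying $\otimes^L\Q_p$, which is how the paper gets both the quasi-isomorphism and its strictness there.
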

Here the crystalline geometric syntomic cohomology is that defined by  Fontaine-Messing (see Section \ref{definitionFM} for the details)
$$
\R\Gamma_{\synt}({X}_{\so_C},\Z_p(r)):=[\R\Gamma_{\crr}({X}_{\so_C})^{\phi=p^r}\to \R\Gamma_{\crr}({X}_{\so_C})/F^r], \quad F^r\R\Gamma_{\crr}({X}_{\so_C}):=\R\Gamma_{\crr}({X}_{\so_C},\sj^{[r]}),
$$
where the crystalline cohomology is  absolute, i.e., over $W(k)$, and we use  $[A\to B]$ to denote the mapping fiber.
The syntomic cohomology $\R\Gamma_{\synt}({X}_{\so_C},\Q_p(r))$ is defined by taking $\R\Gamma_{\synt}(-,\Z_p(r))_{\Q_p}$ on quasi-compact pieces and then gluing.

The next  step is to transform the Fontaine-Messing type syntomic cohomology (that works very well for defining period maps from syntomic cohomology to \'etale cohomology
but is not terribly useful for computations) into Bloch-Kato type syntomic cohomology (whose definition is
motivated by the Bloch-Kato's definition of Selmer groups; it involves much more concrete objects).
This can be done in the case of   the pro-\'etale topology\footnote{At least when $X$ is associated to a weak formal scheme.} but  only partially in the case of the \'etale topology. 
\subsubsection{Pro-\'etale cohomology.} For the pro-\'etale topology, we define a Bloch-Kato syntomic cohomology (denoted by $\R\Gamma^{\rm BK}_{\synt}({X}_{C},\Q_p(r))$) using overconvergent differential forms which, due to imposed overconvergence, behaves reasonably well locally.  Then we construct a map from Fontaine-Messing to Bloch-Kato syntomic cohomology 
as shown in the top part of the following commutative diagram, where the rows are distinguished triangles
$$
\xymatrix@C=1.2cm{
\R\Gamma_{\synt}({{X}}_{\so_C},\Q_p(r))\ar[r]\ar[d]^{\wr} & \R\Gamma_{\crr}({{X}}_{\so_C},F)^{\phi=p^r}\ar[r]^-{\can}\ar[d]^{\wr} &  \R\Gamma_{\crr}({{X}}_{\so_C},F)/F^r\ar[d]^{\wr}\\
\R\Gamma^{\rm BK}_{\synt}({X}_{C},\Q_p(r))\ar[r]\ar[d]^{\theta} & (\R\Gamma_{\hk}(X_k)\wh{\otimes}_F\B^+_{\st})^{N=0,\phi=p^r}\ar[d]^{\theta\iota_{\hk}}\ar[r]^{\iota_{\hk}\otimes\iota} & 
(\R\Gamma_{\dr}(X_K)\wh{\otimes}_K\B^+_{\dr})/F^r\ar[d]^{\theta}\\
 \Omega^{\geq r}({X_K})\wh{\otimes}C\ar[r] & \Omega\kr({X_K})\wh{\otimes}_KC\ar[r] & \Omega^{\leq r-1}({X_K})\wh{\otimes}_KC.
 }
 $$
 Here $\R\Gamma_{\crr}({{X}}_{\so_C},F)$  and its filtrations are defined by the same procedure as $\R\Gamma_{\synt}({{X}}_{\so_C},\Q_p(r))$ (starting from rational absolute crystalline cohomology). 
 The horizontal triangles are distinguished (the top two by definition). 
 The construction of the top vertical  maps and the proof that they are isomorphisms is  nontrivial and constitutes the technical heart of this paper. 
 These maps are basically  K\"unneth maps, that use  the interpretation of period rings as crystalline  cohomology of certain ``base'' rings (for example, $\A_{\crr}\simeq \R\Gamma_{\crr}(\so_{C})$),  coupled with a rigidity of  $\phi$-eigenspaces of crystalline chomology, and  followed by a change of topology (from crystalline to overconvergent) that can be done because $X_K$ is Stein (hence $X_k$ has proper and smooth irreducible components). To control the topology we work in the derived category of locally convex topological vector spaces over $\Q_p$ which, since $\Q_p$ is spherically complete, is reasonably well-behaved.

 The bottom vertical maps in the diagram are induced by the projection $\theta: \B^+_{\dr}\to C$ and use  the fact that, since  $X_K$ is  Stein, we have   $\R\Gamma_{\dr}(X_K)\simeq \Omega\kr({X_K})$.
 The diagram in Theorem \ref{intro1} follows by applying $H^r$ to the above diagram.
\subsection{Proof of Theorem \ref{intro2}} 
 To prove the pro-\'etale  part of Theorem \ref{intro2}, by Theorem \ref{intro1}, it suffices  to show that 
 \begin{equation}
 \label{deszcz}
 (H^r_{\hk}(X_k)\wh{\otimes}_F\B^+_{\st})^{N=0,\phi=p^r}\simeq {\rm Sp}_r(\Q_p)^*.
 \end{equation}
   But we know from Schneider-Stuhler \cite{SS}
   that there is a natural isomorphism $H^r_{\dr}(X_K)\simeq {\rm Sp}_r(K)^*$ of $G$-representations.  Moreover, we know that 
   both sides are generated by standard symbols, i.e.,  cup products of  symbols of $K$-rational hyperplanes thought of as invertible functions on $X_K$ (this is because $ {\rm Sp}_r(K)^*$, by definition,  is generated by standard symbols 
   and Iovita-Spiess  prove that so is  $H^r_{\dr}(X_K)$) 
   and that this isomorphism is compatible with symbols \cite[Theorem 4.5]{IS}. 
   Coupled with the Hyodo-Kato isomorphism and the  irreducibility of the representation ${\rm Sp}_r(K)^*$ this yields a natural isomorphism
 $H^r_{\hk}(X_k)\simeq {\rm Sp}_r(F)^*$. 
 This isomorphism 
 is unique once we impose that it should be compatible with the standard symbols. It follows that  
we have a natural isomorphism $H^r_{\hk}(X_k)^{\phi=p^r}\simeq {\rm Sp}_r(\Q_p)^*$,
which implies $H^r_{\hk}(X_k)\cong F\otimes_{\Q_p}H^r_{\hk}(X_k)^{\phi=p^r}$ 
and (\ref{deszcz}). 
  
\subsubsection{\'Etale cohomology.}  The situation is more complicated for \'etale cohomology. Let $X$ be a semistable Stein formal scheme over~$\so_K$. 
An analogous  computation to the one  above yields the following strict quasi-isomorphism of distinguished triangles (see Section \ref{ZURICH})
  $$
\xymatrix@C=1cm@R=.8cm{
\R\Gamma_{\synt}({X}_{\so_C},\Z_p(r))_{\Q_p}\ar[r]\ar@{=}[d] & \R\Gamma_{\crr}({X}_{\so_C})^{\phi=p^r}_{\Q_p}\ar[r]\ar[d]^{\wr} &  \R\Gamma_{\crr}({X}_{\so_C})_{\Q_p}/F^r\ar[d]^{\wr}\\
\R\Gamma_{\synt}({X}_{\so_C},\Z_p(r))_{\Q_p}\ar[r]& (\R\Gamma_{\crr}(X_k/\so_F^0)\wh{\otimes}_{\so_F}\A_{\st})_{\Q_p}^{N=0,\phi=p^r}\ar[r]^-{\gamma_{\hk}\otimes\iota} & 
(\R\Gamma_{\dr}(X)\wh{\otimes}_{\so_K}\A_{\crr,K})/F^r,
 }
 $$
 where $\so_F^0$ denotes $\so_F$ equipped with the log-structure induced by $1\mapsto 0$ and $\A_{\st}$, $\A_{\crr,K}$ are certain period rings. But, in general,  the map $\gamma_{\hk}$ is difficult to identify. 
 In the case of the  Drinfeld half-space though its domain and target simplify significantly by the acyclicity of the sheaves of differentials proved by Grosse-Kl\"onne \cite{GKI, GKB}.
This  makes it possible to describe it and, as a result, to compute the \'etale syntomic cohomology.
 
    Let  $X$ be the standard formal model of ${\mathbb H}^d_K$ \cite[Section 6.1]{GK2}. Set $${\rm HK}_r:=(\R\Gamma_{\crr}(X_k/\so_F^0)\wh{\otimes}_{\so_F}\A_{\st})_{\Q_p}^{N=0,\phi=p^r}.$$
 We show that there are natural $G\times\sg_K$-equivariant strict (quasi-)isomorphisms (see Lemma \ref{formulas1}, Proposition \ref{lyon11})
 \begin{align}
 \label{kwakus1}
 H^r{\rm HK}_r  \simeq H^0_{\eet}(X_{\overline{k}},W\Omega^r_{\log})_{\Q_p}, & \quad 
  H^{r-1}{\rm HK}_r  \simeq (H^0_{\eet}(X_{\overline{k}},W\Omega^{r-1}_{\log})\wh{\otimes}_{\so_F}\A^{\phi=p}_{\crr})_{\Q_p},\\
    (\R\Gamma_{\dr}(X)\wh{\otimes}_{\so_K}\A_{\crr,K})/F^r& \simeq \oplus_{r-1\geq i\geq 0}(H^0(X,\Omega^i)\wh{\otimes}_{\so_K}\A_{\crr,K})/F^{r-i}[-i],\notag
\end{align}
where $W\Omega^r_{\log}$ is the sheaf of logarithmic de Rham-Witt differentials.
They follow from the isomorphisms (see Proposition \ref{lyon11})
\begin{align} 
\label{deszcz1}
H^0_{\eet}(X_{\overline{k}},W\Omega^r_{\log})\wh{\otimes}_{\Z_p}W(\overline{k}) & \stackrel{\sim}{\to} H^r_{\crr}(X_{\overline{k}}/W(\overline{k})^0),\\
\iota_{\hk}:  H^r_{\crr}(X_k/\so_F^0)\otimes_{\so_F}K & \simeq H^r_{\dr}(X)\otimes_{\so_K}K.\notag
\end{align}
The second one is just the  original Hyodo-Kato isomorphism from \cite{HK}. The first one follows from the acyclicity of the sheaves $\Omega^j_X$  (see Lemma \ref{degenerate}) and the fact that $X_k$ is pro-ordinary (see Corollary \ref{gen-ordinary}), which, in turn and morally speaking, 
follows from the fact that $X_k$ is a normal crossing scheme whose  closed strata are classically ordinary (being products of blow-ups of projective spaces).
Now, 
  the acyclicity of the sheaves $\Omega^j_X$ again and  the fact that the differential is trivial on their global sections (both facts proved by Grosse-Kl\"onne   \cite{GKI}, \cite{GKB}) imply (\ref{kwakus1}).
  
  Hence, we obtain the long exact sequence 
$$
(H^0_{\eet}(X_{\overline{k}},W\Omega^{r-1}_{\log})
  \wh{\otimes}_{\Z_p}{\A}^{\phi=p}_{\crr})_{\Q_p}\lomapr{\gamma^{\prime}_{\hk}}
(H^0(X,\Omega^{r-1})\wh{\otimes}_{\so_{K}}\so_C)_{\Q_p}\to H^r_{\synt}({X}_{\so_C},\Z_p(r))_{\Q_p}\to      H^0_{\eet}(X_{\overline{k}},W\Omega^r_{\log})_{\Q_p}  \to 0
$$
We check that  the map $\gamma^{\prime}_{\hk}$ is surjective (see Corollary \ref{synt11}):  (a bit surprisingly) the Hyodo-Kato isomorphism $\iota_{\hk}$ above holds already integrally 
and $\gamma_{\hk}^{\prime}=\iota_{\hk}\otimes\theta$, where  $\theta: \A_{\crr}^{\phi=p}\to\so_C$ is the canonical projection.
This yields the isomorphism
 $$
 H^r_{\synt}({X}_{\so_C},\Z_p(r))_{\Q_p}\stackrel{\sim}{\to}      H^0_{\eet}(X_{\overline{k}},W\Omega^r_{\log})_{\Q_p}. $$
 A careful study of the topology allows to conclude  that this isomorphism is topological.

  Hence it remains to show that there exists a natural topological isomorphism
 \begin{equation}
 \label{deszcz2}
 H^0_{\eet}(X_{\overline{k}},W\Omega^r_{\log})_{\Q_p}\simeq {\rm Sp}^{\cont}_r(\Q_p)^*.
 \end{equation}
 We do that (see Theorem \ref{comp19}) by showing that we can replace $\overline{k}$ by $k$ and using the maps 
 $$
 H^0_{\eet}(X_k,W\Omega^r_{\log})\otimes_{\Z_p}K \stackrel{f}{\to}
 H^r_{\dr}(X_K)^{\text{ $G$-{\rm bd}}} \simeq  ({\rm Sp}_r(K)^*)^{\text{$G$-{\rm bd}}} 
 \stackrel{\sim}{\leftarrow}
    {\rm Sp}_r^{\cont}(\Q_p)^*\otimes_{\Q_p} K.
$$
   Here, the  second  isomorphism is that of  Schneider-Stuhler \cite{SS}. The map $f$ (a composition of natural maps with the Hyodo-Kato isomorphism) is injective by pro-ordinarity of $Y$. It is surjective because $H^0_{\eet}(X_k,W\Omega^r_{\log})$ is compact and nontrivial and 
   ${\rm Sp}^{\rm cont}_r(\Z_p)/p\simeq {\rm Sp}_r({\mathbf F}_p)$ is irreducible -- a  nontrivial fact proved by Grosse-Kl\"onne \cite[Cor. 4.3]{GKD}. This yields an isomorphism (\ref{deszcz2}). 
  
   Part (3) of Theorem \ref{intro2} follows now easily from the two previous (compatible) parts and the fact that $({\rm Sp}_r(K)^*)^{\text{$G$-{\rm bd}}} \simeq
    {\rm Sp}_r^{\cont}(\Q_p)^*\otimes_{\Q_p} K$ and $ H^r_{\dr}(X_K)^{\text{ $G$-{\rm bd}}} \simeq  H^r_{\dr}(X)\otimes_{\so_K}K$ (the latter isomorphism uses the fact that $X$ can be covered by  $G$-translates of an open subscheme $U$ such that $U_K$ is an affinoid), 

    \begin{acknowledgments}This paper owes great deal to the work of Elmar Grosse-Kl\"onne. We are very grateful to him for his patient and detailed explanations of the computations and constructions in his papers. 
 We would like to thank Fabrizio
 Andreatta, Bruno Chiarellotto, Fr\'ed\'eric D\'eglise, Ehud de Shalit, Veronika Ertl, Laurent Fargues, Florian Herzig, Luc Illusie, Arthur-C\'esar Le Bras,  Sophie Morel, Arthur Ogus, and Lue Pan for helpful conversations related to the subject of this paper. 
 This paper was partly written during our visits to the IAS at Princeton,  the Tata Institute in Mumbai, Banach Center in Warsaw (P.C, W.N), BICMR in Beijing (P.C.), Fudan University in Shanghai (W.N.), Princeton University (W.N.), and the Mittag-Leffler Institute (W.N.). We thank these institutions for their hospitality. 
 \end{acknowledgments}
 \subsubsection{Notation and conventions.}\label{Notation}
 Let $\so_K$ be a complete discrete valuation ring with fraction field
$K$  of characteristic 0 and with perfect
residue field $k$ of characteristic $p$. Let $\varpi$ be a uniformizer of $\so_K$ that we fix in this paper. Let $\ovk$ be an algebraic closure of $K$ and let $\so_{\ovk}$ denote the integral closure of $\so_K$ in $\ovk$. Let
$W(k)$ be the ring of Witt vectors of $k$ with 
 fraction field $F$ (i.e, $W(k)=\so_F$); let $e$ be the ramification index of $K$ over $F$.   Set $\sg_K=\Gal(\overline {K}/K)$, and 
let $\sigma$ be the absolute
Frobenius on $W(\overline {k})$. 

We will denote by $\so_K$,
$\so_K^{\times}$, and $\so_K^0$, depending on the context,  the scheme $\Spec ({\so_K})$ or the formal scheme $\Spf (\so_K)$ with the trivial, the canonical (i.e., associated to the closed point), and the induced by $\N\to \so_K, 1\mapsto 0$,
log-structure, respectively.  Unless otherwise stated all   formal schemes are $p$-adic, locally of finite type, and equidimensional. For a ($p$-adic formal) scheme $X$ over $\so_K$, let $X_0$ denote
the special fiber of $X$; let $X_n$ denote its reduction modulo $p^n$.

We will denote by $\A_{\crr}, \B^+_{\crr}, \B^+_{\st},\B^+_{\dr}$ the crystalline, semistable, and  de Rham period rings of Fontaine. We have
$ \B^+_{\st}=\B^+_{\crr}[u]$ and $\phi(u)=pu, N(u)=-1$. The embedding  $\iota=\iota_{\varpi}: \B^+_{\st}\to \B^+_{\dr}$ is defined by $u\mapsto u_{\varpi}=\log([\varpi^{\flat}]/\varpi)$ and the Galois action on $\B^+_{\st}$ is induced from the one on $\B^+_{\dr}$ via this embedding.

   Unless otherwise stated, we work in the derived (stable) $\infty$-category $\sd(A)$ of left-bounded complexes of a quasi-abelian category $A$ (the latter will be clear from the context).
  Many of our constructions will involve (pre)sheaves of objects from $\sd(A)$. The reader may consult the notes of Illusie \cite{IL} and Zheng \cite{Zhe} for a brief introduction to how to work with such (pre)sheaves  and \cite{Lu1}, \cite{Lu2} for a thorough treatment.
  
  We will use a shorthand for certain homotopy limits:
 if $f:C\to C'$ is a map  in the derived $\infty$-category of a quasi-abelian  category, we set
$$[\xymatrix{C\ar[r]^f&C'}]:=\holim(C\to C^{\prime}\leftarrow 0).$$

\section{Review  of $p$-adic functional analysis}\label{review}
We gather here some basic facts from $p$-adic functional analysis that we use in the paper. 
\emph{The reader is advised, on the first reading, to ignore this chapter 
and topological issues in ensuing computations.}
 \subsection{The category of locally convex $K$-vector spaces} We start with the rational case, where we work in the category of locally convex $K$-vector spaces.  Our main references are \cite{Sch}, \cite{Pr},  \cite{Em}.
\subsubsection{Derived category of locally convex $K$-vector spaces} 
  A topological $K$-vector space\footnote{For us, a {\em $K$-topological vector space} is a $K$-vector space with a linear topology.} is called {\em locally convex} ({\em convex} for short) if there exists a 
neighbourhood basis of the origin consisting of $\so_K$-modules. Since $K$ is spherically complete, the theory of such spaces resembles the theory of locally convex  topological vector spaces over ${\mathbf R}$ or
 ${\mathbf C}$ (with some simplifications). 
 
 We denote by $C_K$ the category of convex $K$-vector spaces. 
  It  is a quasi-abelian category\footnote{An additive category with kernels and cokernels is called {\em quasi-abelian} if every pullback of a strict epimorphism is a strict epimorphism and every pushout of a strict monomorphism  is  a strict monomorphism. Equivalently, an additive category with kernels and cokernels is called {\em quasi-abelian} if  $\Ext(-,-)$ is bifunctorial.}  \cite[2.1.11]{Pr}.
Kernels, cokernels, images, and coimages  are taken in the  category of vector spaces and equipped with the induced topology \cite[2.1.8]{Pr}. A morphism $f:E\to F$ is {\em strict} if and only if it is relatively open, i.e., for any neighbourhood $V$ of $0$ in $E$ there is a neighbourhood $V^{\prime}$ 
of $0$ in $F$ such that $f(V)\supset V^{\prime}\cap f(E)$ \cite[2.1.9]{Pr}. 

Our convex $K$ vector spaces are not assumed to be separated.
We often use the following simple observation: {\it if $F$ is separated and we have an injective 
morphism $f:E\to F$ then $E$ is separated as well; if, moreover, 
$F$ is finite dimensional and  $f$ is bijective then $f$ is an isomorphism in $C_K$}. 
 
 The category $C_K$ has a  natural
 exact category structure: the admissible monomorphisms are  embeddings, the
admissible epimorphisms are open surjections. 
A complex $E\in C(C_K)$ is called   {\em strict} if its differentials are strict.
   There are  truncation functors on $C(C_K)$: 
\begin{align*}
\tau_{\leq n}E & :=\cdots \to E^{n-2}\to E^{n-1}\to 
\ker(d_n)\to 0\to\cdots\\
 \tau_{\geq n} E & :=\cdots \to 0\cdots \to
\coim(d_{n-1}) \to E^n\to E^{n+1}\to\cdots
\end{align*}
with cohomology objects $$\wt{H}^n(E):=
\tau_{\leq n}\tau_{\geq n}(E)=(\coim(d_{n-1})\to \ker(d_n)).
$$
We note that here $\coim(d_{n-1})$ and $\ker(d_n)$ are equipped naturally with the quotient and subspace topology, respectively. The  cohomology $H^*(E)$ taken in the category of $K$-vector spaces we will call {\em algebraic} and, if necessary, we will always equip it with the sub-quotient topology. 

  We will denote the left-bounded derived $\infty$-category of $C_K$ by $\sd(C_K)$. 
       A morphism of complexes that is a quasi-isomorphism in $\sd(C_K)$, i.e., its cone is strictly exact,  will be called a {\em strict quasi-isomorphism}. We will denote by $D(C_K)$ the homotopy category of $\sd(C_K)$ \cite[1.1.5]{Pr}.

 For $n\in \Z$, let $D_{\leq n}(C_K)$ (resp. $D_{\geq n}(C_K)$) denote the full subcategory of $D(C_K)$  of complexes that are strictly exact in degrees $k >n$ (resp. $k<n$)\footnote{Recall \cite[1.1.4]{Sn} that a sequence $A\stackrel{e}{\to} B\stackrel{f}{\to} C$ such that $fe=0$ is called {\em strictly exact} if the morphism $e$ is strict and the natural map $\im e\to \ker f $ is an isomorphism. }. The above truncation maps extend to truncations functors
  $\tau_{\leq n}: D(C_K)\to D_{\leq n}(C_K)$ and $\tau_{\geq n}: D(C_K)\to D_{\geq n}(C_K)$. The pair $(D_{\leq n}(C_K),D_{\geq n}(C_K)$) defines a $t$-structure on $D(C_K)$ by \cite{Sn}. The (left) heart $D(C_K)^{\heartsuit}$  is an abelian category $LH(C_K)$: every object of $LH(C_K)$ is represented (up to  equivalence)  by a monomorphism $f:E\to F$, where $F$ is in degree $0$, i.e., it is isomorphic to a complex $0\to E\stackrel{f}{\to} F\to 0$; 
{\it if $f$ is strict} this object is also represented by the cokernel of $f$
(the whole point of this construction is to keep track of the two possibly different topologies
on $E$: the given one and the one inherited by the inclusion into $F$).
 
  We have an embedding
$I: C_K\hookrightarrow LH(C_K)$, $E\mapsto (0\to E)$,
that induces an equivalence $D(C_K)\stackrel{\sim}{\to} D(LH(C_K))$ that is compatible with $t$-structures. These t-structures pull back to  $t$-structures on the derived dg categories $\sd(C_K), \sd(LH(C_K))$ and so does the above equivalence. There is a functor (the {\em classical part}) $C: LH(C_K)\to C_K$ that sends the monomorphism $f: E\to F$ to $\coker f$. We have $CI\simeq \id_{C_K}$ and a natural epimorphism $e: \id_{LH(C_K)}\to IC$.

  We will denote by $\wt{H}^n: \sd(C_K)\to \sd(LH(C_K))$ the associated cohomological functors.  Note that $C\wt{H}^n=H^n$ and we have a natural epimorphism $\wt{H}^n\to IH^n$.  If, evaluated on $E$,  this epimorphism is an isomorphism  we will say that the  cohomology $\wt{H}^n(E)$ is {\em classical}.

  We will often use the following simple facts (\cite[Prop. 1.2.28, Cor. 1.2.27]{Sn}):
  \begin{enumerate}
  \item If, in the  following short exact sequence in $LH(C_K)$,  both $A_1$ and $A_2$ are in the essential image of $I$ then so is $A$:
  $$
  0\to A_1\to A\to A_2\to 0.
  $$
\item A complex $E\in C(C_K)$ is strictly exact in a specific degree if and only if $\sd(I)(E)$ is exact in the same degree.
  \end{enumerate}
  
  All the above has a dual version: we have a notion of a costrict morphism, a right abelian envelope $I: C_K\to RH(C_K)$, and the cohomology objects $\wt{H}^n: \sd(C_K)\to \sd(RH(C_K)), $ $ \wt{H}^n:=
  (\coker(d_{n-1})\to\im(d_n))$ in degrees $0$ and $1$ (which we will, if necessary, write as $RH^n$). We have $C\wt{H}^n=H^n$, where  $C: RH(C_K)\to C_K$ sends the epimorphism $f: E\to F$ to $\ker f$. 
  There is a natural monomorphism $IH^n\to \wt{H}^n$; if, evaluated on $E$,  this monomorphism is an isomorphism  we will say that the  cohomology $\wt{H}^n(E)$ is {\em classical}. 
\subsubsection{Open Mapping Theorem}\label{OMT}
Let $f:X\to Y$ be a continuous surjective map of locally convex $K$-vector spaces. We will need a well-known version of the Open Mapping Theorem that says that $f$ is open if both $X$ and $Y$ are $LF$-spaces, i.e., countable inductive limits of Fr\'echet spaces\footnote{ If the spaces involved are actually Banach, we will sometimes use the notation $LB$ instead of $LF$.}. 

If $E,F$ are Fr\'echet, $f:E\to F$ is strict if and only if $f(E)$ is closed in $F$
(the ``if'' part follows from the Open Mapping Theorem, the ``only if'' part from the fact
that a Fr\'echet space is a metric space and a complete subspace of a metric space is closed). The following lemma is a simple consequence of this observation but we spell it out because we will use all the time. 
\begin{lemma}
\label{kolobrzeg-winter}
\begin{enumerate}
\item Let $E$ be a complex of Banach or Fr\'echet spaces. If $E$  has  trivial algebraic cohomology then it is strictly acyclic. 
\item A morphism $E_1\to E_2$ of complexes of Banach or Fr\'echet spaces that is an (algebraic)  quasi-isomorphism is a strict quasi-isomorphism. \end{enumerate}
\end{lemma}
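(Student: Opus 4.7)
The plan is to reduce both statements directly to the fact quoted just above the lemma: for Fr\'echet spaces $E,F$, a continuous linear map $f:E\to F$ is strict if and only if $f(E)$ is closed in $F$ (Banach spaces being a special case). Once this is granted, neither part requires any substantial extra input, so there is no real obstacle; the only thing to verify carefully is that the definitional condition of strict exactness used in the paper is implied by strictness of the differentials plus algebraic exactness.

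For part (1), let $E=(E^n,d^n)$ be a complex of Fr\'echet (or Banach) spaces with trivial algebraic cohomology, so that $\im d^{n-1}=\ker d^n$ as subsets of $E^n$ for every $n$. Since $d^n$ is continuous, $\ker d^n$ is closed in $E^n$, and hence so is $\im d^{n-1}$. By the quoted consequence of the Open Mapping Theorem, each $d^{n-1}$ is strict. Strictness means that the bijection $E^{n-1}/\ker d^{n-1}\to \im d^{n-1}$ is a topological isomorphism when the target carries the subspace topology from $E^n$; combined with the set-theoretic equality $\im d^{n-1}=\ker d^n$, this says exactly that the natural map $\im d^{n-1}\to \ker d^n$ in the definition of strict exactness is a topological isomorphism. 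Thus $E$ is strictly exact in every degree.

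For part (2), let $f:E_1\to E_2$ be an algebraic quasi-isomorphism of complexes of Fr\'echet (or Banach) spaces. Form the mapping cone $\Cone(f)$, whose terms are the finite direct sums $E_2^n\oplus E_1^{n+1}$; these are again Fr\'echet (resp.\ Banach), since finite direct sums preserve both metrizability and completeness. The usual long exact sequence of algebraic cohomology associated to the triangle $E_1\to E_2\to \Cone(f)$ shows that $\Cone(f)$ has trivial algebraic cohomology. Applying part (1) to $\Cone(f)$ yields that it is strictly acyclic, which is by definition the statement that $f$ is a strict quasi-isomorphism.

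In short, the whole lemma is a packaging of the single analytic input that a continuous surjection between Fr\'echet spaces is open, channeled through the closed-image criterion for strictness and through the standard reduction from quasi-isomorphisms to acyclicity of the mapping cone.
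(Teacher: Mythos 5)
Your proof is correct and follows the same route as the paper: part (1) is established by noting that trivial algebraic cohomology forces $\im d^{n-1}=\ker d^n$, which is closed, so each differential is strict by the closed-image criterion coming from the Open Mapping Theorem; part (2) is reduced to part (1) by passing to the mapping cone. The only very minor overstatement is that the topological isomorphism $\im d^{n-1}\to\ker d^n$ is automatic from the way images and kernels are topologized (both carry the subspace topology from $E^n$) and does not actually need the strictness of $d^{n-1}$, though that strictness is of course still required as the other half of the definition of strict exactness.
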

\begin{proof}
The second point follows from the first one by applying the latter to the cone of the morphism. For the first point, note that the kernel of a differential is a closed subspace of a Fr\'echet space hence a Fr\'echet space and we can evoke the observation above the lemma. 
\end{proof}
\subsubsection{ Tensor products} \label{tensor} Let $V$, $W$ be two convex $K$-vector spaces. The abstract tensor product $V\otimes_K W$ can be equipped with several natural topologies among them the  projective and injective tensor product topologies: $V\otimes_{K,\pi}W$ and $ V\otimes_{K,\ve}W$. 
Recall that the projective  tensor product topology is universal 
 for jointly continuous bilinear maps $V \times W \to U$; the injective tensor product topology, on the other hand, is defined by  cross seminorms that satisfy a product formula and is the ``weakest'' 
topology with such property. There is a natural map $V\otimes_{K,\pi}W\to V\otimes_{K,\ve}W$. 
We denote by  $V\wh{\otimes}_{K,\alpha}W$, $\alpha=\pi,\ve$,  the Hausdorff  completion of $V\otimes_{K,\alpha} W$ with respect to the topology $\alpha$. 
  
  Recall the following facts.
\begin{enumerate}
\item The projective tensor product functor $(-)\otimes_{K,\pi}W$ preserves strict epimorphisms; the injective tensor product functor $(-)\otimes_{K,\ve}W$ preserves strict monomorphisms. 
 \item The  natural map
$V\otimes_{K,\pi}W\to V\otimes_{K,\ve}W$ is an isomorphism\footnote{Here we used the fact that our  field $K$ is spherically complete.}  \cite[Theorem 10.2.7]{PG}. In what follows we will often just write $V\otimes_KW$ for both products.
\item From (1), (2), and the exactness properties of Hausdorff completion \cite[Cor. 1.4]{Var},  it follows that the tensor product functor $(-)\wh{\otimes}_KW: C_K\to C_K$ is left exact, i.e., it carries strictly exact sequences
$$
0\to V_1\to V_2\to V_3\to0
$$
to strictly exact sequences
$$
0\to V_1\wh{\otimes}_KW\to V_2\wh{\otimes}_K W\to V_3\wh{\otimes}_K W.
$$
Moreover, the image of the last map  above is dense \cite[p.45]{Var}. It follows that this map is surjective if its image  is complete as happens, for example,  in the case when the spaces $V_*, W$ are Fr\'echet \cite[Cor. 1.7]{Var}. 
\item For $V=\varprojlim_nV_n$, where each $V_n$ is a Fr\'echet space, and for a Fr\'echet space $W$, 
there is a natural isomorphism 
$$V\wh{\otimes}_{K,\pi} W=(\varprojlim_nV_n)\wh{\otimes}_{K,\pi}W
\stackrel{\sim}{\to} \varprojlim_n(V_n\wh{\otimes}_{K,\pi}W).
$$ 
For products this is proved in  \cite[Prop. 9, p.192]{LNM} and the general case follows from the fact that tensor product is exact on sequences of  Fr\'echet spaces.
\item  Let $\{V_n\}$, $n\in \N$,  be a regular\footnote{Inductive system $\{V_n\}$, $n\geq 0$, with injective transition maps is called {\em regular} if for each bounded set $B$ in $V=\varinjlim_n V_n$ there exists an $n$ such that $B\subset V_n$ and $B$ is bounded in $V_n$.} inductive system of Fr\'echet spaces with injective nuclear\footnote{A map $f:V\to W$ between two convex $K$-vector spaces is called {\em nuclear} if it can be factored $f:V\to V_1\stackrel{f_1}{\to} W_1\to W$, where the map $f_1$ is a compact map between Banach spaces.} transition maps. Then, for any Banach space $W$,  we have an isomorphism \cite[Theorem 1.3]{Mo}
$$
(\varinjlim_nV_n)\wh{\otimes}_K W\stackrel{\sim}{\leftarrow} \varinjlim_n (V_n\wh{\otimes}_K W).
$$
\end{enumerate}

 \subsubsection{Acyclic inductive systems}\label{acyclic-inductive}
If  $\{V_n\}_n, {n\in\N}$ is an inductive system of locally convex $K$-vector spaces   then it is called {\em  acyclic}  if 
   $\LL^1\hocolim_nV_n=0$.
  We will find useful  the following criterium of acyclicity \cite[Theorem 1.1]{W1}.
\begin{proposition}
An inductive system $\{V_n\}_n, n\in\N$, of Fr\'echet spaces with injective transition maps is acyclic if and only if in every space $V_n$ there is a convex neighbourhood $U_n$ of $0$
such that
\begin{enumerate}
\item $U_n\subset U_{n+1}, n\in\N$, and
\item For every $n\in\N$ there is $m >n$ such that all topologies of the spaces $V_k$, $k>m$, coincide on $U_n$.
\end{enumerate}
\end{proposition}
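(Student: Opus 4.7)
The plan is to identify $\LL^1\hocolim_n V_n$ with the strict cokernel of a canonical two-term complex built from the shift operation on the inductive system, and then verify the two implications of the proposition separately.

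Concretely, for an inductive system $\{V_n\}$ of Fr\'echet spaces with injective transitions $i_n:V_n\hookrightarrow V_{n+1}$, one has a short exact sequence
$$0\longrightarrow \bigoplus_{n\in\N}V_n \xrightarrow{\ 1-\mathrm{sh}\ } \bigoplus_{n\in\N}V_n \longrightarrow \varinjlim_n V_n\longrightarrow 0$$
in the abelian category of $K$-vector spaces, where $\mathrm{sh}$ is the shift sending $(v_n)$ to $(i_{n-1}(v_{n-1}))_n$. The map $1-\mathrm{sh}$ is continuous and injective; the acyclicity condition $\LL^1\hocolim_n V_n=0$ is precisely the statement that $1-\mathrm{sh}$ is a \emph{strict} monomorphism in $C_K$, i.e., that its algebraic inverse on its image is continuous for the subspace topology. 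The first step is to pin down this reformulation, appealing to the material of~\S\ref{OMT}.

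For the direction $(\Leftarrow)$, assuming the neighborhood system $(U_n)$ satisfies (1) and (2), I would prove strictness of $1-\mathrm{sh}$ by producing, for every neighborhood of $0$ in the source $\bigoplus_n V_n$, a neighborhood in the target whose intersection with the image has controlled pre-image. Given $(v_n)$ in such a controlled neighborhood of the image, the unique algebraic pre-image has components $y_n=\sum_{k\leq n}i_{n,k}(v_k)$ in $V_n$; conditions~(1) and~(2) allow one to estimate these partial sums uniformly. Condition~(2) guarantees that the topologies of $V_k$, $k>m$, restrict to a single topology on $U_n$, so the telescoping sums stabilize in a controlled way after rescaling $(v_n)$ into the $U_n$; condition~(1) ensures that these bounds are mutually compatible across the indices, delivering the required strictness estimate for $1-\mathrm{sh}$.

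For the direction $(\Rightarrow)$, which I expect to be the main obstacle, I would assume strictness of $1-\mathrm{sh}$ and extract the $U_n$ from a quantitative openness estimate supplied by the Open Mapping Theorem of~\S\ref{OMT}. The plan is to argue by contrapositive: were it the case that no nested system $(U_n)$ satisfies (2), then for any candidate family a diagonalization over $k$ would yield a sequence $(v_n)$ whose only algebraic $(1-\mathrm{sh})$-pre-image violates the openness estimate, contradicting strictness. The delicate point --- and the heart of the argument --- is to arrange that the $U_n$ extracted from the openness estimate are simultaneously convex, absorbing in each $V_n$, and nested as in condition~(1); this is where one would have to follow most carefully the Baire-category and nested-neighborhood arguments of the cited reference.
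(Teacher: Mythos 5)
The paper does not prove this proposition; it is cited verbatim to Wengenroth [W1, Theorem 1.1], so there is no in-paper argument to compare against. Evaluated on its own terms, your outline is a roadmap rather than a proof, and the essential step is missing. Your opening reformulation --- that acyclicity of $\{V_n\}$ amounts to strictness of $1-\mathrm{sh}\colon\bigoplus_n V_n\to\bigoplus_n V_n$ --- is a plausible reading of the paper's ``$\LL^1\hocolim_n V_n=0$,'' but you present it as a tautology when it requires an argument: in $\sd(C_K)$ the cone of $1-\mathrm{sh}$ has $\wt{H}^{-1}=0$ automatically (since $1-\mathrm{sh}$ is set-theoretically injective), so the obstruction lives in whether $\wt{H}^0$ is classical, i.e.\ whether $\coim(1-\mathrm{sh})\to\bigoplus_n V_n$ is a strict monomorphism, and this identification has to be pinned down rather than asserted.

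The more serious gap is that neither implication is actually carried out. In the $(\Leftarrow)$ direction you say conditions (1) and (2) ``allow one to estimate these partial sums uniformly,'' but no estimate is produced; what one must exhibit is, for every neighbourhood $W$ of $0$ in the source $\bigoplus_n V_n$, a neighbourhood $W'$ of $0$ in the target with $(1-\mathrm{sh})^{-1}(W')\subset W$, and the nesting/topology-stabilization hypotheses have to be used quantitatively to build $W'$. In the $(\Rightarrow)$ direction --- which you correctly identify as the heart of the theorem --- you explicitly defer to ``the Baire-category and nested-neighbourhood arguments of the cited reference,'' that is, to Wengenroth's own proof, which is exactly what the paper already cites; you have relocated the citation, not replaced it. Over $\mathbf{R}$ and $\mathbf{C}$ that direction rests on the Palamodov--Retakh--Vogt machinery (conditions (M) and $(\mathrm{M}_0)$, duality with $\mathrm{Proj}^1$ of the dual projective spectrum), and your outline gives no indication of how to extract from a bare strictness hypothesis a system of $U_n$ that is simultaneously convex, absorbing, nested, and topology-stabilizing. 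Finally, Wengenroth's theorem is stated over $\mathbf{R}$ and $\mathbf{C}$, while the paper applies it to convex spaces over a spherically complete nonarchimedean $K$; the transfer is plausible (Baire category and the open mapping theorem for Fr\'echet spaces hold over $K$), but a blind proof should make that transfer explicit rather than silently inherit it.
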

\subsubsection{Derived tensor products.}
The category $C_K$ has enough injectives hence we have the right derived functor $V\wh{\otimes}_K^R W$ of the tensor functor $V\wh{\otimes}_K W$. We will need to know the conditions under which  it is strictly quasi-isomorphic to the tensor functor.
\begin{lemma} \label{bios2}The natural map
$$
V\wh{\otimes}_KW\to V\wh{\otimes}^R_KW
$$
is a quasi-isomorphism when
\begin{enumerate}
\item both $V$ and $W$ are Fr\'echet spaces,
\item $V=\varinjlim_n V_n$, for an  acyclic inductive system $\{V_n\}, n\in\N$, of Banach spaces, and $W$ is a Banach space. 
\end{enumerate}
\end{lemma}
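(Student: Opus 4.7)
The plan is, in each of the two situations, to exhibit a strict resolution of one of the arguments by objects flat with respect to completed tensor against the other, and to verify that this resolution remains strictly exact after tensoring; this identifies $V\wh{\otimes}_KW$ with $V\wh{\otimes}^R_KW$. The key inputs are the exactness/continuity properties (3), (4), (5) recorded in the preceding subsection.

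For case (1), I would first unpack point (3): when both arguments are Fr\'echet, $(-)\wh{\otimes}_KW$ carries a strict short exact sequence $0\to V_1\to V_2\to V_3\to 0$ to a strict short exact sequence (left exactness, plus the fact that the dense image lies in a Fr\'echet space and so is closed, hence surjective, as recorded in point (3) itself). Thus $(-)\wh{\otimes}_KW$ is exact on strictly exact complexes of Fr\'echet spaces. Next I would build a strict resolution $P^{\bullet}\to V$ whose terms are Fr\'echet: writing $V=\varprojlim_n V_n$ with $V_n$ Banach and transition maps strict surjections, I would resolve each $V_n$ by free Banach spaces of type $c_0(I)$ (which are projective in $C_K$ thanks to spherical completeness of $K$) and assemble into countable products, which are Fr\'echet. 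Point (4) commutes these defining projective limits with $\wh{\otimes}_KW$, and the exactness observed above then shows that $P^{\bullet}\wh{\otimes}_KW$ is a strict resolution of $V\wh{\otimes}_KW$, identifying the latter with $V\wh{\otimes}^R_KW$.

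For case (2), the plan is to reduce to case (1) by commuting the acyclic colimit past $\wh{\otimes}_KW$. Point (5) yields a topological isomorphism
$$V\wh{\otimes}_KW=(\varinjlim_n V_n)\wh{\otimes}_KW\simeq \varinjlim_n(V_n\wh{\otimes}_KW),$$
and case (1) applied to $V_n$ (Banach, hence Fr\'echet) and $W$ gives $V_n\wh{\otimes}_KW\simeq V_n\wh{\otimes}^R_KW$. To identify the colimit on the right with $V\wh{\otimes}^R_KW$ it suffices to know that $\{V_n\wh{\otimes}_KW\}$ is again an acyclic inductive system, so that the homotopy and ordinary colimits agree and commute with the derived tensor product. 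I would verify this via the criterion of Section \ref{acyclic-inductive} applied to neighborhoods of the form $U_n\otimes U_W\subset V_n\wh{\otimes}_KW$, where $\{U_n\}$ witnesses acyclicity of $\{V_n\}$ and $U_W$ is the unit ball of $W$: the nested inclusion $U_n\subset U_{n+1}$ transfers directly, and the eventual coincidence of topologies on the $U_n$ transfers because tensoring with a fixed Banach is continuous and the transition maps are nuclear.

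The main obstacle I anticipate is the construction of the Fr\'echet flat resolution in case (1) together with the verification that exactness is preserved at every stage; both rely crucially on Fr\'echetness of the arguments (via closedness of dense continuous images, \emph{i.e.}\ the Open Mapping Theorem behind Lemma \ref{kolobrzeg-winter}) and on spherical completeness of $K$ (which collapses the injective and projective tensor product topologies and so underlies point (3)). The acyclicity transfer in case (2) is technically simpler but still demands careful attention to the behaviour of neighbourhoods of $0$ under completed tensor products.
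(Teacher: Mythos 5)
There is a genuine gap, and it is at the very first step: you have the direction of derivation backwards. In this paper $V\wh{\otimes}^R_KW$ is the \emph{right} derived functor of the \emph{left exact} functor $\wh{\otimes}_K$ (see point (3) of Section \ref{tensor}: the completed tensor product preserves strict monomorphisms but not, in general, strict epimorphisms). A right derived functor is computed by a resolution going to the right, i.e.\ by an injective (or acyclic) resolution $0\to W\to I^0\to I^1\to\cdots$; a left resolution $\cdots\to P^1\to P^0\to V\to 0$ by projective or ``flat'' objects computes a \emph{left} derived functor, which is a different object and is not what the lemma is about. Your identification of the cohomology of $P^\bullet\wh{\otimes}_KW$ with $V\wh{\otimes}^R_KW$ is therefore unjustified; the intuition from module theory, where $\otimes$ is right exact and one uses flat resolutions to compute $\mathrm{Tor}$, does not transfer here. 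The paper's proof of case (1) instead takes an injective resolution $W\to I^\bullet$ by Fr\'echet spaces (which exists by Prosmans), tensors it with $V$, and checks strict exactness of the resulting sequence directly from the properties in Section \ref{tensor}.

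Your case (2) has the same problem compounded by another: to pass from $\varinjlim_n(V_n\wh{\otimes}^R_KW)$ to $V\wh{\otimes}^R_KW$ you need the right derived functor to commute with the (acyclic) inductive limit, and right derived functors do not commute with colimits in general; verifying acyclicity of $\{V_n\wh{\otimes}_KW\}$ does not by itself supply this. The paper sidesteps the issue by fixing an injective resolution of $W$ by \emph{Banach} spaces once and for all, writing the complex $V\wh{\otimes}I^\bullet$ as the inductive limit of the strictly exact complexes $V_n\wh{\otimes}I^\bullet$ via point (5), and then invoking acyclicity of the inductive system to conclude that the limit complex is still strictly exact. If you want to salvage your strategy, you would have to either prove that your projective resolution also computes the right derived functor (which is what needs proving, not something to assume), or switch to resolving $W$ injectively as the paper does.
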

\begin{proof}
In the first case, take an injective resolution $W\to I\kr$  of $W$ by Fr\'echet spaces $I^i$, $i\geq 0$. This means that the map $W\to I\kr$ is a strict quasi-isomorphism. Such a resolution exists by \cite[Prop. 2.1.12]{Pr}. Tensoring this resolution  with $V$ we get a sequence
\begin{equation}
\label{bios1}
0\to V\wh{\otimes}W\to V\wh{\otimes}I^0\to V\wh{\otimes}I^1\to \cdots
\end{equation}
By Section \ref{tensor}, this sequence is strictly exact, as wanted. 

 In the second case, we take an injective  resolution $W\to I\kr$  of $W$ by Banach  spaces $I^i$, $i\geq 0$. Such a resolution exists by loc. cit. 
 Tensoring this resolution  with $V$ we get a sequence (\ref{bios1}) as above.
Since $V=\varinjlim_nV_n$, by Section \ref{tensor}, this sequence is an inductive limit of sequences
$$
0\to V_n\wh{\otimes}W\to V_n\wh{\otimes}I^0\to V_n\wh{\otimes}I^1\to \cdots
$$
which are strictly exact by Section \ref{tensor}. Hence, by Section \ref{acyclic-inductive}, the sequence (\ref{bios1}) is strictly exact, as wanted.
\end{proof}
\subsection{The category of pro-discrete $\so_K$-modules}\label{pro-discrete}
  For integral topological questions we have found it convenient to work in the category $PD_K$ of pro-discrete $\so_K$-modules.  
For details see  \cite[Section 2]{BC}, \cite[Section 1]{Wit},  \cite{KS}, \cite{KS1}. 
\subsubsection{The category of pro-discrete $\so_K$-modules and its ind-completion.}
Objects in the category $PD_K$ are topological $\so_K$-modules that are countable inverse limits, as topological $\so_K$-modules, of discrete $\so_K$-modules $M^i$, $i\in \N$. 
It is a quasi-abelian category.  It has countable filtered projective limits. Countable product is exact. 

Objects in $PD_K$ are not necessarily separated for the $p$-adic topology: for example 
Banach spaces are objects of $PD_K$ (if $B$ is a Banach with unit ball $B_0$, then $B=\varprojlim B/p^nB_0$),
hence also countable products or projective limits of Banach spaces (i.e.~Fr\'echet spaces) are objects of $PD_K$.
    
   Inside  $PD_K$ we distinguish the category $PC_K$ of pseudocompact $\so_K$-modules, i.e., pro-discrete modules $M\simeq\invlim_iM_i$ such that each $M_i$ is of finite length (we note that if $K$ is a finite extension of $\Q_p$ this is equivalent to $M$ being profinite). It is an abelian category. It has countable exact products as well as exact countable filtered projective limits. 

Let $\Ind(PD_K)$ be the  ind-completion of $PD_K$. That is, 
$PD_K$ is a full subcategory of $\Ind(PD_K)$ and  $\Ind(PD_K)$ has the universal property that any functor $F: PD_K\to C$ into a category with filtered inductive limits extends uniquely to a functor $\wt{F}: \Ind(PD_K)\to C$ which preserves 
filtered inductive limits. In particular, any functor $F: PD_K\to C$ induces a compatible functor $\wt{F}: \Ind(PD_K)\to \Ind(C)$ and if $C$ has filtered  inductive limits then the inclusion $C\to \Ind(C)$ 
has a canonical left inverse $\Ind(C)\to C$.

The category $\Ind(PD_K)$ is also quasi-abelian \cite[Theorem 1.3.1]{KS}. The natural functor $PD_K\to \Ind(PD_K)$ is exact. The category $\Ind(PD_K)$ admits filtered inductive limits and projective limits. The $\varprojlim$ functor is left exact. For any functor $F:PD_K\to C$ to a quasi-abelian category, the functor $F$ is left, resp. right, exact if and only if so is the functor $\wt{F}$. 
 \subsubsection{Tensor product}
 For $M,N\in PD$ we define the completed tensor product
 $$ M\wh{\otimes}_{\so_K}N:=\varprojlim_{U\in\su_M, V\in \su_N}M/U\otimes_{\so_K}N/V, $$
 where $\su_M, \su_N$ denote the inductive system  of open submodules of $M,N$ and $M/U\otimes_{\so_K}N/V$ is given the discrete topology. It is a pro-discrete $\so_K$-module. 
 It satisfies the usual  universal property with respect to pro-discrete $\so_K$-modules \cite[Prop. 6.1]{Wit}.  In general, the completed tensor product  is neither right nor left exact. It is however right exact  when restricted to $PC_K$ \cite[Prop. 1.10]{Wit}. It commutes also with filtered limits $\{N_i\}_i$ such that $N=\varprojlim_iN_i$ surjects onto $N_i$, $i\in I$ \cite[Prop. 1.7]{Wit}; in particular, it commutes with products of pro-discrete $\so_K$-modules and with filtered limits of pseudocompact $\so_K$-modules. 
 
 \subsubsection{The functor to convex spaces.} 
 Since $K \simeq \varinjlim(\so_K\lomapr{\varpi} \so_K\lomapr{\varpi}\so_K\lomapr{\varpi} \cdots)$, the 
algebraic tensor product $M\otimes_{\so_K}K$ is an inductive limit:
$$
M\otimes_{\so_K}K\simeq  \varinjlim(M\lomapr{\varpi} M\lomapr{\varpi}M\lomapr{\varpi} \cdots).
$$
We equip it with the induced inductive limit topology. 
This defines a tensor product functor 
  $$(-){\otimes}K: PD_K\to C_{K}, \quad M\mapsto M\otimes _{\so_K}K.
  $$
 Since $C_K$ admits filtered inductive limits, the functor $(-){\otimes}K$ extends 
to a functor $(-){\otimes}K: \Ind(PD_K)\to C_{K}$. 
   \begin{remark}
  Recall that $K$ belongs to $PD_K$: $K\simeq \varprojlim(K/\so_K\stackrel{\varpi}{\leftarrow}K/\so_K\stackrel{\varpi}{\leftarrow} \cdots)$.  The above  tensor functor should be distinguished from the functor $(-)\wh{\otimes}K: PD_K\to PD_K$ discussed above which is always zero and which we will never use.
  \end{remark}
  
    The  functor $(-){\otimes}K$ is right exact but not, in general, left exact\footnote{We will call a functor $F$ right exact if  it transfers strict exact sequences $0\to A\to B\to C\to0$ to  costrict exact sequences 
    $F(A)\to F(B)\to F(C)\to 0$; functor RR in the language of Schneiders \cite[Section 1.1]{Sch}.}. For example,
  the short strict exact sequence
  $$
  0\to \prod_{i\geq 0}p^i\Z_p\lomapr{\can}\prod_{i\geq 0}\Z_p\to \prod_{i\geq 0}\Z_p/p^i\to 0
  $$
  after tensoring with $\Q_p$ is not costrict exact on the left (note that $(\prod_{i\geq 0}\Z_p/p^i){\otimes}\Q_p$ is not Hausdorff). We will consider its (compatible) left derived functors
  $$
  (-){\otimes}^LK: \sd^{-}(PD_K)\to \Pro(\sd^{-}(C_K)),\quad  (-){\otimes}^LK: \sd^{-}(\Ind(PD_K))\to \Pro(\sd^{-}(C_K)).
  $$
  The following fact is probably well-known but we could not find a reference (see however \cite[Prop. 3.9, Cor. 3.13]{BC} for the case of profinite modules).
  \begin{proposition}\label{acyclic-integral}
  If $E$ is a complex of torsion free and $p$-adically complete (i.e., $E\simeq \varprojlim_n E/p^n$) modules from $PD_K$ then the natural map
  $$
   E{\otimes}^LK\to  E{\otimes}K
  $$ is a strict quasi-isomorhism. 
  \end{proposition}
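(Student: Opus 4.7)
My plan is to reduce the statement for complexes to the case of a single module in degree $0$, and then to establish acyclicity using a two-term flat presentation of $K$ given by the mapping telescope.

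\textbf{Reduction to a single object.} The functor $(-)\otimes K:PD_K\to C_K$ is right exact by construction. By the standard homological argument, once one knows that every torsion-free $p$-adically complete $M\in PD_K$ is acyclic for $(-)\otimes K$, i.e.\ $M\otimes^L K\to M\otimes K$ is a strict quasi-isomorphism, the same conclusion holds termwise for any left-bounded complex $E$ of such modules. So the proposition reduces to the case of a single torsion-free $p$-adically complete module $M$.

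\textbf{Main construction.} Starting from the presentation $K=\varinjlim_n\varpi^{-n}\so_K$, the mapping telescope yields a short strictly exact sequence in $\Ind(PD_K)$
$$0\longrightarrow \bigoplus_{n\geq 0}\so_K\xrightarrow{\ \delta\ }\bigoplus_{n\geq 0}\so_K\longrightarrow K\longrightarrow 0,\qquad \delta(e_n)=e_n-\varpi e_{n+1}.$$
Both middle terms are countable direct sums of the free (hence flat) module $\so_K$, so tensoring with $M$ produces a complex computing $M\otimes^L K$. Applied to our torsion-free $p$-adically complete $M$ this gives
$$0\longrightarrow \bigoplus_{n\geq 0}M\xrightarrow{\ \delta_M\ }\bigoplus_{n\geq 0}M\longrightarrow M\otimes K\longrightarrow 0,$$
where injectivity of $\delta_M$ follows from injectivity of multiplication by $\varpi$ on $M$ (i.e.\ torsion-freeness), and where the cokernel is by construction the sequential colimit $\varinjlim(M\xrightarrow{\varpi}M\xrightarrow{\varpi}\cdots)=M\otimes K$. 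This exhibits $M\otimes^L K$ as the two-term complex $[\bigoplus_n M\to\bigoplus_n M]$, whose cohomology is concentrated in degree $0$ and equals $M\otimes K$, giving the desired quasi-isomorphism.

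\textbf{Main obstacle.} The real work lies in the \emph{topological} strictness. One must check that the cokernel topology on $M\otimes K$ induced by $\delta_M$ coincides with the inductive-limit topology on $\varinjlim(M\xrightarrow{\varpi}M\xrightarrow{\varpi}\cdots)$, and that the latter is Hausdorff, so that the displayed sequences are strictly exact in the appropriate categories. This is precisely where $p$-adic completeness of $M$ is essential: it ensures that $M\otimes K=M[1/\varpi]$ is a Banach space, namely the localisation of $M$ at $\varpi$ equipped with the gauge norm coming from the unit ball $M$, hence Hausdorff and complete. Without this hypothesis strictness fails, as the excerpt's counter-example $\prod_{i\geq 0}\Z_p/p^i$ shows: there $(-)\otimes\Q_p$ produces a non-Hausdorff quotient and the comparison map is not costrict. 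Verifying the strictness of $\delta_M$ and its cokernel in $\Ind(PD_K)\to C_K$ therefore amounts to carefully tracking the inductive-limit topology through the functor and invoking the open mapping principles of Section \ref{OMT}.
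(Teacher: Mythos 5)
There is a genuine gap in the proposal, and it is a structural one rather than a small detail.

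The left derived functor $(-)\otimes^L K$ is defined by resolving the argument $M$ (or the complex $E$) in $PD_K$ or $\Ind(PD_K)$ and applying $(-)\otimes K$ to that resolution. Your plan instead resolves $K$ by the mapping telescope and tensors $M$ with that resolution. In classical module theory this is legitimate because $\mathrm{Tor}^{\so_K}$ is balanced; here, however, $(-)\otimes K$ is an ad hoc one-variable functor (defined as a filtered colimit with the inductive-limit topology, explicitly distinguished in the paper from the completed tensor $\wh\otimes K$, which is identically zero). You would need a topological balancing theorem --- that deriving in $M$ and deriving in $K$ for this particular functor give the same object of $\Pro(\sd^-(C_K))$ --- and no such theorem is stated, proved, or obviously available. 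Indeed the purely algebraic content of your computation ($K$ is flat over $\so_K$, so algebraic $\mathrm{Tor}$ vanishes) is trivial and contains none of the actual content of the proposition, which is entirely about strictness in the topological derived category.

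You do flag that ``the real work lies in the topological strictness,'' but that is precisely the point that your proposal does not carry out, and your suggested route (gauge norm, Open Mapping Theorem) does not reach it. Note also that $M\otimes K$ need not be a Banach space: the topology on a pro-discrete module $M$ is the inverse-limit topology, which in general differs from the $p$-adic topology, so the inductive-limit topology on $M[1/\varpi]$ is not a single norm topology (e.g.\ $M=\prod_n\Z_p$ with the product topology gives an $LB$-type space, not a Banach space). The paper's actual proof takes a quite different route: by [Kel, Lemma 14.1] it suffices to show that each torsion-free $p$-adically complete $M\in PD_K$ is $F$-acyclic, and this is achieved via Lemma \ref{genial}, which proves that \emph{every} strict short exact sequence $0\to A\to B\to M\to 0$ in $PD_K$ with such a quotient $M$ splits topologically (so $M$ is in effect a strictly projective object, hence acyclic for any right-exact functor). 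The proof of that splitting is the technical heart: it builds a continuous $\so_K$-linear section explicitly, first modulo $\varpi$ by choosing compatible bases of the filtration quotients, then lifting via a Teichm\"uller-type approximation using both torsion-freeness and $p$-adic completeness. That construction is the essential missing ingredient, and nothing in your outline substitutes for it.
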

  \begin{proof}By \cite[Lemma 14.1]{Kel} our proposition is implied by the following lemma that shows that the terms of the complex $E$ are $F$-acyclic for the functor $F=(-){\otimes}K$. 
  \begin{lemma}\label{genial}
If  $0\to A\to B\overset{\pi}{\to} C\to 0$ is a strict exact sequence of  pro-discrete $\so_K$-modules  and $C$ is torsion free and $p$-adically complete 
then  $\pi:B\to C$ admits a $\so_K$-linear continuous section and  $B\simeq A\oplus C$ as a  topological $\so_K$-module.
\end{lemma}
\begin{proof}
The strict exact sequence  $0\to A\to B\to C\to 0$ is a limit of exact sequences 
$0\to A^i\to B^i\to C^i\to 0$, where all the terms are discrete  and $A^{i+1}\to A^i$ is surjective
(idem for $B$ and $C$) \cite[Remark 2.9]{BC}.
Let  $A_i$ be the kernel of  $A\to A^i$ (idem for  $B$ and $C$).  Now, if $X=A,B,C$, the $X_i$
form a decreasing filtration and a series  $\sum x_n$, $x_n\in X$,  converges in $X$ if and only if, for all  $i\in\N$, there exists
$n(i)$ such that  $x_n\in X_i$ for all $n\geq n(i)$. Moreover, 
the sequence $0\to A_i\to B_i\to C_i\to 0$ is exact for all  $i$
(since  $A^i\to B^i$ is  injective).

Let  $\overline C=C/\varpi$ and let  $\overline C_i$ be the image of  $C_i$ in $\overline C$.
The  $\overline C_i$, $i\in\N$,  form a decreasing filtration of  $\overline C$ and  $\overline C_i/\overline C_{i+1}$
is a discrete $k$-module (it is a quotient of  $C_i/C_{i+1}\subset C^{i+1}$).
Choose a basis $(\overline e_{i,j})_{j\in J_i}$ of  $\overline C_i/\overline C_{i+1}$ over $k$,
a lifting $\tilde e_{i,j}$ of  $\overline e_{i,j}$ in $\overline C_i$,
a lifting $e_{i,j}$ of  $\tilde e_{i,j}$ in $C_i$,
and a lifting $\hat e_{i,j}$ of $e_{i,j}$
in $B_i$ (it  exists because  $B_i\to C_i$ is surjective).

Let $Y=\prod_{i\in\N}\ell_\infty^0(J_i,\so_K)$, where $\ell_\infty^0(J_i,\so_K)$ is  the space of sequences 
$(y_{i,j})_{j\in J_i}$ with values in  $\so_K$, going to  $0$ at infinity.
If $y=((y_{i,j})_{j\in J_i})_{i\in I}\in Y$, the double series
$\sum_{i\in\N}(\sum_{j\in J_i}y_{i,j}e_{i,j})$ and
$\sum_{i\in\N}(\sum_{j\in J_i}y_{i,j}\hat e_{i,j})$ converge in $C$ and $B$ respectively:
the series corresponding to a fixed $i$  converges because $y_{i,j}\to 0$ when  $j\to\infty$ and its sum belongs to 
 $C_i$ or $B_i$ and hence the series $\sum_i$ converges as well.
We denote by $\iota_C(y)$ and  $\iota_B(y)$ the sums of these series.
The map   $\iota_X:Y\to X$, $X=B,C$,  is a continuous injection (to check  injectivity,
it suffices to argue  modulo~$\varpi$, where it is clear).  
Moreover, we have $\pi\circ\iota_B=\iota_C$.

To finish the proof of the lemma it suffices to show that  $\iota_C$ is a topological isomorphism
(because then $s=\iota_B\circ\iota_C^{-1}$ gives the desired continuous section and we have a topological isomorphism
$A\oplus C\stackrel{\sim}{\to} B$ sending $(a,c)$ to $a+s(c)$ with  the inverse given by 
$b\mapsto(b-s(\pi(b)),\pi(b))$).

Let us start with proving it modulo~$\varpi$.
For $c\in\overline C$, one constructs an element of $\overline Y$ using the following algorithm.
Set $c_0=c$ and $\overline C_0=\overline C$.  The image of  $c_0$ in  $\overline C_0/\overline C_1$
can be written, in a unique way, in the form
$\sum_{j\in J_0}y_{0,j}e_{0,j}$ (and there is only a finite number of $y_{0,j}$ that are nonzero).
Hence $c_1=c-\sum_{j\in J_0}y_{0,j}\tilde e_{0,j}\in\overline C_1$.
The image of  $c_1$ in  $\overline C_1/\overline C_2$
can be written, in a unique way, in the form
$\sum_{j\in J_1}y_{1,j}e_{1,j}$ (and there is only a finite number of $y_{1,j}$ that are nonzero).
Hence $c_2=c_1-\sum_{j\in J_1}y_{1,j}\tilde e_{1,j}\in\overline C_2$.
We continue in this way  and get in the end an element 
$y_c=((y_{i,j})_{j\in J_i})_{i\in I}$ of  $\overline Y$ whose image
by $\iota_C$ is $c$ and  $c\mapsto y_c$ is a continuous inverse of  $\iota_C$ (modulo~$\varpi$): it is an inverse since we have uniqueness at every stage.

Let  $\overline\alpha:\overline C\to \overline Y$ be the inverse of  $\iota_C$ constructed above.
Write  $[\ ]:\overline Y\to Y$ for the map sending 
$((y_{i,j})_{j\in J_i})_{i\in I}$ to  $(([y_{i,j}])_{j\in J_i})_{i\in I}$, where  $[\ ]:k\to\so_K$
is the  Teichm\"uller lift; it is a continuous map.
The inverse  $\alpha:C\to Y$ of  $\iota_C$ is given by the following algorithm:
for $c\in C$, set $c_0=c$, and $c_1=\frac{1}{\varpi}(c_0-\iota_C([\overline\alpha(\overline c_0)]))$
(we write  $\overline c_0$ for the image of $c_0$ modulo~$\varpi$ and we can divide by $\varpi$
since  $C$ has no torsion).
Then set  $c_2=\frac{1}{\varpi}(c_1-\iota_C([\overline\alpha(\overline c_1)]))$, etc.
Finally, set 
$\alpha(c)=\sum_{n\geq 0}\varpi^n[\overline\alpha(\overline c_n)]$. We have $\iota_C(\alpha(c))=\sum_{n\geq 0}\varpi^n\iota_C([\overline\alpha(\overline c_n)])=
\sum_{n\geq 0}(\varpi^nc_n-\varpi^{n+1}c_{n+1})=c_0=c$. Hence  $\alpha=\iota_C^{-1}$, as wanted.
\end{proof}

  \end{proof}
\section{Syntomic cohomologies}
The period map of Fontaine-Messing (see Section ~\ref{PR3}) 
gives a description of (pro-)\'etale cohomology in
terms of the syntomic cohomology of Fontaine-Messing. This syntomic cohomology can be made
more concrete (see~Section~\ref{PR4})
by mimicking the construction of Selmer groups by Bloch and Kato \cite{BKS}:  syntomic cohomology is described as derived  filtered eigenspaces of Frobenius acting on
the geometric Hyodo-Kato cohomology~\cite{HK}.  Now, the geometric Hyodo-Kato cohomology behaves very badly
locally (as does the de Rham cohomology) and the standard way to fix this problem is to do everything
in an overconvergent way.  So we  
define (see~Section~\ref{system11}) overconvergent syntomic cohomology a la Bloch-Kato, 
replacing the usual Hyodo-Kato cohomology
by the overconvergent Hyodo-Kato cohomology of Grosse-Kl\"onne~\cite{GK2} which we review
in Section~\ref{PR1}. In the next chapter, we
will show (Theorem~\ref{fini3}) that these two syntomic cohomologies coincide for Stein spaces.
This definition a la Bloch-Kato makes it easy to show (Proposition~\ref{fd11})
that the overconvergent syntomic cohomology
sits in a ``fundamental diagram'' involving the de Rham complex and overconvergent Hydo-Kato cohomology. It follows that so does pro-\'etale cohomology and this "fundamental diagram" will become our main tool for computations of the latter later on in the paper.
  \subsection{Overconvergent  Hyodo-Kato  cohomology}\label{PR1}
We will review in this section the definition of the overconvergent Hyodo-Kato cohomology and the overconvergent Hyodo-Kato isomorphism due to Grosse-Kl\"onne \cite{GK2}.  We will pay particular attention to topological issues. 
\subsubsection{Dagger spaces and weak formal schemes}
\label{assumptions}
We will review, very briefly, basic facts concerning dagger spaces and weak formal schemes. Our main references are \cite{Mer, GK0, Vez},  where the interested reader can find a detailed exposition. 

   We start with dagger spaces. For $\delta\in \mathbb{R}^+$, set
$$
T_n(\delta)=K\{\delta^{-1}X_1,\ldots,\delta^{-1}X_n\}:=\{\sum_v a_vX^v\in K[[X_1,\ldots,X_n]]\mid\lim_{ |v|\to\infty} |a_v|\delta^{|v|}=0\}.
$$
Here $| v|=\sum_{i=1}^{n} v_i$, $v=(v_1,\ldots,v_n)\in\N^n$. We have $T_n:=K\{X_1,\ldots,X_n\}=T_n(1)$. If $\delta\in p^\Q$, this is an affinoid $K$-algebra; the associated Banach norm    $|\cdot|_{\delta}: T_n(\delta)\to \mathbb{R}$, $|\sum a_vX^v|_{\delta}= \max_v |a_v|\delta^{|v|}$. We set
$$
K[X_1,\ldots,X_n]^{\dagger}:=\bigcup_{\delta >1,\delta\in p^\Q}T_n(\delta)=\bigcup_{\delta >1}T_n(\delta)
$$
It is a Hausdorff $LF$-space.

  A {\em dagger algebra} $A$  is a topological $K$-algebra isomorphic to a quotient of the overconvergent Tate algebra $K[X_1,\ldots,X_d]^{\dagger}$. It is canonically a Hausdorff $LF$-algebra \cite[Cor. 3.2.4]{Bam0}. It defines a sheaf of topological $K$-algebras $\so^{\dagger}$ on $\Sp \wh{A}$, $\wh{A}$ being the $p$-adic completion of $A$,  which is called a {\em dagger structure} on $\Sp \wh{A}$. The pair $\Sp(A):=(|\Sp \wh{A}|,\so^{\dagger})$ is called a {\em dagger affinoid}.

A {\em dagger space}\footnote{Sometimes called {\em rigid analytic space with overconvergent structure sheaf}.} $X$ is a pair $(\wh{X},\so^{\dagger})$ where $\wh{X}$ is a rigid analytic space over $K$ and $\so^{\dagger}$ is a sheaf of topological $K$-algebras on $\wh{X}$ such that, for some affinoid open covering $\{\wh{U}_i\to\wh{X}\}$, there are dagger structures $U_i$ on $\wh{U}_i$ such that $\so^{\dagger}|\wh{U}_i\simeq \so^{\dagger}_{U_i}$. The set of global sections $\Gamma(X,\so^{\dagger})$ has a structure of a convex $K$-vector space given by the projective limit $\varprojlim_Y \Gamma(Y,\so^{\dagger}|Y)$, where $Y$ runs over all affinoid  subsets of $X$. In the case of dagger affinoids this agrees with the previous definition.

 Let $X=\Sp(A)\to Y=\Sp(B)$ be a morphism of affinoid dagger spaces and let $U\subset X$ be an affinoid subdomain. We write $U\Subset_Y X$ if there exists a surjection $\tau:B[X_1,\ldots,X_r ]^{\dagger} \to A$ and $\delta\in p^\Q,\delta >1,$ such that $U\subset \Sp(A[\delta^{-1}\tau(X_1),\ldots,\delta^{-1}\tau(X_r)]^{\dagger})$.
A morphism $f:X\to Y$ of dagger (or rigid) spaces is called 
{\em partially proper} if $f$ is separated and if there exist admissible coverings $Y=\bigcup Y_i$ and $f^{-1}(Y_i)=\bigcup X_{ij}$, all $i$, such that for every $X_{ij}$ there exists an  affinoid subset $\wt{X}_{ij}\subset f^{-1}(Y_i)$ with $X_{ij}\Subset_Y\wt{X}_{ij}$. A partially proper dagger space that is quasi-compact is called {\em proper}. This notion is compatible with the one for rigid spaces. In fact, the category of partially proper dagger spaces is equivalent to the category of partially proper rigid spaces \cite[Theorem 2.27]{GK0}. In particular, the   rigid analytification of a finite type scheme over $K$ is partially proper.
   
  A dagger (or rigid) space $X$ is called {\em Stein} if it admits an admissible affinoid covering $X=\bigcup_{i\in \N}U_i $ such that $U_i\subset^{\dagger}U_{i+1}$ for all $i$; we call the covering $U_i,i\in\N$, a Stein covering. Here the notation $U_i\subset^{\dagger} U_{i+1}$ means that the map $\wh{U}_i=\Sp(C)\subset \wh{U}_{i+1}=\Sp(D)$ is an open immersion of affinoid rigid spaces induced  by a map  $D\simeq T_n(\delta)/I\to C\simeq T_n/IT_n $ for some $I$ and $\delta >1$. Stein spaces are partially proper.

  We pass now to weakly formal schemes; the relation between dagger  spaces and weak formal schemes parallels \cite{LZ} the one between rigid spaces and formal schemes due to Raynaud. A {\em weakly complete} $\so_K$-algebra $A^{\dagger}$ (with respect to $(\varpi)$) is an $\so_K$-algebra which  is $\varpi$-adically separated and which satisfies the following condition:  for any power series $f\in\so_K\{X_1,\ldots,X_n\}$,  
$
f=\sum a_vX^v,
$ such that 
there exists a constant $c$ for which  $c(v_p(a_v)+1)\geq |v|$, all $v$,  and for any $n$-tuple $x_1,\ldots,x_n\in A^{\dagger}$, the series $f(x_1,\ldots,x_n)$ converges to an element of $A^{\dagger}$. The {\em weak completion} of an $\so_K$-algebra $A$ is the smallest weakly complete subalgebra $A^{\dagger}$ of $\wh{A}$ containing the image of $A$. 
  
   A {\em weak formal scheme} is a locally ringed space $(X,\so)$ that is locally isomorphic to an affine weak formal scheme. An {\em affine weak formal scheme} is a locally ringed space $(X,\so)$ such that $X=\Spec(A^{\dagger}/\varpi)$  for some weakly complete finitely generated $\so_K$-algebra $A^{\dagger}$ and the sheaf $\so$ is given on the standard basis of open sets by 
$\Gamma(X_{\overline{f}},\so)=(A^{\dagger}_f)^{\dagger}$, $f\in A^{\dagger}$. We say that $X=\Spwf (A^{\dagger})$, the {\em weak formal spectrum} of $A^{\dagger}$. 
For a weak formal scheme $X$, flat over $\so_K$, the associated dagger space $X_K$ is partially proper if and only if  all irreducible closed subsets $Z$ of $X$ are proper over $\so_K$ \cite[Remark 1.3.18]{Hub}.

   A  weak formal scheme  over $\so_K$ is called {\em semistable} if,  locally for the Zariski topology,  it admits \'etale maps to the weak formal spectrum 
    $\Spwf(\so_K[X_1,\ldots,X_n]^{\dagger}/(X_1\cdots X_r-\varpi))$, $1\leq r\leq n$. We equip it with the log-structure coming from the special fiber. We have a similar definition for formal schemes.
    A (weak) formal  scheme $X$ is called {\em Stein} if its generic fiber $X_K$ is Stein. 
It is called {\em Stein with a semistable reduction} if  it has a  semistable reduction over $\so_K$ (and then
the irreducible components of $Y:=X_0$ are proper and smooth) and 
 there exist closed (resp. open) subschemes $Y_s, s\in\N,$ (resp. $U_s, s\in \N$) of $Y$ such that 
\begin{enumerate}
\item each $Y_s$ is a finite union of irreducible components,
\item $Y_s\subset U_s\subset Y_{s+1}$ and their union is $Y$,
\item the tubes  $\{]U_{s}[_{X}\},s\in\N,$ form  a Stein covering of $X_K$.
\end{enumerate}
We will call the covering $\{U_s\},s\in\N,$ a Stein covering of $Y$.
The schemes $U_s, Y_s$ inherit their log-structure from $Y$ (which is canonically a log-scheme log-smooth over $k^{0}$). The log-schemes $Y_s$ are not log-smooth (over $k^0$) but they are ideally log-smooth, i.e., they have a canonical idealized log-scheme structure and are ideally log-smooth for this structure\footnote{Recall \cite{Og1} that an idealized log-scheme is a log-scheme together with an ideal in its log-structure that maps to zero in the structure sheaf. There is a notion of  log-smooth morphism of idealized log-schemes. Log-smooth idealized log-schemes behave like classical log-smooth log-schemes. One can extend the definitions of  log-crystalline,  log-convergent, and log-rigid cohomology, as well as that of  de Rham-Witt complexes to  idealized log-schemes. In what follows we will often skip the word ``idealized'' if understood.}.
\subsubsection{Overconvergent Hyodo-Kato cohomology}
\label{definition}
 Let $X$ be a semistable weak formal scheme over $\so_K$.
We would like to define  the overconvergent Hyodo-Kato cohomology as  the rational overconvergent rigid cohomology of $X_0$ over $\so_F^0$:
  $$\R\Gamma_{\hk}(X_0):=\R\Gamma_{\rig}(X_0/\so_F^0).$$The foundations of log-rigid cohomology  missing \footnote{See however \cite{Sh2}.} this has to be done by hand \cite[1]{GK2}.  

   Let $Y$ be a fine $k^0$-log-scheme. Choose  an open covering $Y=\cup_{i\in I}Y_i$ and, for every $i\in I$,  an exact closed immersion $Y_i\hookrightarrow  Z_i$ into a log-smooth weak formal $\so_F^0$-log-scheme $Z_i$. For each nonempty finite subset $J\subset I$ choose (perhaps after refining the covering) an exactification\footnote{Recall that {\em an exactification} is an operation that turns closed immersions of log-schemes into exact closed immersions.} \cite[Prop. 4.10]{KT}
$$
Y_J=\cap_{i\in J}Y_i\stackrel{\iota}{\to} Z_J\stackrel{f}{\to} \prod_{\so_F^0}(Z_i)_{i\in J}
$$
of the diagonal embedding $Y_J\to \prod_{\so_F^0}(Z_i)_{i\in J}$. Let $\Omega\kr_{Z_J/\so^0_F}$ be the   de Rham complex of the weak formal log-scheme $Z_J$ over $\so_F^0$. This is a complex of sheaves on $Z_J$; tensoring it with $F$ we obtain a  complex of sheaves $\Omega\kr_{Z_{J,F}}$ on the $F$-dagger space $Z_{J,F}$. By \cite[Lemma 1.2]{GK2},  the tube $]Y_J[_{Z_J}$ and the restriction $\Omega\kr_{]Y_J[_{Z_J}}:=\Omega\kr_{Z_{J,F}}|_{]Y_J[_{Z_J}}$ of $\Omega\kr_{Z_{J,F}}$ to $]Y_J[_{Z_J}$ depend only on the embedding system  $\{Y_i\hookrightarrow Z_i\}_i$ not on the chosen exactification $(\iota,f)$.
 Equip the de Rham complex
$
 \Gamma (]Y_J[_{Z_J},\Omega\kr)  
$
 with the topology induced from the structure sheaf of the dagger space $]Y_J[_{Z_J}$.

   For $J_1\subset J_2$, one has  natural  restriction maps $\delta_{J_1,J_2}:]Y_{J_2}[_{Z_{J_2}}\to ]Y_{J_1}[_{Z_{J_1}}$ and  $\delta^{-1}_{J_1,J_2}\Omega\kr_{]Y_{J_1}[_{Z_{J_1}}}\to\Omega\kr_{]Y_{J_2}[_{Z_{J_2}}}$. Well-ordering  $I$, we get  a simplicial dagger space $]Y_{\jcdot}[_{Z_{\jcdot}}$  and a sheaf $\Omega\kr_{]Y_{\jcdot}[_{Z_{\jcdot}}}$ on $]Y_{\jcdot}[_{Z_{\jcdot}}$.
 Consider the complex $\R\Gamma(]Y_{\jcdot}[_{Z_{\jcdot}},\Omega\kr)$. We equip it with the topology induced from the product topology on every cosimplicial level.
  In the  classical derived category of $F$-vector spaces this complex is independent of choices made but we will make everything independent of choices by simply taking limit over all the possible choices. 
  We define a complex in $\sd(C_F)$
\begin{equation}
\label{system}
\R\Gamma_{\rig}(Y/\so_F^0)
:=\hocolim\Gamma(]Y_{\jcdot}[_{Z_{\jcdot}},\Omega\kr),
\end{equation}
where the limit is over the   category of hypercovers built 
from the data that we have described above\footnote{Note that the category of hypercovers, up to a simplicial homotopy, is filtered.  Indeed, since we have fiber products, the issue here is just with equalizers but those exists, up to a simplicial homotopy, by the very general fact \cite[Tag 01GS]{Sta}. Moreover, they induce a homotopy on the corresponding complexes. }. Note that the data corresponding  to affine coverings form a cofinal system. 
We set
$$
\wt{H}^i_{\rig}(Y/\so_F^0):=\wt{H}^i \R\Gamma_{\rig}(Y/\so_F^0),\quad {H}^i_{\rig}(Y/\so_F^0):={H}^i \R\Gamma_{\rig}(Y/\so_F^0).
$$

   The complex $\R\Gamma_{\rig}(Y/\so_F^0)$ is equipped with a  Frobenius endomorphism $\phi$ defined by lifting Frobenius to the schemes $Z_i$ in the above construction. In the case $Y$ is log-smooth over $k^0$ we also  have a monodromy endomorphism\footnote{The formula that follows, while entirely informal, should give the reader an idea about the definition of the monodromy. The formal definition can be found in \cite[formula (37)]{NN}.} $N=\Res(\bigtriangledown(\dlog 0))$ defined by the logarithmic connection satisfying $p\phi N=N\phi$. 
\begin{proposition} 
\label{rain1}
Let $Y$ be a semistable scheme over $k$ with the induced log-structure \cite[2.1]{GK2}. 
\begin{enumerate}
\item If $Y$ is quasi-compact then $H^*_{\rig}(Y/\so_F^0)$ is a finite dimensional $F$-vector space with its unique locally convex Hausdorff topology.
\item The endomorphism $\phi$ on $H^*_{\rig}(Y/\so_F^0)$ is  a  homeomorphism.
\item If $k$ is finite then $H^*_{\rig}(Y/\so_F^0)$ is a mixed $F$-isocrystal, i.e., the eigenvalues of $\phi$  are Weil numbers. 
\end{enumerate}
\end{proposition}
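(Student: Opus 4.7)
The strategy is to transport each of the three assertions from the better-studied log-crystalline / Hyodo-Kato cohomology via Grosse-Kl\"onne's overconvergent comparison theorem \cite{GK2}, tracking the topology carefully since the cited results normally only address the underlying $F$-vector space.

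For part (1), the plan is to first identify the algebraic cohomology $H^*_{\rig}(Y/\so_F^0)$ with log-crystalline (Hyodo-Kato) cohomology of a proper semistable model obtained by compactifying $Y$ and using Grosse-Kl\"onne's descent from the proper case (or alternatively de Jong's alterations combined with cohomological descent). Classical results of Hyodo-Kato, Mokrane, and Nakkajima give finite-dimensionality of that crystalline cohomology over $F$. Once finiteness of the abstract $F$-vector space is established, the locally convex Hausdorff topology on it is automatically unique, and it only remains to verify that the topology placed on $\wt{H}^i_{\rig}(Y/\so_F^0)$ via the filtered $\hocolim$ in (\ref{system}) is in fact separated. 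For this I would fix one good affine hypercover $Y_i\hookrightarrow Z_i$, observe that $\Gamma(]Y_\jcdot[_{Z_\jcdot},\Omega\kr)$ is a bounded complex of Fr\'echet spaces, and deduce from finite-dimensionality of the algebraic cohomology that the image of each differential has finite codimension inside the kernel; Lemma \ref{kolobrzeg-winter} then forces strictness, so that the cohomology is a finite-dimensional Hausdorff Fr\'echet space and therefore carries its unique topology. Passage to the $\hocolim$ over all hypercovers preserves this, since the colimit is filtered (hence exact on algebraic cohomology) and the transition maps between any two good hypercovers are quasi-isomorphisms of such complexes.

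For part (2), bijectivity of $\phi$ on log-crystalline cohomology after inverting $p$ is built into the Hyodo-Kato construction and transports through the comparison in (1); a bijective linear endomorphism of a finite-dimensional $F$-vector space with its unique Hausdorff locally convex topology is automatically a homeomorphism. For part (3), the Weil-number property is known for log-crystalline cohomology of proper semistable schemes over finite fields (Mokrane and Nakkajima in the proper case, Chiarellotto--Le Stum for the open case via cohomological descent and alterations), and is again transported by the comparison isomorphism.

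The main obstacle I foresee is the topological refinement in part (1): showing that the $\hocolim$ topology on the cohomology coincides with the standard Hausdorff one, rather than being a possibly coarser quotient topology which could fail to be separated. The cleanest route is to prove that a single good affine hypercover already computes the cohomology (using the cofinality remark just after (\ref{system})), so the $\hocolim$ topology reduces to a single Fr\'echet quotient that is Hausdorff by the finite-codimension argument above.
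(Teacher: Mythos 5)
Your plan for the algebraic parts of (1)--(3) is a plausible alternative to the paper's route: the paper does not compactify or invoke alterations, but instead cites \cite[Theorem 5.3]{GK2}, whose own proof runs through the weight spectral sequence
$E^{-k,i+k}_1=\bigoplus_{j\geq 0, j\geq -k}\prod_{S\in \Theta_{2j+k+1}}H^{i-2j-k}_{\rig}(S/\so_F)\Rightarrow H^i_{\rig}(Y/\so_F^0)$,
which reduces finiteness, bijectivity of $\phi$, and the Weil-number statement to the corresponding classical facts for the smooth proper strata $S$. That is more economical than compactification plus descent, but both should work.

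The genuine gap is in your topological argument for part (1). You claim $\Gamma(]Y_{\jcdot}[_{Z_{\jcdot}},\Omega\kr)$ is ``a bounded complex of Fr\'echet spaces''; it is not. When $Y_J$ is affine, $]Y_J[_{Z_J}$ is a dagger affinoid, whose ring of overconvergent functions is an $LF$-space (a countable inductive limit of Banach spaces), not Fr\'echet --- see the discussion in Section~\ref{assumptions}. This invalidates the appeal to Lemma~\ref{kolobrzeg-winter}, which is stated and proved only for Banach or Fr\'echet complexes. More seriously, your fallback ``finite codimension of the image inside the kernel forces strictness'' needs the kernel $\ker d_i$ to again be a space for which the Open Mapping Theorem is available; but a closed subspace of an $LF$-space need not be an $LF$-space, so this step is not automatic. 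The paper sidesteps both difficulties by a different mechanism: it first uses the Poincar\'e lemma for the open ball to reduce to the case where $]Y_J[_{Z_J}$ is the generic fiber of a weak formal lift of $Y_J$, so the complex in question is the genuine de Rham complex of a smooth affinoid dagger space, and then invokes the nontrivial theorem of Grosse-Kl\"onne (Lemma~\ref{czesto}, from \cite{GK0,GK1}) that the de Rham differentials $d_i:\Omega^i\to\Omega^{i+1}$ on a smooth affinoid dagger space are strict with closed image. That specific input is what makes the cohomology $\wt{H}^i$ classical and Hausdorff, and it is what your sketch is missing. Once you have it, the passage to $\hocolim$ over all hypercovers (your last paragraph) is correct and matches the paper: since all transition maps are strict quasi-isomorphisms between complexes with finite-dimensional Hausdorff cohomology, one embedding datum already computes the answer.
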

\begin{proof}
All algebraic statements concerning the  cohomology are proved in \cite[Theorem 5.3]{GK2}. They follow immediately from the following weight spectral sequence  \cite[5.2, 5.3]{GK2} that reduces the statements to the analogous ones for (classically) smooth schemes over $k$
\begin{align}
\label{weights}
E^{-k,i+k}_1 & =\bigoplus_{j\geq 0, j\geq -k}\prod _{S\in \Theta_{2j+k+1}}H^{i-2j-k}_{\rig}(S/\so_F)\Rightarrow H^i_{\rig}(X_0/\so_F^0).
\end{align}
Here  $\Theta_j$ denotes the set of all intersections $S$ of $j$ different irreducible components of $X$ that are equipped with trivial log-structure. By assumptions, the intersections $S$ are smooth  over $k$.

 Let us pass to topology.  Recall  the following fact (that we will repeatedly use in the paper)
  \begin{lemma}{\rm (\cite[Lemma 4.7]{GK0}, \cite[Cor. 3.2]{GK1})}\label{czesto}
  Let $Y$ be a smooth Stein space or a smooth affinoid dagger space. All de Rham differentials $d_i:\Omega^i(Y)\to \Omega^{i+1}(Y)$ are strict and have closed images.
  \end{lemma}
  \begin{remark}
  The above lemma holds also for log-smooth Stein spaces with the log-structure given by a normal crossing divisor. The proof in \cite[Cor. 3.2]{GK1} goes through using the fact that for such quasi-compact log-smooth spaces the rigid de Rham cohomology is isomorphic to the rigid de Rham cohomology of the open locus where the log-structure is trivial (hence it is finite dimensional and equipped with the canonical Hausdorff topology).
  \end{remark}
 
   We claim that, in the notation used above, if $Y_J$ is affine, then the complex
 $$
 \Gamma (]Y_J[_{Z_J},\Omega\kr) =\R\Gamma (]Y_J[_{Z_J},\Omega\kr)
$$
has finite dimensional algebraic cohomology $H^*$ whose topology is Hausdorff. Moreover, its cohomology $\wt{H}^*$ is classical. Indeed, note that, using the contracting homotopy of the Poincar\'e Lemma for an open ball, we may assume that the tube $]Y_{J}[_{Z_J}$ is the generic fiber of a weak formal scheme lifting $Y_{J}$ to $\so^0_F$. 
Now,  write $H^i=\ker d_i/\im d_{i-1}$ with the induced quotient topology.  By the above lemma,  the natural map 
 $\coim d_{i-1}\to \im d_{i-1}$ is an isomorphism and $\im d_{i-1}$ is closed in $\ker d_i$.  Hence 
 $\wt{H}^i$ is classical and  $\wt{H}^i\stackrel{\sim}{\to} H^i$ is Hausdorff, as wanted.

   Note that, by the above, a  map between two de Rham complexes associated to two (different) embeddings of $Y_I$ is a strict quasi-isomorphism. This implies that, if $Y$ is affine,  all the arrows in the system (\ref{system}) are strict quasi-isomorphisms and the cohomology of $\R\Gamma_{\rig}(Y/\so_F^0)$ is isomorphic to the cohomology of $\Gamma(]Y_{\jcdot}[_{Z_{\jcdot}},\Omega\kr)$ for any embedding data.

 This proves claim (1) of our proposition for affine schemes; the  case of a general quasi-compact scheme can be treated in the same way (choose a covering by a finite number of affine schemes). Claim (2) follows easily from claim (1). 
 \end{proof}
\begin{remark}
In an analogous way to $\R\Gamma_{\rig}(Y/\so_F^0)$ we define complexes
 $ \R\Gamma_{\rig}(Y/\so_K^{\times})\in \sd(C_K)$.
 For a quasi-compact $Y$, their cohomology groups are classical; they  are finite $K$-vector spaces with their canonical  Hausdorff topology.
\end{remark}

\subsubsection{Overconvergent Hyodo-Kato isomorphism}
Set $r^+:=k[T], r^{\dagger}:=\so_F[T]^{\dagger}$  with the log-structure associated to $T$.
Let $X$ be a log-scheme over $r^+:= k[T]$ (in particular, we allow  log-schemes over $k^0$). Assume that there exists an open covering $X=\cup_{i\in I}X_i$ and, for every $i$, an exact closed immersion $X_i\hookrightarrow  \wt{X}_i$ into a  log-scheme log-smooth over $\tilde{r}:= \so_F[T]$. For each nonempty finite subset $J\subset I$, choose an exactification (product is taken over~$\tilde{r}$)
$$
X_J:=\cap_{i\in J}X_i\stackrel{\iota}{\hookrightarrow}\wt{X}_J\stackrel{f}{\to}\prod_{i\in J}\wt{X}_i
$$
of the diagonal embedding as in Section \ref{definition}.

   Let $\sx_J$ be the weak completion of $\wt{X}_J$. Define the de Rham complex $\Omega\kr_{\sx_J/r^{\dagger}}$ as the weak completion of the de Rham complex $\Omega\kr_{\wt{X}_J/\tilde{r}}$. The tube $]X_J[_{\sx_J}$ with the  complex ($\Omega\kr_{\sx_J/r^{\dagger}}\otimes\Q)|_{]X_J[_{\sx_J}}$ is independent of the chosen factorization $(\iota, f)$. For varying $J$ one has natural transition maps, hence a simplicial dagger space $]X_{\jcdot}[_{\sx_{\jcdot}}$ and a  complex 
\begin{equation}
\label{complx}
(\Omega\kr_{\sx_{\jcdot}/r^{\dagger}}\otimes\Q)|_{]X_{\jcdot}[_{\sx_{\jcdot}}}
\end{equation}
One shows that, in the derived category of vector spaces over $\Q_p$,
$$
\R\Gamma(]X_{\jcdot}[_{\sx_{\jcdot}},\Omega\kr_{\sx_{\jcdot}/r^{\dagger}}\otimes\Q|_{]X_{\jcdot}[_
{\sv_{\jcdot}}})
$$
is independent of choices. We make it though functorial as a complex  by going to limit over all the choices and define a complex in $\sd(C_F)$
$$
\R\Gamma_{\rig}(X/r^{\dagger}):=\hocolim\Gamma(]X_{\jcdot}[_{\sx_{\jcdot}},(\Omega\kr_{\sx_{\jcdot}/r^{\dagger}}\otimes\Q)|_{]X_{\jcdot}[_
{\sx_{\jcdot}}}),
$$
 where the index set runs over the data described above.

  Cohomology $\R\Gamma_{\rig}(X/r^{\dagger})$ is equipped with a  Frobenius endomorphism $\phi$ defined by lifting mod $p$ Frobenius to the schemes $\wt{X}_i$ in the above construction in a manner compatible with the Frobenius on $r^{\dagger}$ induced by $T\mapsto T^p$. 
  If $X$ is log-smooth over $k^0$, we also have a  monodromy endomorphism $N=\Res(\bigtriangledown(\dlog T))$ defined by the logarithmic connection satisfying $p\phi N=N\phi$. The map $p_0:  \R\Gamma_{\rig}(X/r^{\dagger})\to \R\Gamma_{\rig}(X/\so_F^0)$ induced by $T\mapsto 0$ is compatible with Frobenius and monodromy. 
  
     For a general (simplicial) log-scheme with  boundary $(\overline{X},X)$ over $r^+$ that satisfies certain mild condition\footnote{The interested reader can find a description of this condition in \cite[1.10]{GK2}. It  will be always satisfied by the log-schemes we work with in this paper. } the definition of the rigid cohomology $\R\Gamma_{\rig}((\overline{X},X)/r^{\dagger})$ is analogous.    For details of the construction we refer the reader to \cite[1.10]{GK2} and for the definition of log-schemes with boundary to \cite{GKC}.

   Let $X_0$ be a semistable scheme over $k$ with the induced log-structure \cite[2.1]{GK2}. Let $\{X_i\}_{i\in I}$ be the irreducible components of $X_0$ with induced log-structure.  Denote by $M_{\jcdot}$ the nerve of the covering $\coprod_{i\in I}X_i\to X_0$. We define the complex $\R\Gamma_{\rig}(M_{\jcdot}/\so^0_F)\in \sd(C_K)$ in an analogous way to $ \R\Gamma_{\rig}(X_0/\so^0_F)$ using the embedding data described in \cite[1.5]{GK2}.
 
 \begin{lemma}
 \label{Cech}Let $\so$ denote $\so_F^0$ or $\so_K^{\times}$.
 The natural map
 $$
 \R\Gamma_{\rig}(X_0/\so)\to\R\Gamma_{\rig}(M_{\jcdot}/\so)
 $$
 is a strict quasi-isomorphism. 
 \end{lemma}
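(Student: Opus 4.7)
The plan is to prove this by producing an embedding datum that is compatible with the simplicial structure of the nerve, running a Čech descent argument on the resulting de Rham complex of tubes, and finally promoting the algebraic quasi-isomorphism to a strict one via the finite-dimensionality statement of Proposition \ref{rain1}. First I would reduce to the case where $X_0$ is quasi-compact (hence $I$ is finite): the general case is obtained by passing to the homotopy inverse limit along a Stein exhaustion of $X_0$ by quasi-compact semistable sub-log-schemes, and an inverse limit of strict quasi-isomorphisms of Fréchet complexes is a strict quasi-isomorphism by Lemma \ref{kolobrzeg-winter}.

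Assuming $X_0$ quasi-compact, I would choose an affine open covering $X_0=\cup_{\alpha}U_{\alpha}$ together with exact closed immersions $U_{\alpha}\hookrightarrow Z_{\alpha}$ into log-smooth weak formal schemes over $\so$. For every non-empty $J\subset I$ the intersection $X_J:=\cap_{i\in J}X_i$ is a closed sub-log-scheme of $X_0$, and restriction to $U_{\alpha}$ gives a closed immersion $X_J\cap U_{\alpha}\hookrightarrow Z_{\alpha}$ which, after exactification (product over $\so$ followed by the operation of \cite[Prop. 4.10]{KT} as recalled in Section \ref{definition}), produces an embedding datum $\sx_{J,\alpha}$ compatible with the one used for $X_0$. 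Taking products and exactifications over finite tuples in $\alpha$ and in $J$ jointly yields a bi-simplicial embedding datum indexed by pairs (simplex of the cover $\{U_{\alpha}\}$, simplex of $M_{\jcdot}$), which, by the cofinality remark following (\ref{system}), computes both $\R\Gamma_{\rig}(X_0/\so)$ and $\R\Gamma_{\rig}(M_{\jcdot}/\so)$ by totalizing the associated bi-cosimplicial de Rham complex of tubes.

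With such a datum fixed, the natural map of the lemma becomes, after totalization, a map of bicomplexes that is the identity in the $\{U_{\alpha}\}$-direction and, in the $J$-direction, the natural map from the de Rham complex on the tube $\,]X_0\cap U_{\alpha}[_{Z_{\alpha}}$ to the Čech complex associated to the admissible covering $\{\,]X_i\cap U_{\alpha}[_{Z_{\alpha}}\}_{i\in I}$ of that tube. Since the cover is finite and the sheaves $\Omega^j$ on the tube are coherent, Tate-type acyclicity for the structure sheaf of an affinoid dagger space implies that this Čech map is a quasi-isomorphism term-by-term. Totalizing gives an algebraic quasi-isomorphism $H^*_{\rig}(X_0/\so)\stackrel{\sim}{\to} H^*_{\rig}(M_{\jcdot}/\so)$. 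To pass from algebraic to strict, note that by Proposition \ref{rain1} the source has finite-dimensional cohomology with its unique Hausdorff topology; for the target, each $X_J$ is a proper normal crossing scheme over $k$ (in fact smooth away from the log-locus), so Proposition \ref{rain1} applies stratum-wise and the spectral sequence of the simplicial object $M_{\jcdot}$ (finite because $I$ is) gives finite-dimensionality of $H^*_{\rig}(M_{\jcdot}/\so)$ as well. Lemma \ref{kolobrzeg-winter} then upgrades the map to a strict quasi-isomorphism.

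The main obstacle is step one, namely producing a single embedding datum that is functorial in both simplicial directions: one must check that a joint exactification can be chosen compatibly with the face maps in $I$ and in the index $\alpha$ of the affine cover. Once this is set up, the Čech descent is formal and the strictness statement is essentially automatic from Proposition \ref{rain1}.
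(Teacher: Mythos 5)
Your overall strategy is the paper's: cover the (local) generic fiber admissibly by the tubes of the irreducible components, identify $\R\Gamma_{\rig}(X_0/\so)$ with the de Rham cohomology of the nerve of that covering by \v{C}ech descent, and then upgrade the algebraic quasi-isomorphism to a strict one using finite-dimensionality and Hausdorffness of the cohomology on both sides. The paper's implementation is however much lighter than yours: arguing locally, one chooses a single exact closed immersion $X_0\hookrightarrow X$ into a weak formal scheme log-smooth over $\so$; since each $M_J$ carries the log-structure induced from $X_0$, the composite $M_J\hookrightarrow X$ is already an exact embedding into the \emph{same} ambient $X$, so all the tubes $]M_J[_X$ live in one dagger space and the bi-simplicial joint exactification you single out as the ``main obstacle'' is simply not needed. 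Two points you assert are genuine theorems rather than formalities, and this is where the paper supplies the actual inputs: (i) the admissibility of the covering $]X_0[_X=\cup_{i\in I}]X_i[_X$ by tubes of closed subschemes is \cite[3.3]{GK2} (and it, not Tate acyclicity for affinoids, is what gives the descent --- the tubes are admissible opens, not affinoids); (ii) Proposition \ref{rain1} does not apply to $M_J$, which is an intersection of components and hence only ideally log-smooth, not semistable, so finite-dimensionality and the canonical Hausdorff topology on $H^*_{\rig}(M_J/\so)$ are obtained instead from the comparison $\R\Gamma_{\rig}(M_J/\so)\stackrel{\sim}{\to}\R\Gamma_{\rig}(M_J^{\tr}/\so)$ with the locus of trivial horizontal log-structure \cite[Lemma 4.4]{GK2}. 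With those two citations in place your argument closes; without them the descent step and the strictness step are both unsupported.
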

 \begin{proof}
 It suffices to argue locally, so we may assume that there exists an exact embedding of $X_0$ into a  weak formal scheme $X$ that is log-smooth over $\so$.  
 
  First we prove that above map is a quasi-isomorphism. The complex $\R\Gamma_{\rig}(M_{\jcdot}/\so)$ can be computed by de Rham complexes on the tubes $]M_J[_X$, where, for a nonempty subset $J\subset I$, we set  $M_J=\cap_{j\in J}X_j$ with the induced log-structure.
To compute $\R\Gamma_{\rig}(X_0/\so)$, recall that, for a weak formal scheme $X$ and a closed subscheme $Z$ of its special fiber, if $Z=\cup_{i\in I}Z_i$ is a finite covering by closed subschemes of $Z$, then the dagger space covering $]Z[_X=\cup_{i\in I}]Z_i[_X$ is admissible open \cite[3.3]{GK2}. Hence $\R\Gamma_{\rig}(X_0/\so)$ can be computed as the de Rham cohomology of the nerve of the covering $X_K=\cup_{i\in I}]M_i[_X$. Since the two above mentioned simplicial de Rham complexes are equal, we are done.

  Now,  strictness of the above quasi-isomorphism follows from the fact that  the cohomology groups of the left complex are finite dimensional vector spaces (over $F$ or $K$) with their canonical Hausdorff  topology and so are the cohomology groups of the right complex (basically by the same argument using the quasi-isomorphism $\R\Gamma_{\rig}(M_J/\so)\stackrel{\sim}{\to}\R\Gamma_{\rig}(M_J^{\tr}/\so)$ \cite[Lemma 4.4]{GK2}, where $M^{\tr}_J$ denotes the open set of $M_J$, where the horizontal log-structure is trivial.)
 \end{proof}
  Let $J\subset I$ and $M=M_J=\cap_{j\in J}X_j$. Grosse-Kl\"onne \cite[2.2]{GK2} attaches to $M$ finitely many log-schemes with boundary $(P^{J^{\prime}}_M,V^{J^{\prime}}_M)$, $\emptyset \subsetneq J^{\prime}\subset J$. We think of   $(P^{J^{\prime}}_M,V^{J^{\prime}}_M)$ as the vector bundle $V^{J^{\prime}}_M$ on $M$ (built from the log-structure corresponding to $J^{\prime}$)
  that is compactified by the projective space bundle~$P^{J^{\prime}}_M$. 
   It is a log-scheme with boundary over $r^+$ which, in particular, means that $V^{J^{\prime}}_M$ is a genuine log-scheme over $r^+$ (however this is not the case for $P^{J^{\prime}}_M$). We note, that in application to the Hyodo-Kato isomorphism for $M$  all we need are index sets $J^{\prime}$ with just one element. This construction of Grosse-Kl\"onne corresponds to defining the Hyodo-Kato isomorphism using not the deformation space $r^{\rm PD}_{\varpi}$ as in the classical constructions but its compactification (a projective space). The key  advantage being that the cohomology of the structure sheaf of the new deformation space is now trivial.
   
   The following proposition is the main result of \cite{GK2}.
  \begin{proposition}
  \label{GK1}
  Let $\emptyset\neq J^{\prime}\subset J\subset I$ and let $\so_F(0)=\so_F^0,\so_F(\varpi)=\so_K^{\times}$. The  map
  $$
  \R\Gamma_{\rig}((P^{J^{\prime}}_M,V^{J^{\prime}}_M)/r^{\dagger})\otimes_FF(a)\to\R\Gamma_{\rig}(M/\so_F(a)),\quad a=0,\varpi,
  $$
  defined by restricting to the zero section $M=M_J\to P^{J^{\prime}}_M$ and sending $T\mapsto a$,  is a strict quasi-isomorphism.
  \end{proposition}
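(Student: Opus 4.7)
The algebraic half of this statement—that on classical cohomology the map is an isomorphism—is essentially the main theorem of \cite{GK2}, and I would quote it as a black box. The remaining task is the topological upgrade to a \emph{strict} quasi-isomorphism in $\sd(C_F)$, and my general principle would be the following: if both complexes have classical, Hausdorff, finite-dimensional cohomology in each degree, then any algebraic isomorphism between those cohomology groups is automatically a topological isomorphism (a finite-dimensional $F$-vector space has a unique Hausdorff locally convex topology), so the cone of the map is strictly acyclic. This reduces strictness to a pair of finiteness-and-Hausdorffness statements, one on each side.

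For the target, Proposition \ref{rain1}, applied both to $\so_F^0$ and (by the subsequent remark) to $\so_K^{\times}$, already supplies what I need. For the source, I would mimic the argument in the proof of Proposition \ref{rain1}: work locally, choose an affine embedding datum for $M$, lift to a log-smooth weak formal scheme, form the compactified deformation $(P^{J^{\prime}}_M, V^{J^{\prime}}_M)$ as in \cite[\S 2.2]{GK2}, and then apply the log-smooth variant of Lemma \ref{czesto} (from the remark following it) to the de Rham complex of the relevant tube. This yields strict differentials with closed images. Finite-dimensionality then follows from a projective-bundle/Leray argument exploiting precisely the feature that motivated the compactification in the first place: on $P^{J^{\prime}}_M$ the cohomology of the structure sheaf is trivial in positive degrees, so the cohomology in question reduces to a finite direct sum of shifted pieces of the $r^{\dagger}$-valued rigid cohomology of $M$, which is finite-dimensional by the weight spectral sequence \eqref{weights} together with the boundary-cohomology analysis of \cite[\S 1.10]{GK2}. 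The specialization $T\mapsto a$ combined with the scalar extension $\otimes_F F(a)$ then gives the target.

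Once both sides are known to have classical, Hausdorff, finite-dimensional cohomology in each degree, strictness of the map is automatic by the principle stated above. The main obstacle I anticipate is not the strictness itself but rather the topological bookkeeping: one must verify that the locally convex topology coming from the hypercolimit definition of $\R\Gamma_{\rig}((P^{J^{\prime}}_M, V^{J^{\prime}}_M)/r^{\dagger})$, passed through restriction to the zero section and through $T\mapsto a$ with scalar extension, lands in the category $C_{F(a)}$ in a controlled way. This remains manageable because every local piece is a log-smooth dagger space with normal-crossing log-structure, which is exactly the regime covered by the remark following Lemma \ref{czesto}.
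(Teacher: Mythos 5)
Your proposal is correct in outline, but it takes a longer route than the paper and front-loads work that the paper avoids entirely. The paper's proof also starts from the algebraic isomorphism of \cite[Theorem 3.1]{GK2} and also argues locally for affine $X_0$, but it then only needs topological information about the \emph{target}: the cohomology of $\R\Gamma_{\rig}(M/\so_F(a))$ is a finite-dimensional $F(a)$-vector space with its unique Hausdorff locally convex topology (Proposition \ref{rain1} and the remark after it). Since the comparison map is continuous and an algebraic bijection on cohomology, the source's cohomology injects continuously into a Hausdorff space and is therefore itself Hausdorff; being an algebraically finite-dimensional Hausdorff convex space, it carries the unique such topology, so the map is an isomorphism in $C_{F(a)}$. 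This is exactly the "simple observation" recorded in Section 2 of the paper: a continuous bijection from a convex space onto a finite-dimensional Hausdorff one is automatically a topological isomorphism. Your plan instead establishes Hausdorffness and finite-dimensionality of the source independently, by redoing the analysis of Proposition \ref{rain1} for the log-scheme with boundary $(P^{J'}_M,V^{J'}_M)$ over $r^{\dagger}$ — applying the log-smooth variant of Lemma \ref{czesto} to the relevant tubes and running a projective-bundle argument. That can presumably be carried out, but it is precisely the delicate part (the variant of Lemma \ref{czesto} is stated for log-smooth Stein/affinoid dagger spaces with normal-crossings log-structure, and one must check it covers the boundary compactification over $r^{\dagger}$), and the projective-bundle computation largely re-derives the algebraic content of \cite[Theorem 3.1]{GK2} that you already quoted as a black box. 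The moral difference: you require Hausdorffness on both sides before invoking your rigidity principle, whereas the paper's one-sided version of that principle lets the target's good properties propagate backwards along the continuous map for free.
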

  \begin{proof}
  The algebraic quasi-isomorphism was  proved in  \cite[Theorem 3.1]{GK2}. To show that this quasi-isomorphism is strict we can argue locally, for $X_0$ affine. Then the cohomology of the complex on the right is a finite rank vector space over $F(a)$ with its natural locally convex and Hausdorff topology. Algebraic quasi-isomorphism and continuity of the restriction map imply that the cohomology of the complex on the left is Hausdorff as well. Since it is  a locally convex space the map has to be an isomorphism in $C_{F(a)}$, as wanted.
  \end{proof}
  \begin{example}We have found that the best way to understand the above proposition is through an example supplied by Grosse-Kl\"onne himself in \cite{GK2}. Let $X_0$ be of dimension $1$ and let $M$ be the intersection of two irreducible components.  Hence  the underlying scheme  of $M$ is equal to $\Spec k$. Let $U$ be the $2$-dimensional open unit disk over $K$ with coordinates $x_1,x_2$, viewed as a dagger  space. Consider its two closed subspaces:  $U^0$ defined by  $x_1x_2=0$ and $U^{\varpi}$ defined by $x_1x_2=\varpi$. 

Let $\wt{\Omega}_U\kr$ be the de Rham complex of $U$ with log-poles along the divisor $U^0$; let $\Omega_U\kr$ be its quotient by its sub-$\so_U$-algebra generated by $\dlog (x_1x_2)$. Denote by  $\Omega\kr_{U^0}$  and $\Omega\kr_{U^{\varpi}}$ its restriction to $U^0$ and $U^{\varpi}$, respectively. We note that $U^{\varpi}$ is (classically) smooth and that $\Omega\kr_{U^{\varpi}}$ is its (classical) de Rham complex. We view  the $k^0$-log-scheme $M$ as an exact closed log-subscheme of the  formal log-scheme $\Spf(\so_F[[x_1,x_2]]/(x_1x_2))$ that is log-smooth over $\so_F^0$ or of  the formal log-scheme $\Spf(\so_K[[x_1,x_2]]/(x_1x_2-\varpi))$ that is  log-smooth over  $\so_K^{\times}$. The corresponding tubes are  $U^0$ and $U^{\varpi}$. We have
\begin{equation}
\label{GK2}
\R\Gamma_{\rig}(M/\so_F^0)\otimes_FK=\R\Gamma(U^0,\Omega\kr),\quad \R\Gamma_{\rig}(M/\so_K^{\times})=\R\Gamma(U^{\varpi},\Omega\kr).
\end{equation}
We easily  see that  $H^*(U^0,\Omega\kr)\simeq H^*(U^{\varpi},\Omega\kr)$; in particular, $H^1(U^0,\Omega\kr_{U^0})=H^1(U^{\varpi},\Omega\kr)$ is a one dimensional $K$-vector space generated by  $\dlog x_1$. 
  
  The quasi-isomorphism between the cohomologies in (\ref{GK2}) is constructed via the following deformation space $(P,V)$:
  $$P=({\mathbb P}^1_K\times {\mathbb P}^1_K)^{\an}=((\Spec(K[x_1])\cup\{\infty\})\times (\Spec(K[x_2])\cup\{\infty\}))^{\an},\quad V:=(\Spec(K[x_1,x_2]))^{\an}
    $$
 Let $\wt{\Omega}\kr_P$ be the de Rham complex of $P$ with log-poles along the divisor
$$
(\{0\}\times {\mathbb P}^1_K)\cup ({\mathbb P}^1_K\times\{0\})\cup (\{\infty\}\times {\mathbb P}^1_K)\cup ({\mathbb P}^1_K\times\{\infty\}).
$$
The section $\dlog(x_1x_2)\in \wt{\Omega}^1_U(U)=\wt{\Omega}^1_P(U)$ extends canonically to a section $\dlog(x_1x_2)\in \wt{\Omega}^1_P(P)$.  Let $\Omega\kr_P$ be the quotient of $\wt{\Omega}\kr_P$ by its sub-$\so_P$-algebra generated by $\dlog (x_1x_2)$. The natural restriction maps 
$$
\R\Gamma(U^0,\Omega\kr)\leftarrow \R\Gamma(P,\Omega\kr)\to \R\Gamma(U^{\varpi},\Omega\kr), \quad  0\mapsfrom T, T\mapsto \varpi,
$$
are  quasi-isomorphisms.  This is because we have killed one differential of $\wt{\Omega}_P$ and the logarithmic differentials of ${\mathbb P}^1_
{\so_K}$ are isomorphic to the structure sheaf hence have cohomology which is $1$-dimensional in degree $0$ and trivial otherwise.
 \end{example}

   Varying the index set $J^{\prime}$ in a coherent way one glues the log-schemes $(P^{J^{\prime}}_M,V^{J^{\prime}}_M)$ into a simplicial $r^+$-log-scheme $ (P_{\jcdot},V_{\jcdot})$ with boundary. 
     Set $\R\Gamma_{\rig}(\overline{X}_0/r^{\dagger}):=\R\Gamma_{\rig}((P_{\jcdot},V_{\jcdot})/r^{\dagger})$.
       We have  the corresponding simplicial log-scheme  $
 M^{\prime}_{\jcdot}
 $ over $k^0$.
There is a natural map $M_{\jcdot}\to M^{\prime}_{\jcdot}$ (that induces a strict quasi-isomorphism 
 $\R\Gamma_{\rig}(M^{\prime}_{\jcdot}/\so_F^0)
 \stackrel{\sim}{\to}\R\Gamma_{\rig}(M_{\jcdot}/\so_F^0)$) and a natural map
  $ M^{\prime}_{\jcdot} \to (P_{\jcdot},V_{\jcdot})$.
  The following proposition is an immediate corollary of Proposition \ref{GK1} and Lemma \ref{Cech}. 
  \begin{proposition}\label{Lyonnn}{\rm (\cite[Theorem 3.4]{GK2})} Let $a=0,\varpi$. 
  The natural maps
  $$
        \R\Gamma_{\rig}(X_0/\so_F(a))\to \R\Gamma_{\rig}(M^{\prime}_{\jcdot}/\so_F(a))
        \leftarrow\R\Gamma_{\rig}(\overline{X}_0/r^{\dagger})\otimes_FF(a)
  $$ 
  are strict quasi-isomorphisms. 
  \end{proposition}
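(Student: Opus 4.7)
The plan is to derive the statement as a direct consequence of Proposition~\ref{GK1} and Lemma~\ref{Cech}, assembled simplicially, with strictness coming essentially for free from the finite-dimensionality/Hausdorffness of the cohomologies involved.

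For the left arrow $\R\Gamma_{\rig}(X_0/\so_F(a))\to \R\Gamma_{\rig}(M'_{\jcdot}/\so_F(a))$, I would first invoke Lemma~\ref{Cech}, which applies to both choices of log-structure ($\so_F^0$ when $a=0$, $\so_K^\times$ when $a=\varpi$), to produce a strict quasi-isomorphism $\R\Gamma_{\rig}(X_0/\so_F(a))\stackrel{\sim}{\to}\R\Gamma_{\rig}(M_{\jcdot}/\so_F(a))$. The excerpt already records that the canonical map $M_{\jcdot}\to M'_{\jcdot}$ induces a strict quasi-isomorphism $\R\Gamma_{\rig}(M'_{\jcdot}/\so_F(a))\stackrel{\sim}{\to}\R\Gamma_{\rig}(M_{\jcdot}/\so_F(a))$; composing with its inverse in $\sd(C_{F(a)})$ handles the left arrow.

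For the right arrow, the key is to run Proposition~\ref{GK1} levelwise across the simplicial index and then pass to the totalization. For each nonempty $J\subset I$, specializing $J'=J$ in Proposition~\ref{GK1} with $M=M_J$ yields a strict quasi-isomorphism
$$
\R\Gamma_{\rig}((P^{J}_{M_J},V^{J}_{M_J})/r^{\dagger})\otimes_F F(a)\to \R\Gamma_{\rig}(M_J/\so_F(a)).
$$
The constructions of Grosse-Kl\"onne are functorial in $J$ (the diagonal/exactification recipe is compatible with face and degeneracy maps), so these assemble into a map of cosimplicial objects in $\sd(C_{F(a)})$ which, by the very definition of $\R\Gamma_{\rig}(\overline{X}_0/r^{\dagger})=\R\Gamma_{\rig}((P_{\jcdot},V_{\jcdot})/r^{\dagger})$, is precisely the right-hand arrow after passing to the homotopy limit over $\jcdot$.

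It remains to upgrade the levelwise strict quasi-isomorphism to one on the totalizations. Algebraically, this is automatic: a levelwise quasi-isomorphism of bounded-below cosimplicial complexes induces a quasi-isomorphism of totalizations. Strictness is then the only thing to verify, and here it comes for free, since by Proposition~\ref{rain1} (together with its variant for $\so_K^\times$) the cohomology groups at each cosimplicial level on both sides are finite-dimensional $F(a)$-vector spaces with their canonical Hausdorff topology; the same holds for the abutments by Lemma~\ref{Cech} and the left arrow already proved. An algebraic bijection between finite-dimensional Hausdorff locally convex $F(a)$-vector spaces is an isomorphism in $C_{F(a)}$, so the algebraic quasi-isomorphism on totalizations is strict. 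The main, and only mild, obstacle is the simplicial bookkeeping needed to check that Grosse-Kl\"onne's embedding data $\{(P^{J'}_{M_J},V^{J'}_{M_J})\}$ really do fit together functorially into a simplicial log-scheme with boundary compatible with $M'_{\jcdot}$; this is a diagram chase in his construction and does not require further input beyond what is in \cite{GK2}.
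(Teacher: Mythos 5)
Your proposal is correct and follows the paper's route exactly: the paper simply declares the proposition an immediate corollary of Proposition \ref{GK1} and Lemma \ref{Cech}, which is precisely the assembly (Čech descent for the left arrow, levelwise application of Proposition \ref{GK1} followed by totalization for the right arrow) that you carry out. The only caveat is that your strictness argument via finite-dimensional Hausdorff cohomology of the abutments should, as in the proofs of Lemma \ref{Cech} and Proposition \ref{GK1}, be preceded by a reduction to the quasi-compact case (or replaced by an appeal to the Open Mapping Theorem for Fréchet spaces as in Lemma \ref{kolobrzeg-winter}), since for non-quasi-compact $X_0$ the cohomology groups $H^*_{\rig}(X_0/\so_F(a))$ are only Fréchet, not finite-dimensional.
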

  
  Let $X$ be a semistable weak formal scheme over $\so_K$. We define the overconvergent Hyodo-Kato cohomology of $X_0$ as $\R\Gamma_{\hk}(X_0):=\R\Gamma_{\rig}(X_0/\so_F^0)$.
  Recall that the Hyodo-Kato map $$\iota_{\hk}: \R\Gamma_{\hk}(X_0)\to\R\Gamma_{\dr}(X_K)$$
  is defined as  the zigzag (using the maps from the above proposition)
  $$\R\Gamma_{\hk}(X_0)=\R\Gamma_{\rig}(X_0/\so_F^0)\stackrel{\sim}{\leftarrow} \R\Gamma_{\rig}(\overline{X}_0/r^{\dagger}){\to} \R\Gamma_{\rig}(\overline{X}_0/r^{\dagger})\otimes_FK\stackrel{\sim}{\to}
\R\Gamma_{\rig}(X_0/\so_K^{\times})\simeq \R\Gamma_{\dr}(X_K).
$$
It yields  the (overconvergent) Hyodo-Kato strict quasi-isomorphism $$ \iota_{\hk}: \R\Gamma_{\hk}(X_0)\otimes_F K\stackrel{\sim}{\to}\R\Gamma_{\dr}(X_K).
$$
\begin{remark}
The overconvergent Hyodo-Kato map, as its classical counterpart, depends on the choice of the uniformizer $\varpi$. This dependence  takes the usual form \cite[Prop. 4.4.17]{Ts}.
\end{remark}
  
\subsection{Overconvergent  syntomic cohomology}
In this section we will define syntomic cohomology (a la Bloch-Kato) using overconvergent Hyodo-Kato and de Rham cohomologies of Grosse-Kl\"onne and discuss the fundamental diagram that it fits into.  We call this definition {"a la Bloch-Kato"} because it is inspired by Bloch-Kato's definition of local Selmer groups \cite{BKS}. 
\subsubsection{Period rings $\wh{\B}^+_{\st}$, $\A_{\crr,K}$}   We will recall the definition of  the  rings of periods $\wh{\B}^+_{\st}$ and $\A_{\crr,K}$ that we will need.
We denote by $r^+_\varpi$ the algebra $\so_F[[T]]$
with the log-structure associated to $T$.  Sending $T$ to $\varpi$ induces
a surjective morphism $r_\varpi^+\to \so_K^{\times}$.    We denote by $r_\varpi^{\rm PD}$ the $p$-adic divided power envelope of $r_\varpi^+$ with respect to the kernel of this morphism. Frobenius is defined by $T\mapsto T^p$, monodromy by $T\mapsto T$.

   We start with the definition of  the  ring of periods $\wh{\B}^+_{\st}$ \cite[p.253]{Ts}.
   Let
$$
\wh{\A}_{\st,n}:=H^0_{\crr}(\so_{C,n}^{\times}/r^{\rm PD}_{\varpi,n}), \quad \wh{\A}_{\st}:=\varprojlim_n\wh{\A}_{\st,n},\quad   \wh{\B}^+_{\st}:=\wh{\A}_{\st}[1/p]. $$
We note that  $\wh{\B}^+_{\st}$ is a Banach space over $F$ (which makes it easier to handle topologically than $\B^+_{\st}$).
The ring $\wh{\A}_{\st,n}$ has a natural action of $\sg_K$, Frobenius $\phi $,
and a monodromy
operator $N$.
We have a  morphism $\A_{\crr,n}\to \wh{\A}_{\st,n}$ induced by the map
$H^0_{\crr}(\so_{C,n}/\so_{F,n})\to H^0_{\crr}(\so_{C,n}^{\times}/r^{\rm PD}_{\varpi,n})$. Both it and 
the natural map $r^{\rm PD}_{\varpi,n}\to \wh{\A}_{\st,n}$ are compatible with all the structures (Frobenius, monodromy, and Galois action). Moreover, we have the exact sequence 
\begin{equation}
\label{Breuil}
0\to \A_{\crr,n}\to \wh{\A}_{\st,n}\stackrel{N}{\to} \wh{\A}_{\st,n}\to  0.
\end{equation}

  We can view $\wh{\A}_{\st,n}$
   as the ring of the PD-envelope of the closed immersion
   $$
   \Spec \so_{C,n}^{\times}\hookrightarrow \Spec (\A^{\times}_{\crr,n}\otimes_{\so_{F,n}}r^+_{\varpi,n})
   $$
   defined by the maps $\theta: \A_{\crr,n}\to \so_{C,n}$ and  $r^+_{\varpi,n} \to \so_{K,n}$, $T\mapsto \varpi$. Here $\A^{\times}_{\crr,n}$ is $\A_{\crr,n}$ equipped with the unique log-structure extending the one on $\so^{\times}_{C,n}$. 
   This makes $\Spec \so_{C,1}^{\times}\hookrightarrow \Spec \wh{\A}_{\st,n}$ into a PD-thickening in the crystalline site of
   $\so_{\overline{K},1}^{\times}$. It follows \cite[Sec. 3.9]{Ka3} that
    \begin{equation}
\label{isom1}
\widehat{\A}_{\st,n}\simeq \R\Gamma_{\crr}(\so^{\times}_{C,n}/r^{\rm PD}_{\varpi,n}).
\end{equation}
There is a canonical $\B^+_{\crr}$-linear isomorphism 
$
\B^+_{\st}\stackrel{\sim}{\to}\wh{\B}_{\st}^{+,N-{\rm nilp}}
$
compatible \cite[Theorem 3.7]{Ka3} with the action of $\sg_K$, $\phi$, and $N$.

  We will now pass to the definition of  the  ring of periods $\A_{\crr,K}$ \cite[4.6]{Ts}.
   Let
$$
{\A}_{\crr,K,n}:=H^0_{\crr}(\so_{C,n}^{\times}/\so_{K,n}^{\times}), \quad {\A}_{\crr,K}:=\varprojlim_n{\A}_{\crr,K,n}.$$
The ring ${\A}_{\crr,K,n}$ is a flat $\Z/p^n$-module and ${\A}_{\crr,K,n+1}\otimes\Z/p^n\simeq {\A}_{\crr,K,n}$; moreover, it has a natural action of $\sg_K$. These properties generalize to $H^0_{\crr}(\so_{C,n}^{\times}/\so_{K,n}^{\times},\sj^{[r]})$, for $r\in\Z$, and we have $H^i_{\crr}(\so_{C,n}^{\times}/\so_{K,n}^{\times},\sj^{[r]})=0$, $i\geq 1$, $r\in\Z$. Set $$
F^r{\A}_{\crr,K,n}:=H^0_{\crr}(\so_{C,n}^{\times}/\so_{K,n}^{\times},\sj^{[r]}),\quad F^r{\A}_{\crr,K}:=\varprojlim_nF^r{\A}_{\crr,K,n}.
$$
We have
$$
F^r{\A}_{\crr,K,n}\simeq \rg_{\crr}(\so^{\times}_{C,n}/\so^{\times}_{K,n},\sj^{[r]}), \quad F^r{\A}_{\crr,K,n}/F^s\simeq \rg_{\crr}(\so^{\times}_{C,n}/\so^{\times}_{K,n},\sj^{[r]}/\sj^{[s]}), \quad r\leq s.
$$
The natural map $\gr^r_F\A_{\crr,n}\to\gr^r_F\A_{\crr,K,n}$ is a $p^a$-quasi-isomorphism for a constant $a$ depending on $K$, $a\sim v_p(d_{K/F})$,  \cite[Lemma 4.6.2]{Ts}.
We set $\B^+_{\crr,K}:=\A_{\crr,K}[1/p]$. There is  a natural $\sg_K$-equivariant map $\iota:\wh{\B}^+_{\st}\to
  \B_{\dr}^+$ induced by the maps 
$$
p_{\varpi}:\widehat{\B}^+_{\st}
 \to     \B^+_{\crr,K},\quad 
\B^+_{\crr}/F^r\stackrel{\sim}{\to}\B^+_{\crr,K}/F^r\stackrel{\sim}{\leftarrow}\B^+_{\crr}/F^r,
$$  
where  $p_{\varpi}$ denotes the map induced by sending $T\mapsto \varpi$. The composition ${\B}^+_{\st}{\to}
\widehat{\B}^+_{\st}\stackrel{\iota}{\to}\widehat{\B}^+_{\dr}$ is the map $\iota=\iota_{\varpi}$ from Section \ref{Notation}. 

\subsubsection{Overconvergent  geometric syntomic cohomology}
\label{system11}
Let $X$ be a semistable weak formal scheme over $\so_K$. 
 Take $r\geq 0$. 
We define the overconvergent geometric syntomic cohomology of $X_K$ by the following mapping fiber (taken in $\sd(C_{\Q_p})$)
$$
\R\Gamma_{\synt}(X_{C},\Q_p(r)):=[[\R\Gamma_{\hk}(X_0)\wh{\otimes}_{F}^R\wh{\B}^+_{\st}]^{N=0,\phi=p^r}
\verylomapr{\iota_{\hk}\otimes\iota}(\R\Gamma_{\dr}(X_K)\wh{\otimes}_K^R\B^+_{\dr})/F^r].
$$
This is an overconvergent analog of the algebraic geometric syntomic cohomology studied in \cite{NN}. 
Here, we wrote $[\R\Gamma_{\hk}(X_0)\wh{\otimes}_{F}^R\wh{\B}^+_{\st}]^{N=0,\phi=p^r}$ for the homotopy limit of the commutative diagram\footnote{In general, in what follows we will use the brackets $[\ ]$ to denote derived eigenspaces and the brackets $(\ )$ or nothing to denote the non-derived ones.}
$$
\xymatrix{
\R\Gamma_{\hk}(X_0)\wh{\otimes}_{F}^R\wh{\B}^+_{\st} \ar[r]^{\phi-p^r} \ar[d]^N& \R\Gamma_{\hk}(X_0)\wh{\otimes}_{F}^R\wh{\B}^+_{\st}\ar[d]^N\\
\R\Gamma_{\hk}(X_0)\wh{\otimes}_{F}^R\wh{\B}^+_{\st}  \ar[r]^{p\phi-p^r} & \R\Gamma_{\hk}(X_0)\wh{\otimes}_{F}^R\wh{\B}^+_{\st}.
}
$$
The filtration on $\R\Gamma_{\dr}(X_K)\wh{\otimes}_K^R\B^+_{\dr}$ is defined by the formula
$$
F^r(\R\Gamma_{\dr}(X_K)\wh{\otimes}_K^R\B^+_{\dr}):=\hocolim_{i+j\geq r}F^i\R\Gamma_{\dr}(X_K)\wh{\otimes}_K^RF^j\B^+_{\dr}.
$$
Set
  $$ 
  {\rm HK}(X_{C},r):=[\R\Gamma_{\hk}(X_0)\wh{\otimes}_F^R\wh{\B}^+_{\st}]^{N=0,\phi=p^r},\quad {\rm DR}(X_{C},r):=(\R\Gamma_{\dr}(X_K)\wh{\otimes}_K^R\B^+_{\dr})/F^r.
  $$
  Hence
  $$
  \R\Gamma_{\synt}(X_{C},\Q_p(r))=[{\rm HK}(X_{C},r)\verylomapr{\iota_{\hk}\otimes\iota}{\rm DR}(X_{C},r)].
  $$
  
 \begin{example}Assume that $X$ is quasi-compact. 
\label{LF-tensor}We claim that then  the complex
 $$
 \R\Gamma _{\hk}(X_{0})\wh{\otimes}_F^R\wh{\B}^+_{\st}  
$$
has classical cohomology isomorphic  to $H^*_{\hk}(X_{0})\wh{\otimes}_F\wh{\B}^+_{\st}$, a finite rank free module  over $\wh{\B}^+_{\st}$. To show this, consider the distinguished triangle
$$
 H^0_{\hk}(X_0)\to \R\Gamma _{\hk}(X_{0})\to \tau_{\geq 1}\R\Gamma _{\hk}(X_{0}).
$$
Tensoring it with $\wh{\B}^+_{\st}$ we obtain the distinguished triangle
\begin{equation}
\label{warszawa}
 H^0_{\hk}(X_0)\wh{\otimes}_F^R\wh{\B}^+_{\st}\to \R\Gamma _{\hk}(X_{0})\wh{\otimes}_F^R\wh{\B}^+_{\st}\to (\tau_{\geq 1}\R\Gamma _{\hk}(X_{0}))\wh{\otimes}_F^R\wh{\B}^+_{\st}.
\end{equation}
Note that, since $H^0_{\hk}(X_0)$ is a finite rank vector space over $F$,  we have the natural strict quasi-isomorphism
\begin{equation}
\label{herbata1}
H^0_{\hk}(X_0)\wh{\otimes}_F\wh{\B}^+_{\st}\stackrel{\sim}{\to} H^0_{\hk}(X_0)\wh{\otimes}_F^R\wh{\B}^+_{\st}.
\end{equation}
Since $\wt{H}^0((\tau_{\geq 1}\R\Gamma _{\hk}(X_{0}))\wh{\otimes}_F^R\wh{\B}^+_{\st})=0$, this implies that 
\begin{align*}
 & H^0_{\hk}(X_0)\wh{\otimes}_F\wh{\B}^+_{\st}\stackrel{\sim}{\to}\wt{H}^0(\R\Gamma _{\hk}(X_{0})\wh{\otimes}_F^R\wh{\B}^+_{\st}),\\
& \wt{H}^i(\R\Gamma _{\hk}(X_{0})\wh{\otimes}_F^R\wh{\B}^+_{\st})\stackrel{\sim}{\to} \wt{H}^i((\tau_{\geq 1}\R\Gamma _{\hk}(X_{0}))\wh{\otimes}_F^R\wh{\B}^+_{\st}),\quad i\geq 1.
\end{align*}
Repeating now the above computation for $(\tau_{\geq 1}\R\Gamma _{\hk}(X_{0}))\wh{\otimes}_F^R\wh{\B}^+_{\st}$, $(\tau_{\geq 2}\R\Gamma _{\hk}(X_{0}))\wh{\otimes}_F^R\wh{\B}^+_{\st}$, etc, we get that
\begin{equation}
\label{gabriel}
 \wt{H}^i(\R\Gamma _{\hk}(X_{0})\wh{\otimes}_F^R\wh{\B}^+_{\st}  )\simeq H^i_{\hk}(X_{0})\wh{\otimes}_F\wh{\B}^+_{\st},\quad i\geq 0,
\end{equation}
as wanted. 

\begin{lemma}
\label{paris15}
Let $X$ be quasi-compact.
 The above isomorphism induces a natural  isomorphism
$$
\wt{H}^i({\rm HK}(X_C,r))\simeq({H}^i_{\hk}(X_0)\wh{\otimes}_F\wh{\B}^+_{\st})^{N=0,\phi=p^r}, \quad i\geq 0,
$$
of Banach spaces (so $\wt{H}^i({\rm HK}(X_C,r))$ is classical).
\end{lemma}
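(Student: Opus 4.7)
The plan is to decompose ${\rm HK}(X_{C},r)$ as an iterated homotopy fiber and compute its $\wt{H}^i$ by twice applying the long exact sequence in quasi-abelian cohomology, feeding in the classical identification (\ref{gabriel}).

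Set $A:=\R\Gamma_{\hk}(X_{0})\wh\otimes_{F}^R\wh\B^+_{\st}$. The commutation $N\phi=p\phi N$ makes $N$ a morphism of two-term complexes $[A\xrightarrow{\phi-p^r}A]\to[A\xrightarrow{p\phi-p^r}A]$, and ${\rm HK}(X_{C},r)$ is the fiber of this morphism. Equivalently, one may take the iterated fiber: first form $A^{N=0}:=\fiber(N\colon A\to A)$, then take the fiber of the induced operator $\phi-p^r\colon A^{N=0}\to A^{N=0}$. I would carry out the two steps in this order.

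For the first iteration, (\ref{gabriel}) gives $\wt{H}^i(A)\simeq H^i_{\hk}(X_{0})\wh\otimes_F\wh\B^+_{\st}$, a finite direct sum of Banach copies of $\wh\B^+_{\st}$. I would show that the diagonal $N=N_{\hk}\otimes 1+1\otimes N_{\st}$ is strictly surjective on this space. The exact sequence (\ref{Breuil}), inverted by $p$, yields the strict short exact sequence $0\to\B^+_{\crr}\to\wh\B^+_{\st}\xrightarrow{N}\wh\B^+_{\st}\to 0$; combined with the nilpotence of $N_{\hk}$ on the finite-dimensional $F$-vector space $H^i_{\hk}(X_0)$, one lifts preimages iteratively using a basis adapted to the Jordan form of $N_{\hk}$. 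The long exact sequence then collapses, so $\wt{H}^i(A^{N=0})$ is classical and identified, as a Banach space, with $(H^i_{\hk}(X_{0})\wh\otimes_F\wh\B^+_{\st})^{N=0}\simeq H^i_{\hk}(X_0)\wh\otimes_F\B^+_{\crr}$.

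For the second iteration, one needs strict surjectivity of $\phi-p^r$ on this Banach space. For the trivial $\phi$-module the statement reduces to the fundamental exact sequence $0\to\Q_p(r)\to F^r\B^+_{\crr}\xrightarrow{\phi-p^r}\B^+_{\crr}\to 0$. For the general $\phi$-module $H^i_{\hk}(X_0)$, the bijectivity of $\phi$ (Proposition \ref{rain1}(2)) allows base change to $W(\overline{k})[1/p]$ and a Dieudonn\'e--Manin decomposition into isotypic components, where surjectivity is checked slope by slope and then descended back to $F$. Once this is known, the long exact sequence collapses again, giving the asserted natural, classical identification $\wt{H}^i({\rm HK}(X_{C},r))\simeq (H^i_{\hk}(X_{0})\wh\otimes_F\wh\B^+_{\st})^{N=0,\phi=p^r}$, which is Banach as a closed subspace of a Banach space. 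The main obstacle is precisely this last strict surjectivity, as the twist by $H^i_{\hk}(X_0)$ shifts the Newton slopes of $\phi$ relative to the target eigenvalue $p^r$; the Jordan/isotypic reduction is what must be carried out carefully to descend the slope-by-slope surjectivity from $W(\overline{k})[1/p]$ back to $F$.
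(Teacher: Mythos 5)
Your proof follows the paper's own argument essentially verbatim: the paper likewise computes $\wt{H}^i$ of the iterated fiber via the two long exact sequences, collapsing the first using surjectivity of $N$ on $M\otimes_F\wh{\B}^+_{\st}$ (proved by induction on the nilpotence order of $N$ from the case $M=F$, with the identification $(M\otimes_F\wh{\B}^+_{\st})^{N=0}\simeq M\otimes_F\B^+_{\crr}$ given by the trivialization $\exp(Nu)$) and the second using surjectivity of $p^r-\phi$ on $M\otimes_F\B^+_{\crr}$. The only difference is that the paper quotes this latter surjectivity from \cite[Remark 2.30]{CN} rather than reproving it by Dieudonn\'e--Manin; note that it holds for arbitrary Frobenius slopes, so the caution you express about the slope shift is not needed at this step (slopes only become relevant later, for the surjectivity of $1\otimes\theta$ in Lemma \ref{lyon3}).
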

\begin{proof}
The argument here is  similar to the one given in \cite[Cor. 3.26]{NN} for the Beilinson-Hyodo-Kato cohomology but requires a little bit more care. 
We note that  ${H}^i_{\hk}(X_0)$ is a finite dimensional $(\phi,N)$-module (by Proposition \ref{rain1}). 
For a finite $(\phi, N)$-module $M$,   we  have the following short exact sequences 
 \begin{align}
 \label{seqq}
 0\to & M\otimes_{F}\B^+_{\crr}\stackrel{\beta}{\to} M\otimes_{F}\wh{\B}^+_{\st}\stackrel{N}{\to} M\otimes_{F}\wh{\B}^+_{\st}\to 0,\\
0\to & (M\otimes_{F}\B^+_{\crr})^{\phi=p^r}\to M\otimes_{F}\B^+_{\crr}\lomapr{p^r-\phi} M\otimes_{F}\B^+_{\crr}\to 0.\notag
\end{align}
The first one follows, by induction on $m$ such that $N^m=0$ on $M$, from the fundamental exact sequence, i.e.,   the same sequence for $M=F$. The map 
$\beta$ is the (Frobenius equivariant) trivialization map 
 defined as follows
\begin{align}\label{trivialization}
\beta: M\otimes\B^+_{\crr}  \stackrel{\sim}{\to} (M\otimes\wh{\B}^+_{\st})^{N=0},\quad
m\otimes b & \mapsto \exp(Nu)m\otimes b.
\end{align}
We note here that it is not Galois equivariant;  however this  fact will not be a problem for us in this proof.  The second exact sequence follows from
\cite[Remark 2.30]{CN}.

  We will first show that
   \begin{equation}
   \label{gabriel1}
\wt{H}^i([\R\Gamma_{\hk}(X_0)\wh{\otimes}_F^R\wh{\B}^+_{\st}]^{N=0})\simeq({H}^i_{\hk}(X_0)\wh{\otimes}_F\wh{\B}^+_{\st})^{N=0}.
\end{equation}
Set ${\hk}:=\R\Gamma_{\hk}(X_0)\wh{\otimes}_F^R\wh{\B}^+_{\st}$. We have the long exact sequence
$$
\stackrel{N}{\to}\wt{H}^{i-1}({\hk})\to \wt{H}^i([{\hk}]^{N=0})\to \wt{H}^i({\hk}^{})\stackrel{N}{\to} \wt{H}^i({\hk})\to \wt{H}^{i+1}([{\hk}]^{N=0})\to 
$$
By the isomorphism (\ref{gabriel}) and the exact sequence (\ref{seqq}), it splits into the short exact sequences
$$
0\to \wt{H}^i([{\hk}]^{N=0})\to H^i_{\hk}(X_0)\otimes_F\wh{\B}^+_{\st}\stackrel{N}{\to}H^i_{\hk}(X_0)\otimes_F\wh{\B}^+_{\st}\to 0
$$
The isomorphism (\ref{gabriel1}) follows. By (\ref{seqq}), we also have 
 $\wt{H}^i([{\hk}]^{N=0})\simeq H^i_{\hk}(X_0)\otimes_F\B^+_{\crr}$. 
 
 Now, set $D:=[{\hk}]^{N=0}$. We have the long exact sequence
 $$
\lomapr{\phi-p^r} \wt{H}^{i-1}(D)\to \wt{H}^{i}([D]^{\phi=p^r})\to \wt{H}^{i}(D)\lomapr{\phi-p^r} \wt{H}^{i}(D)\to \wt{H}^{i+1}([D]^{\phi=p^r})\to
 $$ Since 
 $\wt{H}^i(D)\simeq H^i_{\hk}(X_0)\otimes_F\B^+_{\crr}$, 
 the sequence   (\ref{seqq}) implies that the above long exact sequence  splits into the short exact sequences
 $$
 0\to  \wt{H}^{i}([D]^{\phi=p^r})\to H^i_{\hk}(X_0)\otimes_F\B^+_{\crr}\lomapr{\phi-p^r}H^i_{\hk}(X_0)\otimes_F\B^+_{\crr}\to 0
 $$
Our lemma follows from the sequence in (\ref{seqq}).
\end{proof}

  Assume now that $X$ is Stein and let $\{U_n\}, n\in \N,$ be  a Stein covering. We claim that we have a natural strict quasi-isomorphism
\begin{equation}
\label{warszawa0}
\R\Gamma_{\hk}(X_0)\wh{\otimes}_F^R\wh{\B}^+_{\st}\stackrel{\sim}{\to} \holim_n(\R\Gamma_{\hk}(U_{n,0})\wh{\otimes}_F^R\wh{\B}^+_{\st}).
\end{equation}
To show this we will compute the cohomology of both sides. Since, by Lemma \ref{bios2}, the natural map
$$
H^i_{\hk}(X_0)\wh{\otimes}_F\wh{\B}^+_{\st}\to H^i_{\hk}(X_0)\wh{\otimes}_F^R\wh{\B}^+_{\st},\quad i\geq 0,
$$
is a strict quasi-isomorphism,  the argument in Example \ref{LF-tensor} goes through and we get that
$$
\wt{H}^i(\R\Gamma_{\hk}(X_0)\wh{\otimes}_F^R\wh{\B}^+_{\st})\simeq H^i_{\hk}(X_0)\wh{\otimes}_F\wh{\B}^+_{\st},\quad i\geq 0.
$$

Similarly, applying $\holim$ to the analogs of the distinguished triangle (\ref{warszawa}), we get the distinguished triangles
$$
 \holim_n(H^0_{\hk}(U_{n,0})\wh{\otimes}_F^R\wh{\B}^+_{\st})\to \holim_n(\R\Gamma _{\hk}(U_{n,0})\wh{\otimes}_F^R\wh{\B}^+_{\st})\to \holim_n((\tau_{\geq 1}\R\Gamma _{\hk}(U_{n,0}))\wh{\otimes}_F^R\wh{\B}^+_{\st}).
$$
We have 
\begin{align}
\label{warszawa1}
& \wt{H}^0\holim_n(H^0_{\hk}(U_{n,0})\wh{\otimes}_F^R\wh{\B}^+_{\st})\simeq  \invlim_n(H^0_{\hk}(U_{n,0})\wh{\otimes}_F\wh{\B}^+_{\st}),\\
 & \wt{H}^i\holim_n(H^0_{\hk}(U_{n,0})\wh{\otimes}_F^R\wh{\B}^+_{\st})\simeq  \wt{H}^i\holim_n(H^0_{\hk}(U_{n,0})\wh{\otimes}_F\wh{\B}^+_{\st})=0,\quad i\geq 1.\notag
\end{align}
The vanishing in the second line can be seen by evoking  the Mittag-Leffler criterium in the category of convex vector spaces \cite[Cor. 2.2.12]{Pr}:
the projective system  $\{H^0_{\hk}(U_{n,0})\wh{\otimes}_F\wh{\B}^+_{\st}\}$ is Mittag-Leffler because so is the projective system $\{H^0_{\hk}(U_{n,0})\}$. 
Now, the argument in Example \ref{LF-tensor} can be repeated and it will yield that
$$
\wt{H}^i(\holim_n(\R\Gamma_{\hk}(U_{n})\wh{\otimes}_F^R\B^+_{\st}))\simeq \invlim_n(H^i_{\hk}(U_{n,0})\wh{\otimes}_F\wh{\B}^+_{\st}),\quad i\geq 0.
$$

 The computations of cohomology of both sides of (\ref{warszawa0}) being compatible, to prove that they are strictly quasi-isomorphic, it  remains  to show that the natural map
$$
(\invlim_nH^i_{\hk}(U_{n,0}))\wh{\otimes}_F\wh{\B}^+_{\st}\to \invlim_n(H^i_{\hk}(U_{n,0})\wh{\otimes}_F\wh{\B}^+_{\st})
$$
is an isomorphism. But this follows from the fact that each $H^i_{\hk}(U_{n,0})$ is a finite rank vector space (see Section \ref{tensor}).

    To sum up the above discussion: 
    \begin{lemma}
The cohomology of $\R\Gamma_{\hk}(X_0)\wh{\otimes}_F^R\wh{\B}^+_{\st}$ is classical and we have
\begin{equation}
\label{india1}
\wt{H}^i(\R\Gamma_{\hk}(X_0)\wh{\otimes}_F^R\wh{\B}^+_{\st})\simeq {H}^i_{\hk}(X_0)\wh{\otimes}_F\wh{\B}^+_{\st}\simeq \varprojlim_n({H}^i_{\hk}(U_{n,0})\wh{\otimes}_F\wh{\B}^+_{\st}).
\end{equation}
\end{lemma}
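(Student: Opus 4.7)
The plan is to assemble the lemma from the computations already carried out in the paragraphs immediately preceding its statement. There are two assertions to verify: classicality together with the identification $\wt{H}^i(\R\Gamma_{\hk}(X_0)\wh{\otimes}_F^R\wh{\B}^+_{\st})\simeq H^i_{\hk}(X_0)\wh{\otimes}_F\wh{\B}^+_{\st}$, and the further identification of this with $\varprojlim_n H^i_{\hk}(U_{n,0})\wh{\otimes}_F\wh{\B}^+_{\st}$. Both pieces rest on controlling the topology of Hyodo-Kato cohomology in the Stein setting, and on reducing the derived completed tensor product to a non-derived one.

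For the first assertion, I would rerun the truncation argument of Example \ref{LF-tensor}, but now applied to the Stein $X$. The key point is that $X$ is no longer quasi-compact, so $H^i_{\hk}(X_0)$ is not a finite-dimensional $F$-vector space; however, by Proposition \ref{rain1} applied to each $U_{n,0}$, one has $H^i_{\hk}(X_0)\simeq \varprojlim_n H^i_{\hk}(U_{n,0})$ as a countable inverse limit of finite-dimensional $F$-vector spaces, hence a Fr\'echet space. Since $\wh{\B}^+_{\st}$ is a Banach space (a fortiori Fr\'echet), Lemma \ref{bios2}(1) gives a strict quasi-isomorphism
$$
H^0_{\hk}(X_0)\wh{\otimes}_F^R\wh{\B}^+_{\st}\stackrel{\sim}{\to} H^0_{\hk}(X_0)\wh{\otimes}_F\wh{\B}^+_{\st},
$$
replacing (\ref{herbata1}) in the Stein case. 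Inserting this into the truncation triangle $H^0_{\hk}(X_0)\to\R\Gamma_{\hk}(X_0)\to \tau_{\geq 1}\R\Gamma_{\hk}(X_0)$, tensoring with $\wh{\B}^+_{\st}$, and iterating on $\tau_{\geq j}$ exactly as in Example \ref{LF-tensor}, one obtains classicality and the first isomorphism.

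For the second assertion, the plan is to commute $\varprojlim_n$ past $\wh{\otimes}_F\wh{\B}^+_{\st}$. Each $H^i_{\hk}(U_{n,0})$ is a finite-dimensional $F$-vector space (Proposition \ref{rain1}), so the inverse system $\{H^i_{\hk}(U_{n,0})\wh{\otimes}_F\wh{\B}^+_{\st}\}$ is an inverse system of finite products of copies of the Fr\'echet space $\wh{\B}^+_{\st}$. By Section \ref{tensor}(4), countable projective limits of Fr\'echet spaces commute with the completed projective tensor product against a Fr\'echet space, giving
$$
\varprojlim_n\bigl(H^i_{\hk}(U_{n,0})\wh{\otimes}_F\wh{\B}^+_{\st}\bigr)\simeq \bigl(\varprojlim_n H^i_{\hk}(U_{n,0})\bigr)\wh{\otimes}_F\wh{\B}^+_{\st}.
$$
The identification $\varprojlim_n H^i_{\hk}(U_{n,0})\simeq H^i_{\hk}(X_0)$ then follows from the Mittag-Leffler criterion of \cite[Cor. 2.2.12]{Pr} applied to the inverse system of finite-dimensional $F$-vector spaces $\{H^i_{\hk}(U_{n,0})\}$ (whose transition maps have closed, hence dense-and-closed, images after restriction, so Mittag-Leffler holds and $\varprojlim^1$ vanishes), together with the fact that $\R\Gamma_{\hk}(X_0)\simeq \holim_n \R\Gamma_{\hk}(U_{n,0})$ coming from the Stein covering.

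The main obstacle, and the reason the discussion preceding the lemma is long, is the topological bookkeeping: one must be sure that Lemma \ref{bios2}(1) is applicable (which requires $H^i_{\hk}(X_0)$ to be Fr\'echet, not merely an abstract topological vector space), and that the interchange of $\varprojlim_n$ with $\wh{\otimes}_F\wh{\B}^+_{\st}$ really is legitimate in the sense of $\sd(C_F)$. Both are handled by appealing to the finite-dimensionality of $H^i_{\hk}(U_{n,0})$ at the quasi-compact level, so that everything reduces to standard facts about Fr\'echet spaces as recorded in Section \ref{review}.
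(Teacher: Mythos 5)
Your proposal is correct and follows essentially the same route as the paper: the truncation argument of Example \ref{LF-tensor} combined with Lemma \ref{bios2}(1) to eliminate the derived tensor product (since $H^i_{\hk}(X_0)$ is Fr\'echet and $\wh{\B}^+_{\st}$ is Banach), then commuting $\varprojlim_n$ past $\wh{\otimes}_F\wh{\B}^+_{\st}$ via Section \ref{tensor}(4) and a Mittag-Leffler argument. The slight reordering you use — invoking $H^i_{\hk}(X_0)\simeq\varprojlim_nH^i_{\hk}(U_{n,0})$ before justifying it — is harmless, since that identification rests only on the Mittag-Leffler property of the finite-dimensional system $\{H^i_{\hk}(U_{n,0})\}$ and is independent of the tensor computations.
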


\begin{lemma}
\label{Lyon-again}
The cohomology $\wt{H}^i([\R\Gamma_{\hk}(X_0)\wh{\otimes}_F^R\wh{\B}^+_{\st}]^{N=0,\phi=p^r})$ is classical and we have  natural isomorphisms
$$H^i([\R\Gamma_{\hk}(X_0)\wh{\otimes}_F^R\wh{\B}^+_{\st}]^{N=0,\phi=p^r})\simeq (H^i_{\hk}(X_0)\wh{\otimes}_F\wh{\B}^+_{\st})^{N=0,\phi=p^r}.
$$
In particular, the space $H^i([\R\Gamma_{\hk}(X_0)\wh{\otimes}_F^R\wh{\B}^+_{\st}]^{N=0,\phi=p^r})$ is Fr\'echet.  Moreover,    
  $$\wt{H}^i([\rg_{\hk}(X_0)\wh{\otimes}_F^R\wh{\B}^+_{\st}]^{N=0})\simeq (H^i_{\hk}(X_0)\wh{\otimes}_F\wh{\B}^+_{\st})^{N=0}
\simeq H^i_{\hk}(X_0)\wh{\otimes}_F\B^+_{\crr},
$$
where the last isomorphism is not, in general, Galois equivariant.
\end{lemma}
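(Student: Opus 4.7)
The strategy is to mirror the argument of Lemma~\ref{paris15}, bootstrapping from the Stein computation of $\wt{H}^i(\R\Gamma_{\hk}(X_0)\wh{\otimes}_F^R\wh{\B}^+_{\st})$ that was just established. Set $\hk:=\R\Gamma_{\hk}(X_0)\wh{\otimes}_F^R\wh{\B}^+_{\st}$, so that by (\ref{india1}) we have $\wt{H}^i(\hk)\simeq H^i_{\hk}(X_0)\wh{\otimes}_F\wh{\B}^+_{\st}\simeq\varprojlim_n(H^i_{\hk}(U_{n,0})\wh{\otimes}_F\wh{\B}^+_{\st})$, and in particular $\wt{H}^i(\hk)$ is classical and Fr\'echet.

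First I would treat the $N=0$ step. The long exact sequence associated with the fiber defining $[\hk]^{N=0}$ reads
$$\cdots\to\wt{H}^{i-1}(\hk)\xrightarrow{N}\wt{H}^{i-1}(\hk)\to\wt{H}^i([\hk]^{N=0})\to\wt{H}^i(\hk)\xrightarrow{N}\wt{H}^i(\hk)\to\cdots$$
so it suffices to show that $N$ is surjective on $\wt{H}^i(\hk)$. For each finite-dimensional $(\phi,N)$-module $H^i_{\hk}(U_{n,0})$ the first exact sequence of (\ref{seqq}) gives a strictly exact sequence of Banach spaces with kernel $H^i_{\hk}(U_{n,0})\otimes_F\B^+_{\crr}$. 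The transition maps in the Stein covering make $\{H^i_{\hk}(U_{n,0})\}_n$ a projective system of finite-dimensional $F$-vector spaces, hence Mittag-Leffler, so applying $\varprojlim_n$ and using the Mittag-Leffler criterion in $C_F$ (\cite[Cor.~2.2.12]{Pr}) preserves both exactness and strictness. This gives the desired surjectivity of $N$ and identifies
$$\wt{H}^i([\hk]^{N=0})\simeq(H^i_{\hk}(X_0)\wh{\otimes}_F\wh{\B}^+_{\st})^{N=0}\simeq\varprojlim_n\bigl(H^i_{\hk}(U_{n,0})\otimes_F\B^+_{\crr}\bigr),$$
which, via the trivialization $\beta$ applied at each finite level $U_{n,0}$ and then passed to the limit, is further isomorphic to $H^i_{\hk}(X_0)\wh{\otimes}_F\B^+_{\crr}$ (the trivialization, though not Galois equivariant, is Frobenius equivariant and commutes with the finite-level product, hence with the inverse limit). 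This also shows the $N=0$ cohomology is classical, being an inverse limit of Banach spaces with Mittag-Leffler transitions, hence a closed subspace of a Fr\'echet space.

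Next I would treat the $\phi=p^r$ step. Setting $D:=[\hk]^{N=0}$, the previous paragraph gives $\wt{H}^i(D)\simeq H^i_{\hk}(X_0)\wh{\otimes}_F\B^+_{\crr}$, which is Fr\'echet. The long exact sequence attached to $[D]^{\phi=p^r}$,
$$\cdots\to\wt{H}^{i-1}(D)\xrightarrow{\phi-p^r}\wt{H}^{i-1}(D)\to\wt{H}^i([D]^{\phi=p^r})\to\wt{H}^i(D)\xrightarrow{\phi-p^r}\wt{H}^i(D)\to\cdots,$$
will split into short exact sequences once we know $\phi-p^r$ is surjective on $\wt{H}^i(D)$. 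For this, apply the second exact sequence of (\ref{seqq}) to each finite-dimensional $M=H^i_{\hk}(U_{n,0})$ (which gives a strictly exact sequence of Fr\'echet spaces) and take the Mittag-Leffler inverse limit over $n$ of the $\phi=p^r$ kernels, which are finite-dimensional and hence form a Mittag-Leffler system. This yields the strictly exact sequence
$$0\to\wt{H}^i([D]^{\phi=p^r})\to H^i_{\hk}(X_0)\wh{\otimes}_F\B^+_{\crr}\xrightarrow{\phi-p^r}H^i_{\hk}(X_0)\wh{\otimes}_F\B^+_{\crr}\to 0,$$
identifying $\wt{H}^i([\hk]^{N=0,\phi=p^r})$ with $(H^i_{\hk}(X_0)\wh{\otimes}_F\wh{\B}^+_{\st})^{N=0,\phi=p^r}$ and showing both that the cohomology is classical and that it is Fr\'echet (kernel of a continuous map of Fr\'echet spaces).

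The main technical obstacle is the topological passage through the inverse limit $\varprojlim_n$: one has to check that the exact sequences of (\ref{seqq}), applied levelwise to the quasi-compact pieces $U_{n,0}$, remain strictly exact after taking $\varprojlim_n$, and that the various ``classical'' identifications (kernel equals algebraic kernel with subspace topology) commute with this limit. This is where the Mittag-Leffler property of $\{H^i_{\hk}(U_{n,0})\}_n$, guaranteed by their finite-dimensionality via Proposition~\ref{rain1}, combined with the Open Mapping Theorem for Fr\'echet spaces (Lemma~\ref{kolobrzeg-winter}), does all the work.
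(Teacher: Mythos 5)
Your overall strategy matches the paper's: split off the $N=0$ part, then the $\phi=p^r$ part, apply the sequences (\ref{seqq}) levelwise to the quasi-compact pieces $U_{n,0}$, and pass to the inverse limit using acyclicity of the relevant projective systems. The $N=0$ step and the final non-Galois-equivariant identification with $H^i_{\hk}(X_0)\wh{\otimes}_F\B^+_{\crr}$ via $\beta=\varprojlim_n\beta_n$ are handled correctly and as in the paper.

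There is, however, a genuine gap in the $\phi=p^r$ step. You justify the acyclicity of the system of eigenspaces $\{(H^i_{\hk}(U_{n,0})\wh{\otimes}_F\B^+_{\crr})^{\phi=p^r}\}_n$ by asserting that these kernels ``are finite-dimensional and hence form a Mittag-Leffler system.'' This is false: for $r\geq 1$ these are infinite-dimensional $\Q_p$-Banach spaces (already $\B_{\crr}^{+,\phi=p}$ is infinite-dimensional over $\Q_p$, as the fundamental exact sequence $0\to\Q_p(1)\to\B_{\crr}^{+,\phi=p}\to C\to 0$ shows). Only the coefficient modules $H^i_{\hk}(U_{n,0})$ are finite-dimensional over $F$; taking $\phi$-eigenspaces after tensoring with $\B^+_{\crr}$ destroys finite-dimensionality. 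Establishing the Mittag-Leffler condition for this system is precisely the delicate point of the lemma, and the paper resolves it by lifting the system to finite-dimensional Banach--Colmez spaces: the images of the transition maps in a fixed term form a chain of BC subspaces whose Dimensions $(d,h)\in\N\times\N$ decrease in lexicographic order and whose heights remain nonnegative, so the chain stabilizes, giving the Mittag-Leffler criterion of \cite[Cor.~2.2.12]{Pr} and hence acyclicity. Without an argument of this kind (or some substitute, e.g.\ exploiting that the functor $M\mapsto(M\otimes_F\B^+_{\crr})^{\phi=p^r}$ preserves surjections of finite $\phi$-modules, which itself requires the second sequence of (\ref{seqq}) applied to the kernel), your passage to the limit in the $\phi=p^r$ step is unjustified.
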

\begin{proof}For the first claim, 
we argue  as in the proof of Lemma \ref{paris15} using analogs of the exact sequences (\ref{seqq}) (for $M=H^i_{\hk}(X_0)$):
 \begin{align}
 \label{seqq1}
 0\to & H^i_{\hk}(X_0)\wh{\otimes}_{F}\B^+_{\crr}\stackrel{\beta}{\to} H^i_{\hk}(X_0)\wh{\otimes}_{F}\wh{\B}^+_{\st}\stackrel{N}{\to} H^i_{\hk}(X_0)\wh{\otimes}_{F}\wh{\B}^+_{\st}\to 0,\\
0\to & (H^i_{\hk}(X_0)\wh{\otimes}_{F}\B^+_{\crr})^{\phi=p^r}\to H^i_{\hk}(X_0)\wh{\otimes}_{F}\B^+_{\crr}\lomapr{p^r-\phi} H^i_{\hk}(X_0)\wh{\otimes}_{F}\B^+_{\crr}\to 0.\notag
\end{align}
These sequences are limits of sequences  (\ref{seqq}) applied to $H^i_{\hk}(U_{n,0})$, $n\in\N$. We wrote $\beta:=\varprojlim_n\beta_n$ and used the isomorphism (\ref{india1}) (and its analog for $\B^+_{\crr}$) as well as the vanishing of 
$\wt{H}^j\holim_n(H^i_{\hk}(U_{n,0})\wh{\otimes}_F\B^+_{\crr})$ and $\wt{H}^j\holim_n((H^i_{\hk}(U_{n,0})\wh{\otimes}_F\B^+_{\crr})^{\phi=p^r})$ for $j\geq 1$. The vanishing of the first cohomology follows from the fact that the projective system $\{H^i_{\hk}(U_{n,0})\}$ is Mittag-Leffler. The vanishing of the second cohomology is a little subtler. Note that the system of  Banach spaces $\{(H^i(U_{n,0})\wh{\otimes}_F\B^+_{\crr})^{\phi=p^r}\}$ can be lifted to a system of finite dimensional BC spaces
with Dimensions $(d_i,h_i)$, $d_i,h_i\geq 0$ \cite[Prop. 10.6]{CB}. 
The images of the terms in the system in a fixed BC space form a chain with decreasing dimensions $D$ 
(in lexicographical order). Since the height $h$ of any BC subspace of these spaces 
is also $\geq 0$ \cite[Lemma 2.6]{fBC}, they stabilize. Hence the original system satisfies the Mittag-Leffler criterium from \cite[Cor. 2.2.12]{Pr} and, hence, it is acyclic.

The last claim of the lemma was proved while proving the first claim.

\end{proof}
\end{example}
 \begin{example}  
 \label{derham}Assume that $X$ is affine or Stein.  In that case $\Omega^i(X_K)$ is an $LB$-space or a Fr\'echet space, respectively. The de Rham cohomology  $\wt{H}^i_{\dr}(X_K)$ is classical; it is a finite dimensional $K$-vector space with its natural Hausdorff topology or a Fr\'echet space, respectively.
 
\smallskip
  $\bullet$ {\em Assume first that $X$ is Stein}. We claim that (in $\sd(C_K)$)
    \begin{align}
    \label{derham11}
     F^r(\R\Gamma_{\dr}(X_K)  \wh{\otimes}^R_K\B^+_{\dr}) &  \simeq F^r(\Omega\kr({X_K})\wh{\otimes}_{K}\B^+_{\dr})
\\\notag & =(\so({X_K})\wh{\otimes}_K F^r\B^+_{\dr}\to\Omega^1({X_K})\wh{\otimes}_K F^{r-1}\B^+_{\dr}\to \cdots)\\\notag
      {\rm DR}(X_{C},r)    = (\R\Gamma_{\dr}(X_K) & \wh{\otimes}^R_K  \B^+_{\dr})/F^r  \simeq (\Omega\kr({X_K})\wh{\otimes}_{K}\B^+_{\dr})/F^r\\  
   = (\so({X_K}) & \wh{\otimes}_K (\B^+_{\dr}/F^r)\to
      \Omega^1({X_K})\wh{\otimes}_K(\B^+_{\dr}/F^{r-1})\to\cdots 
        \to  \Omega^{r-1}({X_K})\wh{\otimes}_K(\B^+_{\dr}/F^{1})).\notag
  \end{align}

  In low degrees we have
\begin{align*}
{\rm DR}(X_{C},0) &=0,\quad 
      {\rm DR}(X_{C},1)\simeq \so({X_K})\wh{\otimes}_K C,\\
      {\rm DR}(X_{C},2) & \simeq (\so({X_K})\wh{\otimes}_K(\B^+_{\dr}/F^2)\to
      \Omega^1({X_K})\wh{\otimes}_K C).
      \end{align*}

  To prove the first strict quasi-isomorphism in (\ref{derham11}), it suffices to show that the natural map
  $$
 \Omega\kr(X_K)\wh{\otimes}_K\B^+_{\dr} \to \Omega\kr(X_K)\wh{\otimes}^R_K\B^+_{\dr} 
  $$
  is a strict quasi-isomorphism. Or that so is the map
  \begin{equation}
  \label{bios3}
   \Omega^i(X_K)\wh{\otimes}_K\B^+_{\dr} \to \Omega^i(X_K)\wh{\otimes}^R_K\B^+_{\dr}, \quad i\geq 0.
\end{equation}
  But this follows from  Lemma \ref{bios2} since both $ \Omega^i(X_K)$  and $\B^+_{\dr}$
  are  Fr\'echet spaces.

  To prove the second strict quasi-isomorphism above, i.e., the natural strict quasi-isomorphism
  $$
F^r(\Omega\kr(X_K)\wh{\otimes}_K\B^+_{\dr} )\stackrel{\sim}{\to} \hocolim_{i+j\geq r}(F^i\Omega\kr(X_K)\wh{\otimes}^R_KF^j\B^+_{\dr}),
  $$
  since the inductive limit is, in fact, finite, it suffices to show that the natural map
  $$
 F^i\Omega\kr(X_K)\wh{\otimes}_KF^j\B^+_{\dr}\to F^i\Omega\kr(X_K)\wh{\otimes}^R_KF^j\B^+_{\dr}
$$
  is a strict quasi-isomorphism and this follows from the strict quasi-isomorphism (\ref{bios3}).
  
  Recall that the de Rham complex is built from Fr\'echet spaces and it has strict differentials.    The complex   ${\rm DR}(X_{C},r)$ is a complex of Fr\'echet spaces as well. Its differentials are also strict:
  write the $i$'th differential as a composition
  \begin{equation}
  \label{fixed}
  \Omega^i(X_K)\wh{\otimes}_K\B^+_{\dr}/F^{r-i}\verylomapr{\id\otimes \can}  \Omega^i(X_K)\wh{\otimes}_K\B^+_{\dr}/F^{r-i-1} \verylomapr{d_i\otimes \id}\Omega^{i+1}(X_K)\wh{\otimes}_K\B^+_{\dr}/F^{r-i-1}.
  \end{equation}
  Since $\Omega^i(X_K)$ is a Fr\'echet space and $\B^+_{\dr}/F^{i}$ is a Banach space the first map is  surjective and strict (we use here point (3) from Section \ref{tensor})). The second map is induced from the differential $d_i: \Omega^i(X_K) \to \Omega^{i+1}(X_K)$, which is strict, hence it is strict since everything in sight is Fr\'echet. 
It follows that 
 the cohomology  $ \wt{H}^i{\rm DR}(X_{C},r)$ is classical and Fr\'echet as well.  
  
\smallskip
 $\bullet$ {\em  Assume now that $X$ is affine}. 
Then the computation is a bit more complicated because the spaces $\Omega^i(X_K)$ and $\B^+_{\dr}$ (an $LB$-space and a Fr\'echet space, respectively) do not work together well with tensor products. We claim that (in $\sd(C_K)$)
    \begin{align*}
     F^r(\R\Gamma_{\dr}(X_K)\wh{\otimes}^R_K(\B^+_{\dr}/F^r)) & \simeq F^r(\Omega\kr({X_K})\wh{\otimes}_{K}(\B^+_{\dr}/F^r)) \\
    & =(\so({X_K})\wh{\otimes}_K F^r(\B^+_{\dr}/F^r)\to\Omega^1({X_K})\wh{\otimes}_K F^{r-1}(\B^+_{\dr}/F^r)\to \cdots)\\
     {\rm DR}(X_{C},r)  = (\R\Gamma_{\dr}(X_K) & \wh{\otimes}^R_K\B^+_{\dr})/F^r  
     \simeq (\Omega\kr({X_K})\wh{\otimes}_{K}(\B^+_{\dr}/F^r))/F^r\\  
= (\so({X_K}) & \wh{\otimes}_K(\B^+_{\dr}/F^r)\to
      \Omega^1({X_K})\wh{\otimes}_K(\B^+_{\dr}/F^{r-1})\to\cdots 
       \to  \Omega^{r-1}({X_K})\wh{\otimes}_K(\B^+_{\dr}/F^{1}))
  \end{align*}
  The first and the second strict quasi-isomorphisms we can prove just as in the Stein case. We can again invoke Lemma \ref{bios2} here because 
 $F^i\B^+_{\dr}/F^j\B^+_{\dr}$ is a Banach space and $\Omega^i(X_K)$ can be represented by an inductive limit of an acyclic inductive system of Banach spaces. It remains to prove the third strict quasi-isomorphism, i.e., that the natural map
  $$
  (\Omega\kr(X_K)\wh{\otimes}^R_K\B^+_{\dr})/F^r  {\to} (\Omega\kr(X_K)\wh{\otimes}^R_K(\B^+_{\dr}/F^r))/F^r 
  $$
  is a strict quasi-isomorphism.  But this easily follows from the distinguished triangle
  $$
   \Omega\kr(X_K)\wh{\otimes}^R_KF^r\B^+_{\dr} \to F^r(\Omega\kr(X_K)\wh{\otimes}^R_K\B^+_{\dr}) {\to} F^r(\Omega\kr(X_K)\wh{\otimes}^R_K(\B^+_{\dr}/F^r)).
  $$

  Concerning cohomology, we claim that  the cohomology  $ \wt{H}^i{\rm DR}(X_{C},r)$ is classical and that  it is an ${LB}$-space; for $i\geq r$, $\wt{H}^i{\rm DR}(X_{C},r)=0$. Recall that the de Rham complex is built from $LB$-spaces and that it has strict differentials.    The complex   ${\rm DR}(X_{C},r)$ is a complex of $LB$-spaces as well (use Section \ref{tensor}, point (5)). Its differentials are also strict. Indeed, 
  write the $i$'th differential as a composition (\ref{fixed}).  
  The first map in this composition  is a strict surjection: the surjection $\B^+_{\dr}/F^{r-i-1}\to \B^+_{\dr}/F^{r-i}$ has a continuous $K$-linear section since both spaces are $K$-Banach. The second map factors as
  $$
   \Omega^i(X_K) \wh{\otimes}_K(\B^+_{\dr}/F^{r-i-1})\verylomapr{d_i\otimes\id} \im d_i\wh{\otimes}_K(\B^+_{\dr}/F^{r-i-1})\hookrightarrow  \Omega^{i+1}(X_K) \wh{\otimes}_K(\B^+_{\dr}/F^{r-i-1}).
  $$
 Here, the first map is strict. Since a composition of a strict map and a strict injection is strict the $i$'th differential in ${\rm DR}(X_{C},r)$ is strict.
It follows that the 
  cohomology  $ \wt{H}^i{\rm DR}(X_{C},r)$ is classical, as wanted.  The claimed vanishing is now clear.

  Moreover, we easily compute that we have  a strict exact sequence
  $$
  0\to \Omega^i(X_C)/\im d_{i-1}\to {H}^i{\rm DR}(X_{C},r)\to H^i_{\dr}(X_K)\wh{\otimes}_K(\B^+_{\dr}/F^{r-i-1})\to 0
  $$
  Since $\Omega^i(X_C)/\im d_{i-1}$ is Hausdorff so is ${H}^i{\rm DR}(X_{C},r)$.  Since, by \cite[Theorem 1.1.17]{Em}, a Hausdorff quotient of a Hausdorff $LB$-space is an $LB$-space, to show that ${H}^i{\rm DR}(X_{C},r)$ is an $LB$-space it suffices  to show that so is $\ker \tilde{d}_i$, 
 for  $$\tilde{d}_i:  \Omega^i({X_K})\wh{\otimes}_K(\B^+_{\dr}/F^{r-i}) \to \Omega^{i+1}({X_K})\wh{\otimes}_K(\B^+_{\dr}/F^{r-i-1}).
 $$
  But we easily compute that there is a strict exact sequence
  $$
 0\to  \Omega^i({X_K})\wh{\otimes}_K(\gr^{r-i-1}_F\B^+_{\dr})\to  \ker \tilde{d}_i\to \ker d_i\wh{\otimes}_K(\B^+_{\dr}/F^{r-i-1})\to 0
  $$
  that is, in fact, split because the surjection $\B^+_{\dr}/F^{r-i-1}\to \B^+_{\dr}/F^{r-i}$ has a continuous $K$-linear section. It follows that, since $\ker d_i$ is a space of compact type (this follows from \cite[Prop. 1.1.41]{Em} and the fact  that $\Omega^i({X_K})$ is a space of compact type) the space  $\ker d_i\wh{\otimes}_K(\B^+_{\dr}/F^{r-i-1})$ is $LB$ (use Section \ref{tensor}, point (5)) and, finally, so is 
    $\ker \tilde{d}_i$, as wanted. 
\end{example}
\label{perl1}
 Let $X$ be affine or Stein. We can conclude from the above that our syntomic cohomology fits into the long exact sequence
$$
\to {H}^{i-1}{\rm DR}(X_{C},r)\stackrel{\partial}{\to} \wt{H}^i_{\synt}(X_{C},\Q_p(r))\to ({H}^i_{\hk}(X_0)\wh{\otimes}_K\wh{\B}^+_{\st})^{N=0,\phi=p^r}
\verylomapr{\iota_{\hk}\otimes\iota} {H}^i {\rm DR}(X_{C},r)\to 
$$
where all the terms but the syntomic one were shown to be classical and ${LB}$ or Fr\'echet, respectively. We  will show later that the syntomic cohomology  has these properties as well (see Proposition \ref{fd11}).
\begin{example}Assume that $X$ is affine or Stein and geometrically irreducible.
For $r=0$, from the above computations,  we obtain the isomorphism
$$\wt{H}^0_{\synt}(X_{C},\Q_p)\stackrel{\sim}{\to} (H^0_{\hk}(X_0)\wh{\otimes}_F\B^+_{\st})^{\phi=p,N=0}\simeq \B_{\crr}^{+,\phi=1}= \Q_p.$$
Hence $\wt{H}^0_{\synt}(X_{C},\Q_p)$ is classical. 

 For $r=1$, we obtain 
the following   exact sequence 
$$
   H^0 {\rm HK}(X_{C},1)\lomapr{\iota_{\hk}\otimes\iota}  \so(X_K) \wh{\otimes} _K C
    \to \wt{H}^1_{\synt}(X_{C},\Q_p(1))\to (H^1_{\hk}(X_0)\wh{\otimes}_F\wh{\B}^+_{\st})^{\phi=p,N=0}
   \to 0
$$
Since $H^0_{\hk}(X_0)=F$, we have $H^0 {\rm HK}(X_{C},1)=\B^{+,\phi=p}_{\crr}$ and the map    $H^0 {\rm HK}(X_{C},1)\lomapr{\iota_{\hk}\otimes\iota}  \so(X_K) \wh{\otimes} _K C$ is induced by the map $\iota: \B^+_{\crr}\to\B^+_{\dr}/F^1$. Since we have the fundamental sequence
$$
0\to \Q_p(1) \to \B^{+,\phi=p}_{\crr}\to \B^+_{\dr}/F^1\to 0
$$
we get the  exact sequence
$$
   0\to C\to \so(X_C)  \lomapr{\partial} \wt{H}^1_{\synt}(X_{C},\Q_p(1))\to (H^1_{\hk}(X_0)\wh{\otimes}_F\wh{\B}^+_{\st})^{\phi=p,N=0}
   \to 0
$$
Hence $\wt{H}^1_{\synt}(X_{C},\Q_p(1))$ is classical. 
\end{example}
\subsubsection{Fundamental diagram.} 
We will construct the fundamental diagram that syntomic cohomology fits into. We start with an example.
\begin{example}{\em Fundamental diagram; the case of $r=1$.} 
\label{fd1}
Assume that $X$ is affine or Stein and geometrically irreducible.  We claim that we have the following commutative diagram with strictly exact rows.
$$
\xymatrix{
0\ar[r] & \so(X_C)/C\ar[r]^-{\partial}\ar@{=}[d] & H^1_{\synt}(X_{C},\Q_p(1))\ar[d]^{\beta} \ar[r] & (H^1_{\hk}(X_0)\wh{\otimes}_F\B^+_{\st})^{\phi=p,N=0}\ar[r]\ar[d]^{\iota_{\hk}\otimes\theta} & 0\\
0\ar[r]& \so(X_C)/C \ar[r]^-d & \Omega^1(X_C)^{d=0} \ar[r] & H^1_{\dr}(X_K)\wh{\otimes}_K C\ar[r] & 0
}
$$
The top row is the strictly exact sequence from the Example \ref{perl1}. The bottom row is induced by  the natural exact sequence defining $ H^1_{\dr}(X_K)$. By Section \ref{tensor}, it is isomorphic to
the sequence
$$
0\to \so(X_C)/C \stackrel{d}{\to} \Omega^1(X_C)^{d=0} \to H^1_{\dr}(X_C)\to  0.
$$
Hence it is strictly exact by Lemma \ref{czesto}.
The map $\iota_{\hk}\otimes\theta$ is induced by the composition
$$ \R\Gamma_{\hk}(X_0)\wh{\otimes}_{F}\wh{\B}^+_{\st}\verylomapr{\iota_{\hk}\otimes\iota}   \R\Gamma_{\dr} (X_K)\wh{\otimes}_K\B^+_{\dr}\stackrel{\theta}{\to} \R\Gamma_{\dr} (X_K)\wh{\otimes}_K C .
$$
The map $\beta$ is induced by the composition (the fact that the first map  lands in $F^1$ is immediate from the definition of $\R\Gamma_{\synt}(X_{C},\Q_p(1))$)
\begin{align*}
\R\Gamma_{\synt}(X_{C},\Q_p(1)) & \to   F^1 (\R\Gamma_{\dr}(X_K)\wh{\otimes}^R_K\B^+_{\dr})
 \stackrel{\theta}{\to}
(0\to \Omega^1({X_K})\wh{\otimes}_KC\to \Omega^2({X_K})\wh{\otimes}_KC\to\cdots).
\end{align*}
Clearly they make the right  square in the above diagram commute. To see that the left square commutes as well it is best to consider the following diagram of maps of distinguished triangles
$$
\xymatrix{
\R\Gamma_{\synt}(X_{C},\Q_p(1))\ar[r]\ar@{.>}[d]^{\tilde{\beta}} & [\R\Gamma_{\hk}(X_0)\wh{\otimes}_F\wh{\B}^+_{\st}]^{\phi=p,N=0}
\ar[d]^{\iota_{\hk}\otimes\iota}\ar[r]^{\iota_{\hk}\otimes\iota} & (\R\Gamma_{\dr}(X_K)\wh{\otimes}_K\B^+_{\dr})/F^1\ar@{=}[d]\\
F^1(\R\Gamma_{\dr}(X_K)\wh{\otimes}_K\B^+_{\dr})\ar[r]\ar[d]^{\theta}&  \R\Gamma_{\dr}(X_K)\wh{\otimes}_K\B^+_{\dr}\ar[d]^{\theta}\ar[r] & (\R\Gamma_{\dr}(X_K)\wh{\otimes}_K\B^+_{\dr})/F^1\ar[d]^{\theta}\\
\Omega^{\geq 1}({X_K})\wh{\otimes}_K C[-1] \ar[r] & \Omega\kr(X_K)\wh{\otimes}_K C \ar[r] & \so({X_K})\wh{\otimes}_K C
}
$$
The map $\tilde{\beta}$ is the map on mapping fibers induced by the commutative right square.  We  have $\beta=\theta\tilde{\beta}$. 
It remains to check that the map $\so(X_K)\wh{\otimes}_K C\to \Omega^1(X_K)\wh{\otimes}_KC$ induced from the bottom row of the above diagram is equal to $d$ but this is easy.
\end{example}

 And here is the general case.
\begin{proposition} 
\label{fd11}Let $X$ be an affine or  a Stein  weak formal scheme.
Let  $r\geq 0$. There is a natural  map of strictly exact sequences
  $$
\xymatrix{
 0\ar[r] & \Omega^{r-1}(X_C)/\ker d\ar[r]^-{\partial}\ar@{=}[d] & H^r_{\synt}(X_{C},\Q_p(r))\ar[d]^{\beta} \ar[r] & (H^r_{\hk}(X_{0})\wh{\otimes}_F\B^+_{\st})^{N=0,\phi=p^r}
\ar[r]\ar[d]^{\iota_{\hk}\otimes\theta} & 0\\
 0\ar[r] & \Omega^{r-1}(X_C)/\ker d \ar[r]^-d & \Omega^r(X_C)^{d=0} \ar[r] & H^r_{\dr}(X_K)\wh{\otimes}_K C\ar[r] & 0
}
$$
Moreover, $\ker(\iota_{\hk}\otimes\theta)\simeq (H^r_{\hk}(X_{0})\wh{\otimes}_F\B^+_{\st})^{N=0,\phi=p^{r-1}}
$, $H^r_{\synt}(X_{C},\Q_p(r))$ is $LB$ or Fr\'echet, respectively, and the maps $\beta$, $\iota_{\hk}\otimes\theta$ are strict and have closed images.
  \end{proposition}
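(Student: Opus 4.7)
The plan is to generalize directly the diagram of distinguished triangles constructed in Example~\ref{fd1}. Factor $\iota_{\hk}\otimes\iota\colon {\rm HK}(X_C,r)\to {\rm DR}(X_C,r)$ through $\R\Gamma_{\dr}(X_K)\wh\otimes_K^R\B^+_{\dr}$; passage to fibers induces a natural map $\wt\beta\colon \R\Gamma_{\synt}(X_C,\Q_p(r))\to F^r(\R\Gamma_{\dr}(X_K)\wh\otimes_K^R\B^+_{\dr})$. Composing $\wt\beta$ with $\theta\colon\B^+_{\dr}\to C$ on coefficients produces $\beta=\theta\wt\beta\colon\R\Gamma_{\synt}(X_C,\Q_p(r))\to\Omega^{\geq r}(X_C)$, since under $\theta$ the summand $\Omega^i\wh\otimes F^{r-i}\B^+_{\dr}$ of $F^r(\R\Gamma_{\dr}\wh\otimes\B^+_{\dr})$ vanishes whenever $r-i\geq 1$ and survives as $\Omega^i(X_K)\wh\otimes_K C$ when $i\geq r$.

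Take $H^r$ of the defining fiber sequence $\R\Gamma_{\synt}\to {\rm HK}\to {\rm DR}$. Since ${\rm DR}(X_C,r)$ is concentrated in degrees $\leq r-1$ (Example~\ref{derham}), $H^r({\rm DR}(X_C,r))=0$, so the long exact sequence collapses to
\[
0\to \coker\!\bigl(H^{r-1}({\rm HK})\to H^{r-1}({\rm DR})\bigr)\to H^r_{\synt}(X_C,\Q_p(r))\to (H^r_{\hk}(X_0)\wh\otimes_F\B^+_{\st})^{N=0,\phi=p^r}\to 0,
\]
where the last term is given by Lemma~\ref{Lyon-again}. The same procedure applied to the bottom fiber sequence $\Omega^{\geq r}(X_C)\to\R\Gamma_{\dr}(X_K)\wh\otimes_K C\to\Omega^{<r}(X_C)$ yields the bottom row of the proposition: strictness of $d$ (Lemma~\ref{czesto}) gives $H^r(\Omega^{\geq r}(X_C))=\Omega^r(X_C)^{d=0}$, and the connecting map $d\colon H^{r-1}(\Omega^{<r})=\Omega^{r-1}(X_C)/\im d_{r-2}\to\Omega^r(X_C)^{d=0}$ has kernel $H^{r-1}_{\dr}(X_C)=\ker d/\im d_{r-2}$ and image $\Omega^{r-1}(X_C)/\ker d$.

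The core step is to identify the cokernel in the top row with $\Omega^{r-1}(X_C)/\ker d$. Example~\ref{derham} furnishes a short exact sequence $0\to\Omega^{r-1}(X_C)/\im d_{r-2}\to H^{r-1}({\rm DR})\to H^{r-1}_{\dr}(X_K)\wh\otimes_K C\to 0$. Via the trivialization $(H^{r-1}_{\hk}\wh\otimes_F\B^+_{\st})^{N=0}\cong H^{r-1}_{\hk}\wh\otimes_F\B^+_{\crr}$ of Lemma~\ref{Lyon-again} together with the fundamental exact sequence $0\to\Q_p(r)\to(\B^+_{\crr})^{\phi=p^r}\to\B^+_{\dr}/F^r\to 0$ projected to $\B^+_{\dr}/F^1=C$, the HK image surjects onto the quotient $H^{r-1}_{\dr}(X_K)\wh\otimes_K C$. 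On the other hand, its image inside $\Omega^{r-1}(X_C)/\im d_{r-2}$ is represented by cocycles of the form $\iota_{\hk}(h)\otimes\theta(b)$ with $\iota_{\hk}(h)$ a closed form, so it lands inside $\ker d/\im d_{r-2}=H^{r-1}_{\dr}(X_C)$, and in fact fills it using surjectivity of $\iota_{\hk}\otimes_F K$ and of $(\B^+_{\crr})^{\phi=p^r}\to C$. Hence the cokernel is $(\Omega^{r-1}(X_C)/\im d_{r-2})/H^{r-1}_{\dr}(X_C)=\Omega^{r-1}(X_C)/\ker d$. An analogous calculation identifies $\ker(\iota_{\hk}\otimes\theta)$ after trivialization with $(H^r_{\hk}\wh\otimes_F t\B^+_{\crr})^{\phi=p^r}$; dividing by $t$ (using $\phi(t)=pt$) yields $(H^r_{\hk}\wh\otimes_F\B^+_{\st})^{N=0,\phi=p^{r-1}}$.

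For topology, all spaces other than $H^r_{\synt}$ appearing in either row are Fr\'echet in the Stein case and $LB$ in the affine case by Example~\ref{derham} and Lemma~\ref{Lyon-again}, whence $H^r_{\synt}(X_C,\Q_p(r))$ inherits that structure as an extension, and the strictness of $\beta$ and $\iota_{\hk}\otimes\theta$ plus the closedness of their images follow from the Open Mapping theorem (Lemma~\ref{kolobrzeg-winter}). The main obstacle is the third paragraph: extracting $\Omega^{r-1}(X_C)/\ker d$ rather than the naive $\Omega^{r-1}(X_C)/\im d_{r-2}$ requires a careful bookkeeping of the Hodge filtration on $H^{r-1}({\rm DR})$ against the fundamental exact sequence, pinning down both the surjection onto $H^{r-1}_{\dr}\wh\otimes C$ and the image inside $\Omega^{r-1}/\im d_{r-2}$.
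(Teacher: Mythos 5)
Your overall plan is sound, but the central step has a genuine gap and an internal inconsistency. The short exact sequence you cite from Example~\ref{derham} does not exist for the index you use: for $i=r-1$, the term $H^{i}_{\dr}(X_K)\wh\otimes_K(\B^+_{\dr}/F^{r-i-1})$ involves $\B^+_{\dr}/F^0=0$, so $H^{r-1}({\rm DR}(X_C,r))$ is simply $\Omega^{r-1}(X_C)/\im d_{r-2}$, not an extension of $H^{r-1}_{\dr}(X_K)\wh\otimes_K C$ by it. Consequently your claim that the HK image simultaneously ``surjects onto $H^{r-1}_{\dr}(X_K)\wh\otimes_K C$'' and ``lands inside $\ker d/\im d_{r-2}$'' is contradictory in your own (incorrect) extension picture: anything landing in the sub dies in the quotient. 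The correct picture is to use the exact sequence $0\to H^{r-1}_{\dr}(X_C)\to\Omega^{r-1}(X_C)/\im d_{r-2}\to\Omega^{r-1}(X_C)/\ker d\to 0$ (the bottom row of the paper's diagram) and to show that the image of $H^{r-1}({\rm HK})$ in $\Omega^{r-1}/\im d_{r-2}$ is exactly $\ker d/\im d_{r-2}=H^{r-1}_{\dr}(X_C)$; the cokernel is then $\Omega^{r-1}/\ker d$.

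Second, even after that repair, the surjectivity is not justified. Appealing to surjectivity of ``$\iota_{\hk}\otimes_F K$ and of $(\B^+_{\crr})^{\phi=p^r}\to C$'' does not give surjectivity of $(H^{r-1}_{\hk}\wh\otimes_F\B^+_{\crr})^{\phi=p^r}\to H^{r-1}_{\dr}\wh\otimes_K C$, because taking $\phi$-eigenspaces does not commute naively with the tensor product: for a slope-$r$ piece $M$ of $H^{r-1}_{\hk}$ one finds $(M\otimes\B^+_{\crr})^{\phi=p^r}\simeq M\otimes\B^{+,\phi=1}_{\crr}=M\otimes\Q_p$, which certainly does not surject onto $M\otimes_F C$. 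The paper's Lemma~\ref{lyon3} (proved by a Dimension count of finite-dimensional Banach--Colmez spaces) shows that $1\otimes\theta$ is surjective precisely when the Frobenius slopes of $M$ are $\leq r-1$, and the paper then verifies that the slopes on $H^{i}_{\hk}(X_0)$ are $\leq i$ via the weight spectral sequence and the Chiarellotto--Le Stum bound for rigid cohomology of smooth schemes. This input is essential and is missing from your proposal; the same lemma also gives the $\ker(\iota_{\hk}\otimes\theta)$ identification rigorously. Finally, your topology paragraph is too quick: Lemma~\ref{kolobrzeg-winter} applies only to complexes of Banach or Fr\'echet spaces, and an extension of $LB$-spaces need not be $LB$. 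In the affine case the paper obtains the $LB$ structure on $H^r_{\synt}$ by exhibiting an explicit continuous section (using finite-dimensionality of $H^r_{\dr}(X_K)$ and the equalizer description of syntomic cohomology), and proves strictness of $\iota_{\hk}\otimes\theta$ by lifting it to a map of finite-dimensional Banach--Colmez spaces rather than via the Open Mapping Theorem.
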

  \begin{proof} To deal easier with topological issues, we start with changing the period ring in the proposition   from $\B^+_{\st}$ to $\wh{\B}^+_{\st}$. That is, we will show that 
the natural map $
\iota_{\st}: {\B}^+_{\st}\to \wh{\B}^+_{\st}$ induces a commutative diagram (in $C_{\Q_p}$)
\begin{equation}
\label{sqq1}
\xymatrix{(H^r_{\hk}(X_0) \wh{\otimes}_{F}{\B}^+_{\st})^{N=0,\phi=p^i} \ar[r]^-{\iota_{\hk}\otimes\iota}\ar[d]^{1\otimes\iota_{\st}}_{\wr} &   
H^r_{\dr}(X_K) \wh{ \otimes}_{K}C \ar@{=}[d]\\
   (H^r_{\hk}(X_0)\wh{\otimes}_{F}\wh{\B}^+_{\st})^{N=0,\phi=p^i} \ar[r]^-{\iota_{\hk}\otimes\iota}&   H^r_{\dr}(X_K) \wh{ \otimes}_{K}C.
   }
\end{equation}
The fact that this diagram is commutative
 is clear  since the composition
$\B^+_{\st}\stackrel{\iota_{\st}}{\to} \wh{\B}^+_{\st}\stackrel{\iota}{\to} \B^+_{\dr} $ is just the original map $\iota:  {\B}^+_{\st}{\to} \B^+_{\dr}$. 
To see that the map $1\otimes\iota_{\st}$ is a strict isomorphism it suffices to look at the diagram
 $$
 \xymatrix{ & (H^r_{\hk}(X_0)\wh{\otimes}_{F}{\B}^+_{\st})^{N=0}\ar[d]^{\iota_{\st}} \\
  H^r_{\hk}(X_0)\wh{\otimes}_{F}\B^+_{\crr}\ar[r]^-{\wh{\beta}}_-{\sim} \ar[ru]^-{\beta} & (H^r_{\hk}(X_0)\wh{\otimes}_{F}\wh{\B}^+_{\st})^{N=0},
  }
  $$
  where $\wh{\beta}$ is the trivialization from (\ref{trivialization}). The map $\wh{\beta}$  is well-defined because $N$ is nilpotent on $H^r_{\hk}(X_0)$ and factorizes through the map $\iota_{\st}$ yielding the map $\beta$ and making the above diagram commute. The map $\wh{\beta}$ is a strict isomorphism. It follows that both $\beta$ and $\iota_{\st}$ are algebraic isomorphisms and this easily implies that $\iota_{\st}$ is a strict isomorphism, as wanted.

  We will now prove our  proposition with $\B^+_{\st}$ replaced by $\wh{\B}^+_{\st}$. The following map of exact sequences 
(where $\Omega^i$, $H^i_{\rm dR}$ and $H^i_{\rm HK}$ stand for
$\Omega^i(X_C)$, $H^i_{\rm dR}(X_K)$ and $H^i_{\rm HK}(X_0)$, respectively)
is constructed in an analogous way to the case of $r=1$ treated in the above example 
\begin{equation}
\label{zima22}
\xymatrix@C=.5cm{
 (H^{r-1}_{\hk}\wh{\otimes}_F\wh{\B}^+_{\st})^{N=0,\phi=p^r}\ar[r]\ar[d]^{\iota_{\hk}\otimes\theta}  & \Omega^{r-1}/d\Omega^{r-2}\ar[r]^-{\partial}\ar@{=}[d] & \wt{H}^r_{\synt}(X_{C},\Q_p(r))\ar[d]^{\beta} \ar[r] & (H^r_{\hk}\wh{\otimes}_F\wh{\B}^+_{\st})^{N=0,\phi=p^r}
\ar[d]^{\iota_{\hk}\otimes\theta} \to 0\\
0\to H^{r-1}_{\dr}\wh{\otimes}_KC \ar[r] &  \Omega^{r-1}/d\Omega^{r-2} \ar[r]^-d & \Omega^{r,d=0} \ar[r]^{\pi} & H^r_{\dr}\wh{\otimes}_K C\to 0
}
\end{equation}
To prove the first claim of the proposition it suffices to show that the map $\iota_{\hk}\otimes\theta$ in degree $r-1$ is surjective. For that we will need the following lemma.
  \begin{lemma}
  \label{lyon3}
Let $M$ be an effective\footnote{We call a $(\phi,N)$-module  $M$  {\em effective} if all the slopes of the Fobenius are $\geq 0$.} finite $(\phi,N)$-module over $F$. The sequence
$$
0\to (M\otimes_F\wh{\B}^+_{\st})^{\phi=p^j,N=0}\stackrel{t}{\to}(M\otimes_F\wh{\B}^+_{\st})^{\phi=p^{j+1},N=0}\lomapr{1\otimes\theta}M\otimes_FC
$$
is exact. Moreover, the right arrow is a surjection if the slopes of Frobenius are $\leq j$.
\end{lemma}
\begin{proof}Using the trivialization (\ref{trivialization}) and the fact that $\theta (u)=0$, we get the following commutative diagram
$$
\xymatrix{
0\ar[r] &  (M\otimes_F\wh{\B}^+_{\st})^{\phi=p^j,N=0}\ar[r]^{t} & (M\otimes_F\wh{\B}^+_{\st})^{\phi=p^{j+1},N=0}\ar[r]^-{1\otimes\theta} & M\otimes_FC\\
0\ar[r] &  (M\otimes_F\B^+_{\crr})^{\phi=p^j}\ar[u]^{\beta}_{\wr}\ar[r]^{t} & (M\otimes_F\B^+_{\crr})^{\phi=p^{j+1}}\ar[u]^{\beta}_{\wr}\ar[r]^-{1\otimes\theta} & M\otimes_FC\ar[u]^{\id}_{\wr}
}
$$
Hence it suffices to prove the analog of our lemma for the bottom sequence.

 First we will show that the  following sequence 
\begin{equation}
\label{lyon1}0\to (M\otimes_F\B^+_{\crr})^{\phi=p^j}\stackrel{t}{\to}(M\otimes_F\B^+_{\crr})^{\phi=p^{j+1}}\lomapr{1\otimes\theta}M\otimes_FC
\end{equation}
 is exact. Multiplication by $t$ is clearly injective. To show exactness in the middle it suffices to show that
$$
(M\otimes_FF^1\B^+_{
\crr})^{\phi=p^{j+1}}=(M\otimes_Ft\B^+_{
\crr})^{\phi=p^{j+1}},$$
where $F^1\B^+_{
\crr}:=\B^+_{\crr}\cap F^1\B^+_{\dr}.
$
Or that 
$(F^1\B^+_{
\crr})^{\phi=p^{j+1-\alpha}}=(t\B^+_{
\crr})^{\phi=p^{j+1-\alpha}}.
$
But this follows from the fact that 
$$(F^1\B^+_{
\crr})^{\phi=p^{j+1-\alpha}}\subset\{x\in\B^+_{\crr}|\theta(\phi^k(x))=0, \forall k\geq 0\}=t\B^+_{\crr}.
$$

 It remains to show that if the  Frobenius slopes of $M$ are $\leq j$ then the last arrow in the sequence (\ref{lyon1}) is a surjection. To see this, we note that all the terms 
 in the sequence  are $C$-points of  finite dimensional BC spaces\footnote{Which are called finite dimensional Banach Spaces in \cite{CB} and Banach-Colmez spaces in most of the literature.} and the maps can be lifted to maps  of such spaces. It follows that the  cokernel of multiplication by $t$ is a finite dimensional BC space. We compute its  Dimension \cite[5.2.2]{CN}:
\begin{align*}
\ddim (M\otimes_F\B^+_{\crr})& ^{\phi=p^{j+1}}-\ddim (M\otimes_F\B^+_{\crr})^{\phi=p^{j}}
 =\sum_{r_i\leq j+1}(j+1-r_i,1)-\sum_{r_i\leq j}(j-r_i,1)\\
& =((j+1)\dim_F M-t_N(M),\dim_F M)-(j\dim_F M-t_N(M),\dim_F M)
 =
(\dim_F M,0).
\end{align*}
Here   $r_i$'s are the slopes of Frobenius repeated with multiplicity, $t_N(M)=v_p(\det \phi)$, and the second equality follows from the fact that the slopes of Frobenius are $\leq j$.
Since this Dimension  is the same as the Dimension of the BC space corresponding to 
 $M\otimes_FC$, we get the surjection we wanted. 
\end{proof}

   Let us come back to the proof of Proposition \ref{fd11}. \\
  $\bullet$ {\em Assume first that $X$ is quasi-compact}. By the above lemma,  to prove that the map $\iota_{\hk}\otimes\theta$ is surjective in degree $r-1$ and that its kernel in degree $r$ is isomorphic to $ (H^r_{\hk}(X_{0})\wh{\otimes}_F\wh{\B}^+_{\st})^{\phi=p^{r-1},N=0}
$
it suffices to show that the slopes of Frobenius 
on 
$H^i_{\rm HK}(X_0)$ are ~$\leq i$. For that we use the weight spectral sequence (\ref{weights}) to reduce to showing that, for a smooth  scheme $Y$ over $k$, the slopes of Frobenius
on the (classical) rigid cohomology $H^j_{\rig}(Y/F)$ are $\leq j$. But this is well-known \cite[Th\'eor\`eme 3.1.2]{CS}. 

   We have shown that syntomic cohomology fits into an exact sequence 
   $$
   \xymatrix{
 0\ar[r] & \Omega^{r-1}(X_C)/\ker d\ar[r]^-{\partial} & \wt{H}^r_{\synt}(X_{C},\Q_p(r))\ar[r]^-{\pi_1} & (H^r_{\hk}(X_{0})\wh{\otimes}_F\B^+_{\st})^{N=0,\phi=p^r}
\ar[r] & 0
}
$$
Hence, since it is an extension of two classical object, it is classical. Since $H^r_{\dr}(X_K)$ is a finite dimensional vector space over $K$ the surjective map $\pi$ in the diagram (\ref{zima22}) has a section.
   Since the syntomic cohomology is the equalizer of the maps $\iota_{\hk}\otimes\theta$ and $\pi$ this section lifts to a section of the surjection $\pi_1$ above. Hence ${H}^r_{\synt}(X_{C},\Q_p(r))$ is an $LB$-space. 
    Since the map $\iota_{\hk}\otimes\theta$ lifts to a  map of finite Dimensional BC spaces it is strict and  has a closed image. It follows that so does the pullback map $\beta$. 

 $\bullet$ {\em Assume now that $X$ is Stein with a Stein covering $\{U_i\},i\in\N.$}
  Since the map $\iota_{\hk}\otimes\theta$ is the projective limit of the maps
$$(\iota_{\hk}\otimes\theta)_i: (H^{*}_{\hk}(U_{i})\wh{\otimes}_F\wh{\B}^+_{\st})^{N=0,\phi=p^r}\to
 H^{*}_{\dr}(]U_i[_X)\wh{\otimes}_KC 
$$
the computation above yields the statement on the kernel in degree $r$. For the surjectivity in degree $r-1$, it remains to show 
the vanishing of
$ H^1\holim_i(H^r_{\hk}(U_{i})\wh{\otimes}_F\wh{\B}^+_{\st})^{N=0,\phi=p^{r-1}}.
$
But this was shown in the proof of Lemma \ref{Lyon-again}.

   Since the maps $(\iota_{\hk}\otimes\theta)_i$  are strict and  have closed images and $H^1\holim_i(H^r_{\hk}(U_{i})\wh{\otimes}_F\wh{\B}^+_{\st})^{N=0,\phi=p^{r-1}}.$  it follows that the projective limit map  $\iota_{\hk}\otimes\theta$ inherits these properties and then so does the pullback map $\beta$.
 Finally, since the syntomic cohomology is the equalizer of the maps $\iota_{\hk}\otimes\theta$ and $\pi$ of Fr\'echet spaces it is Fr\'echet.  
\end{proof}
\begin{remark}Assume that $X$ is affine. 
The image of the map $\iota_{\hk}\otimes\theta$ in degree $r$ in the fundamental diagram is the $C$-points of a finite dimensional BC space that is the cokernel of the map:
$$  (H^r_{\hk}(X_{0})\otimes_F{\mathbb B}^+_{\st})^{N=0,\phi=p^{r-1}}\stackrel{t}{\to} (H^r_{\hk}(X_{0})\otimes_F{\mathbb B}^+_{\st})^{N=0,\phi=p^{r}}.
$$
Its Dimension is equal to 
\begin{align*}
\sum_{r_i\leq r}(r-r_i,1)-\sum_{r_i\leq r-1}(r-1-r_i,1)=\sum_{r_i \leq r-1}(1,0)+\sum_{r-1 <  r_i \leq r}(r-r_i,1).
\end{align*}
\end{remark}
\subsection{Crystalline  syntomic cohomology} 
The classical crystalline syntomic cohomology of Fontaine-Messing and the related period map to \'etale cohomology generalize easily to formal schemes. 
We define them and then modify this syntomic cohomology in the spirit of Bloch-Kato to make it more computable.
\subsubsection{Definition a la Fontaine-Messing}\label{definitionFM}
Let $X$ be a semistable $p$-adic formal scheme over $\so_K$. This means that, locally for the Zariski topology,  $X=\Spf(R)$, where  $R$ is the $p$-adic completion of an algebra \'etale over 
$
\so_K\{T_1,\dots,T_n\}/(T_{1}\cdots T_{m}-\varpi).
$
That is, we do not allow self-intersections. 
 We equip $X$ with the log-structure induced by the special fiber. Set $\overline{X}:=X_{\so_C}$.
 
  For $r\geq 0$, we have the {\em geometric syntomic cohomology} of  Fontaine-Messing \cite{FM}
\begin{align*}
\R\Gamma_{\synt}(\overline{X},\Z/p^n(r)) & :=[F^r\R\Gamma_{\crr}(\overline{X}_{n})\lomapr{\phi-p^r}\R\Gamma_{\crr}(\overline{X}_n)],\quad F^r\R\Gamma_{\crr}(\overline{X}_n):=\R\Gamma_{\crr}(\overline{X}_n,\sj^{[r]})\\
\R\Gamma_{\synt}(\overline{X},\Z_p(r))  &  :=\holim_n\R\Gamma_{\synt}(\overline{X},\Z/p^n(r)).
\end{align*}
Crystalline cohomology used here is the absolute one, i.e., over $\so_{F,n}$ and $\sj^{[r]}$ is the $r$'th level of the crystalline Hodge filtration sheaf. We have
\begin{align}
  \label{map1}
  \R\Gamma_{\synt}(\overline{X},\Z_p(r))_{\Q_p}  & =[F^r\R\Gamma_{\crr}(\overline{X})_{\Q_p}\lomapr{\phi-p^r}
  \R\Gamma_{\crr}(\overline{X})_{\Q_p}]\\
   & =[\R\Gamma_{\crr}(\overline{X})_{\Q_p}^{\phi=p^r}\to
  \R\Gamma_{\crr}(\overline{X})_{\Q_p}/F^r]\notag
  \end{align}
    and similarly with $\Z_p$ and $\Z/p^n$ coefficients. 

   The above geometric syntomic cohomology is related, via period morphisms,  to the \'etale cohomology of the rigid space $X_C$ (see below) and hence allows to describe the latter using differential forms. To achieve the same for pro-\'etale cohomology, we need to modify the definition of syntomic cohomology a bit. 
 Consider the complexes of sheaves on $X$ associated to the presheaves  ($U$ is an affine Zariski open in $X$ and $\overline{U}:=U_{\so_C}$)
   \begin{align*}
   \sa_{\crr}  & := (U\mapsto (\holim_n\R\Gamma_{\crr}(\overline{U}_n))_{\Q_p}), \quad F^r\sa_{\crr}:=(U\mapsto (\holim_nF^rR\Gamma_{\crr}(\overline{U}_n))_{\Q_p}),\\
    \sss(r) & :=(U\mapsto \R\Gamma_{\synt}(\overline{U},\Z_p(r))_{\Q_p}).
    \end{align*}
    We have 
    $$
    \sss(r)=[ F^r\sa_{\crr}\lomapr{\phi-p^r} \sa_{\crr}]=[ \sa_{\crr}^{\phi=p^r}\to \sa_{\crr}/F^r].
    $$
 We define  
    \begin{align*}
    \R\Gamma_{\crr}(\overline{X},\Q_p)  :=\R\Gamma(X,\sa_{\crr}),\quad     F^r\R\Gamma_{\crr}(\overline{X},\Q_p):=  \R\Gamma(X,F^r\sa_{\crr}),\quad \R\Gamma_{\synt}(\overline{X},\Q_p(r) ) :=  \R\Gamma(X,\sss(r)).
 \end{align*}
 Hence
 \begin{equation}
 \label{map2}
     \R\Gamma_{\synt}(\overline{X},\Q_p(r))=[F^r\R\Gamma_{\crr}(\overline{X},\Q_p)\lomapr{\phi-p^r}   \R\Gamma_{\crr}(\overline{X},\Q_p)]=[\R\Gamma_{\crr}(\overline{X},\Q_p)^{\phi=p^r}\to    \R\Gamma_{\crr}(\overline{X},\Q_p)/F^r].
 \end{equation}
 There is a natural map
 \begin{equation}
 \label{passage}
 \R\Gamma_{\synt}(\overline{X},\Z_p(r))_{\Q_p}\to\R\Gamma_{\synt}(\overline{X},\Q_p(r)).
 \end{equation}
 It is a quasi-isomorphism in the case $X$ is of finite type but not in general (since in the case of $\Z_p(r)$ we do all computations on $U$'s as above integrally and invert $p$ at the very end and in the case of 
 $\Q_p(r)$ we invert $p$ already on  each $U$).

      By proceeding just as in the case of overconvergent syntomic cohomology (and using crystalline embedding systems instead of dagger ones) we can equip both complexes in (\ref{passage}) 
  with a natural topology for which they   become  complexes of Banach  spaces over $\Q_p$  in the case $X$ is quasi-compact
  \footnote{We note that $\so_K$ being syntomic over $\so_F$, all the integral complexes in sight are
   in fact  $p$-torsion free.}.   We used here the simple fact that an exact sequence of Fr\'echet spaces is strictly exact. 
   
   The defining mapping fibers (\ref{map1}) and (\ref{passage}) are taken in $\sd(C_{\Q_p})$. Moreover, the  change of topology map in (\ref{passage}) is continuous (and a strict quasi-isomorphism  if $X$ is of finite type).  
  \subsubsection{Period map}\label{PR3}
We are interested in syntomic cohomology a la Fontaine-Messing because of the following comparison 
  \cite{Ts, CN}.
    \begin{proposition}
    \label{Ts1}
    Let $X$ be a  semistable finite type formal scheme over $\so_K$. 
    The Fontaine-Messing period map\footnote{We take the version of the Fontaine-Messing period map that is compatible with Chern classes.} \cite{FM}
    $$\alpha_{\rm FM}: \R\Gamma_{\synt}(\overline{X},\Q_p(r))\to \R\Gamma_{\eet}(X_{C},\Q_p(r))
    $$
    is a quasi-isomorphism after truncation $\tau_{\leq r}$.
    \end{proposition}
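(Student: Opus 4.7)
The plan is to follow the classical Fontaine--Messing--Kato--Tsuji strategy: sheafify on $\overline X_{\eet}$, reduce to a local comparison between the syntomic complex and $p$-adic nearby cycles, and then assemble globally. Since $X$ is of finite type, the map \eqref{passage} is a strict quasi-isomorphism, so it is enough to work with the complex of \'etale sheaves $\sss_n(r)$ on $\overline X$ obtained by sheafifying $\overline U\mapsto \rg_{\synt}(\overline U,\Z/p^n(r))$, then to take $\R\varprojlim_n$ and invert $p$. Writing $i:\overline X_0\hookrightarrow \overline X$ and $j:X_C\hookrightarrow \overline X$ for the closed and open immersions, the Fontaine--Messing construction supplies a natural morphism
$$
\alpha_{\rm FM}^n:\sss_n(r)\longrightarrow i^{*} Rj_{*}(\Z/p^n(r)\pri)
$$
in $\sd(\overline X_{0,\eet})$, compatible with Chern classes and with varying $n$. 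Granted that $\tau_{\leq r}\alpha_{\rm FM}^n$ is a $p^{N(r)}$-quasi-isomorphism with $N(r)$ independent of $n$, the combination of $\R\varprojlim_n$, inversion of $p$, and $R\Gamma(\overline X_0,-)$, together with the standard identification $\rg_{\eet}(X_C,\Q_p(r))\simeq R\Gamma(\overline X_0, i^{*} Rj_{*}\Q_p(r))$, delivers the proposition.

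The core is the local statement on a small semistable affine $\overline U=\Spf R$. After \'etale localization one may assume $R$ has standard semistable coordinates and that $U_C$ admits a tower of Kummer coverings trivializing them with Galois group $\Gamma$. On the syntomic side, one uses Fontaine--Messing's description of $\sss_n(r)(\overline U)$ as a mapping fibre $[F^r\rg_{\crr}(\overline U_n)\xrightarrow{\phi-p^r}\rg_{\crr}(\overline U_n)]$ together with a crystalline Poincar\'e lemma on the PD-envelope of $R$, yielding an explicit Koszul-type complex with coefficients in $\A_{\crr,n}$. On the \'etale side, Faltings's almost purity theorem and the Kato--Tsuji computation express $i^{*}Rj_{*}(\Z/p^n(r)\pri)$ as continuous $\Gamma$-cohomology of a Faltings-type extension of $R$, again with coefficients essentially in $\A_{\crr,n}$. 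The comparison map is then induced by the Fontaine--Messing fundamental sequence
$$
0\longrightarrow \Z/p^n(r)\pri \longrightarrow F^r\A_{\crr,n}\xrightarrow{\;\phi-p^r\;}\A_{\crr,n}\longrightarrow 0,
$$
which is exact up to a bounded power of $p$; the two explicit complexes match term-by-term under the identification, so the reduction is to exactness of this sequence in degrees $\leq r$.

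The hard part is uniform control of $p$-torsion denominators: both the Fontaine--Messing fundamental sequence and the crystalline Poincar\'e lemma produce $p$-power denominators whose valuations grow with the cohomological degree, and the truncation $\tau_{\leq r}$ is precisely what bounds these denominators by an exponent depending only on $r$, uniformly in $n$ and in the \'etale cover chosen. Beyond degree $r$ the Frobenius eigenvalue $p^r$ no longer dominates on $\A_{\crr}$, the fundamental sequence ceases to be controlled, and the comparison breaks --- so the truncation condition is sharp. Once this bounded-denominator local comparison is in place, a Leray argument for $i^{*}Rj_{*}$ on $\overline X_0$, combined with the finiteness and $p$-adic completeness that makes $\R\varprojlim_n$ interact well with $\tau_{\leq r}$ and with $(-)\otimes\Q_p$, completes the proof.
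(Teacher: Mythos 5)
Your proposal is correct and follows the same route as the paper, which in fact gives no proof of this proposition but simply cites \cite{Ts} and \cite{CN}: the content of those references is exactly the local comparison of the syntomic complex with $p$-adic nearby cycles that you describe — sheafify, reduce to a small semistable affine, compare a Koszul-type crystalline complex with continuous Galois cohomology via almost purity and the fundamental exact sequence, and bound the $p$-power denominators in degrees $\leq r$ by a constant $N(r)$ uniform in $n$. The only caveat is that your ``hard part'' is asserted rather than proved, but that uniform $p^{N(r)}$-quasi-isomorphism is precisely the theorem of the cited works (and is the form in which the paper actually uses the result later, in Proposition \ref{fini11}).
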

    
    We equip the pro-\'etale and \'etale cohomologies $\R\Gamma_{\proeet}(X_{C},\Q_p(r))$, and  $\R\Gamma_{\eet}(X_{C},\Q_p(r))
$ with a natural topology by proceeding as in the case of overconvergent rigid cohomology by using as local data compatible complexes of free $\Z/p^n$-modules \footnote{Such complexes can be found, for example, by taking the system of  \'etale hypercovers.}. If  
 $X$ is quasi-compact, we obtain in this way complexes of Banach spaces over $\Q_p$. 
\begin{corollary}
\label{fini1}
Let $X$ be a  semistable formal scheme over $\so_K$. 
    There is a natural  Fontaine-Messing period map
    \begin{equation}
    \label{topology1}
    \alpha_{\rm FM}: \R\Gamma_{\synt}(\overline{X},\Q_p(r))\to \R\Gamma_{\proeet}(X_{C},\Q_p(r))
    \end{equation}
   that  is a strict quasi-isomorphism after truncation $\tau_{\leq r}$.
\end{corollary}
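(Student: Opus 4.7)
The plan is to construct $\alpha_{\rm FM}$ by sheafifying the classical Fontaine--Messing period map of Proposition \ref{Ts1}. For every affine open $U\subset X$ of finite type, Proposition \ref{Ts1} provides a map
$$\R\Gamma_{\synt}(\overline U,\Z_p(r))_{\Q_p}\to\R\Gamma_{\eet}(U_C,\Q_p(r))$$
that is a quasi-isomorphism after $\tau_{\leq r}$. Since such $U$ is of finite type, the comparison (\ref{passage}) is an isomorphism, so the source can be identified with $\R\Gamma(U,\sss(r))$; since $U_C$ is qcqs, the target can be identified with $\R\Gamma_{\proeet}(U_C,\Q_p(r))$. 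Varying $U$ over a basis of Zariski opens, these local maps are functorial and therefore sheafify to a morphism of complexes of (pre)sheaves on $X_{\Zar}$, namely $\sss(r)\to \R\nu_*\Q_p(r)$, where $\nu\colon (X_C)_{\proeet}\to X_{\Zar}$ is the natural projection. Taking $\R\Gamma(X,-)$ produces the global period map $\alpha_{\rm FM}$ of (\ref{topology1}).

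By Proposition \ref{Ts1} applied on each finite-type affine, the above morphism of sheaves is a (classical) quasi-isomorphism after $\tau_{\leq r}$, and hence so is the induced map on derived global sections. It remains to upgrade this to a strict quasi-isomorphism in $\sd(C_{\Q_p})$. For this, choose a countable increasing cover $\{U_n\}_{n\in\N}$ of $X$ by quasi-compact semistable opens with $U_n\subset U_{n+1}$ and compute both sides via the associated \v Cech complex. On each $U_n$ both sides are complexes of Banach spaces by the discussion following (\ref{map2}), and passage to $X$ expresses each degree as a countable projective limit of Banach spaces, hence as a Fréchet space. The period map is continuous for these topologies because it comes from a morphism of local data. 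Lemma \ref{kolobrzeg-winter} then upgrades the algebraic quasi-isomorphism after $\tau_{\leq r}$ into a strict one.

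The main obstacle is purely topological bookkeeping: one must verify that the Fréchet topology obtained from the countable cover and the \v Cech formalism coincides with the topology defined intrinsically on $\R\Gamma_{\synt}(\overline X,\Q_p(r))$ and $\R\Gamma_{\proeet}(X_C,\Q_p(r))$ via crystalline (respectively pro-\'etale hypercover) embedding systems, and that the relevant projective systems satisfy the Mittag--Leffler condition needed to exchange $\holim_n$ with taking cohomology, in the spirit of the argument in Lemma \ref{Lyon-again}. Once this is in place, the algebraic content reduces entirely to the quasi-compact case of Proposition \ref{Ts1}, and the proposition follows.
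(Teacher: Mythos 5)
Your construction of $\alpha_{\rm FM}$ by gluing the local Fontaine--Messing maps and your appeal to Lemma~\ref{kolobrzeg-winter} for strictness match the paper's approach in spirit. However, there is a real gap at the point where you write ``since $U_C$ is qcqs, the target can be identified with $\R\Gamma_{\proeet}(U_C,\Q_p(r))$.'' That identification is not part of the statement of Proposition~\ref{Ts1} (which lands in \'etale, not pro-\'etale, cohomology), and proving it is roughly half of the paper's argument: one first invokes Scholze's result that \'etale and pro-\'etale cohomology agree with $\Z/p^n$-coefficients on quasi-compact rigid spaces, then passes to $\Z_p$-coefficients by commuting $\holim_n$ with $\R\Gamma$ and identifying $\holim_n\Z/p^n(r)$ with $\varprojlim_n\Z/p^n(r)$ (Scholze, Prop.~8.2), and finally passes to $\Q_p$-coefficients by observing that the pro-\'etale site of a quasi-compact $X_C$ is coherent, so pro-\'etale cohomology commutes with filtered colimits of abelian sheaves. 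The last step in particular is not automatic for the pro-\'etale topology and needs the quasi-compactness hypothesis; your proposal treats the whole chain as routine.

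On the topological side, your plan is heavier than what the paper actually does. You do not need a \v Cech resolution, an explicit Fr\'echet identification, nor a Mittag--Leffler check in the style of Lemma~\ref{Lyon-again}: the topology on $\R\Gamma_{\synt}(\overline X,\Q_p(r))$ and on the (pro-)\'etale side is defined from the outset via limits of local quasi-compact data (crystalline, resp.\ \'etale, hypercovers), so once one knows that on each quasi-compact piece the period map is a zigzag of algebraic quasi-isomorphisms between complexes of Banach spaces, Lemma~\ref{kolobrzeg-winter} directly upgrades it to a strict quasi-isomorphism, and strictness is preserved under the limits defining the global complexes. Your proposed bookkeeping would also work, but it reproduces by hand considerations that the definitions already absorb.
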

\begin{proof}
Cover $X$ with quasi-compact  formal schemes and invoke  Proposition \ref{Ts1}; we obtain a quasi-isomorphism
$$
    \alpha_{\rm FM}:\tau_{\leq r} \R\Gamma_{\synt}(\overline{X},\Q_p(r))\to \tau_{\leq r}\R\Gamma_{\eet}(X_{C},\Q_p(r)).
$$
To see that it is  strict, it suffices to note that, locally,  the period map is a zigzag between complexes of  Banach spaces and invoke Lemma \ref{kolobrzeg-winter}.  

 It remains to show that, for a quasi-compact $X$,  the natural map
 $$
 \R\Gamma_{\eet}(X_{C},\Q_p(r))\to \R\Gamma_{\proeet}(X_{C},\Q_p(r))
 $$
 is a (strict) quasi-isomorphism.
 From
\cite[Cor. 3.17]{Sch1} we know that this is true with $\Z/p^n$-coefficients.
 This implies that we have a sequence of quasi-isomorphisms
 \begin{align*}
   \R\Gamma_{\eet}(X_{C},\Z_p(r)) & =\holim_n\R\Gamma_{\eet}(X_{C},\Z/p^n(r))
   \simeq \holim_n\R\Gamma_{\proeet}(X_{C},\Z/p^n(r)) \\ 
   & \simeq \R\Gamma_{\proeet}(X_{C},\holim_n\Z/p^n(r))
    \simeq \R\Gamma_{\proeet}(X_{C},\varprojlim_n\Z/p^n(r))=\R\Gamma_{\proeet}(X_{C},\Z_p(r)),
\end{align*}
where the third quasi-isomorphism follows from the fact that $\R\Gamma$ and $\holim$ commute and the fourth one 
follows from \cite[Prop. 8.2]{Sch1}. 

 It remains  to show that 
$$
 \R\Gamma_{\proeet}(X_C,\Z_p(r))\otimes \Q_p\stackrel{\sim}{\to}\R\Gamma_{\proeet}(X_C,\Q_p(r)).
$$
But, since $|X_C|$ is quasi-compact, the site $X_{C,\proeet}$  is coherent \cite[Prop. 3.12]{Sch1}. Hence its  cohomology commutes with colimits of abelian sheaves, yielding the above quasi-isomorphism.
\end{proof}

  \subsubsection{Definition a la Bloch-Kato}\label{PR4}
 Crystalline geometric syntomic cohomology a la Fontaine-Messing can be often described in a very simple way using filtered de Rham complexes and Hyodo-Kato cohomology (if the latter can be defined) and the period rings $\B^+_{\st},\B^+_{\dr}$.This was done for proper algebraic and analytic varieties in \cite{NN, CN}. In this section  we adapt the arguments from loc. cit. to the case of some non-quasi-compact rigid varieties. The de Rham term is the same, the Hyodo-Kato term is more complicated, and the role of the period ring $\B^+_{\st}$ is played by $\wh{\B}^+_{\st}$.
  
  Let $r\geq 0$. For a semistable formal scheme $X$ over $\so_K$, we define the {\em crystalline geometric syntomic cohomology a la Bloch-Kato} (as an object in $\sd(C_{\Q_p})$)
  $$
  \R\Gamma^{\rm BK}_{\synt}({X}_{\so_{C}},\Q_p(r)):=[[\R\Gamma_{\crr}(X/r^{\rm PD}_{\varpi},\Q_p) \wh{\otimes}^R_{r^{\rm PD}_{\varpi,\Q_p}}\wh{\B}^+_{\st}]^{N=0,\phi=p^r} \verylomapr{p_{\varpi}
  \otimes\iota}  (\R\Gamma_{\dr}(X_K) \wh{ \otimes}^R_{K}{\B}^+_{\dr})/F^r].
  $$
  Here $\R\Gamma_{\crr}(X/r^{\rm PD}_{\varpi},\Q_p)$ is defined in an analogous way to $\R\Gamma_{\crr}(X,\Q_p)$ (hence it is rational; the corresponding integral cohomology we will denote simply by 
  $\R\Gamma_{\crr}(X/r^{\rm PD}_{\varpi})$). The completed tensor product 
  $\R\Gamma_{\crr}(X/r^{\rm PD}_{\varpi},\Q_p) \wh{\otimes}^R_{r^{\rm PD}_{\varpi,\Q_p}}\wh{\B}^+_{\st}$ is defined in the following way (note that $r^{\rm PD}_{\varpi,\Q_p}$ is not a field hence we can not use the tensor product in the category of convex vector spaces): if $X$ is of finite type, we set
  $$
  \R\Gamma_{\crr}(X/r^{\rm PD}_{\varpi},\Q_p) \wh{\otimes}^R_{r^{\rm PD}_{\varpi,\Q_p}}\wh{\B}^+_{\st}:=(\R\Gamma_{\crr}(X/r^{\rm PD}_{\varpi}) \wh{\otimes}_{r^{\rm PD}_{\varpi}}\wh{\A}^+_{\st}){\otimes}^L{\Q_p},
  $$
  where the integral objects  are in the category $D(\Ind(PD_{\Q_p}))$; for a general $X$, we lift the above definition from formal schemes of finite type via the \'etale cohomological descent.
  \begin{proposition}
  \label{fini2}
There exists a functorial  quasi-isomorphism   in $\sd(C_{\Q_p})$
$$
\iota_{\rm BK}:\R\Gamma^{\rm BK}_{\synt}({X}_{\so_{C}},\Q_p(r)) \stackrel{\sim}{\to}\R\Gamma_{\synt}(X_{\so_{C}},\Q_p(r)).
$$
\end{proposition}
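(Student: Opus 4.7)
The plan is to construct $\iota_{\rm BK}$ by identifying, term by term, the two inputs of the Fontaine--Messing mapping fibre
\[
\R\Gamma_{\synt}(\overline{X},\Q_p(r))=[\R\Gamma_{\crr}(\overline{X},\Q_p)^{\phi=p^r}\to \R\Gamma_{\crr}(\overline{X},\Q_p)/F^r]
\]
with their Bloch--Kato counterparts, in a way compatible with the connecting maps. Since the Bloch--Kato construction is defined on general $X$ by \'etale cohomological descent from the finite-type case, I may assume $X$ is of finite type, so that all integral crystalline complexes are $p$-torsion free and live naturally in $\sd(\Ind(PD_{\Q_p}))$; $p$ then gets inverted only at the end.

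For the $\phi$-eigenspace input, the key step is a log-crystalline K\"unneth isomorphism
\[
\R\Gamma_{\crr}(\overline{X}_n)\simeq \R\Gamma_{\crr}(X_n/r^{\rm PD}_{\varpi,n})\wh{\otimes}^L_{r^{\rm PD}_{\varpi,n}}\wh{\A}_{\st,n},
\]
obtained from the log base-change $\overline{X}_n=X_n\times_{\so^\times_{K,n}}\so^\times_{C,n}$ combined with the identification (\ref{isom1}) $\wh{\A}_{\st,n}\simeq \R\Gamma_{\crr}(\so^\times_{C,n}/r^{\rm PD}_{\varpi,n})$. Taking $\holim_n$, inverting $p$, and base-changing the Breuil exact sequence (\ref{Breuil}) along $\R\Gamma_{\crr}(X/r^{\rm PD}_{\varpi},\Q_p)$ yields
\[
\R\Gamma_{\crr}(\overline{X},\Q_p)\simeq [\R\Gamma_{\crr}(X/r^{\rm PD}_{\varpi},\Q_p)\wh{\otimes}^R_{r^{\rm PD}_{\varpi,\Q_p}}\wh{\B}^+_{\st}]^{N=0},
\]
so taking $\phi=p^r$ gives the first identification. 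For the filtration input, I would run the analogous argument over $\so_K^\times$: a similar K\"unneth decomposition, together with $\A_{\crr,K,n}\simeq \R\Gamma_{\crr}(\so^\times_{C,n}/\so^\times_{K,n})$, writes $\R\Gamma_{\crr}(\overline{X},\Q_p)/F^r$ as $(\R\Gamma_{\crr}(X/\so_K^\times,\Q_p)\wh{\otimes}^R_K \B^+_{\crr,K})/F^r$; the $p^a$-quasi-isomorphism $\gr^\bullet_F\A_{\crr}\to \gr^\bullet_F\A_{\crr,K}$ of \cite{Ts} becomes an isomorphism rationally and upgrades $\B^+_{\crr,K}$ to $\B^+_{\dr}$ modulo $F^r$, and the log-crystalline/log-de~Rham comparison replaces $\R\Gamma_{\crr}(X/\so_K^\times,\Q_p)$ by $\R\Gamma_{\dr}(X_K)$. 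Taking the mapping fibre of the two identifications produces $\iota_{\rm BK}$, with functoriality automatic from each step.

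The main obstacle is topological rather than algebraic: all these identifications must be promoted from abstract quasi-isomorphisms to strict quasi-isomorphisms in $\sd(C_{\Q_p})$, and the completed derived tensor products must be controlled. I would handle this by carrying out the entire construction first integrally in $\sd(\Ind(PD_{\Q_p}))$, where Lemma~\ref{genial} supplies the continuous splittings that make the K\"unneth and base-change maps well-behaved, and then passing to $\sd(C_{\Q_p})$ via Proposition~\ref{acyclic-integral} combined with the Fr\'echet/$LB$ exactness properties of $\wh{\otimes}_K$ already exploited in Examples~\ref{LF-tensor} and \ref{derham}. Compatibility with Frobenius and monodromy is built into the construction of the K\"unneth map and the naturality of (\ref{Breuil}), so no extra work is required there.
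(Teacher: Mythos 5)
Your overall strategy -- identify the two inputs of the Fontaine--Messing fibre with their Bloch--Kato counterparts via K\"unneth, using the identification of the period rings as crystalline cohomologies of base rings -- is the same as the paper's. But your treatment of the $N=0$ reduction has a genuine gap. Your first displayed isomorphism
$\R\Gamma_{\crr}(\overline{X}_n)\simeq \R\Gamma_{\crr}(X_n/r^{\rm PD}_{\varpi,n})\wh{\otimes}^L_{r^{\rm PD}_{\varpi,n}}\wh{\A}_{\st,n}$
is false as stated: the left side is absolute crystalline cohomology over $\so_{F,n}$, whereas the K\"unneth map (together with (\ref{isom1})) identifies the right side with $\R\Gamma_{\crr}(\overline{X}_n/r^{\rm PD}_{\varpi,n})$, which is a \emph{different} object carrying a nontrivial monodromy operator. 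The bridge between the two is precisely Kato's distinguished triangle \cite[Lemma 4.2]{Ka3}, i.e.~(\ref{int2}): $\R\Gamma_{\crr}(\overline{X}_n)\to\R\Gamma_{\crr}(\overline{X}_n/r^{\rm PD}_{\varpi,n})\stackrel{N}{\to}\R\Gamma_{\crr}(\overline{X}_n/r^{\rm PD}_{\varpi,n})$, and you never invoke it.

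Your proposed repair -- "base-changing the Breuil exact sequence (\ref{Breuil}) along $\R\Gamma_{\crr}(X/r^{\rm PD}_{\varpi},\Q_p)$" -- does not fill this hole. Tensoring (\ref{Breuil}) with $\R\Gamma_{\crr}(X/r^{\rm PD}_{\varpi},\Q_p)$ computes the kernel of $1\otimes N_{\wh{\A}_{\st}}$, not the total monodromy $N = N_X\otimes 1 + 1\otimes N_{\wh{\A}_{\st}}$ that appears in the definition of $\R\Gamma^{\rm BK}_{\synt}$ and which transports to the monodromy on $\R\Gamma_{\crr}(\overline{X}/r^{\rm PD}_{\varpi})$ under K\"unneth. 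Since $\R\Gamma_{\crr}(X/r^{\rm PD}_{\varpi})$ has nontrivial monodromy in general, these $N=0$ conditions differ. Moreover, even granting the kernel computation you still land on $\R\Gamma_{\crr}(X/r^{\rm PD}_\varpi,\Q_p)\wh{\otimes}\B^+_{\crr}$ and would need to identify this with $\R\Gamma_{\crr}(\overline{X},\Q_p)$, which is exactly what (\ref{int2}) supplies and you do not supply. A smaller version of the same issue appears on the filtration side: you write the K\"unneth as landing in $\R\Gamma_{\crr}(\overline{X},\Q_p)/F^r$, but it lands in $\R\Gamma_{\crr}(\overline{X}/\so^\times_{K},\Q_p)/F^r$, and the comparison of these two (change of PD-base from $\so_F$ to $\so^\times_K$, a $p^N$-quasi-isomorphism by \cite[Cor.~2.4]{NN}) is a step the paper carries out and you omit. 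Your handling of strictness via the integral category $\Ind(PD)$ and Lemma~\ref{genial} is a plausible alternative to the paper's "Banach rationally" shortcut, and is not where the problem lies.
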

\begin{proof}
The comparison map $\iota_{\rm BK}$ will be induced by  a pair of maps $(\iota^1_{\rm BK},\iota^2_{\rm BK})$,   basically K\"unneth cup product maps,
 that make the following diagram commute (in $\sd(C_{\Q_p})$). 
$$
\xymatrix{
  [\R\Gamma_{\crr}(X/r^{\rm PD}_{\varpi},\Q_p) \wh{\otimes}^R_{r^{\rm PD}_{\varpi,\Q_p}}\wh{\B}^+_{\st}]^{N=0} \ar[r]^-{p_{\varpi}
  \otimes\iota}\ar[d]^{\iota^1_{\rm BK}}_{\wr} &   (\R\Gamma_{\dr}(X_K) \wh{ \otimes}^R_{K}{\B}^+_{\dr})/F^r\ar[d]^{\iota^2_{\rm BK}}_{\wr}\\
 \R\Gamma_{\crr}(\overline{X},\Q_p)\ar[r]     &   \R\Gamma_{\crr}(\overline{X},\Q_p)/F^r
}
$$
{\em {\rm (i)} Construction of the map $\iota^1_{\rm BK}$.} We may argue locally and assume that  $X$ is  quasi-compact. 
Consider the following maps in $\sd(C_F)$
\begin{align}
\label{comp1}
\R\Gamma_{\crr}(X/r^{\rm PD}_{\varpi},\Q_p)\widehat{\otimes}^R_{r^{\rm PD}_{\varpi,\Q_p}}\widehat{\B}^+_{\st}\stackrel{\cup}{\to}\R\Gamma_{\crr}(X_{\so_{C}}/r^{\rm PD}_{\varpi},\Q_p)\leftarrow
\R\Gamma_{\crr}(X_{\so_{C}},\Q_p).
\end{align}
We claim that the  cup product map 
is a quasi-isomorphism: indeed, 
the proof of an analogous result in the case of schemes  \cite[Prop. 4.5.4]{Ts} goes through in our setting. Recall the key points.
By (\ref{isom1}) and the fact that $\A_{\st,n}$ is flat over $r^{\rm PD}_{\varpi,n}$, it suffices to prove that the K\"unneth morphism
\begin{equation}
\label{int1}
\cup: \quad\R\Gamma_{\crr}(X_n/r^{\rm PD}_{\varpi,n})\otimes^L_{r^{\rm PD}_{\varpi,n}}\R\Gamma_{\crr}(\so^{\times}_{\ovk,n}/r^{\rm PD}_{\varpi,n}) {\to}\R\Gamma_{\crr}(X_{\so_{\ovk},n}/r^{\rm PD}_{\varpi,n})
\end{equation}
 is a quasi-isomorphism. By unwinding both sides one finds  a K\"unneth morphism for certain de Rham complexes. It is a quasi-isomorphism because these complexes   are ``flat enough''
 which follows from the fact that  the map $X_{\so_{\ovk,n}}\to \so_{K,n}^{\times}$ is log-syntomic. This finishes the argument.

  Both maps in (\ref{comp1}) are compatible with the
   monodromy operator $N$. Moreover, we have the distinguished triangle  \cite[Lemma 4.2]{Ka3}
\begin{equation}
\label{int2}
\R\Gamma_{\crr}(X_{\so_{\ovk,n}}/\so_{F,n})\to\R\Gamma_{\crr}(X_{\so_{\ovk,n}}/r^{\rm PD}_{\varpi,n})\stackrel{N}{\to} \R\Gamma_{\crr}(X_{\so_{\ovk,n}}/r^{\rm PD}_{\varpi,n}).
\end{equation}
It follows that the last map in (\ref{comp1}) is a quasi-isomorphism after taking the (derived) $N=0$ part.
Hence applying $N=0$ to the terms in (\ref{comp1}) we obtain a functorial quasi-isomorphism in $\sd(C_F)$ (for strictness, note that, rationally,  we worked only with complexes of Banach spaces)
\begin{equation}
\label{zurich1}
\iota_{\rm BK}^1:  [\R\Gamma_{\crr}(X/r^{\rm PD}_{\varpi},\Q_p) \wh{\otimes}^R_{r^{\rm PD}_{\varpi,\Q_p}}\wh{\B}^+_{\st}]^{N=0} \stackrel{\sim}{\to }\R\Gamma_{\crr}(X_{\so_{C}},\Q_p).
\end{equation}
{\em {\rm (ii)} Construction of the map $\iota^2_{\rm BK}$.} We may argue locally and assume that  $X$ is  quasi-compact. Consider the maps
 \begin{align}  
 \label{paris11}
 (\R\Gamma_{\crr}({X}_n/\so_{K,n}^{\times}){\otimes}^L_{\so_{K,n}}\R\Gamma_{\crr}(\so_{\ovk,n}^{\times}/\so_{K,n}^{\times}))/F^r
    \stackrel{\cup}{\to} \R\Gamma_{\crr}({X}_{\so_{\ovk,n}}/\so_{K,n}^{\times})/F^r
   {\leftarrow}\R\Gamma_{\crr}({X}_{\so_{\ovk,n}})/F^r.
 \end{align}
The  cup product map is a K\"unneth map and it is  a quasi-isomorphism for the same reason as the map  (\ref{int1}).
The second map -- the change of base map from $\so_F$ to $\so_K^{\times}$ -- is a  quasi-isomorphism (up to a universal  constant) by \cite[Cor. 2.4]{NN}.  
Rationally, the above maps  induce a map
 \begin{align*}
  \iota_{\rm BK}^2:  (\R\Gamma_{\dr}({X}_K)\wh{\otimes}^R_{K}{\B}^+_{\dr})/F^r
   \stackrel{\sim}{\to}\R\Gamma_{\crr}({X}_{\so_{C}},\Q_p)/F^r.
 \end{align*}
Since ${\B}^+_{\dr}$ is a Fr\'echet space, the natural map $$ 
(\R\Gamma_{\dr}({X}_K)\wh{\otimes}_{K}{\B}^+_{\dr})/F^r\to (\R\Gamma_{\dr}({X}_K)\wh{\otimes}^R_{K}{\B}^+_{\dr})/F^r
$$
is a strict quasi-isomorphism, hence so is the map
$\iota_{\rm BK}^2$.

  Compatibility of the maps $\iota_{\rm BK}^1,\iota_{\rm BK}^2$ can be  inferred from 
   the  natural commutative diagram
$$
\xymatrix{
[\R\Gamma_{\crr}({X}/r^{\rm PD}_{\varpi},\Q_p)\wh{\otimes}^R_{r^{\rm PD}_{\varpi,\Q_p}}\widehat{\B}^+_{\st}]^{N=0}\ar@/_70pt/[dd]_{\iota_{\rm BK}^1}\ar[d]^{\cup}_{\wr} 
\ar@/^30pt/[rr]^-{p_{\varpi}\otimes\iota} \ar[r]^-{p_{\varpi}\otimes p_{\varpi}}   &   
(\R\Gamma_{\crr}({X}/\so_K^{\times},\Q_p)\wh{\otimes}^R_K\R\Gamma_{\crr}(\so_{C}^{\times}/\so_K^{\times})_{\Q_p})/F^r\ar[d]^{\cup}_{\wr}
  & (\R\Gamma_{\dr}({X}_K)\wh{\otimes}^R_{K}{\B}^+_{\dr})/F^r
\ar[ddl]^{\iota_{\rm BK}^2}\ar[l]_-{\sim}\\
     [\R\Gamma_{\crr}({X}_{\so_{C}}/r^{\rm PD}_{\varpi},\Q_p)]^{N=0}\ar[r]^{p_{\varpi}}  & \R\Gamma_{\crr}({X}_{\so_{C}}/\so_K^{\times},\Q_p)/F^r\\
         \R\Gamma_{\crr}({X}_{\so_{C}},\Q_p)\ar[u]^{\wr}\ar[r]  &        \R\Gamma_{\crr}({X}_{\so_{C}},\Q_p)/F^r.\ar[u]^{\wr}
}
$$
\end{proof}

\section{Comparison of syntomic cohomologies}
We have defined two geometric syntomic cohomologies: the crystalline one and the overconvergent one. 
We will show now (Theorem~\ref{fini3})
that they are naturally isomorphic for Stein spaces and that, as a result, the $p$-adic pro-\'etale cohomology 
fits into a fundamental diagram (Theorem \ref{fdd}). We use this result to describe the pro-\'etale
cohomology of affine spaces, tori, and curves (see section~\ref{EXA}).
\subsection{Comparison morphism}
\label{compmorph}
 Let $X$ be a semistable weak formal scheme over $\so_K$. Let $\wh{X}$ be the associated  formal scheme. 
The purpose of this section is to prove that the change of convergence map  from overconvergent syntomic cohomology to crystalline syntomic cohomology is a strict quasi-isomorphism assuming that $X$ is Stein. 
 \begin{theorem}
 \label{fini3}Let $r\geq 0$.
There is a functorial  map  in $\sd(C_{\Q_p})$
$$
\iota_{\rig}: \R\Gamma_{\synt}(X_{C},\Q_p(r))\to\R\Gamma^{\rm BK}_{\synt}(\wh{X}_{\so_{C}},\Q_p(r)).
$$
It is a quasi-isomorphism if $X$ is Stein.
\end{theorem}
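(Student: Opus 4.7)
The plan is to construct $\iota_{\rig}$ as a morphism of the two defining mapping fibers, and then prove it is a strict quasi-isomorphism by reducing, via a Stein covering, to each quasi-compact piece, where one invokes the classical agreement of overconvergent and convergent rigid cohomology for smooth proper schemes over $k$.

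For the construction, observe first that the de Rham columns $(\R\Gamma_{\dr}(X_K)\wh{\otimes}^R_K\B^+_{\dr})/F^r$ on both sides coincide canonically, since $X$ and its $p$-adic completion $\wh X$ share the same generic rigid space. What remains is to build a map on the Hyodo-Kato / crystalline side
$$
[\R\Gamma_{\hk}(X_0)\wh{\otimes}^R_F\wh{\B}^+_{\st}]^{N=0,\phi=p^r} \longrightarrow [\R\Gamma_{\crr}(\wh X/r^{\rm PD}_{\varpi},\Q_p)\wh{\otimes}^R_{r^{\rm PD}_{\varpi,\Q_p}}\wh{\B}^+_{\st}]^{N=0,\phi=p^r}.
$$
This comes from the natural change-of-convergence morphism: any embedding system $X_0\hookrightarrow Z_i$ into weakly formal log-smooth $\so_F^0$-schemes passes, by $p$-adic completion, to a crystalline embedding system $\wh X_0\hookrightarrow \wh Z_i$, and the de Rham complex on dagger tubes maps naturally into the one on rigid tubes, yielding a $\phi$-compatible arrow $\R\Gamma_{\rig}(X_0/\so_F^0) \to \R\Gamma_{\crr}(\wh X/\so_F^0)_{\Q_p}$. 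After tensoring with $\wh{\B}^+_{\st}$ and using that the $N=0$ part of the $r^{\rm PD}_{\varpi}$-based crystalline cohomology is identified with the $\so_F^0$-based one by the distinguished triangle (\ref{int2}), one obtains $\iota_{\rig}$. Compatibility with the Bloch-Kato boundary map into the de Rham column is formal.

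For the quasi-isomorphism when $X$ is Stein, choose a Stein covering $\{U_s\}_s$. Both sides commute with $\holim_s$: the overconvergent side by (\ref{warszawa0}) together with Example \ref{derham}, and the crystalline side by a parallel argument using that $\{\R\Gamma_{\crr}(\wh U_s/r^{\rm PD}_{\varpi})_{\Q_p}\}$ has finite-dimensional cohomology in each degree (so satisfies the Mittag-Leffler criterion in $\sd(C_F)$). One is thus reduced to checking the comparison on each quasi-compact $U_s$. Because $X$ is Stein with semistable reduction, $U_{s,0}$ is a finite union of proper smooth log-irreducible components, so the weight spectral sequence (\ref{weights}) on both sides reduces the claim to the classical theorem that overconvergent and convergent rigid cohomology coincide for smooth proper varieties over $k$.

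The main obstacle will be topological bookkeeping: ensuring that every arrow is strict in the quasi-abelian category $C_{\Q_p}$ and that the cohomology objects computed along the way are classical. Concretely, one must verify that the natural maps from $\wh\otimes$ to $\wh\otimes^R$ are strict quasi-isomorphisms in the relevant degrees — invoking Lemma \ref{bios2} together with the Fréchet and finite-dimensional character of the Hyodo-Kato cohomology of quasi-compact pieces — and that the Mittag-Leffler step at the Stein limit preserves strictness, not merely algebraic isomorphism. A secondary subtlety is that Grosse-Klönne's overconvergent Hyodo-Kato cohomology is built from dagger structures on weak formal schemes, while the crystalline side lives on $p$-adic formal schemes; the identification of these locally on quasi-compact log-smooth pieces with proper components (via proper base change in rigid cohomology) is what ultimately powers the local comparison, and verifying it is compatible with the Frobenius, monodromy, and Hodge filtration simultaneously is the technically delicate point.
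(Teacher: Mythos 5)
Your overall architecture (a map of mapping fibers that is essentially the identity on the de Rham column, a change-of-convergence map on the Hyodo--Kato column, then reduction to quasi-compact pieces with proper components) matches the paper's, but the key step of your construction does not work as written. To define $\iota_{\rig}$ you need an arrow \emph{from} the $\so_F^0$-based overconvergent cohomology \emph{to} the $r^{\rm PD}_{\varpi}$-based crystalline cohomology, whereas all the natural maps (the specializations $p_0\colon T\mapsto 0$) go the other way. You propose to bridge this by identifying the $N=0$ part of the $r^{\rm PD}_{\varpi}$-based cohomology with the $\so_F^0$-based one via (\ref{int2}); but (\ref{int2}) identifies $[\R\Gamma_{\crr}(X_{\so_{\ovk}}/r^{\rm PD}_{\varpi})]^{N=0}$ with the \emph{absolute} crystalline cohomology $\R\Gamma_{\crr}(X_{\so_{\ovk}}/\so_F)$ of the base-changed scheme, not with the Hyodo--Kato cohomology $\R\Gamma_{\crr}(X_0/\so_F^0)$, and in any case it does not reverse the direction of $p_0$ (that identification is what powers $\iota^1_{\rm BK}$ in Proposition \ref{fini2}, a different comparison). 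The actual mechanism is that $p_0$ becomes invertible after passing to derived $\phi=p^r$-eigenspaces: Frobenius is highly topologically nilpotent on $J=\ker(r^{\rm PD}_{\varpi}\to\so_F)$, so the eigenspaces of the complexes built from $J$ are strictly acyclic rationally and the zigzag (\ref{zigzag}) yields a quasi-isomorphism $h_{\crr}$ between the two $\phi=p^r$-eigenspaces; $\iota^1_{\rig}$ is then the change-of-convergence map (factored through convergent cohomology, using Proposition \ref{rig-conv} and hence properness of the irreducible components) followed by $h_{\crr}^{-1}$. Without this eigenspace argument your map on the Hyodo--Kato column simply does not exist.

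Two secondary points. The de Rham columns do not ``coincide canonically'': the generic fiber of $X$ is a dagger space and that of $\wh X$ a rigid space, and $\Omega^{\scriptscriptstyle\bullet}(X_K)\to\Omega^{\scriptscriptstyle\bullet}(\wh X_K)$ is a quasi-isomorphism only for partially proper (e.g.\ Stein) $X$ --- which is exactly why the theorem asserts only a map in general and a quasi-isomorphism in the Stein case. The compatibility of the two columns is also not formal: the overconvergent Hyodo--Kato map is Grosse-Kl\"onne's zigzag through $\R\Gamma_{\rig}(\overline{X}_0/r^{\dagger})$ while the crystalline one is $p_{\varpi}\otimes\iota$ out of $\R\Gamma_{\crr}(X/r^{\rm PD}_{\varpi})$, so one must explicitly compare the $r^{\dagger}$- and $r^{\rm PD}_{\varpi}$-deformations (the maps $f_1,f_2,h_{\rig},h_{\crr}$ in the paper's diagram). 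Finally, in the Stein reduction the opens $U_{s,0}$ are not finite unions of \emph{proper} components; one must pass to the closed truncations $Y_s$, as the paper does.
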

\begin{proof}
We will  induce 
the comparison map $\iota_{\rig}$ in our theorem by  a pair of maps  $(\iota^1_{\rig},\iota^2_{\rig})$ defined below that make the following diagram commute (in $\sd(C_{\Q_p})$)
$$
\xymatrix{
  [\R\Gamma_{\rig}(X_0/\so_F^0) \wh{\otimes}^R_{F}\wh{\B}^+_{\st}]^{\phi=p^r} \ar[r]^-{\iota_{\hk}\otimes\iota}\ar[d]^{\iota^1_{\rig}} &   (\R\Gamma_{\dr}(X_K) \wh{ \otimes}^R_{K}{\B}^+_{\dr})/F^r\ar[d]^{\iota_{\rig}^2}\\
  [\R\Gamma_{\crr}(\wh{X}/r^{\rm PD}_{\varpi},\Q_p) \wh{\otimes}^R_{r^{\rm PD}_{\varpi,\Q_p}}\wh{\B}^+_{\st}]^{\phi=p^r} \ar[r]^-{p_{\varpi}\otimes\iota} &   (\R\Gamma_{\dr}(\wh{X}_K) \wh{ \otimes}^R_{K}{\B}^+_{\dr})/F^r.
  }
$$
The map $\iota^1_{\rig}$ is compatible with monodromy. Both maps $\iota^1_{\rig},\iota^2_{\rig}$ are quasi-isomorphisms if $X$ is Stein.

\smallskip
\noindent
{\em {\rm (i)} Definition of maps $\iota^1_{\rig}$ and $\iota_{\rig}^2$}. 

\smallskip
$(\star )$ {\em Map $\iota_{\rig}^2.$} The map $\iota_{\rig}^2$, the easier of the two maps,  is just the map from de Rham cohomology of a weak formal scheme to de Rham cohomology of its completion; in the case $X$ is Stein, it is an isomorphism induced by the  canonical identification of coherent cohomology of a partially proper dagger space and its rigid analytic avatar \cite[Theorem 2.26]{GK0}.

\smallskip
$(\star )$ {\em Map $\iota_{\rig}^1.$} To define the map $\iota_{\rig}^1$, 
consider first the change of convergence map
\begin{equation}
\label{morph0}
\R\Gamma_{\rig}(X_0/\so_F^0)\widehat{\otimes}^R_{F}\widehat{\B}^+_{\st}\to \R\Gamma_{\crr}(X_0/\so_F^0,F)\widehat{\otimes}^R_{F}\widehat{\B}^+_{\st}.
\end{equation}
It is compatible with Frobenius and monodromy. 
We claim that if $X$ is Stein it is a strict quasi-isomorphism. It suffices to show this for the change of topology map
\begin{equation}
\label{morph01}
\R\Gamma_{\rig}(X_0/\so_F^0)\to \R\Gamma_{\crr}(X_0/\so_F^0,F). 
\end{equation}
But before we proceed to do that, a small digression about {\em convergent cohomology} that we will use. 
\begin{remark}({\em Review of convergent cohomology}).

Contrary to the case of rigid cohomology, the theory  of (relative) convergent cohomology is well developed   \cite{Og, Sh, Sh1}. Recall the key points. The set up is the following: the base $\sbb$ is a $p$-adic formal log-scheme over $\so_F$, $B:=\sbb_1$; we look at convergent cohomology of $X$ over $\sy$, where $X$ is a log-scheme over $B$ and $\sy$ is a $p$-adic formal log-scheme over $\sbb$.
\begin{enumerate}
\item There exist a convergent site defined in analogy with the crystalline site, where the role of PD-thickenings (analytically, objects akin to closed discs
 of a specific radius $<1$) is played by enlargements ($p$-adic formal schemes; analytically, closed discs of any radius $<1$). Convergent cohomology is defined as the cohomology of the rational structure sheaf on this site.
\item {\em Invariance under infinitesimal thickenings}. If $i:X\to X^{\prime}$ is a homeomorphic exact closed  immersion then the pullback functor $i^*$ induces a quasi-isomorphism on convergent cohomology \cite[0.6.1]{Og}, \cite[Prop. 3.1]{Sh1}.
\item {\em Poincar\'e Lemma}. It states that, locally,  convergent cohomology can be computed by de Rham complexes of convergent tubes in $p$-adic formal log-schemes log-smooth over the base (playing the role of PD-envelopes) \cite[Theorem 2.29]{Sh1}. Analytically, this means that the fixed closed discs used in the crystalline theory are replaced by open discs. 
\item There is a natural map from convergent cohomology to crystalline cohomology. It is a quasi-isomorphism for   log-schemes log-smooth over $\sy_1$ \cite[Theorem 2.36]{Sh1}. In particular\footnote{This can be easily seen by looking locally  at the de Rham complexes computing both sides.}, for a semistable scheme $X_0$ over $k^0$,  there is a  natural quasi-isomorphism
$$
\R\Gamma_{\conv}(X_0/\so_F^0)\stackrel{\sim}{\to} \R\Gamma_{\crr}(X_0/\so_F^0,F).
$$
\item 
\begin{proposition}{\rm(\cite[Theorem 5.3]{GK2})} 
\label{rig-conv}Let $X$ be a semistable weak formal scheme over $\so_K$. 
Assume that all irreducible components of $X_0$ are proper. Then
the natural morphism (induced by mapping weak formal log-schemes to their completions)
$$
\R\Gamma_{\rig}(X_0/\so_F^0)\to \R\Gamma_{\conv}(X_0/\so_F^0)
$$is a quasi-isomorphism.
\end{proposition}
\begin{proof}
Recall that we have  two compatible weight spectral sequences \cite[5.2, 5.3]{GK2}
\begin{align*}
E^{-k,i+k}_1 & =\bigoplus_{j\geq 0, j\geq -k}\bigoplus _{S\in \Theta_{2j+k+1}}H^{i-2j-k}_{\rig}(S/\so_F)\Rightarrow H^i_{\rig}(X_0/\so_F^0),\\
E^{-k,i+k}_1 & =\bigoplus_{j\geq 0, j\geq -k}\bigoplus _{S\in \Theta_{2j+k+1}}H^{i-2j-k}_{\conv}(S/\so_F)\Rightarrow H^i_{\conv}(X_0/\so_F^0)
\end{align*}
Here  $\Theta_j$ denotes the set of all intersections $S$ of $j$ different irreducible components of $X$ that are equipped with trivial log-structures. By assumptions, they are smooth and proper over $k$.
It suffices then to prove that the maps $H^*_{\rig}(S/\so_F)\to H^*_{\conv}(S/\so_F)$ are isomorphisms. Since $S$ is proper this is classical \cite[Prop. 1.9]{Ber}.
\end{proof}
\end{enumerate}
Hence, $\R\Gamma_{\conv}(X_0/\so_F^0)$ is a convergent version of $\R\Gamma_{\rig}(X_0/\so_F^0)$, i.e., a version where we use $p$-adic formal schemes and rigid spaces instead of weak formal schemes and dagger spaces.  
\end{remark}

Now, coming back to the change of topology map (\ref{morph01}), note that it  factors as
\begin{equation}
\label{morph02}
\R\Gamma_{\rig}(X_0/\so_F^0)\to \R\Gamma_{\conv}(X_0/\so_F^0)\stackrel{\sim}{\to} \R\Gamma_{\crr}(X_0/\so_F^0,F).
\end{equation}
We topologized the  convergent cohomology  in the same way we did  rigid cohomology. The second map is quasi-isomorphism because $X_0$ is semistable (hence, locally, admitting liftings). By Proposition \ref{rig-conv}, the first map is a quasi-isomorphism as well.  We claim that the composition (\ref{morph02})  is a strict quasi-isomorphism. We will reduce checking this to $X_0$ proper where it will be clear. Take the 
subschemes $\{U_i\}, \{Y_i\}, i\in\N,$ of $X_0$ as in Section \ref{assumptions}. We have strict quasi-isomorphisms
$$
\R\Gamma_{\rig}(X_0/\so_F^0)\stackrel{\sim}{\to}\holim_i\R\Gamma_{\rig}(U_i/\so_F^0)\stackrel{\sim}{\to}
\holim_i\R\Gamma_{\rig}(Y_i/\so_F^0).
$$
The first one by Example \ref{LF-tensor}, the second one trivially. We have similar strict quasi-isomorphisms for the crystalline cohomology
$$
\R\Gamma_{\crr}(X_0/\so_F^0,F)\stackrel{\sim}{\to}\holim_i\R\Gamma_{\crr}(U_i/\so_F^0,F)\stackrel{\sim}{\to}
\holim_i\R\Gamma_{\crr}(Y_i/\so_F^0,F).
$$
The second one is again trivial. The first one follows from the fact that it is a quasi-isomorphism of complexes of Fr\'echet spaces. It remains to show that the natural map
$$
\holim_i\R\Gamma_{\rig}(Y_i/\so_F^0)\to \holim_i\R\Gamma_{\crr}(Y_i/\so_F^0)
$$
is a strict quasi-isomorphism. Or that, so is the natural map
$$
\R\Gamma_{\rig}(Y_i/\so_F^0)\to \R\Gamma_{\crr}(Y_i/\so_F^0,F).
$$
We factor this map as
$$
\R\Gamma_{\rig}(Y_i/\so_F^0)\to \R\Gamma_{\conv}(Y_i/\so_F^0)\to \R\Gamma_{\crr}(Y_i/\so_F^0,F).
$$
Since the idealized log-scheme $Y_i$ is ideally log-smooth over $k^0$ the second map is a quasi-isomorphism. Since $Y_i$ is also  proper, so is the first map. This finishes the proof  that the map (\ref{morph01}) is a strict quasi-isomorphism.

  We will define now the  following  functorial quasi-isomorphism  in $\sd(C_{\Q_p})$
\begin{align}
\label{morph1}h_{\crr}: [\R\Gamma_{\crr}(X/r^{\rm PD}_{\varpi},\Q_p)
\widehat{\otimes}^R_{r^{\rm PD}_{\varpi,\Q_p}}\widehat{\B}^+_{\st}]^{\phi=p^r}{\to}[\R\Gamma_{\crr}(X_0/\so_F^0,F)\widehat{\otimes}^R_{F}\widehat{\B}^+_{\st}]^{\phi=p^r}.
\end{align}
    We may assume that  $X$ is  quasi-compact. 
Let $J$ be the kernel of the map $p_0: r^{\rm PD}_{\varpi}\to \so_F$, $T\mapsto 0$. The map $p_0$ is compatible with Frobenius and monodromy ("$\log T\mapsto\log 0$"). Consider the exact sequence
$$
0\to J_n\to r^{\rm PD}_{\varpi, n}\stackrel{p_0}{\to} \so_{F,n}\to 0.
$$
Tensoring it with $\wh{\A}_{\st,n}$, we get the following exact sequence
$$
0\to J_n\otimes_{r^{\rm PD}_{\varpi,n}}\wh{\A}_{\st,n}\to \wh{\A}_{\st, n}\stackrel{p_0}{\to} \so_{F,n}\otimes_{r^{\rm PD}_{\varpi,n}}\wh{\A}_{\st,n}\to 0
$$
We used here the fact that $\wh{\A}_{\st,n}$ is flat over $r^{\rm PD}_{\varpi,n}$. Going to limit with $n$, we get the exact sequence
\begin{equation}
\label{paris1}
0\to J\wh{\otimes}_{r^{\rm PD}_{\varpi}}\wh{\A}_{\st}\to \wh{\A}_{\st}\stackrel{p_0}{\to} \so_{F}\wh{\otimes}_{r^{\rm PD}_{\varpi}}\wh{\A}_{\st}\to 0
\end{equation}
Set $E:=J\wh{\otimes}_{r^{\rm PD}_{\varpi}}\wh{\A}_{\st}$.

   Tensoring the last sequence with $\R\Gamma_{\crr}(X_n/r^{\rm PD}_{\varpi,n})$ and $\R\Gamma_{\crr}(X_0/\so^0_{F,n})$, respectively,   we obtain the following distinguished triangles
   \begin{align*}
 &  E_n\otimes^L_{r^{\rm PD}_{\varpi,n}}\R\Gamma_{\crr}(X_n/r^{\rm PD}_{\varpi,n}) \to \wh{\A}_{\st,n}\otimes^L_{r^{\rm PD}_{\varpi,n}}\R\Gamma_{\crr}(X_n/r^{\rm PD}_{\varpi,n})\stackrel{p_0}{\to }\so_{F,n}{\otimes}_{r^{\rm PD}_{\varpi,n}}\wh{\A}_{\st,n}\otimes^L_{r^{\rm PD}_{\varpi,n}}\R\Gamma_{\crr}(X_n/r^{\rm PD}_{\varpi,n}),\\
 & E_n\otimes^L_{\so_{F,n}}\R\Gamma_{\crr}(X_0/\so^0_{F,n}) \to \wh{\A}_{\st,n}\otimes^L_{\so_{F,n}}\R\Gamma_{\crr}(X_0/\so^0_{F,n})\stackrel{p_0}{\to} \so_{F,n}{\otimes}_{r^{\rm PD}_{\varpi,n}}\wh{\A}_{\st,n}\otimes^L_{\so_{F,n}}\R\Gamma_{\crr}(X_0/\so^0_{F,n}).
   \end{align*}
   The last terms in these triangles are quasi-isomorphic. Indeed, by direct local computations we see that the natural map
   $$
   \R\Gamma_{\crr}(X_n/r^{\rm PD}_{\varpi,n})\otimes^L_{r^{\rm PD}_{\varpi,n}}\so_{F,n}\to \R\Gamma_{\crr}(X_0/\so^0_{F,n})
   $$ 
   is a quasi-isomorphism. Hence
   \begin{align*}
   \so_{F,n}{\otimes}_{r^{\rm PD}_{\varpi,n}}\wh{\A}_{\st,n}\otimes^L_{r^{\rm PD}_{\varpi,n}}\R\Gamma_{\crr}(X_n/r^{\rm PD}_{\varpi,n}) & \simeq \wh{\A}_{\st,n}\otimes^L_{r^{\rm PD}_{\varpi,n}}\R\Gamma_{\crr}(X_0/\so^0_{F,n})\\
 &\simeq \wh{\A}_{\st,n}\otimes^L_{r^{\rm PD}_{\varpi,n}}\so_{F,n}\otimes^L_{\so_{F,n}}\R\Gamma_{\crr}(X_0/\so^0_{F,n}).
   \end{align*}
   
   The complexes
   \begin{equation}
   \label{acyclic}
      [ E_{\Q_p}\wh{\otimes}^R_{r^{\rm PD}_{\varpi,\Q_p}}\R\Gamma_{\crr}(X/r^{\rm PD}_{\varpi},\Q_p)]^{\phi=p^r},\quad  [E_{\Q_p}\wh{\otimes}^R_{{F}}\R\Gamma_{\crr}(X_0/\so^0_{F},F)]^{\phi=p^r}
   \end{equation}
   are strictly acyclic\footnote{In fact, they are both isomorphic to the trivial complex.}: this is an immediate consequence of the fact that Frobenius $\phi$ is highly topologically nilpotent on $J$
   (hence $p^r-\phi$ is rationally invertible). This implies that the following maps
  \begin{equation}
  \label{zigzag}
   [\R\Gamma_{\crr}(X/r^{\rm PD}_{\varpi},\Q_p)\wh{\otimes}^R_{r^{\rm PD}_{\varpi,\Q_p}}\widehat{\B}^+_{\st}]^{\phi=p^r}\stackrel{p_0}{\to}
   [\R\Gamma_{\crr}(X_0/\so_F^0,F) \wh{\otimes}^R_{F}(\widehat{\B}^+_{\st}/E_{\Q_p})]^{\phi=p^r}
   \stackrel{1\otimes p_{0}}{\longleftarrow} [\R\Gamma_{\crr}(X_0/\so_F^0,F) \wh{\otimes}^R_{F}\widehat{\B}^+_{\st}]^{\phi=p^r}
\end{equation}
are strict quasi-isomorphisms. We define the map $h_{\crr}$  to be equal to the above zigzag. It is compatible with the monodromy operator  (for the first map in the zigzag use the fact
that  the monodromy  operator is defined by compatible residue maps).

   We define a map in $\sd(C_{\Q_p})$
\begin{equation}
\label{zurich2}
\iota_{\rig}^1:
[\R\Gamma_{\rig}(X_0/\so_F^0)  \wh{\otimes}^R_{F}\widehat{\B}^+_{\st}]^{\phi=p^r}{\to} [\R\Gamma_{\crr}(X/r^{\rm PD}_{\varpi},\Q_p)\widehat{\otimes}^R_{r^{\rm PD}_{\varpi,\Q_p}}\widehat{\B}^+_{\st}]^{\phi=p^r}
\end{equation}
as the composition of  the maps in (\ref{morph0}) and (\ref{morph1}).
    Both maps  being compatible with the
   monodromy operator  so is $\iota^1_{\rig}$.  

\smallskip
\noindent
{\em {\rm (ii)} Compatibility of the maps $\iota^1_{\rig},\iota^2_{\rig}$.} 
Let $r\geq 0$. The compatibility of the maps $\iota^1_{\rig},\iota^2_{\rig}$ can be shown by the  commutative diagram
 $$
\xymatrix@C=0.5pt{ [\R\Gamma_{\rig}(X_0/\so_F^0) \wh{\otimes}^R_{F}\wh{\B}^+_{\st}]^{\phi=p^r}   \ar[dd]_{\wr} \ar[rr]^-{\iota_{\hk}\otimes\iota}\ar@/_80pt/[ddd]^{\iota_{\rig}^1}_{\wr}     &  &  (\R\Gamma_{\dr}(X_K)\wh{\otimes}^R_{K}{\B}^+_{\dr})/F^r\ar[ddd]^{\iota_{\rig}^2}_{\wr}\\
 &     
 [\R\Gamma_{\rig}(\overline{X}_0/r^{\dagger})\wh{\otimes}^R_{F}\widehat{\B}^+_{\st}]^{\phi=p^r}\ar[lu]_{p_0\otimes 1}^{\sim}\ar[d]^{f_1}     \ar[ru]^-{p_{\varpi}\otimes\iota}     \\
  [\R\Gamma_{\crr}(X_0/\so_F^0,F)\wh{\otimes}^R_{F}\widehat{\B}^+_{\st}]^{\phi=p^r}
& 
 [\R\Gamma_{\rig}({X}_0/r^{\dagger})\wh{\otimes}^R_{F}\widehat{\B}^+_{\st}]^{\phi=p^r} \ar[luu]^{h_{\rig}}\ar[dl]^{f_2}  \ar[ruu]_-{p_{\varpi}\otimes\iota}  \\
[\R\Gamma_{\crr}(X/r^{\rm PD}_{\varpi},\Q_p)\wh{\otimes}^R_{r^{\rm PD}_{\varpi,\Q_p}}\widehat{\B}^+_{\st}]^{\phi=p^r}\ar[u]^{h_{\crr}}_{\wr} \ar[rr]^-{p_{\varpi}\otimes\iota}
   &  & (\R\Gamma_{\dr}(\wh{X}_K)\wh{\otimes}^R_{K}{\B}^+_{\dr})/F^r.
}
$$
Here the map $f_2$ is the change of convergence map defined by the composition
$$
f_2:\R\Gamma_{\rig}({X}_0/r^{\dagger})\to \R\Gamma_{\conv}({X}_0/\hat{r})\stackrel{\sim}{\leftarrow} \R\Gamma_{\conv}({X}_1/\hat{r})\to\R\Gamma_{\crr}(X/r^{\rm PD}_{\varpi},\Q_p),
$$
where $\hat{r}:=\so_F\{T\}$. The quasi-isomorphism is actually a natural isomorphism by the invariance under infinitesimal thickenings. The map $f_2$ is clearly compatible with the projection $p_{\varpi}$  and the map $\iota^2_{\rig}$.
The map $h_{\rig}$ is defined in the same way as the map $h_{\crr}$: we just replace ${\crr}$ by $\rig$ and $r^{\rm PD}_{\varpi}$ by $r^{\dagger}$.  It is clear that the maps $h_*$ are compatible. 

The map
   $f_1$
   is induced by  the composition 
   $$\R\Gamma_{\rig}(\overline{X}_0/r^{\dagger})= \R\Gamma_{\rig}((P_{\jcdot},V_{\jcdot})/r^{\dagger})\to  \R\Gamma_{\rig}(M^{\prime}_{\jcdot}/r^{\dagger})\stackrel{\sim}{\to}     \R\Gamma_{\rig}(M_{\jcdot}/r^{\dagger})\stackrel{\sim}{\leftarrow}    
  \R\Gamma_{\rig}(X_0/r^{\dagger}).
   $$
  The fact that the last quasi-isomorphism is strict needs a justification. We may assume that $X_0$ is affine and take a log-smooth lifting $Y$ of $X_0$ to $r^{\dagger}$. Since the sheaf of differentials of $Y_F$ is free we are reduced to showing strict acyclicity of the \v{C}ech complex of overconvergent functions for the covering corresponding to $\{M_{i}\},i\in I$. Using a dagger presentation of $Y_F$, this complex can be written as an inductive limit of \v{C}ech complexes for analogous  coverings of rigid analytic affinoids. The latter complexes being strictly acyclic (because they are acyclic and we have the Open Mapping Theorem for Banach spaces) and the inductive system being acyclic, the former complex is acyclic as well.
  For the above diagram we need  strictness of the last quasi-isomorphism  with terms tensored with $\wh{\B}^+_{\st}$ but this is automatic since we have taken derived tensor products. Finally, 
  it is easy to check (do it first without the period ring $\wh{\B}^+_{\st}$) that the map $f_1$ makes the two small adjacent triangles in the above diagram commute.
   \end{proof}
\subsection{Fundamental diagram}
Having the comparison theorem proved above, we can now deduce a fundamental diagram for pro-\'etale cohomology from the one for overconvergent syntomic cohomology. 
\begin{theorem}
\label{fdd}
Let $X$ be a Stein semistable weak formal scheme over $\so_K$. Let $r\geq 0$. There is  a natural map of strictly exact sequences of Fr\'echet spaces
  $$
\xymatrix@C=.6cm{
0\ar[r] &  \Omega^{r-1}(X_C)/\ker d\ar[r]\ar@{=}[d] & H^r_{\proeet}(X_{C},\Q_p(r))\ar[d]^{\tilde{\beta}} \ar[r] & (H^r_{\hk}(X_{0})\wh{\otimes}_F\B^+_{\st})^{N=0,\phi=p^r}\ar[r]\ar[d]^{\iota_{\hk}\otimes\theta} & 0\\
0\ar[r] &  \Omega^{r-1}(X_C)/\ker d \ar[r]^-d & \Omega^r(X_C)^{d=0} \ar[r] & H^r_{\dr}(X_C)\ar[r] & 0
}
$$
Moreover, the vertical maps  have closed images, and 
$
\ker\tilde{\beta}\simeq  (H^r_{\hk}(X_{0})\wh{\otimes}_F\B^+_{\st})^{N=0,\phi=p^{r-1}}$.
\end{theorem}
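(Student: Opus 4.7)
The plan is to transport the fundamental diagram already established for overconvergent syntomic cohomology in Proposition~\ref{fd11} to pro-\'etale cohomology by stringing together the comparison quasi-isomorphisms proved in the preceding sections. Since $X$ is a Stein semistable weak formal scheme, Proposition~\ref{fd11} gives the desired strictly exact sequence for $H^r_{\synt}(X_{C},\Q_p(r))$ together with the closed-image and kernel statements for the vertical maps, with the left column identical to that of the target diagram. It therefore suffices to produce a $\sg_K$-equivariant strict isomorphism
$$H^r_{\synt}(X_{C},\Q_p(r)) \stackrel{\sim}{\to} H^r_{\proeet}(X_{C},\Q_p(r))$$
of Fr\'echet spaces that is compatible with the right-hand map $\iota_{\hk}\otimes\theta$ and with the connecting map from $\Omega^{r-1}(X_C)/\ker d$.

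First, I would apply Theorem~\ref{fini3} to obtain a strict quasi-isomorphism
$$\iota_{\rig}:\R\Gamma_{\synt}(X_{C},\Q_p(r))\stackrel{\sim}{\to}\R\Gamma^{\rm BK}_{\synt}(\wh{X}_{\so_{C}},\Q_p(r)),$$
then compose with $\iota_{\rm BK}^{-1}$ from Proposition~\ref{fini2} to land in $\R\Gamma_{\synt}(\wh{X}_{\so_{C}},\Q_p(r))$, and finally apply the Fontaine-Messing period map of Corollary~\ref{fini1}, which is a strict quasi-isomorphism after truncation $\tau_{\leq r}$. Taking $H^r$ yields the required strict isomorphism. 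The composite is compatible with the Hyodo-Kato / de Rham pieces because the right-hand column of the diagram in Proposition~\ref{fini2} together with the commutative diagram of Theorem~\ref{fini3} shows that under $\iota_{\rig}$ and $\iota_{\rm BK}$ the top row of the overconvergent fundamental diagram is identified with the top row one would write using the crystalline Bloch-Kato model, which by construction maps (via $p_{\varpi}$ and $\theta$) to the de Rham term in the same way.

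Having obtained the isomorphism, define $\tilde{\beta}$ as the transport of $\beta$ from Proposition~\ref{fd11}. The strict exactness of the top row and the identification of $\ker\tilde\beta$ with $(H^r_{\hk}(X_0)\wh{\otimes}_F\B^+_{\st})^{N=0,\phi=p^{r-1}}$ then follow from the corresponding statements for $H^r_{\synt}(X_C,\Q_p(r))$ and $\ker\beta$, since these are purely formal consequences of the strict exactness of the top row (which is preserved by the isomorphism) and the identification of $\ker(\iota_{\hk}\otimes\theta)$ already proved in Proposition~\ref{fd11}. The bottom row is shown to be strictly exact exactly as in Example~\ref{fd1} (i.e.\ via Lemma~\ref{czesto}, the Stein hypothesis, and the fact that $\B^+_{\dr}\to C$ has a continuous $K$-linear section so that $\wh{\otimes}_K C$ preserves strict exactness of the relevant de Rham sequence).

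The main obstacle is the purely topological bookkeeping: one must check that each of the three comparison isomorphisms ($\iota_{\rig}$, $\iota_{\rm BK}$, $\alpha_{\rm FM}$) is not merely an algebraic quasi-isomorphism but a \emph{strict} quasi-isomorphism in $\sd(C_{\Q_p})$, so that the induced map on $H^r$ is a topological isomorphism of Fr\'echet spaces and $\tilde\beta$ inherits the ``closed image'' property from $\beta$. This is exactly what Theorem~\ref{fini3}, Proposition~\ref{fini2}, and Corollary~\ref{fini1} are designed to provide (in particular via the Stein hypothesis, which upgrades algebraic quasi-isomorphisms between complexes of Fr\'echet spaces to strict ones by Lemma~\ref{kolobrzeg-winter}), so no new input is required beyond carefully assembling them.
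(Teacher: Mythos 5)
Your proposal is correct and is essentially the paper's own proof: the paper defines $\tilde{\beta}:=p^{-r}\beta\,\iota_{\rig}^{-1}\iota_{\rm BK}^{-1}\alpha_{\rm FM}^{-1}$ using exactly Corollary~\ref{fini1}, Proposition~\ref{fini2}, and Theorem~\ref{fini3}, and then invokes Proposition~\ref{fd11}. The only detail you omit is the normalizing twist by $p^{-r}$, which the paper inserts to make $\tilde\beta$ compatible with symbols.
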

\begin{proof}
We define $\tilde{\beta}:=p^{-r}\beta\iota_{\rig}^{-1}\iota_{\rm BK}^{-1}\alpha_{\rm FM}^{-1}$, using Corollary \ref{fini1}, Proposition \ref{fini2}, and Theorem \ref{fini3}; the twist by $p^{-r}$ being added to make this map compatible with symbols.
The theorem  follows immediately from 
 Proposition \ref{fd11}.
\end{proof}
\begin{remark}The above diagram can be thought of as a one-way comparison theorem, i.e., the pro-\'etale cohomology $H^r_{\proeet}(X_{C},\Q_p(r))$ is the pullback of the diagram $$
(H^r_{\hk}(X_{0})\wh{\otimes}_F\B^+_{\st})^{N=0,\phi=p^r}\verylomapr{\iota_{\hk}\otimes\theta}H^r_{\dr}(X_C) \stackrel{\can}{\longleftarrow}\Omega^r(X_C)^{d=0}$$ built from the Hyodo-Kato cohomology and a piece of the de Rham complex. 
For a striking comparison, 
recall that if $X$ is a proper semistable formal scheme over $\so_K$ then the Semistable Comparison Theorem from \cite{CN} shows that we have the exact sequence
\begin{equation}
\label{proper}
0\to H^r_{\proeet}(X_{C},\Q_p(r))\to  (H^r_{\hk}(X_{0})\wh{\otimes}_F\B^+_{\st})^{N=0,\phi=p^r}\verylomapr{\iota_{\hk}\otimes\iota}  (H^r_{\dr}(X_K)\wh{\otimes}_K\B^+_{\dr})/F^r,
\end{equation}
i.e., the pro-\'etale cohomology $H^r_{\proeet}(X_{C},\Q_p(r))$ is the pullback of the diagram $$
(H^r_{\hk}(X_{0})\wh{\otimes}_F\B^+_{\st})^{N=0,\phi=p^r}\verylomapr{\iota_{\hk}\otimes\theta}(H^r_{\dr}(X_K)\wh{\otimes}_K \B^+_{\dr})/F^r {\longleftarrow} 0.$$ 
Of course, in this case the  \'etale and the pro-\'etale cohomologies agree. The sequence  (\ref{proper}) is obtained in an analogous  way to the top sequence in the fundamental diagram above. With the degeneration of the Hodge-de Rham spectral sequence and the theory of finite dimensional BC spaces forcing the injectivity on the left.
\end{remark}
\begin{remark}
The following 
commutative diagram illustrates the relationship between syntomic cohomology of $\Q_p(r)$  and $\Q_p(r-1)$
$$\xymatrix@C=.4cm@R=.5cm{
...\ar[d]^t\ar[r]&{\rm Syn}^{r-2}_{r-1}\ar[d]^t_{\wr}\ar[r]&{\rm HK}_{r-1}^{r-2}\ar[d]^t\ar[r]&{\rm DR}_{r-1}^{r-2}\ar[d]^t\ar[r]^{\partial}
&{\rm Syn}^{r-1}_{r-1}\ar[d]^t_{\wr}\ar[r]&{\rm HK}_{r-1}^{r-1}\ar[r]\ar[d]^t&0\ar[d]\ar[r]
\ar[d]&0\ar[d]\ar[r]&0\ar[d]\ar[r]&0\\
...\ar[r]\ar[d]&{\rm Syn}^{r-2}_r\ar[r]\ar[d]&{\rm HK}_r^{r-2}\ar[r]\ar[d]^{\theta}&{\rm DR}_r^{r-2}\ar[r]^{\partial}
\ar[d]^{\theta}&{\rm Syn}^{r-1}_r\ar[r]\ar[d]&{\rm HK}_r^{r-1}\ar[r]\ar[d]^{\theta}&{\rm DR}_r^{r-1}\ar[r]^{\partial}
\ar[d]^{\id}&{\rm Syn}^{r}_r\ar[r]\ar[d]^{\id}&{\rm HK}_r^{r}\ar[r]\ar[d]^{\id}&0\\
...\ar[r]&0\ar[r]&H^{r-2}_{\rm dR}\ar[r]^{\id}&H^{r-2}_{\rm dR}\ar[r]&0\ar[r]
&H^{r-1}_{\rm dR}\ar[r]
&{\rm DR}_r^{r-1}\ar[r]^{\partial}
&{\rm Syn}^{r}_r\ar[r]&{\rm HK}_r^{r}\ar[r]&0
}$$
Here we wrote ${\rm HK}^i_r,{\rm DR}^i_r,$ and ${\rm Syn}^i_r$ for the $i$'th cohomology of the complexes ${\rm HK}(X_{C},r),{\rm DR}(X_{C},r),$ and $\R\Gamma_{\synt}(X_{C},\Q_p(r))$, respectively.  We set $H^i_{\dr}:=H^i_{\dr}(X_C)$.

   We claim that the rows of the above diagram are strictly exact. Indeed, the two top rows  arise from the definition of syntomic cohomology $\R\Gamma_{\synt}(X_{C},\Q_p(r-1))$ and $\R\Gamma_{\synt}(X_{C},\Q_p(r))$; the map between them is the multiplication by  $t\in (\B_{\crr}^+)^{\varphi=p}\cap F^1\B_{\dr}$. These rows are clearly strictly exact. It suffices now to show  that the columns form short strictly exact sequences (with zeros at the ends). Indeed, for $i\leq r-1$, multiplication by $t$ induces an isomorphism
 (using comparison with pro-\'etale cohomology)
$${\rm Syn}^i_r\cong t{\rm Syn}^i_{r-1}$$
  as well as the following strictly exact sequences
\begin{align}
\label{lyon2}
0\to &  {\rm DR}^i_{r-1}\stackrel{t}{\to} {\rm DR}^i_r\to H^i_{\rm dR}(X_K)\wh{\otimes}_K C\to 0,\quad r\geq i+2,\\
0\to &  {\rm HK}^i_{r-1}\stackrel{t}{\to} {\rm HK}^i_r\to H^i_{\rm dR}(X_K)\wh{\otimes}_K C\to 0.\notag
\end{align}
The first strictly exact sequence  follows from  
 the strictly exact sequence 
$$0\to \gr^{r-i-1}_F\B^+_{\dr}\wh{\otimes}_K (\Omega^i(X_K)/d\Omega^{i-1}(X_K))
\to {\rm DR}^i_r\to (\B_{\dr}^+/F^{r-i-1})\wh{\otimes}_K H^i_{\rm dR}(X_K)\to 0;$$
the second one from Lemma \ref{lyon3}.
 \end{remark} 
\subsection{Examples}\label{EXA}
We will now illustrate Theorem \ref{fdd} with some simple examples.
\subsubsection{Affine space} Let $d\geq 1$. Let ${\mathbb A}^d_K$ be the  $d$-dimensional rigid analytic affine space over $K$.  Recall that 
$H^r_{\eet}({\mathbb A}_{C}^d,\Q_p)=0$ for $ r \geq 1$ \cite[Theorem 7.3.2]{Ber}. On the other hand, as the following proposition shows,  the pro-\'etale cohomology of
${\mathbb A}_{C}^d$ is highly nontrivial in nonzero degrees.
\begin{proposition}Let $r\geq 1$.
There is a   $\sg_K$-equivariant isomorphism  in $C_{\Q_p}$ (of Fr\'echet spaces)
$$
\Omega^{r-1}({\mathbb A}^d_C)/\ker d\stackrel{\sim}{\to} 
H^r_{\proeet}({\mathbb A}_{C}^d,\Q_p(r)).
$$
\end{proposition}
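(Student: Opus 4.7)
The plan is to apply the fundamental diagram of Theorem~\ref{fdd} directly to the affine space, noting that the Hyodo--Kato and de Rham contributions vanish.

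First I would observe that ${\mathbb A}^d_K$ admits an obvious semistable (in fact smooth) Stein weak formal model $X$ over $\so_K$, with Stein covering given by the closed polydisks of radii $p^{-1/n}$ (viewed as dagger affinoids), whose special fiber $X_0\simeq {\mathbb A}^d_k$ is smooth over $k$ (so the log-structure is trivial). Thus Theorem~\ref{fdd} applies.

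Next I would compute the two outer terms of the fundamental diagram. The de Rham cohomology of ${\mathbb A}^d_C$ vanishes in positive degree: this is a consequence of the Poincar\'e lemma for the open polydisk (or equivalently, Lemma~\ref{czesto} plus the explicit description of $\Omega\kr({\mathbb A}^d_C)$). By the overconvergent Hyodo--Kato isomorphism $\iota_{\hk}\colon H^r_{\hk}(X_0)\otimes_F K\stackrel{\sim}{\to} H^r_{\dr}(X_C)$, this forces $H^r_{\hk}(X_0)=0$ for $r\geq 1$, and therefore
$$(H^r_{\hk}(X_0)\wh{\otimes}_F\B^+_{\st})^{N=0,\phi=p^r}=0.$$

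Finally, plugging these vanishings into the strictly exact top row
$$0\to \Omega^{r-1}(X_C)/\ker d\to H^r_{\proeet}(X_C,\Q_p(r))\to (H^r_{\hk}(X_0)\wh{\otimes}_F\B^+_{\st})^{N=0,\phi=p^r}\to 0$$
of Theorem~\ref{fdd} yields the desired isomorphism of Fr\'echet spaces
$$\Omega^{r-1}({\mathbb A}^d_C)/\ker d\stackrel{\sim}{\to} H^r_{\proeet}({\mathbb A}^d_C,\Q_p(r)),$$
which is $\sg_K$-equivariant by the functoriality built into the diagram. There is no real obstacle here: all the work has already been done in establishing Theorem~\ref{fdd}, and the only verification needed is that ${\mathbb A}^d_K$ genuinely falls under its hypotheses (a semistable Stein weak formal model exists) and that both input cohomologies vanish in positive degree -- the latter being classical.
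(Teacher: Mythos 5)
There is a genuine gap in your proposal, and it sits exactly where the paper invests most of its effort: the construction of a semistable Stein weak formal model of ${\mathbb A}^d_K$. Your claim that ${\mathbb A}^d_K$ has an ``obvious smooth Stein weak formal model $X$ over $\so_K$ $\ldots$ whose special fiber $X_0\simeq {\mathbb A}^d_k$'' is incorrect. First, a smooth (weak) formal model over $\so_K$ with special fiber ${\mathbb A}^d_k$ has generic fiber the \emph{closed unit polydisk}, not the rigid analytic affine space, so the generic fiber is wrong. (Incidentally, the covering you describe by polydisks of radii $p^{-1/n}$ exhausts the \emph{open unit polydisk}, not ${\mathbb A}^d_K$; to exhaust ${\mathbb A}^d_K$ one needs radii going to infinity, and gluing the corresponding formal models modifies the special fiber at each step.) Second, and more importantly, the hypotheses behind Theorem~\ref{fdd} --- in particular the notion of ``Stein with semistable reduction'' in Section~\ref{assumptions} --- require the irreducible components of the special fiber $X_0$ to be \emph{proper} over $k$; ${\mathbb A}^d_k$ is not, so even if you had the right generic fiber this model would not qualify.

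This is not a cosmetic issue, because the whole point of the proposition is that Theorem~\ref{fdd} applies; establishing that requires producing an honest semistable Stein weak formal model. The paper does this carefully: for $d=1$ it invokes Theorem~4.9.1 of \cite{FDP} (which produces a semistable model of ${\mathbb P}_K\setminus\sll^*$ for a suitable compact $\sll$, applied with $\sll=\{\infty\}\cup\{\varpi^n\mid n\leq 0\}$), obtaining a model whose special fiber is an infinite half-line of projective lines --- quite far from ${\mathbb A}^1_k$. For $d>1$ it then takes a fiber product over $\so_K^{\times}$ and applies a semistable resolution of the resulting log-smooth but non-semistable scheme. Once a genuine model is in place, the rest of your argument (vanishing of $H^r_{\dr}$ in positive degree, the Hyodo--Kato isomorphism forcing $H^r_{\hk}=0$, and plugging into the strictly exact top row of Theorem~\ref{fdd}) is correct and coincides with what the paper does.
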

\begin{remark}
A simpler and more direct proof of this result (but still using syntomic cohomology) has been given in \cite{CN2}. 
See \cite{AC} for another proof working directly with the fundamental exact sequence in the pro-\'etale topology.
\end{remark}
\begin{proof}Let $\sa^d$ denote a semistable weak formal scheme over $\so_K$ such that $\sa^d_K\simeq {\mathbb A}^d_K$. We will explain below how such a model $\sa^d$ can be constructed.
By Theorem \ref{fdd}, we have a $\sg_K$-equivariant exact sequence (in $C_{\Q_p}$)
 $$
\xymatrix@C=.6cm{
0\ar[r] &  \Omega^{r-1}({\mathbb A}^d_C)/\ker d\ar[r] & H^r_{\proeet}({\mathbb A}^d_{C},\Q_p(r))\ar[r]& (H^r_{\hk}(\sa^d_{0})\wh{\otimes}_F\B^+_{\st})^{N=0,\phi=p^r}\ar[r]
& 0
}
$$
Recall that $H^r_{\dr}({\mathbb A}^d_K)=0$. Since, by the Hyodo-Kato isomorphism $H^r_{\hk}(\sa^d_0)\otimes_FK\simeq H^r_{\dr}({\mathbb A}^d_K)$, we have $H^r_{\hk}(\sa^d_0)=0$. Our proposition follows from the above exact sequence.

  It remains to show that we can construct a semistable weak formal scheme $\sa^d$ over $\so_K$ whose generic fiber is ${\mathbb  A}^d_K$. For $d=1$, we can define a model $\sa^1$ using Theorem 4.9.1 of \cite{FDP}. That theorem describes a construction of a formal semistable model for any analytic subspace ${\mathbb P}_K\setminus\sll^*$, where $\sll$ is an infinite compact subset of $K$-rational points of the projective line ${\mathbb P}_K$ and $\sll^*$ is the set of its limit points. The proof of the theorem   can be easily modified to yield a weak formal model. To define the model $\sa^1$ we want we  apply this theorem with $\sll=\{\infty\}\cup \{\varpi^n|n\in\Z, n\leq 0\}$. We note that the special fiber of $\sa^1$ is a half line  of projective lines.
  
    To construct a model $\sa^d$ for $d>1$, first we consider the $d$-fold product $Y$ of the logarithmic weak formal scheme associated to $\sa^1$. Product is taken over $\so_K^{\times}$. It is not  a semistable scheme but it is log-smooth over $\so_K^{\times}$. Hence its singularities can be resolved using combinatorics of monoids describing the log-structure. In fact, using Lemma 1.9 of \cite{SW},  one can define a canonical ideal sheaf of $Y$ that needs to be blown-up to obtain a semistable model $\sa^d$ we want. 
\end{proof}
\subsubsection{Torus}
 Let $d\geq 1$. Let ${\mathbb G}^d_{m,K}$ be the  $d$-dimensional rigid analytic torus over $K$. 
Let $\sy^d$ denote a semistable weak formal scheme over $\so_K$ such that $\sy^d_K\simeq {\mathbb G}^d_{m,K}$. Such a model $\sy^d$ exists.
For $d=1$, we can define a model $\sy^1$ using Theorem 4.9.1 of \cite{FDP}; just as in the case of ${\mathbb A}^1_K$ above. More specifically, 
to define the model $\sy^1$ we want we  apply this theorem with $\sll=\{\infty, 0\}\cup \{\varpi^n|n\in\Z\}$. We note that the special fiber of $\sy^1$ is a line  of projective lines.
  To construct a model $\sy^d$ for $d>1$, we use products as above.
  
  To compute the pro-\'etale cohomology, we will use    Theorem \ref{fdd}. To make it  explicit, we need to compute $
  (H^r_{\hk}(\sy^d_{0})\wh{\otimes}_F\B^+_{\st})^{N=0,\phi=p^r}$. 
For $d=1$,
  we have
  $$
  H^r_{\dr}({\mathbb G}_{m,K})=\begin{cases}
  K  & \mbox{ if } r=0,\\
  c_1^{\dr}(z) K&  \mbox{ if } r=1,\\
  0  & \mbox{  if } r>1.
  \end{cases}
  $$
  Here $z$ is a coordinate of the torus and $c_1^{\dr}(z)$ is its de Rham Chern class, i.e. $dz/z$ 
(see Appendix \ref{symbols}). 
  For $d>1$, we can use the K\"unneth formula to compute that $H^r_{\dr}({\mathbb G}^d_{m,K})$ is a $K$-vector space of dimension $\binom{d}{r}$ generated by 
  the tuples $c_1^{\dr}(z_{i_1})\cdots c_1^{\dr}(z_{i_r})$. Similarly, 
  $H^r_{\hk}(\sy^d_{0})$ is an $F$-vector space of dimension $\binom{d}{r}$ generated by the tuples $c_1^{\hk}(z_{i_1})\cdots c_1^{\hk}(z_{i_r})$. By Lemma \ref{compatibility}, the Hyodo-Kato and the de Rham symbols are compatible under the Hyodo-Kato map $\iota_{\hk}$.

Since $\phi (c_1^{\hk}(z_{i_j}))=pc_1^{\hk}(z_{i_j})$ and $N(c_1^{\hk}(z_{i_j}))=0$, we get  that    $$
(H^r_{\hk}(\sy^d_{0})\wh{\otimes}_F\B^+_{\st})^{N=0,\phi=p^r}=H^r_{\hk}(\sy^d_{0})^{\phi=p^r}=\wedge^r\Q_p^d
$$  
 and that it  is a $\Q_p$-vector space of dimension $\binom{d}{r}$ generated by the tuples $c_1^{\hk}(z_{i_1})\cdots c_1^{\hk}(z_{i_r})$.
  Hence, Theorem \ref{fdd} gives us   a map of $\sg_K$-equivariant exact sequences (in $C_{\Q_p}$)
 $$
\xymatrix{0\ar[r] &  \Omega^{r-1}({\mathbb G}^d_{m,C})/\ker d\ar@{=}[d]\ar[r] &
 H^r_{\proeet}({\mathbb G}^d_{m,C},\Q_p(r))\ar[r]\ar@{^(->}[d]^{\tilde{\beta}} & \wedge^r\Q_p^d\ar[r] \ar@{^(->}[d]^{\can}& 0\\
0\ar[r] &  \Omega^{r-1}({\mathbb G}^d_{m,C})/\ker d
\ar[r]^-d & \Omega^r({\mathbb G}^d_{m,C})^{d=0} \ar[r] & 
\wedge^rC^d\ar[r] & 0
}
$$

Recall, for comparison, that $H^r_{\eet}({\mathbb G}^d_{m,C},\Q_p(r))\simeq \wedge^r\Q_p^d
$, 
  a $\Q_p$-vector space generated by the tuples $c_1^{\eet}(z_{i_1})\cdots c_1^{\eet}(z_{i_r})$.

\subsubsection{Curves}
Let $X$ be a Stein curve over $K$ with a semistable model $\sx$ over $\so_K$. The diagram from Theorem \ref{fdd} takes the following form\footnote{We note here that the conditions of that theorem are always satisfied for curves.}
$$\xymatrix@R=.5cm@C=.6cm{
 0\ar[r]&C^{\pi_0(X)}\ar@{=}[d]\ar[r]&
\mathcal{O}(X_C)\ar[r]^-{\rm exp}\ar@{=}[d]& H^1_{\proeet}(X_C, \Q_p(1))\ar[r]\ar[d]^-{\rm dlog}&
(H^1_{\hk}(\sx_0)\wh{\otimes}_F\B^+_{\st})^{\phi=p,N=0}\ar[r]\ar[d]^-{\rm \iota_{\hk}\otimes\theta}&0\\
0\ar[r]&C^{\pi_0(X)}\ar[r]&
\mathcal{O}(X_C)\ar[r]^-{d} & \Omega^1(X_C) \ar[r]&
H^1_{\rm dR}(X)\wh{\otimes}_KC\ar[r]&0
}$$        
\section{Pro-\'etale cohomology of Drinfeld half-spaces}
We will use the fundamental diagram of Theorem \ref{fdd} 
to compute the $p$-adic pro-\'etale cohomology of Drinfeld half-spaces (Theorem~\ref{PROET}).
This boils down to understanding the Hyodo-Kato cohomology groups as $(\varphi,N)$-modules
and as representations of ${\rm GL}_{d+1}(K)$.  The latter can be done by using
the comparison with de Rham cohomology and results of Schneider-Stuhler (see Theorem~\ref{SSmain});
the computation of $\varphi$ and $N$ uses an explicit description a la Iovita-Spiess (see Theorem~\ref{IS}
and Lemma~\ref{genIS})
of Hyodo-Kato cohomology of Drinfeld half-spaces
in terms of symbols of rational hyperplanes.
\subsection{Drinfeld half-spaces and their standard formal models}\label{formal-models}
    Let $K$ be  a finite extension of $\Q_p$.  Let ${\mathbb H}^d_K$, $d\geq 1$, be the $d$-dimensional Drinfeld half-space over $K$:
 the $K$-rigid space that is the complement in ${\mathbb P}^d_K$ of all $K$-rational hyperplanes. If $\mathcal{H}=\mathbb{P}((K^{d+1})^*)=\mathbb{P}^d(K)$ is the  space of $K$-rational hyperplanes
         in $K^{d+1}$ (this is a profinite set), we have 
   $${\mathbb H}^d_K={\mathbb P}^d_K\setminus \cup_{H\in \mathcal{H}} H.$$ 
 The group  $G:={\rm GL}_{d+1}(K)$ acts on it. 
 We will drop the subscript $K$ if there is no danger of confusion. 
 
  ${\mathbb H}^d_K$ is a rigid analytic Stein space hence also a dagger analytic Stein space. It has a (standard) $G$-equivariant semistable weak formal model $\wt{\mathbb H}^d_K$ \cite[6.1]{GK2} (that is Stein).
Recall that the set of vertices of the Bruhat-Tits building ${\rm BT}$ of ${\rm PGL}_{d+1}(K)$ is the set of homothety classes of lattices in $K^{d+1}$. It corresponds to the set of irreducible components of $Y:=\wt{\mathbb H}^d_{K,0}$.
For $s\geq 0$, let ${\rm BT}_s$ denote the Bruhat-Tits building truncated at $s$, i.e., the simplicial subcomplex of ${\rm BT}$ supported on the vertices $v$ such that the combinatorial distance $d(v,v_0)\leq s$, $v_0=[\so_K^{d+1}]$. Here, for a lattice $L$, $[L]$ denotes the homothety class of $L$. 
 Let $Y_s$ denote the union of the irreducible components corresponding to the vertices of ${\rm BT}_s$. It is a closed subscheme of $Y$ that we equip with the induced log-structure. We will sometimes write $Y_{\infty}$ for the whole special fiber $Y$. We denote by 
 $Y_s^{\circ}:=Y_s\setminus (Y_s\cap \overline{(Y\setminus Y_{s})})$, where the bar denotes closure.
We have immersions $Y_{s-1}\subset Y_s^{\circ}\subset Y_s$, where the first one is closed and the second one is open. 
 \subsection{Generalized Steinberg representations} 
 We will briefly review the definitions and basic properties of the generalized Steinberg representations that we will need.
 \subsubsection{Locally constant special representations}
 \label{intro5}
 Let $B$ be the upper triangular Borel subgroup of $G$ and 
   $\Delta=\{1,2,...,d\}$. We identify the Weyl group $W$ of $G$ with the group of permutations of $\{1,2,...,d+1\}$ and with the subgroup of permutation matrices
   in $G$. Then $W$ is generated by the elements $s_i=(i, i+1)$ for $i\in \Delta$. 
   
   For each subset 
   $J$ of $\Delta$ we let: 
   
   $\bullet$ $W_J$ be the subgroup of $W$ generated by the $s_i$ with $i\in J$. 
   
   $\bullet$ $P_J=BW_JB$, the parabolic subgroup of $G$ generated by $B$ and $W_J$.

   $\bullet$ $X_J=G/P_J$, a compact topological space. 
    
    If $A$ is an abelian group and $J\subset \Delta$, let 
    $${\rm Sp}_J(A)=\frac{{\rm LC}(X_J, A)}{\sum_{i\in \Delta\setminus J} {\rm LC}(X_{J\cup \{i\}}, A)},$$
       where ${\rm LC}$ means locally constant functions with values in $A$ (automatically with compact support since the $X_J$'s are compact). This is 
   a smooth $G$-module over $A$ and we have a natural isomorphism 
   ${\rm Sp}_J(A)={\rm Sp}_J(\mathbf{Z})\otimes A$. For $J=\emptyset$ we obtain the usual Steinberg representation with coefficients in $A$, while for $J=\Delta$ we have ${\rm Sp}_J(A)=A$ (since $X_J$ is a point). For $r\in \{0,1,...,d\}$ we use the simpler notation 
        $${\rm Sp}_r={\rm Sp}_{\{1,2,...,d-r\}}.$$For $r> d$, we set ${\rm Sp}_r=0$.

  \begin{proposition} \label{GK irred}
   If $A$ is a field of characteristic $0$ or $p$ then 
 the ${\rm Sp}_J(A)$'s (for varying $J$) are the irreducible constituents of ${\rm LC}(G/B, A)$, each occurring with multiplicity $1$.    
  \end{proposition}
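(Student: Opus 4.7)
The plan is to reduce the proposition to three ingredients: a Grothendieck-group identity expressing $[{\rm LC}(G/B,A)]$ as $\sum_{J\subset\Delta}[{\rm Sp}_J(A)]$, the irreducibility of each ${\rm Sp}_J(A)$, and the fact that the ${\rm Sp}_J(A)$'s are pairwise non-isomorphic. The proposition then follows by combining these.

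For the first ingredient, write $\iota_J := {\rm LC}(X_J,A) = {\rm Ind}_{P_J}^G\mathbf 1$ (smooth induction). By definition ${\rm Sp}_J(A)$ is the cokernel of the natural map $\bigoplus_{i\in\Delta\setminus J}\iota_{J\cup\{i\}}\to \iota_J$. I would assemble these maps into the standard Lusztig--Koszul resolution
\[
0\to \iota_\Delta \to \cdots \to \bigoplus_{|J|=d-1}\iota_J \to \bigoplus_{|J|=d}\iota_J \to \iota_\emptyset\to {\rm Sp}_\emptyset(A)\to 0,
\]
and iterate the same construction inside each truncation. A straightforward Möbius-inversion on the boolean lattice of subsets $J\subset\Delta$ then yields, in the Grothendieck group of smooth $G$-representations over $A$, the identity $[\iota_J]=\sum_{J'\supset J}[{\rm Sp}_{J'}(A)]$; specializing to $J=\emptyset$ gives $[{\rm LC}(G/B,A)]=\sum_{J\subset\Delta}[{\rm Sp}_J(A)]$, with each factor occurring with multiplicity exactly one.

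For the second ingredient, irreducibility of ${\rm Sp}_J(A)$ is classical when $\operatorname{char} A=0$: it is a theorem of Casselman (equivalently Borel--Serre / Bernstein--Zelevinsky) that the special representations of $G$ attached to subsets of simple roots are irreducible and exhaust the irreducible subquotients of the unramified principal series ${\rm Ind}_B^G\mathbf 1$. When $\operatorname{char} A=p$ this is considerably more delicate, and is exactly the content of Grosse-Kl\"onne's \cite[Cor.~4.3]{GKD} already invoked elsewhere in the paper. For the third ingredient, one distinguishes the ${\rm Sp}_J(A)$'s by their behaviour under parabolic restriction: the Jacquet-like functor ``invariants under the unipotent radical of $P_{J'}$'' detects the label $J$ (e.g.\ the space of $U_{P_{J}}$-invariants is nonzero on ${\rm Sp}_J(A)$ but vanishes on ${\rm Sp}_{J'}(A)$ for $J'\supsetneq J$).

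The main obstacle is unambiguously the $\operatorname{char} A=p$ case of irreducibility: the mod-$p$ representation theory of ${\rm GL}_{d+1}(K)$ is not semisimple and lacks Jacquet module tools with the strength available in characteristic zero, so one really must import Grosse-Kl\"onne's argument rather than reproduce a uniform proof. Once that input is granted, the Grothendieck-group identity coming from the Koszul resolution combined with the distinguishability of the ${\rm Sp}_J(A)$'s pins down the composition series of ${\rm LC}(G/B,A)$ uniquely, yielding the proposition.
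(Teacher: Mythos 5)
The paper does not give an argument here at all: its ``proof'' is a two-line citation, attributing the full statement (irreducibility, exhaustion of the constituents, and multiplicity one) to Casselman \cite[X, Theorem 4.11]{BW} in characteristic $0$ and to Grosse-Kl\"onne \cite[Cor.~4.3]{GKD} in characteristic $p$. Your proposal ultimately leans on the same two sources for the hard content, so in substance it matches the paper; the extra scaffolding you build (the Koszul/M\"obius computation in the Grothendieck group, and the separate verification that the ${\rm Sp}_J(A)$ are pairwise non-isomorphic) is not needed if one cites \cite[Cor.~4.3]{GKD}, which already contains the whole characteristic-$p$ statement and not merely irreducibility. The Grothendieck-group identity itself is fine and can be checked integrally, since ${\rm LC}(X_J,A)={\rm LC}(X_J,\mathbf{Z})\otimes A$ and the lattice of the subspaces ${\rm LC}(X_J,\mathbf{Z})$ inside ${\rm LC}(G/B,\mathbf{Z})$ is distributive independently of $A$.

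The one genuinely shaky step in your own scaffolding is the claim that ``the space of $U_{P_J}$-invariants is nonzero on ${\rm Sp}_J(A)$ but vanishes on ${\rm Sp}_{J'}(A)$ for $J'\supsetneq J$'' in characteristic $p$. If $U_{P_J}$ is interpreted as a compact open subgroup of the unipotent radical, it is pro-$p$, and a nonzero smooth representation of a pro-$p$ group over a field of characteristic $p$ never has vanishing invariants; if it is the full unipotent radical, the relevant functor is coinvariants (the Jacquet functor), which in characteristic $p$ is right-exact but poorly behaved and does not obviously separate these representations by the naive argument you sketch. Distinguishing the ${\rm Sp}_J$ mod $p$ is a nontrivial point (it is part of what Grosse-Kl\"onne proves), so either import it from \cite{GKD} along with irreducibility, or replace the Jacquet-functor argument by one that actually works in characteristic $p$ (e.g.\ via $I(1)$-invariants as modules over the pro-$p$ Iwahori--Hecke algebra).
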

  
  \begin{proof} This is due to Casselman in characteristic $0$ (see \cite[X, Theorem 4.11]{BW})
  and to Grosse-Kl\"onne \cite[Cor. 4.3]{GKD} in characteristic $p$. 
    \end{proof}
    
    \begin{remark} The proposition does not hold for $A$ a field of characteristic $\ell\ne p$, see
     \cite[III, Theorem 2.8]{MF}. 
           \end{remark}
       
        The rigidity in characteristic $p$ given by the previous theorem has consequences in characteristic $0$ that will be very useful to us later on.
     \begin{corollary}\label{Uniquelattice}
   If $J$ is a subset of $\Delta$, then 
    ${\rm Sp}_J(\so_K)$ is, up to a $K^*$-homothety, the 
    unique $G$-stable $\so_K$-lattice in ${\rm Sp}_J(K)$.
    \end{corollary}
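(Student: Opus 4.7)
The plan is to deduce the uniqueness of the lattice from the irreducibility of the mod-$p$ reduction of ${\rm Sp}_J$ provided by Proposition \ref{GK irred}. This is a standard type of argument: whenever the reduction of an admissible integral $G$-representation is irreducible, any $G$-stable lattice is unique up to homothety.

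Concretely, let $L \subseteq {\rm Sp}_J(K)$ be a $G$-stable $\so_K$-lattice. Since both $L$ and ${\rm Sp}_J(\so_K)$ generate ${\rm Sp}_J(K)$ over $K$, they are commensurable. Rescaling $L$ by an appropriate element of $K^{\ast}$, I may assume that $L \subseteq {\rm Sp}_J(\so_K)$ and $L \not\subseteq \varpi{\rm Sp}_J(\so_K)$. The image $\overline{L}$ of $L$ in the reduction ${\rm Sp}_J(k) = {\rm Sp}_J(\so_K)/\varpi{\rm Sp}_J(\so_K)$ is then a nonzero $G$-stable $k$-subspace. Applying Proposition \ref{GK irred} with $A = k$ (which is a field of characteristic $p$), the representation ${\rm Sp}_J(k)$ is irreducible, so $\overline{L} = {\rm Sp}_J(k)$; equivalently,
$$L + \varpi{\rm Sp}_J(\so_K) = {\rm Sp}_J(\so_K).$$

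It remains to upgrade this equality modulo $\varpi$ to the genuine equality $L = {\rm Sp}_J(\so_K)$. This is the main obstacle, because Nakayama's lemma does not apply directly: ${\rm Sp}_J(\so_K)$ is not a finitely generated $\so_K$-module. I would handle this using the admissibility of ${\rm Sp}_J$. For any compact open subgroup $H \subseteq G$, the space ${\rm Sp}_J(K)^H$ is finite-dimensional over $K$, and $L^H$, ${\rm Sp}_J(\so_K)^H$ are both $\so_K$-lattices in it; they are finitely generated over $\so_K$. The plan is then to show that
$$L^H + \varpi{\rm Sp}_J(\so_K)^H = {\rm Sp}_J(\so_K)^H,$$
for a cofinal system of compact open subgroups $H$, after which classical Nakayama gives $L^H = {\rm Sp}_J(\so_K)^H$; taking the union over $H$ recovers $L = {\rm Sp}_J(\so_K)$.

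The delicate point is the stability under $H$-invariants, which is not automatic when $H$ is pro-$p$. One clean way is to observe that if $v \in {\rm Sp}_J(\so_K)^H$ and we write $v = l + \varpi w$ with $l \in L$, $w \in {\rm Sp}_J(\so_K)$, then $hl - l \in L \cap \varpi{\rm Sp}_J(\so_K) = \varpi L$ (using that $L$ is saturated in ${\rm Sp}_J(\so_K)$), so the image of $l$ in $L/\varpi L$ is $H$-invariant. Choosing $H$ in a cofinal system whose pro-order is prime to $p$ (e.g.\ intersections of $H$ with the Iwahori, made small enough to be fixed), averaging over $H$ then produces $H$-fixed lifts in $L$ and we obtain the desired Nakayama step. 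Taking union over $H$ yields $L = {\rm Sp}_J(\so_K)$, completing the proof.
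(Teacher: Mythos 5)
There are two genuine gaps. The first is the opening claim that $L$ and ${\rm Sp}_J(\so_K)$ are commensurable merely because both span ${\rm Sp}_J(K)$ over $K$. This is false in infinite dimensions: in $K^{(\N)}$ the lattices $\so_K^{(\N)}$ and $\bigoplus_n\varpi^n\so_K$ both span but are not commensurable, and ${\rm Sp}_J(K)$ is infinite-dimensional. Commensurability is essentially the content of the corollary, not something you may assume at the outset; establishing it is exactly where the second ingredient in the paper's proof --- that ${\rm Sp}_J(\so_K)$ is finitely generated over $\so_K[G]$ (\cite[Cor.~4.5]{GKD}) --- enters. From $\so_K[G]$-generators $v_1,\dots,v_m$ and $G$-stability of $L$ one obtains $c\cdot{\rm Sp}_J(\so_K)\subseteq L$ for a single scalar $c$; the reverse containment then comes from filtering $L$ by $L_n:=\varpi^{-n}{\rm Sp}_J(\so_K)\cap L$, noting that each $L_{n+1}/L_n$ is a $G$-submodule of ${\rm Sp}_J(k)$ (hence $0$ or everything by Proposition~\ref{GK irred}), and using $\varpi$-adic separatedness of $L$ to rule out all steps being nontrivial. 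Your argument never invokes finite generation over $\so_K[G]$, and without it the normalization $L\subseteq{\rm Sp}_J(\so_K)$, $L\not\subseteq\varpi{\rm Sp}_J(\so_K)$ is unjustified.

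The second gap is the averaging step. There is no cofinal system of compact open subgroups $H$ of $G={\rm GL}_{d+1}(K)$ with pro-order prime to $p$: every sufficiently small compact open subgroup of $G$ is pro-$p$, and every compact open subgroup has pro-order divisible by $p^\infty$. So there is no way to average and produce an $H$-fixed lift in $L$, and the proposed Nakayama-via-$H$-invariants route collapses. (In addition, the intermediate identity $L\cap\varpi{\rm Sp}_J(\so_K)=\varpi L$ invoked ``by saturatedness of $L$'' is unjustified: nothing in the hypotheses forces $L$ to be $\varpi$-saturated in ${\rm Sp}_J(\so_K)$.) The clean way to close the argument is the one sketched above, using finite generation over $\so_K[G]$ together with irreducibility of ${\rm Sp}_J(k)$, rather than admissibility and $H$-invariants.
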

     \begin{proof}
     This follows easily from Proposition \ref{GK irred} and the fact that ${\rm Sp}_J(\so_K)$ is finitely generated over 
   $\so_K[G]$, see \cite[Cor. 4.5]{GKD} for the details. 
\end{proof}
       
       \subsubsection{Topology}
       
         If $\Lambda$ is a topological ring, then ${\rm Sp}_J(\Lambda)$ has a natural topology: the space 
         $X_J$ being profinite, we can write $X_J=\varprojlim_{n} X_{n,J}$ for  finite sets 
         $X_{n,J}$ and then 
          ${\rm LC}(X_J, \Lambda)=\varinjlim_{n} {\rm LC}(X_{n,J}, \Lambda)$, each 
          ${\rm LC}(X_{n,J}, \Lambda)$ being a finite free $\Lambda$-module endowed with the natural topology.       
          In particular, if $\Lambda$ is a finite extension of $\Q_p$, this exhibits ${\rm Sp}_J(\Lambda)$ as an inductive limit
          of finite dimensional $\Lambda$-vector spaces, and the corresponding topology is the strongest locally convex topology on the 
          $\Lambda$-vector space ${\rm Sp}_J(\Lambda)$, which is an LF-space. 
          
          Let $M^*:={\rm Hom}_{\rm cont}(M,\Lambda)$ for any topological $\Lambda$-module $M$, and equip $M^*$ with the weak topology.
          Then ${\rm Sp}_J(\Lambda)^*$ is naturally isomorphic to $\varprojlim_{n} {\rm LC}(X_{n,J}, \Lambda)^*$, i.e. a countable inverse limit of finite free $\Lambda$-modules.
          In particular, if  $L$ is a finite extension of $\Q_p$ then 
          ${\rm Sp}_J(L)^*$ is a nuclear FrŽ\'echet space (in fact a countable product of Banach spaces) and ${\rm Sp}_J(\so_L)^*$ is a compact $\so_L$-module, which is torsion free. 
          Therefore ${\rm Sp}_J(\so_L)^*\otimes L$ has a natural structure of a weak dual of an $L$-Banach space.           
\subsubsection{Continuous special representations}     
Consider now the corresponding 
continuous special representation 
$${\rm Sp}^{\rm cont}_J(\Lambda)=\frac{\mathcal{C}(X_J, \Lambda)}{\sum_{\alpha\in \Delta\setminus J} \mathcal{C}(X_{J\cup \{\alpha\}}, \Lambda)}.$$
Arguing as above, we see that, for any finite extension $L$ of $\Q_p$, the space ${\rm Sp}^{\rm cont}_J(L)$ has a natural structure of an $L$-Banach space, with the 
unit ball given by ${\rm Sp}^{\rm cont}_J(\so_L)$. The action of $G$ on all these spaces is continuous and we can recover ${\rm Sp}_J(L)$ from ${\rm Sp}^{\rm cont}_J(L)$ as the space of 
smooth vectors (for the action of~$G$). 
     
The rigidity in characteristic $p$ given by Proposition \ref{GK irred} and Corollary \ref{Uniquelattice} yields:     
\begin{corollary}\label{Uniquelattice1}
Let $J$ be a subset of $\Delta$ and $L$ a finite extension of $\Q_p$.
       
{\rm a)} The universal unitary completion of ${\rm Sp}_J(L)$ is 
${\rm Sp}^{\rm cont}_J(L)$.
     
{\rm b)} The space of $G$-bounded vectors in ${\rm Sp}_J(L)^*$ is ${\rm Sp}^{\rm cont}_J(L)^*$.
\end{corollary}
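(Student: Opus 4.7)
The plan is to use the unicity of the $G$-invariant lattice provided by Corollary \ref{Uniquelattice} to identify both the universal unitary completion and the space of $G$-bounded dual vectors by way of a single $p$-adic completion argument.

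First I would reduce part (a) to the identification $\widehat{{\rm Sp}_J(\so_L)}={\rm Sp}^{\rm cont}_J(\so_L)$ of $p$-adic completions. The key observation is that $X_J$ is profinite and $\so_L/p^n$ is discrete, so every continuous function $X_J\to\so_L/p^n$ is automatically locally constant; hence ${\rm LC}(X_J,\so_L/p^n)=\mathcal{C}(X_J,\so_L/p^n)$, and similarly for each $X_{J\cup\{i\}}$. A diagram chase in the defining quotients then yields ${\rm Sp}_J(\so_L)/p^n\simeq{\rm Sp}^{\rm cont}_J(\so_L)/p^n$, and passing to the inverse limit, using that ${\rm Sp}^{\rm cont}_J(\so_L)$ is already $p$-adically complete as the unit ball of a Banach space, gives the identification. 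With this in hand, the universal property of ${\rm Sp}^{\rm cont}_J(L)$ follows formally: any continuous $G$-equivariant map $f:{\rm Sp}_J(L)\to B$ into a Banach $G$-representation sends the $G$-stable lattice ${\rm Sp}_J(\so_L)$ into a bounded $G$-invariant $\so_L$-submodule of $B$, hence, after scaling, into the unit ball $B^0$; functoriality of $p$-adic completion then extends $f$ uniquely to a continuous $G$-equivariant map $\widehat{{\rm Sp}_J(\so_L)}\otimes_{\so_L}L={\rm Sp}^{\rm cont}_J(L)\to B$.

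For part (b) I would combine (a) with the fact that ${\rm Sp}_J(\so_L)$ is finitely generated as an $\so_L[G]$-module, which is the content of \cite[Cor.~4.5]{GKD} and was already invoked in the proof of Corollary \ref{Uniquelattice}. Choose generators $v_1,\dots,v_m$. A functional $\lambda\in{\rm Sp}_J(L)^*$ that is $G$-bounded is, by definition, bounded on each orbit $Gv_i$; the non-Archimedean nature of $|\cdot|$ then forces $|\lambda(w)|\leq\max_i\sup_{g\in G}|\lambda(gv_i)|$ for any $w\in{\rm Sp}_J(\so_L)$, expressed as an $\so_L$-linear combination of elements in $\bigcup_i Gv_i$, so $\lambda$ is bounded on ${\rm Sp}_J(\so_L)$. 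The converse is immediate from the $G$-stability of the lattice. By (a), boundedness on ${\rm Sp}_J(\so_L)$ is equivalent to $\lambda$ extending uniquely by continuity to a continuous linear functional on ${\rm Sp}^{\rm cont}_J(L)$, yielding the identification ${\rm Sp}^{\rm cont}_J(L)^*=({\rm Sp}_J(L)^*)^{G\text{-bd}}$.

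The main technical point will be the mod $p^n$ diagram chase: one must check that the image modulo $p^n$ of the kernel of ${\rm LC}(X_J,\so_L)\twoheadrightarrow{\rm Sp}_J(\so_L)$ coincides with that of the kernel of $\mathcal{C}(X_J,\so_L)\twoheadrightarrow{\rm Sp}^{\rm cont}_J(\so_L)$, which comes down to observing that both equal $\sum_i{\rm LC}(X_{J\cup\{i\}},\so_L/p^n)=\sum_i\mathcal{C}(X_{J\cup\{i\}},\so_L/p^n)$ inside $\mathcal{C}(X_J,\so_L/p^n)$. Once this is granted, everything else is a formal consequence of the universal property of $p$-adic completion combined with the rigidity supplied by Corollary \ref{Uniquelattice}, which in turn rests on Grosse-Kl\"onne's mod $p$ irreducibility theorem.
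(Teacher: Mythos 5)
Your proposal is correct and follows essentially the same route as the paper's (very terse) proof: identify ${\rm Sp}^{\rm cont}_J(\so_L)$ with the $p$-adic completion of the lattice ${\rm Sp}_J(\so_L)$ and then use the rigidity coming from Grosse-Kl\"onne's results, with (b) deduced by duality. The only step you should make explicit is the assertion in (a) that $f({\rm Sp}_J(\so_L))$ is bounded in $B$ --- this is not automatic for a general smooth representation and is precisely where the rigidity enters; it follows either from Corollary \ref{Uniquelattice} applied to the $G$-stable lattice $f^{-1}(B^0)$ (using irreducibility to see $f$ is injective), or from the finite generation of ${\rm Sp}_J(\so_L)$ over $\so_L[G]$ together with the $G$-invariance of the norm on $B$, i.e.\ the same ultrametric argument you already spell out in part (b).
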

  \begin{proof}
a) Follows from Corollary \ref{Uniquelattice} and the fact that 
${\rm St}^{\rm cont}_J(\so_L)$
is the $p$-adic completion of ${\rm Sp}_J(\so_L)$ (which in turn uses that
${\rm Sp}_J(A)={\rm Sp}_J(\mathbf{Z})\otimes A$ for all $A$, and this is a free $A$-module).
  
b) Follows by duality from a). 
\end{proof}
  
\begin{remark}
One can also define a locally analytic generalized Steinberg representation ${\rm Sp}_J^{\rm an}(L)$ for any finite extension 
$L$ or $\Q_p$ (or any closed subfield of complex numbers). It is naturally a space of compact type, whose dual is a nuclear Fr\'echet space.
It contains ${\rm Sp}_J(L)$ as a closed subspace (it is closed because  it is the space of vectors killed by the Lie algebra of $G$). The dual of ${\rm Sp}_J^{\rm an}(L)$ 
surjects onto the dual of ${\rm Sp}_J(L)$ and contains the dual of ${\rm Sp}^{\rm cont}_J(L)$ as a dense subspace. The big difference is that 
${\rm Sp}_J^{\rm an}(L)$ is  topologically reducible as a $G$-module. Its Jordan-H\"older constituents are described in \cite{SO}.
\end{remark}
    
\subsection{Results of Schneider-Stuhler}
 
We recall the cohomological interpretation of the representations ${\rm Sp}_r(\mathbf{Z})$, following \cite{SS}. 
Recall that $\mathcal{H}$ is the space of $K$-rational hyperplanes
in $K^{d+1}$. For $r\in \{1,2,...,d\}$ we define simplicial profinite sets $Y_{\cdot}^{(r)}$, $\mathcal{T}^{(r)}_{\cdot}$
as follows: 
         
$\bullet$ $Y_s^{(r)}$ is the closed subset of $\mathcal{H}^{s+1}$ consisting of tuples $(H_0,...,H_s)\in \mathcal{H}^{s+1}$ with $$\dim_K(\sum_{i=0}^s K\ell_{H_i})\leq r,$$
where $\ell_{H_i}\in (K^{d+1})^*$ is any equation of $H_i$. 
              
$\bullet$  $\mathcal{T}_{s}^{(r)}$ is the set of flags 
$W_0\subset...\subset W_s$ in $(K^{d+1})^*$ for which $\dim_K W_i\in \{1,...,r\}$ for all 
$i$.  This set has a natural profinite topology.
        
In both cases the face/degeneracy maps are the obvious ones, i.e. 
omit/double one hyperplane in a tuple, resp. a vector subspace in a flag.  With the topology forgotten, $\mathcal{T}^{(d)}_{\cdot}$  is the Tits\footnote{For instance, for $d=1$ this is the set of ends of the tree.} (not Bruhat-Tits!) building of $G$.

           The following result is due to Schneider and Stuhler: 
               
        \begin{proposition} \label{SSsimplicial} For all $r\in \{1,2,...,d\}$ we have natural isomorphisms (where $\wt{H}$ denotes reduced cohomology)
        $$\wt{H}^{r-1}(|\mathcal{T}^{(r)}_{\cdot}|, \mathbf{Z})\simeq \wt{H}^{r-1}(|Y^{(r)}_{\cdot}|, \mathbf{Z})\simeq {\rm Sp}_r(\mathbf{Z}).$$
                \end{proposition}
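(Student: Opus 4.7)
My plan is to reduce both computations to the homotopy type of the (truncated) Tits building of $\mathrm{GL}_{d+1}(K)$, then identify the top cohomology with the generalized Steinberg representation by carefully matching the combinatorics with the presentation of ${\rm Sp}_r(\mathbf Z)$.

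First I would handle the second isomorphism. By construction, $|\mathcal{T}^{(r)}_{\cdot}|$ is the order complex (geometric realization of the nerve) of the poset $\mathcal{P}_r$ of nonzero subspaces $W\subseteq (K^{d+1})^*$ with $1\le\dim W\le r$. The Solomon--Tits theorem in its relative form (applied to the truncated spherical building, or equivalently to the shellable poset $\mathcal{P}_r$) yields that $|\mathcal{T}^{(r)}_{\cdot}|$ is homotopy equivalent to a wedge of $(r-1)$-spheres and its reduced cohomology is concentrated in degree $r-1$. The complex of top-dimensional simplices is $\mathbf Z[\mathcal{T}^{(r)}_{r-1}]=\mathbf Z[X_{\{1,\ldots,d-r\}}]^{\mathrm{lc}}$, i.e.~locally constant $\mathbf Z$-valued functions on the variety of partial flags $W_1\subset\cdots\subset W_r$ with $\dim W_i=i$, which is exactly ${\rm LC}(X_{\{1,\ldots,d-r\}},\mathbf Z)$. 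The face maps into the $(r-2)$-dimensional simplices correspond precisely to the maps ${\rm LC}(X_{J\cup\{i\}},\mathbf Z)\hookrightarrow {\rm LC}(X_J,\mathbf Z)$ (for $J=\{1,\ldots,d-r\}$ and $i\notin J$) obtained by forgetting one of the intermediate levels. Since the reduced cohomology is concentrated in the top degree, taking $\wt H^{r-1}$ gives the cokernel description, which is $G$-equivariantly isomorphic to ${\rm Sp}_r(\mathbf Z)$.

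Next I would relate $|Y^{(r)}_{\cdot}|$ to $|\mathcal{T}^{(r)}_{\cdot}|$. Introduce the bisimplicial set $E_{s,t}=\{((H_0,\ldots,H_s),(W_0\subsetneq\cdots\subsetneq W_t))\}$ with all $\dim W_i\le r$ and $K\ell_{H_j}\subseteq W_0$ for every $j$. Forgetting the flag gives a map $|E|\to |Y^{(r)}_{\cdot}|$ whose fibers, for a fixed tuple $(H_0,\ldots,H_s)$ spanning a subspace $W$, are realizations of the poset of nonzero subspaces of $W$; these are contractible (having a minimum once we adjoin the span, or by the standard Quillen argument for order complexes with a least element). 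Symmetrically, forgetting the hyperplanes gives a map $|E|\to |\mathcal{T}^{(r)}_{\cdot}|$ whose fibers over a flag $W_0\subsetneq\cdots\subsetneq W_t$ are homotopy equivalent to the nerves of coverings of $\mathbf P(W_0)^{s+1}$ by rational point configurations, which are contractible by the Čech-to-cohomology spectral sequence applied to an acyclic covering. Both projections are therefore weak equivalences, yielding $|Y^{(r)}_{\cdot}|\simeq |\mathcal{T}^{(r)}_{\cdot}|$ equivariantly.

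The hard part will be step three: turning the abstract isomorphism of top cohomology with a quotient of ${\rm LC}$-spaces into the specific $G$-equivariant identification with ${\rm Sp}_r(\mathbf Z)$ in the form defined in Section~\ref{intro5}. This requires checking that the simplicial boundary maps really correspond to the sum of the $(d-r)$ face maps ${\rm LC}(X_{J\cup\{i\}})\to {\rm LC}(X_J)$ used to define ${\rm Sp}_r$, with the correct signs and $G$-action, and that the quotient appearing in the cokernel of the last boundary is equal on the nose to $\sum_{i\notin J}{\rm LC}(X_{J\cup\{i\}},\mathbf Z)$. The remaining potential subtlety is naturality/continuity of these identifications, since $\mathcal H$ is profinite; this is handled by writing $\mathcal H=\varprojlim_n\mathcal H_n$ for finite quotients and passing to the limit, using that $\mathcal T^{(r)}_{\cdot}$ and $Y^{(r)}_{\cdot}$ are simplicial profinite sets and that reduced cohomology with $\mathbf Z$-coefficients commutes with the relevant cofiltered limits for these finite-type simplicial sets.
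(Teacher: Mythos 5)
Your overall strategy---concentrate the reduced cohomology in top degree, identify the top cochains with $\mathrm{LC}(X_{\{1,\ldots,d-r\}},\mathbf Z)$, and present $\wt{H}^{r-1}$ as a cokernel of face maps---is the right one and broadly matches the paper's. But as written the argument has two genuine gaps.

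First, you have not actually carried out what you yourself flag as ``the hard part'': identifying the image of the $(r-2)$-boundary with $\sum_{i\in\Delta\setminus J}\mathrm{LC}(X_{J\cup\{i\}},\mathbf Z)$ for $J=\{1,\ldots,d-r\}$. This is the whole content of the proposition, not a formality. It also requires a correction: your identification $\mathbf Z[\mathcal{T}^{(r)}_{r-1}]\cong\mathrm{LC}(X_{\{1,\ldots,d-r\}},\mathbf Z)$ is false as stated, because $\mathcal{T}^{(r)}_{r-1}$ contains degenerate flags. One must pass to the nondegenerate part $\mathcal{NT}^{(r)}_\cdot$ (equivalently, use the normalized cochain complex), where indeed $\mathcal{NT}^{(r)}_{r-1}\simeq X_{\{1,\ldots,d-r\}}$, and then observe that $\mathcal{NT}^{(r)}_{r-2}\simeq\coprod_{i=d-r+1}^{d}X_{\{1,\ldots,d-r,i\}}$ with the face maps given by the pullbacks $\mathrm{LC}(X_{J\cup\{i\}},\mathbf Z)\to\mathrm{LC}(X_J,\mathbf Z)$; only then does $\wt{H}^{r-1}$ coincide on the nose with the quotient defining $\mathrm{Sp}_r(\mathbf Z)$. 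This is exactly how the paper finishes, and it cannot be left as a remark.

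Second, for $|Y^{(r)}_\cdot|\simeq|\mathcal{T}^{(r)}_\cdot|$ the paper simply quotes Schneider--Stuhler; your bisimplicial zig-zag through $E_{s,t}$ is a plausible plan (and probably close to their argument), but both fiber-contractibility justifications are off. The fiber of $E\to Y^{(r)}_\cdot$ over $(H_0,\ldots,H_s)$ is the flag complex of the poset of subspaces $V$ with $\sum_j K\ell_{H_j}\subseteq V$ and $\dim V\le r$; it is contractible because this poset has the unique minimum $\sum_j K\ell_{H_j}$---not because it is ``the poset of nonzero subspaces of $W$.'' The fiber of $E\to\mathcal{T}^{(r)}_\cdot$ over a flag is, in the $s$-direction, the \v{C}ech nerve $\mathcal{H}(W_0)^{\bullet+1}$ of the map from the nonempty set $\mathcal H(W_0)$ to a point, contractible for that elementary reason rather than by any ``\v{C}ech-to-cohomology spectral sequence for an acyclic covering.'' You would also need to invoke (and state) the standard bisimplicial lemma that a map to a constant-in-one-direction bisimplicial set with contractible fibers induces a weak equivalence on realizations. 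None of this is conceptually fatal, but as written the step does not go through, whereas the paper avoids the issue entirely by citing the original source.
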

         
         \begin{proof} The 
         isomorphism $\wt{H}^{r-1}(|\mathcal{T}^{(r)}_{\cdot}|, \mathbf{Z})\simeq \wt{H}^{r-1}(|Y^{(r)}_{\cdot}|, \mathbf{Z})$ is proved in \cite[Ch. 3, Prop 5]{SS}. To identify these objects with ${\rm Sp}_r(\mathbf{Z})$, assuming for simplicity $r>1$ from now on, consider the clopen subset $\mathcal{NT}_s^{(r)}\subset \mathcal{T}_{s}^{(r)}$ of $\mathcal{T}_{s}^{(r)}$ consisting of flags $W_0\subset...\subset W_s$ for which all inclusions are strict. Using the obvious isomorphism $\wt{H}^{r-1}(|\mathcal{NT}^{(r)}_{\cdot}|, \mathbf{Z})\simeq \wt{H}^{r-1}(|\mathcal{T}^{(r)}_{\cdot}|, \mathbf{Z})$
the result follows from 
         the exact sequence\footnote{Recall that if $S_{\cdot}$ is any simplicial profinite set,
         then $H^*(|S_{\cdot}|, \mathbf{Z})=H^*({\rm LC}(S_{\cdot}, \mathbf{Z}))$, where $|S_{\cdot}|$ is the geometric realisation of
       $S_{\cdot}$ and ${\rm LC}(S_{\cdot}, \mathbf{Z})$ is the complex 
       $({\rm LC}(S_s, \mathbf{Z}))_{s}$, the differentials being given by the alternating sum
       of the maps induced by face maps in $S$.}
         
                  $${\rm LC}( \mathcal{NT}^{(r)}_{r-2}, \mathbf{Z})\to {\rm LC}(\mathcal{NT}^{(r)}_{r-1},\Z)\to H^{r-1}(|\mathcal{NT}^{(r)}_{\cdot}|, \mathbf{Z})\to 0$$         
         and the 
         identifications
         $$\mathcal{NT}^{(r)}_{r-1}\simeq X_{\{1,2,...,d-r\}}, \quad \mathcal{NT}^{(r)}_{r-2}\simeq \coprod_{i=d-r+1}^d X_{\{1,...,d-r, i\}}$$
\end{proof}
             
  \begin{remark} \label{USEFUL} 
 For all $r\in \{1,2,...,d\}$ and all $q$ there are natural isomorphisms 
       $$H^q(|\mathcal{NT}^{(r)}_{\cdot}|, \mathbf{Z})\simeq H^q(|\mathcal{T}^{(r)}_{\cdot}|, \mathbf{Z})\simeq H^q(|Y^{(r)}_{\cdot}|, \mathbf{Z})$$
       and these spaces are nonzero only for $q=0, r-1$, with $H^0$ being given by $\mathbf{Z}$ for $r>1$ and by 
       ${\rm LC}(\mathbf{P}((K^{d+1})^*), \mathbf{Z})$ for $r=1$. See \cite[Ch. 3, Prop. 6]{SS} for the details.

   \end{remark}
  
          The following theorem is one of the main results of \cite{SS}. See also \cite{Orl} for a different argument (at least for a) and the compactly supported analogue of b)). 
     
\begin{theorem}{\rm (Schneider-Stuhler)} \label{SSmain}
Let  $r\geq 0$. 
     
{\rm   a)}  For a  prime $\ell\ne p$, there are natural isomorphisms of $G\times {\sg}_K$-modules 
$$H^r_{\eet}({\mathbb H}^d_C, \Q_{\ell}(r))\simeq {\rm Sp}_r(\Z_\ell)^*\otimes \Q_{\ell},\quad H^r_{\proeet}({\mathbb H}^d_C, \Q_{\ell}(r))\simeq {\rm Sp}_r(\Q_{\ell})^*.$$
          
{\rm    b)}  There is a  natural isomorphism of $G$-modules 
     $$H^r_{\rm dR}({\mathbb H}^d_K)\simeq {\rm Sp}_r(K)^*.  $$
\end{theorem}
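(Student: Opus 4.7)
The plan is to reduce everything to the combinatorial identification provided by Proposition \ref{SSsimplicial}, by exploiting the stratification of $\mathbb{P}^d_C$ induced by the closed subset $Z := \mathbb{P}^d_C \setminus \mathbb{H}^d_C = \bigcup_{H\in \mathcal{H}} H$. The three cases (étale, pro-étale, de Rham) follow the same template, differing only in the choice of the sheaf theory.

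I would first exploit the long exact sequence of the pair $(\mathbb{P}^d_C, \mathbb{H}^d_C)$:
$$\cdots \to H^i_Z(\mathbb{P}^d_C, \Q_\ell(r)) \to H^i(\mathbb{P}^d_C, \Q_\ell(r)) \to H^i(\mathbb{H}^d_C, \Q_\ell(r)) \to H^{i+1}_Z(\mathbb{P}^d_C, \Q_\ell(r)) \to \cdots$$
and its de Rham analogue. Since $H^*(\mathbb{P}^d_C, \Q_\ell(r))$ (resp. $H^*_{\dr}(\mathbb{P}^d_K)$) is one dimensional in each even degree $0,2,\ldots,2d$ and zero otherwise, the degree $r$ cohomology of $\mathbb{H}^d_C$ reduces to a computation of $H^{r+1}_Z$ together with the elementary Tate twist classes (which account for the trivial summand seen in $X_{\Delta}$). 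The key tool is then a Čech-type spectral sequence associated to the covering of $Z$ by the profinite family $\{H\}_{H\in \mathcal{H}}$, of the form
$$E_1^{s,t} = \prod_{(H_0,\ldots,H_s)\in \mathcal{H}^{s+1}} H^t_{H_0\cap\cdots\cap H_s}(\mathbb{P}^d_C, \Q_\ell(r)) \;\Longrightarrow\; H^{s+t}_Z(\mathbb{P}^d_C,\Q_\ell(r)),$$
where the product is taken with continuous dependence on the profinite parameter space $\mathcal{H}^{s+1}$ and the alternating differential.

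Next I would analyse this $E_1$ term using Gysin purity: if $\dim_K \sum K\ell_{H_i} = k$ then $H_0\cap\cdots\cap H_s$ is a linear subspace of codimension $k$, and $H^t_{H_0\cap\cdots\cap H_s}(\mathbb{P}^d_C, \Q_\ell(r)) \simeq H^{t-2k}(\mathbb{P}^{d-k}_C, \Q_\ell(r-k))$ which is nonzero only in the even degrees $2k,2k+2,\ldots,2d$. Filtering the $E_1$ page by the stratification $Y_\cdot^{(k)}$ of Proposition \ref{SSsimplicial}, the contribution landing in total degree $r$ comes precisely from tuples with $k=r$ (hyperplanes in linearly general position), and reads
$$\wt{H}^{r-1}(|Y^{(r)}_\cdot|,\Q_\ell)^* \otimes (\text{Tate twist})$$
after dualization. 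Applying Proposition \ref{SSsimplicial} identifies this with ${\rm Sp}_r(\Q_\ell)^*$, which gives the pro-\'etale assertion; the \'etale case is the subspace of functions that factor through a finite quotient of $\mathcal{H}$, yielding ${\rm Sp}_r(\Z_\ell)^*\otimes \Q_\ell$. The de Rham case in (b) proceeds identically with algebraic de Rham cohomology of $\mathbb{P}^d_K$ and the linear subspaces, the classes being realized explicitly by cup products of the logarithmic forms $d\log \ell_H$; alternatively one constructs the map ${\rm Sp}_r(K)^* \to H^r_{\dr}(\mathbb{H}^d_K)$ directly from standard symbols (as in Iovita--Spiess) and checks bijectivity using a dimension count coming from the same stratification.

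The main obstacle is technical rather than conceptual: one must handle carefully the profinite topology on $\mathcal{H}$ and the non quasi-compactness of $\mathbb{H}^d_C$. Concretely, the Čech complex above lives over a simplicial profinite set and the spectral sequence must be set up as a continuous one, requiring that cohomology commute appropriately with the cofiltered limits indexing the profinite topology on $\mathcal{H}$. In the rigid analytic \'etale setting this uses the hypothesis $\ell\neq p$ in an essential way (for homotopy invariance for complements of hyperplanes and for compatibility of étale cohomology with suitable limits), and in the de Rham case one relies on the fact that the relevant de Rham cohomology groups are finite dimensional over $K$, so that the limits pose no problem.
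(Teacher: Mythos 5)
Your proposal follows the same global skeleton as the paper's proof: the long exact sequence of the pair $({\mathbb P}^d,{\mathbb H}^d)$, a \v{C}ech-type spectral sequence indexed by tuples of hyperplanes for the "cohomology with supports" term, and the identification of the resulting $E_2$-page with the complexes computing ${\rm Sp}_r$ via Proposition \ref{SSsimplicial}. Where you differ is in the local input and in the handling of the profinite index set. The paper (following Schneider--Stuhler's axiomatics) never takes supports in the closed linear strata themselves: it works with the open tubes $U_n(H)=\{|\ell_H(z)|>|\pi|^n\}$, computes $H^i({\mathbb P}^d, U_n(H_0)\cup\cdots\cup U_n(H_j))$ using only the homotopy property for the open polydisk (the union fibers over a projective space with polydisk fibers), and --- crucially --- observes that $U_n=\cap_{H\in\mathcal H}U_n(H)=\cap_{H\in\mathcal H_n}U_n(H)$ for a \emph{finite} set $\mathcal H_n\simeq{\mathbb P}^d((\so_K^{d+1}/\pi^n)^*)$, so that at each level $n$ the spectral sequence is an honest finite one; the profinite structure only appears when passing to the limit $n\to\infty$. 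You instead invoke Gysin purity for the closed subvarieties $H_0\cap\cdots\cap H_s$ and set up a continuous \v{C}ech complex over the profinite set $\mathcal H^{s+1}$ directly. Purity is available in all three theories here, so your local computation is a legitimate substitute for homotopy invariance; but the place you flag as "technical" is exactly where the paper's finite-approximation trick is doing the real work, and your argument only closes once you replace the infinite union $Z=\cup_H H$ (which is not an analytic subvariety) by the decreasing family of finite unions of closed tubes and commute cohomology with that limit. Two smaller points: the degree bookkeeping is slightly more involved than "only $k=r$ contributes" (there is also an $E_2^{0,r+1}$ term for odd $r$ that cancels against $H^{r+1}({\mathbb P}^d)$), and the dichotomy in part (a) is not "functions factoring through a finite quotient" but rather the order of operations --- \'etale cohomology is $(\varprojlim_n H^*(\cdot,\Z/\ell^n))\otimes\Q_\ell$, producing the bounded dual ${\rm Sp}_r(\Z_\ell)^*\otimes\Q_\ell$ (measures), while the pro-\'etale theory commutes with the limit over $n$ rationally and produces the full dual ${\rm Sp}_r(\Q_\ell)^*$ (distributions).
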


\begin{proof} Let $H^*$ be any of the cohomologies occuring in the theorem. It has the properties required by Schneider-Stuhler \cite[Ch. 2]{SS}. The crucial among them  is the homotopy invariance property: if $D$ is the $1$-dimensional open unit disk then, for any smooth affinoid 
$X$,  the projection $X\times D\to X$ induces a  natural isomorphism $H^*(X)\stackrel{\sim}{\to}H^*(X\times D)$.
For de Rham cohomology this is very simple (see the discussion preceding Prop. 3 in \cite[Ch. 2]{SS});
for $\ell$-adic \'etale and pro-\'etale cohomologies this follows from the ``homotopy property''
 of $\ell$-adic \'etale cohomology with respect to  a closed disk \cite[proof of Theorem  6.0.2]{SS}, and the fact that $\ell$-adic \'etale and pro-\'etale cohomologies are the same on affinoids.
  
   We recall very briefly the key arguments, without going into the rather involved combinatorics. If $H\in \mathcal{H}$ and $n\geq 1$, let 
   $U_n(H)$ be the open polydisk in the affine space ${\mathbb P}_K^d\setminus H$ given by\footnote{We use unimodular representatives for points of projective space and for linear forms giving equations of $H$.}
   $|\ell_H(z)|>|\pi|^n$. The open subsets $U_n=\cap_{H\in \mathcal{H}} U_n(H)$ form a Stein covering of ${\mathbb H}^d_K$ and $U_n=\cap_{H\in \mathcal{H}_n} U_n(H)$, for a 
    finite subset $\mathcal{H}_n$ of $\mathcal{H}$, in bijection with ${\mathbb P}^d((\so_K^{d+1}/\pi^n)^*)$. 
   Writing $H^*(X,U)$ for the ``cohomology with support in $X\setminus U$'' (more precisely, the 
  derived functors of the functor ``sections vanishing on $U$''), a formal argument (see the discussion following \cite[Ch.3, Cor. 5]{SS}) gives a spectral sequence 
   $$E_1^{-j, i}(n)=\bigoplus_{H_{0},...,H_{j}\in \mathcal{H}_n} H^i({\mathbb P}_K^d, U_n(H_{0})\cup...\cup U_n(H_{j}))\Rightarrow H^{i-j}({\mathbb P}^d_K, U_n).$$
   
   Now,  $U_n(H_{0})\cup...\cup U_n(H_{j})$ is a locally trivial fibration over a projective space, whose fibers are open polydisks  \cite[Ch.1, prop.6]{SS}. Using the homotopy invariance of cohomology, one computes $H^i({\mathbb P}_K^d, U_n(H_{0})\cup...\cup U_n(H_{j}))$, in particular this is always equal to $A=H^0({\rm Sp}(K))$ or $0$ (with a simple combinatorial recipe allowing to distinguish the two cases). The spectral sequence simplifies greatly and\footnote{This is allowable as all modules involved are finite over the Artinian ring $A$.} letting $n\to \infty$ gives (using also Proposition \ref{SSsimplicial} and Remark \ref{USEFUL}) a spectral sequence 
   $$E_2^{-j,i}\Rightarrow H^{i-j}({\mathbb P}^d_K, {\mathbb H}^d_K),$$
   where $$E_2^{-j,i}={\rm Hom}_{\mathbf{Z}} (H^j(|\mathcal{T}_{\cdot}^{(\frac{i}{2})}|, \mathbf{Z}), A)$$
   if $i\in [2, 2d]$ is even and $j\in \{0, \frac{i}{2}-1\}$, and $0$ otherwise. The analysis of this spectral sequence combined with Proposition 
   \ref{SSsimplicial} yields the cohomology groups $H^i({\mathbb P}^d_K, {\mathbb H}^d_K)$. The result follows from the exact sequence 
   $$...\to H^i({\mathbb P}^d_K)\to H^i({\mathbb H}^d_K)\to H^{i+1}({\mathbb P}^d_K, {\mathbb H}^d_K)\to H^{i+1}({\mathbb P}^d_K)\to....$$
   
   \end{proof}  
  Combining Theorem \ref{SSmain} and Corollary \ref{Uniquelattice1} yields:

  \begin{corollary}\label{Gbounded}
   The space of $G$-bounded vectors in $H^r_{\rm dR}({\mathbb H}^d_K)$
   is isomorphic to ${\rm Sp}^{\rm cont}_r(K)^*$. 
     \end{corollary}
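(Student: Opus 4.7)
The plan is to deduce this corollary directly by combining the two preceding results, namely the Schneider-Stuhler identification $H^r_{\rm dR}({\mathbb H}^d_K)\simeq {\rm Sp}_r(K)^*$ from Theorem \ref{SSmain}(b) and the description of $G$-bounded vectors in ${\rm Sp}_r(K)^*$ from Corollary \ref{Uniquelattice1}(b). Once the isomorphism of Theorem \ref{SSmain}(b) is seen to respect the relevant topological structure, the statement is immediate.

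First I would verify that the Schneider-Stuhler isomorphism is not merely an abstract $G$-equivariant isomorphism of $K$-vector spaces but in fact a topological one. On the left, $H^r_{\rm dR}({\mathbb H}^d_K)$ carries the natural Fr\'echet topology coming from the fact that ${\mathbb H}^d_K$ is Stein (so $\Omega^r({\mathbb H}^d_K)$ is Fr\'echet and the differentials are strict with closed image by Lemma \ref{czesto}). On the right, ${\rm Sp}_r(K)^*$ is, by the discussion of topology on special representations, a countable inverse limit of finite-dimensional $K$-vector spaces, hence also Fr\'echet. The spectral sequence construction recalled in the proof of Theorem \ref{SSmain} (b) proceeds by computing $H^r({\mathbb P}^d_K,{\mathbb H}^d_K)$ via coverings by complements of finite collections of rational hyperplanes and then passing to the limit over the Stein exhaustion $U_n$; each step is continuous, and so the resulting map is continuous in one direction, at which point the Open Mapping Theorem for Fr\'echet spaces (Section \ref{OMT}) promotes it to a topological isomorphism.

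Once this is established, a $G$-equivariant topological isomorphism carries $G$-bounded vectors to $G$-bounded vectors, so
\[
H^r_{\rm dR}({\mathbb H}^d_K)^{\text{$G$-bd}}\simeq ({\rm Sp}_r(K)^*)^{\text{$G$-bd}}.
\]
Applying Corollary \ref{Uniquelattice1}(b) to the right-hand side identifies this with ${\rm Sp}^{\rm cont}_r(K)^*$, finishing the argument.

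The only non-formal step is the topological compatibility of the Schneider-Stuhler isomorphism; everything else is a direct combination of already established results. I expect this topological check to be the main (albeit mild) obstacle, and it can be handled either by tracing topologies through the Schneider-Stuhler spectral sequence or by noting that the isomorphism is realized through the explicit symbol maps (cup products of $\dlog$'s of linear forms) discussed later in the paper, which are manifestly continuous on both sides.
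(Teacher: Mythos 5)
Your proposal matches the paper exactly: the corollary is obtained by combining Theorem \ref{SSmain}(b) with Corollary \ref{Uniquelattice1}(b). The paper leaves the topological compatibility of the Schneider–Stuhler isomorphism implicit, whereas you spell it out (correctly) via the Open Mapping Theorem or the symbol description; otherwise the argument is the same.
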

   \subsection{Generalization of Schneider-Stuhler} We will extend the results of Schneider-Stuhler to Hyodo-Kato cohomology. To do that we will use the description of the isomorphisms
   in Theorem \ref{SSmain} via symbols. 
      \subsubsection{Results of Iovita-Spiess} 
    All the isomorphisms in Theorem \ref{SSmain} are rather abstract, but following Iovita-Spiess \cite{IS} one can make them quite 
    explicit as follows. 
   Let ${\rm LC}^{c}(\mathcal{H}^{r+1}, \mathbf{Z})$ be the space of locally constant functions 
       $f: \mathcal{H}^{r+1}\to \mathbf{Z}$ such that, for all $H_0,...,H_{r+1}\in \mathcal{H}$,
       $$f(H_1,...,H_{r+1})-f(H_0, H_2,..., H_{r+1})+...+(-1)^{r+1}f(H_0,...,H_{r})=0$$
       and moreover, if 
       $\ell_{H_i}$ are linearly dependent, then     
       $f(H_0,...,H_r)=0$ (i.e., $f$ vanishes on $Y_r^{(r)}$). Define analogously $\mathcal{C}^{c}(\mathcal{H}^{r+1}, \mathbf{Z})$.
      It is not difficult to see that we have a natural isomorphism (see the proof of Proposition
      \ref{SSsimplicial} for the notation used below)
       $$\widetilde{H}^{r-1}(|\mathcal{NT}^{(r)}_{\cdot}|, \mathbf{Z})\simeq {\rm LC}^{c}(\mathcal{H}^{r+1}, \mathbf{Z})$$ and, in particular, (using Proposition \ref{SSsimplicial})
       a natural  isomorphism $${\rm Sp}_r(\mathbf{Z})\simeq {\rm LC}^{c}(\mathcal{H}^{r+1}, \mathbf{Z}).$$
       
         If $S$ is a profinite set and $A$ an abelian group, let $D(S,A)={\rm Hom}({\rm LC}(S,\mathbf{Z}), A)$ be the space of $A$-valued locally constant distributions on 
         $S$. If $L$ is a discrete valuation nonarchimedean field  let $M(S, L)$ be  the    space of  $L$-valued measures, i.e., bounded $L$-valued distributions. It has a natural topology that is finer than the subspace topology induced from $D(S,L)$ \cite[Ch. 4]{IS}.
         
          The inclusion ${\rm LC}^{c}(\mathcal{H}^{r+1}, \mathbf{Z})\subset {\rm LC}(\mathcal{H}^{r+1}, \mathbf{Z})$ gives rise to a continuous strict surjection 
           $$D(\mathcal{H}^{r+1}, A)\to {\rm Hom}({\rm Sp}_r(\mathbf{Z}), A).$$
           Define the space $D(\mathcal{H}^{r+1}, A)_{\rm deg}$ of degenerate distributions as the kernel of this map.
        Combining this with the previous theorem we obtain  surjections: 
        \begin{align*}
        & D(\mathcal{H}^{r+1}, K)\to H^s_{\rm dR}({\mathbb H}^d_K), \quad M(\mathcal{H}^{r+1}, \Q_{\ell})\to H^r_{\eet}({\mathbb H}^d_C, \Q_{\ell}(r)),\\
         & D(\mathcal{H}^{r+1}, \Q_{\ell})\to H^r_{\proeet}({\mathbb H}^d_C, \Q_{\ell}(s)).
         \end{align*}

        These surjections can be made explicit as follows. For each $(H_0,...,H_r)\in \mathcal{H}^{r+1}$, the invertible functions (on ${\mathbb H}^d_K$)
         $\frac{\ell_{H_1}}{\ell_{H_0}},..., \frac{\ell_{H_r}}{\ell_{H_0}}$ give rise (either by taking ${\rm dlog}$ and wedge-product or by taking the corresponding symbols in \'etale cohomology and then cup-product) to a symbol 
         $[H_0,...,H_r]$ living in $H^r_{\rm dR}({\mathbb H}^d_K)$, resp. in $H^r_{\eet}({\mathbb H}^d_C, \Q_{\ell}(r))$, resp. $H^r_{\proeet}({\mathbb H}^d_C, \Q_{\ell}(r))$. For example, for de Rham cohomology 
         $[H_0,...,H_r]$ is the class of the closed $r$-form 
        $${\rm dlog}\frac{\ell_{H_1}}{\ell_{H_0}}\wedge...\wedge {\rm dlog}\frac{\ell_{H_r}}{\ell_{H_0}}$$
in $H^r_{\rm dR}({\mathbb H}^d_K)$.

    One shows that the following regulator map is well-defined
$$
r_{\dr}: D(\mathcal{H}^{r+1}, K)\to H^r_{\dr}({\mathbb H}^d_K), \quad \mu\mapsto \int_{\mathcal{H}^{r+1}} [H_0,...,H_r] \mu(H_0,...,H_r).$$ The problem here is that the map $(H_0,...,H_r)\mapsto [H_0,...,H_r]$ is not locally constant on ${\mathbb H}^d_K$; however it is so on $U_n$ (see the proof of Theorem \ref{SSmain} for the notation), for all $n$, which makes it possibe to give a meaning to the integral.
The same integral  works for $\ell$-adic \'etale and pro-\'etale cohomologies 
yielding 
the regulator maps
\begin{align*}
 r_{\eet}: M(\mathcal{H}^{r+1}, \Q_{\ell})\to H^r_{\eet}({\mathbb H}^d_C,\Q_{\ell}(r)),\quad  r_{\proeet}: D(\mathcal{H}^{r+1}, \Q_{\ell})\to H^r_{\proeet}({\mathbb H}^d_C,\Q_{\ell}(r)).
 \end{align*}
This can be easily seen in the case of \'etale cohomology. For the pro-\'etale cohomology, the key point is that we can write $$H^r_{\proeet}({\mathbb H}^d_C, \Q_{\ell}(r))=\varprojlim_{n} H^r_{\proeet}(U_{n,C}, \Q_{\ell}(r)),$$ where
$H^r_{\proeet}(U_{n,C}, \Q_{\ell}(r))$ is finite dimensional 
and the map $$\mathcal{H}^{r+1}\to H^r_{\proeet}({\mathbb H}^d_C, \Q_{\ell}(r))
\to H^r_{\proeet}(U_{n,C}, \Q_{\ell}(r)),\quad 
(H_0,...,H_r)\mapsto [H_0,...,H_r],$$ 
is locally constant for all $n$, by arguing as in \cite[Lemma 4.4]{IS}.   All these regulators are continuous.

    One can show that the above  maps induce the isomorphisms in Theorem \ref{SSmain} by imitating the arguments in \cite{IS}.
    \begin{theorem}{\rm (Iovita-Spiess, \cite[Theorem 4.5]{IS})}
\label{IS}
The following diagram of Fr\'echet $G$-spaces commutes
$$
\xymatrix{
0\ar[r] & D(\mathcal{H}^{r+1}, K)_{\deg}\ar[r] & D(\mathcal{H}^{r+1}, K)\ar@{->>}[rd]_{\can}\ar[r]^{r_{\dr}} & H^r_{\dr}({\mathbb H}^d_K)\ar[r] & 0\\
 & & & {\rm Sp}_r(K)^*\ar@{-}[u]^{\wr}
}
$$
and the sequence is strictly exact. 
Similarly for $\ell$-adic \'etale and pro-\'etale cohomologies.
\end{theorem}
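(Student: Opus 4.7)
\medskip

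\noindent\textbf{Proof proposal.} The plan is to establish the de Rham version first; the $\ell$-adic \'etale and pro-\'etale versions will follow by the same argument, with $\mathrm{dlog}$ replaced by $\ell$-adic \'etale Chern classes, using that pro-\'etale cohomology of each affinoid level is finite-dimensional and, for the \'etale case, that the Chern class is $\Z_\ell$-integral so that the integral is controlled by measures rather than all distributions. First I would verify that $r_{\mathrm{dr}}$ is well-defined and continuous. Writing ${\mathbb H}^d_K = \bigcup_n U_n$ for the Stein covering from the proof of Theorem~\ref{SSmain}, we have $H^r_{\mathrm{dR}}({\mathbb H}^d_K) \simeq \varprojlim_n H^r_{\mathrm{dR}}(U_n)$. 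On each $U_n$ only the finite set $\mathcal{H}_n \subset \mathcal{H}$ of hyperplanes matters for testing where the forms $\mathrm{dlog}(\ell_{H_i}/\ell_{H_0})$ are bounded, so the map $(H_0,\dots,H_r)\mapsto [H_0,\dots,H_r]|_{U_n}$ factors through $\mathcal{H}_n^{r+1}$ and is therefore locally constant with finite image. Hence $r_{\mathrm{dr},n}(\mu):=\int [H_0,\dots,H_r]|_{U_n}\,\mu$ is a finite sum, depends continuously on $\mu \in D(\mathcal{H}^{r+1},K)$, and the levels are compatible, defining the continuous map $r_{\mathrm{dr}}$.

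Next I would show that $r_{\mathrm{dr}}$ factors through $\mathrm{Sp}_r(K)^*$, i.e., kills $D(\mathcal{H}^{r+1},K)_{\deg}$. By definition and Proposition~\ref{SSsimplicial}, $D(\mathcal{H}^{r+1},K)_{\deg}$ is generated by two families of distributions, dual to the two defining relations of $\mathrm{LC}^c \subset \mathrm{LC}$: coboundaries, and functions supported on the degenerate locus $Y^{(r)}_r$. Accordingly, I would establish two geometric facts about the symbol class $[H_0,\dots,H_r] \in H^r_{\mathrm{dR}}({\mathbb H}^d_K)$: (a) the simplicial cocycle identity
\[
\sum_{i=0}^{r+1}(-1)^i\,[H_0,\dots,\hat H_i,\dots,H_{r+1}] \;=\; 0,
\]
which follows from the log-multiplicativity $\mathrm{dlog}(\ell_{H_i}/\ell_{H_0}) = \mathrm{dlog}(\ell_{H_i}/\ell_{H_j}) + \mathrm{dlog}(\ell_{H_j}/\ell_{H_0})$ and a standard telescoping/antisymmetry computation on wedge products; and (b) the vanishing $[H_0,\dots,H_r] = 0$ whenever $\ell_{H_0},\dots,\ell_{H_r}$ span a subspace of dimension $\leq r$, which follows because in that case the map ${\mathbb H}^d_K \to \mathbb{G}_{m,K}^r$ defined by $(\ell_{H_i}/\ell_{H_0})_{i=1}^r$ factors through a torus of dimension $\leq r-1$, and the pullback of a top $r$-form from a torus of smaller dimension is zero. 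Combined with the continuity from the previous step, this produces a continuous $G$-equivariant map $\bar r_{\mathrm{dr}}\colon \mathrm{Sp}_r(K)^* \to H^r_{\mathrm{dR}}({\mathbb H}^d_K)$.

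Finally I would identify $\bar r_{\mathrm{dr}}$ with the Schneider--Stuhler isomorphism of Theorem~\ref{SSmain}(b). Both $\mathrm{Sp}_r(K)^*$ and $H^r_{\mathrm{dR}}({\mathbb H}^d_K)$ are Fr\'echet $G$-modules, abstractly isomorphic, and by Proposition~\ref{GK irred} (or rather its corollary for duals in characteristic $0$) $\mathrm{Sp}_r(K)^*$ is topologically irreducible as a $G$-module. So it suffices to show $\bar r_{\mathrm{dr}} \neq 0$: picking any $(r+1)$-tuple of hyperplanes in general position inside some $U_n$ and using the explicit residue pairing on the dual torus, the image symbol is nonzero. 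Bijectivity then follows, and strict exactness of the sequence follows from the Open Mapping Theorem (Lemma~\ref{kolobrzeg-winter}) applied to the continuous bijection $\bar r_{\mathrm{dr}}$ of Fr\'echet spaces. The hardest step is the verification of (b) along with compatibility with the Schneider--Stuhler identification: showing that $\bar r_{\mathrm{dr}}$ is not merely some nonzero $G$-equivariant map but literally the inverse of the abstract SS isomorphism in fact requires matching the \v{C}ech-type spectral sequence underlying SS (running over $\mathcal{H}_n$-coverings of $\mathbb{P}^d_K$) with the simplicial computation of $\mathrm{Sp}_r$ from Proposition~\ref{SSsimplicial} at the level of cochains; this compatibility, however, is clean once one notes that the $E_2$-term of the SS spectral sequence is computed by exactly the complex whose cocycles are $\mathrm{LC}^c(\mathcal{H}^{r+1},\mathbf{Z})$ and whose class of a flag of hyperplanes is realized by its symbol.
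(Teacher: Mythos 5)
The paper does not prove this theorem; it cites \cite[Theorem 4.5]{IS} and only remarks that ``one can show that the above maps induce the isomorphisms \ldots\ by imitating the arguments in \cite{IS}.'' So you are reconstructing the Iovita--Spiess argument rather than matching the paper's own proof, which is fine. Your overall skeleton (well-definedness and continuity of $r_{\dr}$ via the Stein exhaustion; killing degenerate distributions via the cocycle identity and the degeneracy vanishing; then matching with the Schneider--Stuhler isomorphism) is the right one. But two steps are not sound as written.

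First, your argument for vanishing of the symbol $[H_0,\dots,H_r]$ on the degenerate locus $Y^{(r)}_r$ is incorrect. A linear relation $\sum a_i\ell_{H_i}=0$ among the forms $\ell_{H_i}$ gives, after dividing by $\ell_{H_0}$, an \emph{affine} relation $a_0+\sum_{i\ge 1}a_iz_i=0$ among the coordinates $z_i=\ell_{H_i}/\ell_{H_0}$; it does not produce a multiplicative relation, so the map $({\mathbb H}^d_K\to\mathbb{G}_m^r)$ does not factor through a subtorus of dimension $\le r-1$. The conclusion is still true, but for a different reason: the affine relation forces the differentials $dz_1,\dots,dz_r$ to be $\mathcal{O}$-linearly dependent on the image, hence any $r$-fold wedge built out of the one-forms $\dlog z_i= dz_i/z_i$, which lie in the $\mathcal O$-span of the $dz_i$, vanishes identically. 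You should replace the ``torus of dimension $\le r-1$'' argument with this linear-dependence argument.

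Second, and more seriously, the jump from ``$\bar r_{\dr}\neq 0$'' to ``bijectivity'' via topological irreducibility is a genuine gap. Topological irreducibility of the target $H^r_{\dr}({\mathbb H}^d_K)$ tells you only that the image of the nonzero continuous $G$-equivariant map is \emph{dense}; a dense $G$-stable subspace of a Fr\'echet space need not be closed, and a continuous injection of Fr\'echet spaces with dense image need not be onto (so you cannot yet invoke the Open Mapping Theorem). The missing input is precisely the hard part of Iovita--Spiess, namely that the standard symbols $[H_0,\dots,H_r]$ actually \emph{generate} $H^r_{\dr}({\mathbb H}^d_K)$; this is invoked elsewhere in the paper (in the proof sketch of Theorem~\ref{intro2}: ``Iovita--Spiess prove that so is $H^r_{\dr}(X_K)$'') and cannot be deduced from irreducibility alone. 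Once surjectivity is in hand, strictness does follow from Lemma~\ref{kolobrzeg-winter}, and the identification of $\bar r_{\dr}$ with the Schneider--Stuhler isomorphism on the nose (not just up to a scalar) needs the cochain-level comparison you gesture at in your last paragraph; as you say, that is the most delicate compatibility, and you should either carry it out or state explicitly that you are citing \cite[Theorem 4.5]{IS} for it.
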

\subsubsection{Generalization of the results of Iovita-Spiess}
Set $X:=\wt{{\mathbb H}}^d_K$ and $Y:=\wt{{\mathbb  H}}^d_{K,0}$. The above results of Iovita-Spiess can be generalized to Hyodo-Kato cohomology.
\begin{lemma}
\label{genIS}Let $r\geq 0$. There are natural isomorphisms of Fr\'echet spaces
$$
 H^r_{\hk}(Y)\simeq {\rm Sp}_r(F)^*,\quad H^r_{\hk}(Y)^{\phi=p^r}\simeq {\rm Sp}_r(\Q_p)^*
$$
that are compatible with the isomorphism $H^r_{\dr}(X_K)\simeq {\rm Sp}_r(K)^*$ 
from Theorem \ref{SSmain} and the natural maps $ {\rm Sp}_r(\Q_p)^*\to {\rm Sp}_r(F)^*\to  {\rm Sp}_r(K)^*$.
\end{lemma}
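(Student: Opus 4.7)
The plan is to follow the strategy sketched in the introduction: transport the Iovita-Spiess description of $H^r_{\dr}(X_K)$ across the Hyodo-Kato isomorphism, and then descend along the Frobenius structure. I would start by attaching to each tuple $(H_0,\ldots,H_r)\in\mathcal{H}^{r+1}$ a Hyodo-Kato symbol $[H_0,\ldots,H_r]_{\hk}\in H^r_{\hk}(Y)$, obtained as the cup product of the $\dlog$ classes of the invertible functions $\ell_{H_i}/\ell_{H_0}$. By the very construction of the Hyodo-Kato map (and Lemma~\ref{compatibility}), these symbols are compatible with the de Rham ones via $\iota_{\hk}$; a direct local computation gives $\phi([H_0,\ldots,H_r]_{\hk})=p^r[H_0,\ldots,H_r]_{\hk}$ and $N=0$. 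Using that on each piece $U_s$ of the Stein covering of $Y$ the group $H^r_{\hk}(U_s)$ is finite dimensional and the symbol map $\mathcal{H}^{r+1}\to H^r_{\hk}(U_s)$ is locally constant (which, via Grosse-Kl\"onne's weight spectral sequence, reduces to the classical de Rham case on the smooth closed strata), integration against distributions gives a continuous $G$-equivariant regulator
$$r_{\hk}: D(\mathcal{H}^{r+1},F)\longrightarrow H^r_{\hk}(Y),\qquad \mu\longmapsto \int[H_0,\ldots,H_r]_{\hk}\,d\mu.$$

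Next, I would check that $r_{\hk}$ vanishes on the degenerate distributions. Tensoring $r_{\hk}$ with $K$ and composing with $\iota_{\hk}$ produces exactly the de Rham regulator of Theorem~\ref{IS}, which is surjective with kernel $D(\mathcal{H}^{r+1},K)_{\deg}=D(\mathcal{H}^{r+1},F)_{\deg}\otimes_F K$. Since $F\to K$ is faithfully flat and $\iota_{\hk}\otimes K$ is an isomorphism, both the vanishing of $r_{\hk}$ on $D(\mathcal{H}^{r+1},F)_{\deg}$ and its surjectivity descend to the $F$-level. Hence $r_{\hk}$ factors through a continuous $G$-equivariant $F$-linear bijection ${\rm Sp}_r(F)^*\to H^r_{\hk}(Y)$ between Fr\'echet spaces, which by the Open Mapping Theorem is a strict isomorphism and which, by construction, is compatible with the Iovita-Spiess isomorphism over $K$ via $\iota_{\hk}$.

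For the Frobenius-fixed version, the restriction of $r_{\hk}$ to $\Q_p$-valued distributions lands in $H^r_{\hk}(Y)^{\phi=p^r}$ since the symbols do, and by the same faithfully flat descent argument (now along $\Q_p\to F$) it factors into a continuous $G$-equivariant strict isomorphism ${\rm Sp}_r(\Q_p)^*\to H^r_{\hk}(Y)^{\phi=p^r}$. Tensoring with $F$ recovers the isomorphism of the previous paragraph, so in particular $H^r_{\hk}(Y)\cong F\otimes_{\Q_p}H^r_{\hk}(Y)^{\phi=p^r}$ and all Frobenius slopes on $H^r_{\hk}(Y)$ equal $r$. The compatibilities stated in the lemma are immediate from the symbol-wise construction.

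The hardest step is the factorization: a priori $r_{\hk}$ is only known to kill degenerate distributions and to be surjective after base change to $K$, and extracting the analogous statements at the $F$- and then $\Q_p$-level relies on injectivity of $\iota_{\hk}$ before extending scalars, which is ensured by faithful flatness. Topological strictness of the factored maps is then automatic, since both sides are Fr\'echet---${\rm Sp}_r(F)^*$ as a countable product of finite dimensional pieces and $H^r_{\hk}(Y)=\varprojlim_s H^r_{\hk}(U_s)$ as an inverse limit of finite dimensional $F$-vector spaces---so the Open Mapping Theorem applies.
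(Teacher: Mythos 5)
Your construction of the Hyodo--Kato regulator $r_{\hk}$ via symbols, its compatibility with $r_{\dr}$ under $\iota_{\hk}$, and the factorization through ${\rm Sp}_r(F)^*$ follow the paper's route. For surjectivity of the resulting map ${\rm Sp}_r(F)^*\to H^r_{\hk}(Y)$ you use faithfully flat descent along $F\to K$ from the surjectivity of $r_{\dr}$, whereas the paper invokes topological irreducibility of $H^r_{\hk}(Y)$ as a $G$-module; since $[K:F]<\infty$, so that $D(\mathcal{H}^{r+1},K)=D(\mathcal{H}^{r+1},F)\otimes_FK$ and completed and algebraic tensor products agree, your variant is sound and arguably cleaner.

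The $\phi=p^r$ statement, however, has a circularity. Your descent along $\Q_p\to F$ would give surjectivity of ${\rm Sp}_r(\Q_p)^*\to H^r_{\hk}(Y)^{\phi=p^r}$ only if you already know that the natural map $H^r_{\hk}(Y)^{\phi=p^r}\otimes_{\Q_p}F\to H^r_{\hk}(Y)$ is \emph{injective}: from the surjectivity of $r_{\hk}$ on $D(\mathcal{H}^{r+1},F)$ you learn that the composite $D(\mathcal{H}^{r+1},\Q_p)\otimes_{\Q_p}F\to H^r_{\hk}(Y)^{\phi=p^r}\otimes_{\Q_p}F\to H^r_{\hk}(Y)$ is surjective, which forces the second arrow to be surjective but says nothing about the first unless the second is also injective. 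You instead present the bijectivity of that second arrow as a \emph{consequence} (``Tensoring with $F$ recovers\dots''), so the logical dependency runs the wrong way. This injectivity is not formal for an infinite-dimensional Fr\'echet $\phi$-module; the paper proves it as a separate step (the displayed injection (\ref{paris12})) by writing $H^r_{\hk}(Y)=\varprojlim_s H^r_{\hk}(Y_s)$, commuting both the eigenspace and the finite base change $\otimes_{\Q_p}F$ with the limit, and applying the standard finite-dimensional isocrystal fact to each $H^r_{\hk}(Y_s)$. You need to supply this argument (or an equivalent) before the descent step; once it is in place the rest of your proof goes through.
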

\begin{proof}       We start with $ H^r_{\hk}(Y)$. Consider the following diagram  
   $$
     \xymatrix{ 
  & D(\sh^{r+1},K)\ar[rd]_{r_{\dr}} \ar[rrd]^{\can}\\
  D(\sh^{r+1},F)\ar[ru]^{\can}\ar[r]^{r_{\hk}} \ar@{->>}[dr]_{\can} & H^r_{\hk}(Y)\ar@{^(->}[r]^-{\iota_{\hk}} & H^r_{\dr}(X_K) \ar@{-}[r]^{\sim} & {\rm Sp}_r(K)^*\\
  &  {\rm Sp}_r(F)^*\ar@{^(->}[urr]_{\can}\ar@{-->}[u]^f
  }
  $$
  Here the regulator map $r_{\hk}$ is defined in an analogous way to the map $r_{\dr}$ but by using the overconvergent Hyodo-Kato Chern classes $c^{\hk}_1$ defined in the Appendix.  It is continuous. 
The outer diagram clearly commutes.  The small triangle commutes by Theorem \ref{IS}. The square  commutes by Lemma \ref{compatibility}. Chasing the diagram we construct the broken arrow, a continuous map 
$f: {\rm Sp}_r(F)^*\to H^r_{\hk}(Y)$ that makes the left bottom triangle commute; it is easy to check that it makes the right bottom triangle commute as well. This implies that the map $f$ is injective. Since $H^r_{\hk}(Y)$ is topologically irreducible as a $G$-module  (use the Hyodo-Kato isomorphism), it is also surjective (use the fact that $ {\rm Sp}_r(F)^*$ is closed in ${\rm Sp}_r(K)^*$).

  The argument for $H^r_{\hk}(Y)^{\phi=p^r}$  is similar. But first we need to show that the natural map 
  \begin{equation}
  \label{paris12}
  H^r_{\hk}(Y)^{\phi=p^r}\otimes_{\Q_p}F\to H^r_{\hk}(Y)
  \end{equation}
   is an injection. We compute
 \begin{align*}
 H^r_{\hk}(Y)^{\phi=p^r}\otimes_{\Q_p}F & \simeq (\varprojlim_s H^r_{\hk}(Y_s))^{\phi=p^r}\otimes_{\Q_p}F\simeq 
 (\varprojlim_s H^r_{\hk}(Y_s)^{\phi=p^r})\otimes_{\Q_p}F\\
  & \simeq \varprojlim_s (H^r_{\hk}(Y_s)^{\phi=p^r}\otimes_{\Q_p}F)
 \hookrightarrow \varprojlim_s H^r_{\hk}(Y_s)\simeq H^r_{\hk}(Y),
 \end{align*}   
  as wanted.       For the injection above we have used the fact  that all $H^r_{\hk}(Y_s)$ are finite dimensional over $F$.    
                   
 We look now  at the commutative diagram
$$
\xymatrix{D(\sh^{r+1},F) \ar[rr]^{\can}    \ar[rd]^{r_{\hk}}&  &  {\rm Sp}_r(F)^*\ar[dl]^{f}\\
   & H^r_{\hk}(Y)  \\
   & H^r_{\hk}(Y)^{\phi=p^r}\ar@{^(->}[u]^{\can}\\
   D(\sh^{r+1},\Q_p)\ar@{->>}[rr]^{\can}\ar[uuu]^{\can}\ar[ru]^{r_{\hk}} & &  {\rm Sp}_r(\Q_p)^*\ar@{^(->}[uuu]^{\can}\ar@{-->}[lu]^{f^{\prime}}
}
$$
The key point is that, as shown in Section \ref{diffsymbols},  the map $r_{\hk}$ restricted to $D(\sh^{r+1},\Q_p)$ factors through $H^r_{\hk}(Y)^{\phi=p^r}$. 
Arguing as above we construct the continuous map $f^{\prime}$. It is clearly injective. It is surjective by (\ref{paris12}). 
\end{proof}

 \subsection{Pro-\'etale cohomology} \label{kolokwak}
 We are now ready to compute the $p$-adic pro-\'etale cohomology of ${\mathbb H}^d_C$.
 Let $r\geq 0$. Since  the linearized Frobenius on $H^r_{\hk}(Y)$ is equal to the multiplication by $q^r$, where $q= |k|$.
  \cite[Cor. 6.6]{GK2} and  $N\phi=p\phi N$ \cite[Prop. 5.5]{GK2}, the monodromy operator is trivial on $H^r_{\hk}(Y)$.
Hence the first isomorphism below is Galois equivariant.
 \begin{align*}
 (H^r_{\hk}(Y)\wh{\otimes}_F\B^+_{\st})^{N=0,\phi=p^r} & \simeq (H^r_{\hk}(Y)\wh{\otimes}_{F}\B^{+}_{\crr})^{\phi=p^r}        
 \simeq (H^r_{\hk}(Y)^{\phi=p^r}\wh{\otimes}_{\Q_p}\B^{+}_{\crr})^{\phi=p^r}\\
 & \simeq
  H^r_{\hk}(Y)^{\phi=p^r}\wh{\otimes}_{\Q_p}\B^{+,\phi=1}_{\crr}\simeq
  {\rm Sp_r(\Q_p})^*\wh{\otimes}_{\Q_p}\B_{\crr}^{+,\phi=1}= {\rm Sp_r(\Q_p})^*.
  \end{align*}
  The second isomorphism follows from the proof of Lemma \ref{genIS}, the fourth one -- from this lemma itself, and  the third one is clear.
  Using the above isomorphisms and Lemma \ref{genIS},  the map
  $\iota_{\hk}\otimes\theta: (H^r_{\hk}(Y)\wh{\otimes}_F\B^+_{\st})^{N=0,\phi=p^r} \to H^r_{\dr}(X_K)\wh{\otimes}_K C$ can be identified with the natural map 
  ${\rm Sp}_r(\Q_p)^*\to {\rm Sp}_r(K)^*\wh{\otimes}_KC$. 
  
  Similarly, we compute  that
  \begin{align*}
 (H^r_{\hk}(Y)\wh{\otimes}_F\B^+_{\st})^{N=0,\phi=p^{r-1}} & \simeq H^r_{\hk}(Y)^{\phi=p^r}\wh{\otimes}_{\Q_p}\B^{+,\phi=p^{-1}}_{\crr}= 0.
\end{align*}
Combined with Theorem \ref{fdd}, these  yield  the following theorem.
\begin{theorem}\label{PROET}
Let $r\geq 0$.
    There is a natural map of strictly exact sequences of  $G\times \sg_K$-Fr\'echet spaces  (over $\Q_p$) 
$$
 \xymatrix{
 0\ar[r] & \Omega^{r-1}({\mathbb H}^d_C)/\ker d\ar[r]\ar@{=}[d] & H^r_{\proeet}({\mathbb H}^d_C,\qp(r))\ar@{^(->}[d]^{\tilde{\beta}}\ar[r] &
  {\rm Sp}_r (\Q_p)^*\ar[r]\ar@{^(->}[d]^{\can} & 0\\
0\ar[r] & \Omega^{r-1}({\mathbb H}^d_C )/\ker d \ar[r]^-{d}  & \Omega^r ({\mathbb H}^d_C)^{d=0}\ar[r] & {\rm Sp}_r(K)^*\wh{\otimes}_KC\ar[r] & 0
 }
 $$
 where the vertical maps are  closed immersions. 
 \end{theorem}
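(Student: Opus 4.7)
The plan is to deduce Theorem \ref{PROET} as a direct specialization of the fundamental diagram (Theorem \ref{fdd}) to $X=\wt{\mathbb H}^d_K$ (which is a Stein weak formal semistable model of ${\mathbb H}^d_K$ by Section \ref{formal-models}), with the Hyodo-Kato term identified via Lemma \ref{genIS}. First, I would invoke Theorem \ref{fdd} for $X=\wt{\mathbb H}^d_K$ to get the strictly exact sequence
$$0\to \Omega^{r-1}({\mathbb H}^d_C)/\ker d\to H^r_{\proeet}({\mathbb H}^d_C,\Q_p(r))\to (H^r_{\hk}(Y)\wh{\otimes}_F\B^+_{\st})^{N=0,\phi=p^r}\to 0,$$
which uses the vanishing $(H^r_{\hk}(Y)\wh{\otimes}_F\B^+_{\st})^{N=0,\phi=p^{r-1}}\simeq H^r_{\hk}(Y)^{\phi=p^r}\wh{\otimes}_{\Q_p}\B^{+,\phi=p^{-1}}_{\crr}=0$ that I would establish in step three.

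Next, I would observe two structural facts about the Hyodo-Kato cohomology of $Y$. First, by \cite[Prop. 5.5]{GK2} one has $N\phi=p\phi N$, while by \cite[Cor. 6.6]{GK2} the linearized Frobenius on $H^r_{\hk}(Y)$ is multiplication by $q^r$; the compatibility $N\phi=p\phi N$ then forces $N=0$ on $H^r_{\hk}(Y)$. Second, Lemma \ref{genIS} identifies $H^r_{\hk}(Y)^{\phi=p^r}\simeq {\rm Sp}_r(\Q_p)^*$ compatibly with the Schneider--Stuhler isomorphism $H^r_{\dr}({\mathbb H}^d_K)\simeq {\rm Sp}_r(K)^*$ under $\iota_{\hk}$.

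With these in hand, I would run the chain of identifications
\begin{align*}
(H^r_{\hk}(Y)\wh{\otimes}_F\B^+_{\st})^{N=0,\phi=p^r}
&\simeq (H^r_{\hk}(Y)\wh{\otimes}_F\B^+_{\crr})^{\phi=p^r}\\
&\simeq (H^r_{\hk}(Y)^{\phi=p^r}\wh{\otimes}_{\Q_p}\B^+_{\crr})^{\phi=p^r}\\
&\simeq {\rm Sp}_r(\Q_p)^*\wh{\otimes}_{\Q_p}\B^{+,\phi=1}_{\crr}={\rm Sp}_r(\Q_p)^*,
\end{align*}
the first step using $N=0$ and the trivialization $(-\wh{\otimes}_F\B^+_{\st})^{N=0}=-\wh{\otimes}_F\B^+_{\crr}$ (Galois-equivariantly since $N=0$), the second using that $H^r_{\hk}(Y)$ is obtained from its $\phi=p^r$-eigenspace by scalar extension $\Q_p\to F$ (a consequence of the linearized Frobenius acting as $q^r$), and the last using $\B^{+,\phi=1}_{\crr}=\Q_p$. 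The analogous computation for $\phi=p^{r-1}$ yields $0$ since $\B^{+,\phi=p^{-1}}_{\crr}=0$, justifying the exactness claim above.

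Finally, I would identify the map $\iota_{\hk}\otimes\theta$ on the right-hand side with the canonical inclusion ${\rm Sp}_r(\Q_p)^*\hookrightarrow {\rm Sp}_r(K)^*\wh{\otimes}_K C$: this is forced by the compatibility part of Lemma \ref{genIS} together with $\theta$ being $\Q_p$-linear and restricting to the inclusion $\Q_p=\B^{+,\phi=1}_{\crr}\hookrightarrow C$. Closedness/injectivity of the two vertical maps then follows, for the right one from the visible injectivity of ${\rm Sp}_r(\Q_p)^*\hookrightarrow {\rm Sp}_r(K)^*\wh{\otimes}_KC$ (combined with the fact that it is continuous between Fr\'echet spaces with closed image, already provided by Theorem \ref{fdd}), and for $\tilde{\beta}$ by the five lemma applied to the map of short exact sequences. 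The main step requiring care is the Galois-equivariance of the trivialization, which works precisely because $N=0$; everything else is a formal consequence of the already established fundamental diagram and the symbol-based computations of Iovita--Spiess and Grosse-Kl\"onne.
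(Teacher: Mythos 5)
Your proof is correct and follows essentially the same route as the paper's Section 5.5: invoke Theorem \ref{fdd} for the standard weak formal model, use $N\phi=p\phi N$ together with the linearized Frobenius acting as $q^r$ to deduce $N=0$ (hence Galois-equivariance of the trivialization), run the chain $(H^r_{\hk}(Y)\wh{\otimes}_F\B^+_{\st})^{N=0,\phi=p^r}\simeq H^r_{\hk}(Y)^{\phi=p^r}\wh{\otimes}_{\Q_p}\B^{+,\phi=1}_{\crr}\simeq {\rm Sp}_r(\Q_p)^*$ via Lemma \ref{genIS}, and compute the $\phi=p^{r-1}$ eigenspace to be zero. Your added five-lemma remark for the injectivity of $\tilde\beta$ is equivalent to reading it off from the kernel identification already provided by Theorem \ref{fdd}.
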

\section{\'Etale cohomology of Drinfeld half-spaces} \label{etale}
The purpose of this chapter is to compute the $p$-adic \'etale cohomology of the Drinfeld half-space (Theorem~\ref{comp19}). 
Using a  comparison theorem (see Proposition \ref{fini11} below) 
this reduces to the computation of the Fontaine-Messing syntomic cohomology. 
The latter then is transformed into a syntomic cohomology built from the crystalline Hyodo-Kato cohomology and the integral de Rham cohomology (see Section~\ref{ZURICH}). 
These differential cohomologies can be computed explicitly (Corollary~\ref{amtrak} and~Theorem~\ref{Steinberg})
due to the fact that the standard formal model of the Drinfeld half-space is pro-ordinary and the sheaves of integral differentials are acyclic (a result of Grosse-Kl\"onne).

 Throughout this chapter we work in the category of pro-discrete modules (see Section \ref{pro-discrete} for a quick review and Remark \ref{nassau1} for how the topology is defined on the various cohomology complexes).
\subsection{Period isomorphism}
\begin{proposition}
\label{fini11}
Let $X$ be a  semistable formal scheme over $\so_K$.  Let $r\geq 0$, $\overline{X}:=X_{\so_C}$.
    There is a natural  Fontaine-Messing period map
    \begin{equation*}
    \label{topology11}
    \alpha_{\rm FM}: \R\Gamma_{\synt}(\overline{X},\Z_p(r)){\otimes}^L\Q_p\to \R\Gamma_{\eet}(X_{C},\Z_p(r)){\otimes}^L\Q_p
    \end{equation*}
   that  is a strict quasi-isomorphism (in $\sd(C_{\Q_p})$) after truncation $\tau_{\leq r}$.
\end{proposition}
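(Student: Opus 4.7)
The plan is to adapt the argument of Corollary~\ref{fini1} to the integral-to-rational setting, working systematically with the pro-discrete formalism of Section~\ref{pro-discrete}. The starting point is Proposition~\ref{Ts1}: on any quasi-compact open $U\subset X$, the Fontaine-Messing map
$$\alpha_{\rm FM}:\tau_{\leq r}\R\Gamma_{\synt}(\overline{U},\Z/p^n(r))\to \tau_{\leq r}\R\Gamma_{\eet}(U_C,\Z/p^n(r))$$
is a quasi-isomorphism. Taking $R\varprojlim_n$ and gluing over a quasi-compact cover of $X$ would first give the algebraic statement that $\alpha_{\rm FM}$, after truncation and after $\otimes^L\Q_p$, is an algebraic quasi-isomorphism.

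For strictness, I would argue locally. On a quasi-compact piece $U$, the integral complexes $\R\Gamma_{\synt}(\overline{U},\Z_p(r))$ and $\R\Gamma_{\eet}(U_C,\Z_p(r))$ are constructed (as explained in Sections~\ref{definitionFM} and~\ref{PR3}) from $p$-torsion-free, $p$-adically complete pro-discrete $\so_K$-modules via crystalline and \'etale embedding systems; in particular they are objects of $\sd(PD_K)$ to which Proposition~\ref{acyclic-integral} applies. Hence $\otimes^L\Q_p$ agrees with the non-derived tensor product $\otimes\Q_p$, which, on each quasi-compact piece, turns these complexes into complexes of Banach $\Q_p$-spaces. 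The Fontaine-Messing map is then represented locally by a zigzag of morphisms between such complexes of Banach spaces, so Lemma~\ref{kolobrzeg-winter} upgrades the algebraic quasi-isomorphism to a strict one.

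To pass from quasi-compact $U$'s to a general $X$, I would write $X=\bigcup_i U_i$ as an increasing union of quasi-compact semistable opens, and interpret both sides of the map as appropriate homotopy limits. On the syntomic side this is built into the definition~(\ref{passage}) and the sheafy presentation preceding it. On the \'etale side, one uses Scholze's comparison $\R\Gamma_{\eet}(U_{i,C},\Z/p^n)\simeq \R\Gamma_{\proeet}(U_{i,C},\Z/p^n)$ (cited in the proof of Corollary~\ref{fini1}), commutes $\R\Gamma$ with $\holim_n$ and with the cover (using coherence of the pro-\'etale site on each $U_{i,C}$), and concludes by the same argument as at the end of Corollary~\ref{fini1}. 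The truncation $\tau_{\leq r}$ is preserved throughout because the period maps on the $U_i$ are compatible.

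The main obstacle will be the careful bookkeeping of topologies when gluing over a non-quasi-compact $X$: one has to check that the natural map $\R\Gamma_{\eet}(X_C,\Z_p(r))\otimes^L\Q_p\to \holim_i\bigl(\R\Gamma_{\eet}(U_{i,C},\Z_p(r))\otimes^L\Q_p\bigr)$ remains a strict quasi-isomorphism on $\tau_{\leq r}$, and likewise on the syntomic side, so that the strictness established quasi-compactly propagates to $X$. This is exactly where Proposition~\ref{acyclic-integral} and the Mittag-Leffler-type vanishing arguments (as in Lemma~\ref{Lyon-again}) do the work of comparing the derived and underived tensor products with $\Q_p$ in the presence of inverse limits.
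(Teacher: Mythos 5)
Your starting point contains a genuine error, and it is precisely the error that the paper's argument is designed to handle. You assert that the mod-$p^n$ period map
$$\alpha_{\rm FM}\colon\tau_{\leq r}\R\Gamma_{\synt}(\overline{U},\Z/p^n(r))\to \tau_{\leq r}\R\Gamma_{\eet}(U_C,\Z/p^n(r))$$
is a quasi-isomorphism on quasi-compact $U$, citing Proposition~\ref{Ts1}. This is false: what Fontaine--Messing and Tsuji prove at the torsion level (and what Proposition~\ref{Ts1} is derived from) is only that $\tau_{\leq r}\wt{\alpha}_{\rm FM}$ is a \emph{$p^N$-quasi-isomorphism} for a universal constant $N=N(r)$, i.e.\ its kernel and cokernel on cohomology are killed by $p^N$. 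Proposition~\ref{Ts1} itself is stated with $\Q_p$-coefficients for exactly this reason. Consequently the $\Z_p$-level map you want to take $R\varprojlim_n$ of is not a quasi-isomorphism either, and your first reduction step collapses.

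This is not a cosmetic issue, because the $p^N$-defect is what makes the strictness argument work in the paper. The paper observes that a $p^N$-quasi-isomorphism $\wt{\alpha}_{\rm FM}$ admits, after truncation, a two-sided $p^N$-inverse $\wt{\beta}$ in $D(\Ind(PD_{\Z_p}))$: $\wt{\alpha}\wt{\beta}=p^N$ and $\wt{\beta}\wt{\alpha}=p^N$. After applying $(-)\otimes^L\Q_p$ this becomes an actual two-sided inverse in $D(C_{\Q_p})$, and a map that is invertible in the homotopy category is automatically a strict quasi-isomorphism. This single global observation replaces your entire local-to-global machinery: there is no need to descend to Banach complexes on quasi-compact pieces, invoke Lemma~\ref{kolobrzeg-winter}, and then worry about whether strictness survives the passage $X=\bigcup_i U_i$ via $\holim_i$. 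Your own last paragraph concedes that this last step is ``the main obstacle,'' and it genuinely is: $(-)\otimes^L\Q_p$ does not obviously commute with $\holim_i$ for these complexes, and strictness on each $U_i$ does not propagate to $\holim_i$ without uniform Mittag--Leffler-type control that you would have to establish separately. The $p^N$-inverse argument sidesteps all of this and is the key idea missing from your proposal.

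One further warning about your framing: you say you want to ``adapt the argument of Corollary~\ref{fini1}.'' But Corollary~\ref{fini1} concerns $\R\Gamma_{\synt}(\overline{X},\Q_p(r))$, where $p$ is inverted locally on quasi-compact pieces and then one glues; Proposition~\ref{fini11} concerns $\R\Gamma_{\synt}(\overline{X},\Z_p(r))\otimes^L\Q_p$, where one glues first and inverts $p$ last. The paper explicitly remarks (around~(\ref{passage})) that these two complexes differ for non-quasi-compact $X$. An argument that freely localizes to quasi-compact pieces, rationalizes there, and reassembles is in danger of computing the wrong object.
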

\begin{proof}
Let $Y$ be a semistable finite type formal scheme over $\so_K$.  Fontaine-Messing in \cite[III.5]{FM} have defined an integral period map 
$$
 \wt{\alpha}_{\rm FM}:\R\Gamma_{\synt}(\overline{Y},\Z_p(r))\to \R\Gamma_{\eet}(Y_{C},\Z_p(r)^{\prime}),
$$
where $\Z_p(r)^{\prime}:=(p^aa!)^{-1}\Z_p(r)$, for $r=(p-1)a+b, a,b\in\Z, 0\leq b\leq p-1$. 
The map $ \tau_{\leq r}\wt{\alpha}_{\rm FM}$  is a $p^N$-quasi-isomorphism for a universal constant $N=N(r)$. This means that its  kernel and cokernel  on cohomology groups in degrees $0\leq i\leq r$ are annihilated by $p^N$. It follows that the cone of $\wt{\alpha}_{\rm FM}$ has cohomology annihilated by $p^N$, $N=N(r)$, as well. 

We define $$
\wt{\alpha}_{\rm FM}:\R\Gamma_{\synt}(\overline{X},\Z_p(r))\to \R\Gamma_{\eet}(X_{C},\Z_p(r)^{\prime})
$$ by cohomological descent from the above $\wt{\alpha}_{\rm FM}$. The above local arguments imply that 
 $(\tau_{\leq r} \wt{\alpha}_{\rm FM}){\otimes}^L{\Q_p}$ is a quasi-isomorphism.  We set ${\alpha}_{\rm FM}:=p^{-r}(\wt{\alpha}_{\rm FM}{\otimes}^L{\Q_p})$.  This twist by $p^{-r}$
is necessary to make the period morphism compatible with Chern classes.

 It remains to show that $(\tau_{\leq r} \wt{\alpha}_{\rm FM}){\otimes}^L{\Q_p}\simeq\tau_{\leq r} (\wt{\alpha}_{\rm FM}{\otimes}^L\Q_p)$ and that this quasi-isomorphism  is strict. 
 \begin{remark}\label{nassau1} 
Before  doing that, let us recall how topology is defined on the domain and target of $\wt{\alpha}_{\rm FM}$. Locally, for a quasi-compact \'etale open $U\to X$, we get complexes of (topologically free) $\Z_p$-modules with $p$-adic topology. For a quasi-compact \'etale hypercovering $\{U_i\}_{i\in I}$, of $X$, we take the total complex of the \v{C}ech complex of such complexes. Hence in every degree we have a product of $\Z_p$-modules with $p$-adic topology. The functor $(-){\otimes}\Q_p$ from Section \ref{pro-discrete}, by Proposition \ref{acyclic-integral},  
associates to these complexes of pro-discrete $\Z_p$-modules complexes of locally convex $\Q_p$-vector spaces by tensoring them degree-wise with $\Q_p$ and taking the induced topology. 
These new complexes represent the domain and target of $\wt{\alpha}_{\rm FM}{\otimes}^L\Q_p$. We note that we have a strict quasi-isomorphism
$\tau_{\leq r}(\wt{\alpha}_{\rm FM}{\otimes}^L\Q_p)\simeq
(\tau_{\leq r}\wt{\alpha}_{\rm FM}){\otimes}^L\Q_p$ (again use Proposition \ref{acyclic-integral}).
 \end{remark}
 
 Now, note that the map $\wt{\alpha}_{\rm FM}$, being a $p^N$-isomorphism on cohomology, has a $p^N$-inverse in $D(\Ind(PD_{\Z_p}))$, i.e., there exists a map $\wt{\beta}:\tau_{\leq r}\R\Gamma_{\eet}(X_{C},\Z_p(r)^{\prime})\to \tau_{\leq r} \R\Gamma_{\synt}(\overline{X},\Z_p(r))$ such that $\wt{\alpha}\wt{\beta}=p^N$ and $\wt{\beta}\wt{\alpha}=p^N$ (not the same $N=N(r)$, of course). It follows that $(\tau_{\leq r} \wt{\alpha}_{\rm FM}){\otimes}^L{\Q_p}$ has an inverse in $D(C_{\Q_p})$, hence it is strict. 
\end{proof}
\subsection{Cohomology of differentials}
We gather in this section computations of various bounded differential cohomologies of the Drinfeld half-space.
Let 
$$\wt{X}:=\wt{{\mathbb H}}^d_K,\quad {X}:=(\wt{{\mathbb H}}^d_K)^{\wedge}$$
 be the standard weak formal model, resp. formal model, of the Drinfeld half-space ${\mathbb H}^d_K$. It is equipped with an action of $G={\rm GL}_{d+1}(K)$ compatible with the natural action on the generic fiber.
Let 
$$Y:=X_0,\quad  \overline{Y}:=Y_{\overline{k}}.$$  
Let $F^0$ be the set of irreducible components of the special fiber $Y$. They are isomorphic smooth projective schemes over $k$ that we see as log-schemes with the log-structure induced from $Y$. Let $T$ be the central irreducible component of $Y$, i.e., the irreducible component with  stabilizer $K^*{\rm GL}_{d+1}(\so_K)$.
It is obtained from the projective space ${\mathbb P}^d_k$ by first blowing up all $k$-rational points, then blowing up the strict transforms of $k$-rational lines, etc. For $0\leq j\leq d-1,$ let $\sv^j_0$ be the set of all $k$-rational linear subvarieties $Z$ of ${\mathbb P}^d_k$ with $\dim (Z)=j$ and let $\sv_0:=\bigcup_{j=0}^{d-1}\sv^j_0$. The set $\sv$ of all strict transforms in $T$ of elements of $\sv_0$ is a set of divisors of $T$; together with the canonical log-structure of the log-point $k^0$, it induces the log-structure on $T$. 
 
   Let $\wt{\theta}_0,\ldots,\wt{\theta}_d$ be the standard projective coordinate functions on ${\mathbb P}^d_k$ and on $T$. For $i,j\in\{0,\ldots,d\}$ and $g\in G$ we call $g\dlog (\wt{\theta}_i/\wt{\theta}_j) $ a {\em standard logarithmic differential 1-form} on $T$; exterior products of such forms we call {\em standard logarithmic differential forms} on $T$. 
 
 \subsubsection{Cohomology of differentials on irreducible components}As proved by Grosse-Kl\"onne the sheaves of differentials on $T$ are  acyclic and the standard logarithmic differential forms generate the $k$-vector space of global differentials. 
\begin{proposition}{\rm (\cite[Theorem 2.3, Theorem 2.8]{GKI},\cite[Prop. 1.1]{GKB})}
\label{GK5}
\begin{enumerate}
\item $H^i(T,\Omega^j)=0$, $i>0,j\geq 0$.
\item  The $k$-vector space $H^0(T,\Omega^j)$, $j\geq 0$, is generated by  standard logarithmic forms. In particular, it is killed by $d$.
\item $H^i_{\crr}(T/\so_F^0)$ is torsion free and 
$$
H^i_{\crr}(T/\so_F^0)\otimes_{\so_F}k=H^i_{\dr}(T)=H^0(T,\Omega^i_T).
$$
\end{enumerate}
\end{proposition}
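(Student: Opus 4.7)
The plan is to treat (1) and (2) together via induction on the iterated blow-up structure of $T$, and then to derive (3) as a formal consequence by lifting the whole picture to $\so_F$ and invoking the crystalline--de Rham comparison.

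For (1) and (2), I would factor $T$ as a tower of blow-ups $T = T_{d-1}\to T_{d-2}\to\cdots\to T_0 = {\mathbb P}^d_k$, where $T_{j+1}\to T_j$ is the blow-up along the strict transforms of the $j$-dimensional $k$-rational linear subvarieties. Each $T_j$ inherits a log-structure from the accumulated exceptional divisors together with the strict transforms of the coordinate hyperplanes and the $k^0$-structure. The key step is to show, at each stage $\pi : T_{j+1}\to T_j$, that $R\pi_*\Omega^i_{T_{j+1}} = \Omega^i_{T_j}$ for the logarithmic de Rham sheaves. This follows from a direct computation on the blow-up of a smooth $k$-rational center (the normal bundle of such a center in the ambient iterated blow-up has a well-understood form, and one uses the projection formula together with the standard vanishing for projective bundles). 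Iterating these isomorphisms reduces (1) and the generation claim in (2) to the base case of ${\mathbb P}^d_k$ with log-structure along all coordinate hyperplanes, where $H^0(\Omega^j)$ is generated by $\operatorname{dlog}(\wt{\theta}_i/\wt{\theta}_j)$ and the higher cohomology vanishes by the Bott-type computation. Translating the generators by $G$ yields all standard logarithmic forms; the fact that $d$ kills them is automatic. This is the content of the cited arguments of Grosse-Kl\"onne.

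For (3), I lift the tower to $\so_F$: let $\mathfrak{T}$ be the iterated blow-up of ${\mathbb P}^d_{\so_F}$ along the $\so_F$-rational linear subvarieties, with log-structure from the exceptional divisors, the strict transforms of the coordinate hyperplanes, and the canonical structure of $\so_F^0$. Then $\mathfrak{T}$ is proper and log-smooth over $\so_F^0$ and lifts $T/k^0$, so the log-crystalline--log-de Rham comparison gives
$$\R\Gamma_{\crr}(T/\so_F^0)\simeq \R\Gamma(\mathfrak{T},\Omega^{\cdot}_{\mathfrak{T}/\so_F^0}).$$
Running the same blow-up argument over $\so_F$ shows that $H^i(\mathfrak{T},\Omega^j_{\mathfrak{T}/\so_F^0}) = 0$ for $i>0$ and that $H^0(\mathfrak{T},\Omega^j_{\mathfrak{T}/\so_F^0})$ is a free $\so_F$-module whose reduction mod $p$ is $H^0(T,\Omega^j_T)$ (generated by the evident lifts of the standard logarithmic forms). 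The Hodge-to-de Rham spectral sequence for $\mathfrak{T}/\so_F^0$ therefore degenerates trivially at $E_1$ on a single row, yielding $H^i_{\dr}(\mathfrak{T}/\so_F^0) = H^0(\mathfrak{T},\Omega^i_{\mathfrak{T}/\so_F^0})$. This is $\so_F$-free; base change then gives torsion-freeness of $H^i_{\crr}(T/\so_F^0)$ and the claimed identification of the reduction modulo $p$.

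The main obstacle I expect is the inductive step at the level of sheaves of logarithmic differentials under blow-ups: one must verify that the log-structure on $T_{j+1}$ (resp.\ on the lift) interacts with the exceptional divisor in a way that makes $R\pi_*\Omega^i$ clean, and track generation by standard logarithmic forms through each step. Once (1) and (2) are in hand, the crystalline part (3) is formal: all that is used is the existence of a log-smooth proper lift, the crystalline--de Rham comparison, and the degeneration of the Hodge-to-de Rham spectral sequence that (1) forces for purely dimensional reasons.
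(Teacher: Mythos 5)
The paper does not prove this proposition; it is quoted from Grosse-Kl\"onne (\cite[Theorems 2.3, 2.8]{GKI}, \cite[Prop. 1.1]{GKB}), so your argument has to stand on its own. As written it has a genuine gap in the proof of (1) and (2). Your key step reduces everything, via $R\pi_*\Omega^i_{T_{j+1}}=\Omega^i_{T_j}$ along the blow-up tower, to ``${\mathbb P}^d_k$ with log-structure along all coordinate hyperplanes'' and a Bott-type computation there. But the log-structure on $T$ is given by the strict transforms of \emph{all} $k$-rational linear subvarieties, not just the coordinate ones, and the standard logarithmic forms include all $G$-translates $g\dlog(\wt{\theta}_i/\wt{\theta}_j)$, i.e.\ $\dlog$'s of ratios of arbitrary $k$-rational linear forms. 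Your base case therefore produces the wrong answer: already for $d=1$ (where there are no blow-ups at all) one has $T={\mathbb P}^1_k$ with log poles at all $q+1$ rational points, so $H^0(T,\Omega^1)=H^0({\mathbb P}^1,\so(q-1))$ is $q$-dimensional, whereas the coordinate-hyperplane computation gives the $1$-dimensional space spanned by $\dlog(\wt{\theta}_1/\wt{\theta}_0)$. Relatedly, at intermediate stages $T_j$ the union of the exceptional divisors and the strict transforms of the hyperplanes is not a normal crossings divisor (that failure is the whole reason for the iterated blow-up), so the sheaves $\Omega^i_{T_j}$ you want to push forward to are not even defined as locally free log-differential sheaves. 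The actual proof is not a routine blow-up/projection-formula computation: Grosse-Kl\"onne compares $\Omega^j_T$ with the \emph{constant} subsheaves ${\mathbb L}^{j,0}_T\subset{\mathbb L}^j_T\subset\Omega^j_T$ (this is Proposition \ref{standard} above, i.e.\ \cite[Theorem 1.2]{GKB}) by a delicate induction on the strata of the hyperplane arrangement using residue exact sequences; acyclicity then follows because constant sheaves on an irreducible variety have no higher Zariski cohomology, and the identification of $H^0$ with the span of standard logarithmic forms comes out of the same induction.

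Part (3) is essentially the right formal deduction (lift the tower to $\so_F$, reduce to the log-scheme $T'$ over $\so_F$ with trivial base log-structure as in the remark following the proposition, apply the crystalline--de Rham comparison and base change), but note two points. First, concentration of the Hodge--de Rham $E_1$-page in a single row does \emph{not} by itself force degeneration ``for purely dimensional reasons'': the $d_1$-differentials $H^0(\Omega^j)\to H^0(\Omega^{j+1})$ live within that row, and their vanishing is exactly part (2), so (3) genuinely depends on (2) and not only on (1). Second, one must choose a single $\so_F$-lift of each $k$-rational linear subvariety to form $\mathfrak{T}$; with that fixed, the flat base change and freeness arguments you sketch do go through once (1) and (2) are established integrally.
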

We note here that, the underlying scheme of $T$ being smooth, the crystalline cohomology $H^i_{\crr}(T/\so_F^0)=H^i_{\crr}(T^{\prime}/\so_F)$, where $T^{\prime}$ is the underlying scheme of $T$  equipped with the log-structure given by the elements of $\sv$.

 For $0\leq j\leq d$, let ${\mathbb L}^j_T$ be the $k$-vector subspace of $\Omega^j_T(T^0)$, $T^0:=T\setminus \cup_{V\in {\mathcal V}}V$,  generated by all $j$-forms $\eta$ of the type
 $$
 \eta=y_1^{m_1}\cdots y_j^{m_j}\dlog y_1\wedge \cdots\wedge \dlog y_j
 $$
 with $m_i\in\Z$ and $y_i\in\so(T^0)^*$ such that $y_j=\wt{\theta}_j/\wt{\theta}_0$ for an isomorphism of $k$-varieties ${\mathbb P}^d_k\simeq{\rm Proj} (k[\wt{\theta}_0,\cdots,\wt{\theta}_d])$. By Theorem \ref{GK5}, $H^0(T,\Omega^j)$ is the $k$-vector subspace of ${\mathbb L}^j_T$ generated by all $j$-forms $\eta$ as above with $m_i=0$ for all $0\leq i\leq j$. 

 Let ${\mathbb L}^j_T$, resp.  ${\mathbb L}^{j,0}_T$, be the constant sheaf on $T$ with values ${\mathbb L}^j_T$, resp. $H^0(T,\Omega^j_T)$.
 For a non-empty subset $S$ of ${\mathcal V}$ such that $E=\cap_{V\in S}V$ is non-empty, define the subsheaf ${\mathbb L}^j_E$ of $\Omega^j_T\otimes\so_E$ as the image of the composite
 $$
 {\mathbb L}^j_T\to \Omega^j_T\to \Omega^j_T\otimes\so_E.
 $$
 \begin{proposition}{\rm (\cite[Theorem 1.2]{GKB})}
 \label{standard}
The canonical maps
 $$
 {\mathbb L}^{j,0}_T\hookrightarrow {\mathbb L}^j_T\hookrightarrow \Omega^j_T, \quad 
{\mathbb L}^j_E\hookrightarrow  \Omega^j_T\otimes\so_E$$
 induce  isomorphisms on Zariski cohomology groups.
 \end{proposition}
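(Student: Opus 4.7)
The plan is to deduce all three claims from Proposition \ref{GK5}, by computing source and target Zariski cohomologies separately and comparing. The two main ingredients are: (i) the irreducibility of $T$ (and of each $E$), which forces the higher Zariski cohomology of any constant abelian sheaf to vanish; (ii) the explicit description of the global log-forms on $T$ as the $k$-span of standard logarithmic forms.

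For the composite ${\mathbb L}^{j,0}_T \hookrightarrow \Omega^j_T$, I would first reduce the statement to checking it on $H^0$: Proposition \ref{GK5}(1) gives $H^{>0}(T, \Omega^j_T) = 0$, while the irreducibility of $T$ gives $H^{>0}(T, {\mathbb L}^{j,0}_T) = 0$ since ${\mathbb L}^{j,0}_T$ is constant. On $H^0$, the identification is tautological from the definition of ${\mathbb L}^{j,0}_T$ (constant sheaf with value $H^0(T, \Omega^j_T)$) combined with Proposition \ref{GK5}(2).

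For the intermediate inclusion ${\mathbb L}^{j,0}_T \hookrightarrow {\mathbb L}^j_T$ (and, consequently, the composite ${\mathbb L}^j_T \hookrightarrow \Omega^j_T$), the content is that, although ${\mathbb L}^j_T$ is a priori a larger $k$-vector space of meromorphic forms on $T^0$, the map induced between the associated constant sheaves is a quasi-isomorphism in Zariski cohomology. The key technical step is a local computation: using the blow-up description of $T$ as an iterated blow-up of ${\mathbb P}^d_k$ along strict transforms of $k$-rational linear subvarieties, and the linear relations among the coordinate ratios $\wt\theta_i/\wt\theta_j$, every monomial form $y_1^{m_1}\cdots y_j^{m_j}\,\dlog y_1\wedge\cdots\wedge \dlog y_j$ which contributes to the map to $\Omega^j_T$ must already be a $k$-linear combination of standard logarithmic forms. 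This reduces the kernel and cokernel of the map to genuinely constant sheaves with vanishing values.

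For the final statement, ${\mathbb L}^j_E \hookrightarrow \Omega^j_T \otimes \so_E$, the subscheme $E = \bigcap_{V \in S} V$ is itself an iterated blow-up of a projective space of dimension $d - |S|$ over $k$, equipped with a log-structure of the same type. I would induct on $d$ (the base $d = 0$ being vacuous), employing (a) the conormal short exact sequence relating $\Omega^j_T\otimes \so_E$ to $\Omega^j_E$, (b) the analogue of \ref{GK5}(1) for $E$ so that higher cohomology vanishes, and (c) the analogue of \ref{GK5}(2) together with the definition of ${\mathbb L}^j_E$ as the image of ${\mathbb L}^j_T$ to handle the global sections.

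The main obstacle will be the middle step: carrying out the combinatorial-geometric calculation on the iterated blow-up $T$ that rewrites a general monomial $y_1^{m_1}\cdots y_j^{m_j}\,\dlog y_1\wedge\cdots\wedge\dlog y_j$ (with arbitrary integer exponents) in terms of standard logarithmic forms. This is where the detailed geometry of the divisor collection $\sv$, and the inductive blow-up construction, must be exploited explicitly; all other steps are then essentially formal consequences of Proposition \ref{GK5} and the irreducibility of the ambient spaces.
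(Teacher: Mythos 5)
The first thing to note is that the paper does not prove Proposition \ref{standard} at all: it is quoted verbatim from Grosse-Kl\"onne \cite[Theorem 1.2]{GKB} and used as a black box, so there is no internal argument to measure yours against. On its own terms, your reduction of the first comparison ${\mathbb L}^{j,0}_T\hookrightarrow\Omega^j_T$ to Proposition \ref{GK5} together with flasqueness of constant sheaves on the irreducible scheme $T$ is correct, and that is indeed the only purely formal part of the statement.

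The two remaining comparisons are where the proposal breaks down. For the middle map your plan --- treat ${\mathbb L}^{j,0}_T$ and ${\mathbb L}^j_T$ as constant sheaves, kill higher cohomology by irreducibility, and argue on $H^0$ that every monomial form contributing to $\Omega^j_T$ is already a combination of standard logarithmic forms, so that kernel and cokernel are constant sheaves with zero values --- cannot be made to work. If both sheaves are constant on irreducible $T$, the induced map on cohomology is exactly the inclusion of vector spaces $H^0(T,\Omega^j_T)\subseteq{\mathbb L}^j_T$ in degree $0$, and these spaces genuinely differ: already for $d=j=1$ (where $T={\mathbb P}^1_k$ and no blow-up occurs) the generator $y\,\dlog y=dy$ of ${\mathbb L}^1_T$ has a double pole at $y=\infty$, hence is neither a global log form nor a $k$-linear combination of the forms $\dlog\frac{y-a}{y-b}$ that span $H^0(T,\Omega^1_T)$. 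The cokernel of the map of constant sheaves is the constant sheaf with value ${\mathbb L}^j_T/{\mathbb L}^{j,0}_T\neq 0$, so no local computation on the blow-up tower will make it vanish; whatever the correct reading of the maps in Grosse-Kl\"onne's theorem is, it is not the naive constant-sheaf reading your argument relies on. For the last map, ${\mathbb L}^j_E$ is by definition the image of a constant sheaf inside the coherent sheaf $\Omega^j_T\otimes\so_E$, hence is not constant and $E$-cohomology of neither side is formal; the assertion that this combinatorially defined subsheaf has the same cohomology as $\Omega^j_T\otimes\so_E$ is precisely the content of \cite[Theorem 1.2]{GKB} and is established there by an induction over the blow-up construction using residue exact sequences, not by the conormal sequence and an appeal to analogues of Proposition \ref{GK5}. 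Since you yourself flag these steps as the ``main obstacle'' without supplying them, the proposal as it stands does not constitute a proof.
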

 
\subsubsection{Cohomology of differentials on $X$ and truncations of $Y$}
We quote an important result of Grosse-Kl\"onne proving acyclicity of the sheaves of differentials on $X$ and vanishing of the differential on their global sections.
\begin{proposition}{\rm (\cite[Theorem 4.5]{GKI},\cite[Prop. 4.5]{GKB})}\label{GK01} 
Let $j\geq 0$.
 \begin{enumerate}
 \item We have  topological isomorphisms\footnote{Here and below, cohomology $H^*$ without a subscript denotes Zariski cohomology. All the groups are profinite. This is because they are limits of cohomologies of the truncated log-schemes $Y_s$ described below that are ideally log-smooth and proper.}
\begin{align*}
& H^i(X,\Omega^j_X)=0\quad{\rm and}\quad  H^i(X,\Omega^j_X\otimes_{\so_K}k)=0,\quad i >0,\\
& H^0(X,\Omega^j_X)\otimes_{\so_K}k\simeq H^0(X,\Omega^j_X\otimes_{\so_K}k).
\end{align*}
\item
$d=0$ on 
$H^0(X,\Omega^j_X).$
\end{enumerate}
\end{proposition}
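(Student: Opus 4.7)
The plan is to reduce everything first to the mod $\varpi$ statement, then to the finite truncations $Y_s$ (which are proper ideally log-smooth over $k^0$), and finally to the irreducible components, where Propositions \ref{GK5} and \ref{standard} apply. For the mod $\varpi$ part, I would first note that $\Omega^j_X\otimes_{\so_K}k$ is supported on $Y=\bigcup_s Y_s$ and that the affine opens $]U_s[_X$ form a Stein covering of $X_K$, which, combined with the formal/scheme-theoretic structure, lets one write $H^i(X,\Omega^j_X\otimes_{\so_K}k)\simeq \varprojlim_s H^i(Y_s,\Omega^j_{Y_s})$, and similarly for $H^i(X,\Omega^j_X/\varpi^n)$ by induction on $n$. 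The Mittag-Leffler property needed to pass to the limit follows from the fact that each transition map $H^i(Y_{s+1},\Omega^j)\to H^i(Y_s,\Omega^j)$ will turn out to be surjective in view of the explicit description below.

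Second, on each finite truncation $Y_s$, I would use the covering by the irreducible components $\{T_v\}_{v\in{\rm BT}_s}$ and the associated \v{C}ech spectral sequence
\[
E_1^{p,q}=\bigoplus_{v_0,\dots,v_p}H^q(T_{v_0}\cap\cdots\cap T_{v_p},\,\Omega^j_{Y_s}|_{T_{v_0}\cap\cdots\cap T_{v_p}})\Longrightarrow H^{p+q}(Y_s,\Omega^j_{Y_s}).
\]
On each intersection $E=T_{v_0}\cap\cdots\cap T_{v_p}$, Proposition \ref{standard} replaces the coherent sheaf $\Omega^j_T\otimes\so_E$ by the (essentially constant) sheaf ${\mathbb L}^j_E$, and Proposition \ref{GK5} combined with the fact that $E$ is a blowup-tower inside a projective space kills $H^q$ for $q>0$. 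Thus the spectral sequence collapses to $E_1^{p,0}$, i.e.\ to a purely combinatorial \v{C}ech complex on the simplicial complex ${\rm BT}_s$ with coefficients in the system $\{{\mathbb L}^j_E\}$. A direct inspection of this system — the spaces ${\mathbb L}^j_E$ are built from symbols of $k$-rational hyperplanes on a single common ambient ${\mathbb P}^d_k$ — identifies it, up to acyclic pieces, with the simplicial cochains of a contractible subcomplex of the building, and so its higher cohomology vanishes and the $H^0$ is explicitly computed.

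Third, for assertion (2) I would use that Proposition \ref{standard} identifies $H^0(Y_s,\Omega^j)$ with a subspace of the space of standard logarithmic forms $\bigwedge^j\dlog(\wt{\theta}_i/\wt{\theta}_{i'})$. Since every such form is trivially $d$-closed, passing to the limit $s\to\infty$ and lifting via $H^0(X,\Omega^j_X)\otimes_{\so_K}k\simeq H^0(X,\Omega^j_X\otimes_{\so_K}k)$ (which is part of (1)) together with Nakayama-type arguments integrally, the differential on $H^0(X,\Omega^j_X)$ vanishes modulo $\varpi$ and hence, by torsion-freeness following from (1), vanishes.

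The main obstacle will be the combinatorial \v{C}ech computation in step two: one must show that the \v{C}ech complex on the truncated Bruhat--Tits complex ${\rm BT}_s$ with values in the sheaves ${\mathbb L}^j_E$ has no higher cohomology and compute its $H^0$ compatibly under $s\mapsto s+1$ (to secure both the limit argument and the Mittag-Leffler property). This is exactly the content of Grosse-Kl\"onne's analysis in \cite{GKI,GKB}, and the delicate point is bookkeeping how the sheaves ${\mathbb L}^j_E$ patch along intersections coming from the blow-up description of each $T$.
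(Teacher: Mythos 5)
First, note that the paper does not actually prove this proposition: it is quoted from Grosse-Kl\"onne (\cite[Theorem 4.5]{GKI}, \cite[Prop. 4.5]{GKB}), and the closest the paper comes to a proof is Proposition \ref{general-princeton}, which establishes the analogous statements for the finite truncations $Y_s$ by exactly the reduction you describe: Mayer--Vietoris over the irreducible components, acyclicity of $\Omega^j_{Y_s}\otimes\so_Z$ on each intersection $Z$ (via Propositions \ref{GK5} and \ref{standard}), and then a purely combinatorial vanishing statement for a coefficient system on ${\rm BT}_s$. Your outer reductions (passage to mod $\varpi$, d\'evissage in $n$, Mittag--Leffler in $s$ using surjectivity of the transition maps, and the topological-Nakayama argument deducing (2) from topological generation by standard logarithmic forms, which are closed) are sound and consistent with how the paper handles the corresponding limits elsewhere.

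The one step that would fail as literally written is the combinatorial one. The coefficient system $Z\mapsto H^0(Z,\Omega^j_{Y_s}\otimes\so_Z)$ (equivalently $\{{\mathbb L}^j_E\}$) is genuinely non-constant, and the vanishing of its higher \v{C}ech cohomology is \emph{not} a consequence of contractibility of (a subcomplex of) the building: contractibility of the underlying space gives nothing for a non-constant coefficient system. The actual mechanism is Grosse-Kl\"onne's local acyclicity criterion for coefficient systems on buildings --- the lemma quoted in the proof of Proposition \ref{general-princeton}, i.e.\ the main theorem of \cite{GKA}: one must verify, for every pointed simplex $\wh{\eta}$ and every stable subset $M_0\subset N_{\wh{\eta}}$, the exactness of a specific three-term subquotient of the cochain complex. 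Verifying this condition for the system $\{{\mathbb L}^j_E\}$ is the real content of \cite[Cor. 1.6]{GKI} and is precisely where the blow-up description of $T$ and Proposition \ref{standard} get used. You correctly flag this as the main obstacle and defer to \cite{GKI,GKB}, which is legitimate given that the paper itself only cites the result; but the phrase ``identifies it with the simplicial cochains of a contractible subcomplex'' should be replaced by an appeal to this criterion, since no such identification is available.
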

\begin{corollary}Let $j\geq 0$. We have a topological isomorphism $H^j_{\dr}(X)\stackrel{\sim}{\leftarrow} H^0(X,\Omega^j_X)$. In particular, these groups are torsion-free.
\end{corollary}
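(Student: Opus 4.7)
The plan is to deduce the corollary directly from the two parts of Proposition~\ref{GK01} via the Hodge-to-de Rham spectral sequence for the formal log-scheme $X$:
$$E_1^{p,q} = H^q(X,\Omega^p_X) \Longrightarrow H^{p+q}_{\dr}(X).$$
Part~(1) of Proposition~\ref{GK01} says that $E_1^{p,q}=0$ whenever $q>0$, so the spectral sequence is concentrated on the row $q=0$. Hence it collapses to the complex $(H^0(X,\Omega^{\bullet}_X),d)$, i.e.\ $H^j_{\dr}(X)$ is computed as the cohomology of the global sections of the de Rham complex. Part~(2) then asserts that every differential in this complex vanishes, so the $E_2$-page equals the $E_1$-page and the spectral sequence degenerates. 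The resulting edge map is precisely the canonical map $H^0(X,\Omega^j_X)\to H^j_{\dr}(X)$, and it is an isomorphism.

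For the topological statement I would observe that the same argument works with the pro-discrete structure in place: each $H^0(X,\Omega^j_X)$ is profinite (being a limit of the coherent cohomology of the ideally log-smooth proper truncations $Y_s$ of Section~\ref{formal-models}), the global sections of the de Rham complex form a complex of pro-discrete modules, and the spectral sequence construction is compatible with limits. Since the abutment map is the edge map of a degenerate spectral sequence (with only one nonzero row and zero differentials), it is automatically an isomorphism in $PD_K$, not merely of abstract modules.

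Finally, torsion-freeness follows formally. Because $X$ is semistable, $\Omega^j_X$ is a locally free $\so_X$-module (in the log sense), in particular $\varpi$-torsion free, so
$$0\to \Omega^j_X\xrightarrow{\varpi}\Omega^j_X\to \Omega^j_X\otimes_{\so_K}k\to 0$$
is short exact. Taking global sections yields an injection $\varpi:H^0(X,\Omega^j_X)\hookrightarrow H^0(X,\Omega^j_X)$, showing that $H^0(X,\Omega^j_X)$ is torsion-free over $\so_K$; the claim for $H^j_{\dr}(X)$ is then immediate from the isomorphism just established. (Incidentally, the long exact sequence combined with Proposition~\ref{GK01}(1) reproduces the identification $H^0(X,\Omega^j_X)\otimes_{\so_K}k\simeq H^0(X,\Omega^j_X\otimes_{\so_K}k)$ appearing in the statement of Proposition~\ref{GK01}.) There is no real obstacle here: the whole content of the corollary is a clean repackaging of Proposition~\ref{GK01} via Hodge-to-de Rham, and the only mild care needed is to track the pro-discrete topology, which poses no problem because everything in sight is a finite successive extension of profinite modules and the spectral sequence degenerates at $E_2$.
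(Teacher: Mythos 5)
Your argument is correct and is exactly the intended deduction: the paper states this corollary without proof as the immediate consequence of Proposition~\ref{GK01} via the Hodge--de Rham spectral sequence (vanishing of higher coherent cohomology collapses it to the complex of global sections, on which $d=0$), with torsion-freeness coming from local freeness of $\Omega^j_X$ on the flat formal scheme. The only point where the paper is more explicit than you are appears later, in Corollary~\ref{amtrak}(3), where the passage to the limit over $n$ is justified by the surjectivity of $H^0(X_{n+1},\Omega^j)\to H^0(X_n,\Omega^j)$ (Lemma~\ref{degenerate}(3)) and a Mittag--Leffler argument; your appeal to profiniteness of all the terms covers the same ground.
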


The above theorem  can be generalized to the idealized log-schemes $Y_s, s\in\N,$ defined in Section \ref{formal-models} in the following way.
\begin{proposition}
\label{general-princeton} Let $j\geq 0$, 
 $s\in\N$.
 \begin{enumerate}
 \item $H^i(Y_s,\Omega^j)=0$ for $i>0$.
\item 
$d=0$ on $H^0(Y_s,\Omega^j).
$
 \end{enumerate}
\end{proposition}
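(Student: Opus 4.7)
The plan is to reduce both statements to Propositions \ref{GK5} and \ref{standard}, which handle the case of a single irreducible component $T$ and its strata, by a Mayer--Vietoris argument along the irreducible components of $Y_s$. Write $V_s$ for the set of vertices of the Bruhat--Tits building at combinatorial distance $\leq s$ from $v_0$, and $\{T_v\}_{v\in V_s}$ for the corresponding irreducible components of $Y_s$, each isomorphic to $T$ with its log (and ideal) structure induced from $Y_s$. For any nonempty $S\subset V_s$ such that $T_S:=\bigcap_{v\in S}T_v$ is nonempty, $T_S$ is a smooth closed stratum of any $T_v$ with $v\in S$, equipped with the restriction of the normal--crossings log structure, and the idealized structure on $Y_s$ restricts on $T_S$ to the standard log structure of the stratum.

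First I would set up the Mayer--Vietoris (\v Cech) spectral sequence for the closed covering $Y_s=\bigcup_{v\in V_s}T_v$: this is allowable because a covering of a closed subscheme of $Y$ by closed subschemes gives an admissible Zariski cover after passing to sheaves. It takes the form
\[
E_1^{p,q}=\prod_{\#S=p+1}H^q(T_S,\Omega^j_{T_S})\Longrightarrow H^{p+q}(Y_s,\Omega^j).
\]
Here $\Omega^j_{T_S}$ means the restriction of $\Omega^j_{Y_s}$ to $T_S$. By Proposition \ref{GK5}(1) the individual components are acyclic; for the strata $T_S$ with $\#S\geq 2$, Proposition \ref{standard} identifies $\Omega^j_{T_S}$ with the subsheaf ${\mathbb L}^j_{T_S}$ on Zariski cohomology, which is a constant sheaf on each irreducible stratum and hence acyclic (the strata $T_S$ being irreducible smooth projective). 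Therefore $E_1^{p,q}=0$ for $q>0$, so the spectral sequence collapses to the \v Cech complex of $H^0$'s. To conclude part (1) it suffices to show this \v Cech complex is acyclic in positive degrees; this is where I would use the contractibility of the combinatorial realisation of the nerve of the covering (namely ${\rm BT}_s$, which is contractible as a subcomplex of the Bruhat--Tits building) together with Proposition \ref{standard}, which describes the transition maps in terms of restrictions of standard logarithmic forms.

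For part (2) there are two equivalent routes; I would take the direct one. By the computation above, each element of $H^0(Y_s,\Omega^j)$ is determined by a compatible system of sections over the components $T_v$, each of which lies in $H^0(T_v,\Omega^j_{T_v})$. By Proposition \ref{GK5}(2) every such section is a $k$-linear combination of standard logarithmic forms $\dlog(\wt\theta_{i_1}/\wt\theta_{j_1})\wedge\cdots\wedge \dlog(\wt\theta_{i_j}/\wt\theta_{j_j})$, and these are annihilated by $d$; hence $d$ is zero on each $H^0(T_v,\Omega^j_{T_v})$, and therefore on the equaliser $H^0(Y_s,\Omega^j)$. Alternatively, one can use Proposition \ref{GK01} and the fact that the natural map $H^0(X,\Omega^j_X)\otimes_{\so_K}k \to H^0(Y_s,\Omega^j)$ has dense image (with respect to the profinite topology), together with $d=0$ on $H^0(X,\Omega^j_X)$.

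The main obstacle, in my view, is the bookkeeping for the idealized log structure on $Y_s$: one must check that the restriction of $\Omega^j_{Y_s}$ to each stratum $T_S$ really is the sheaf $\Omega^j_{T_S}$ with its \emph{ideally} log-smooth structure (so that Propositions \ref{GK5} and \ref{standard} apply verbatim) and that the transition maps in the \v Cech complex are induced by the natural inclusions of forms described in Proposition \ref{standard}. Once this is granted, the rest of the argument is formal.
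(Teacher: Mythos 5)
Your Mayer--Vietoris setup for part (1) and the argument for part (2) both match the paper's approach, but there is a genuine gap in the crucial step for part~(1). After reducing to the acyclicity of the \v{C}ech complex of $H^0$'s (the $E_2$ page), you invoke \emph{contractibility of the nerve $\mathrm{BT}_s$ together with Proposition~\ref{standard}}. This does not work: contractibility of $\mathrm{BT}_s$ only yields vanishing of higher cohomology with \emph{constant} coefficients, and the coefficient system $\sff(Z)=H^0(Z,\Omega^j_{Y_s}\otimes\so_Z)$ here is far from constant --- the vector spaces attached to simplices of different dimension are different and the face maps are restriction maps of differential forms. The standard ``contractible nerve implies degenerate \v{C}ech spectral sequence'' argument simply does not apply to a non-constant coefficient system, and Proposition~\ref{standard} (which only identifies each $\sff(Z)$ and the transition maps) does not by itself supply the missing exactness.

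The paper's actual argument fills this gap with a substantial result of Grosse-Kl\"onne: the acyclicity criterion for coefficient systems on the Bruhat--Tits building \cite[Theorem 1.2]{GKA}, which is a genuinely nontrivial combinatorial statement (it requires verifying a local exactness condition on ``pointed simplices'' $\wh{\eta}$ and ``stable'' subsets $M_0\subset N_{\wh{\eta}}$). Moreover, that theorem is stated for all of $\mathrm{BT}$, and one must further argue that the recursive construction of a coboundary in its proof ``stays inside'' the truncated building $\mathrm{BT}_s$ --- this is a nontrivial observation that the paper makes explicitly in a separate lemma. Finally, the hypotheses of the criterion for the coefficient system $\sff$ are not obvious either; the paper cites \cite[Cor.~1.6]{GKI} for that verification. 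So the missing ideas are: (a) the correct tool is Grosse-Kl\"onne's acyclicity criterion for coefficient systems on buildings (not topological contractibility of the nerve); (b) one needs the extension of that criterion to $\mathrm{BT}_s$; and (c) one needs the verification that $\sff$ satisfies its hypotheses, which is where Proposition~\ref{standard} (or rather \cite[Cor.~1.6]{GKI}) actually enters.

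Two smaller points. In your $E_1$ page you write $H^q(T_S,\Omega^j_{T_S})$, but the sheaf appearing in the Mayer--Vietoris resolution of $\Omega^j_{Y_s}$ is $\Omega^j_{Y_s}\otimes\so_{T_S}$, the restriction of $\Omega^j_{Y_s}$ to $T_S$, which for $\#S\geq 2$ is not the sheaf of log-differentials of $T_S$ itself; one should be careful to cite acyclicity for the former. And the acyclicity of ${\mathbb L}^j_E$ (hence of $\Omega^j_T\otimes\so_E$ via Proposition~\ref{standard}) should be stated as an input rather than taken as obvious from irreducibility of $E$. Your part (2) argument, embedding $H^0(Y_s,\Omega^j)$ into $\bigoplus_{Z\in F^0_s}H^0(Z,\Omega^j_{Y_s}\otimes\so_Z)$ and using Proposition~\ref{GK5}(2), is exactly the paper's argument.
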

\begin{proof} For the first claim, the argument is analogous to the one of Grosse-Kl\"onne for $s=\infty$.  We will sketch it briefly. 
Take  $s\neq\infty$.
 Since $\Omega^j_{Y_s}$ is locally free over $\so_{Y_s}$, we have the Mayer-Vietoris exact sequence
$$
0\to \Omega^j_{Y_s}\to \bigoplus_{Z\in F^0_s}\Omega^j_{Y_s}\otimes\so_Z\to \bigoplus_{Z\in F^1_s}\Omega^j_{Y_s}\otimes\so_Z\to\cdots
$$
where $F^r_s$ is the set of non-empty intersections of $(r+1)$ pairwise distinct irreducible components of  $Y_s$ and is a finite set (which is also  the set of $r$-simplices of ${\rm BT}_s$). 
 By \cite[Cor. 1.6]{GKI},
$H^i(Z,\Omega^j_{Y_s}\otimes\so_Z)=0,$ $i>0,$ for every $Z\in F^r_s$. 
Hence
to show that $H^i(Y_s,\Omega^j_{Y_s})=0$, $i>0$, we need to prove that
$H^i({\rm BT}_s,\sff)=0$, for $i>0$, where $\sff$ is the coefficient system on ${\rm BT}_s$ defined by $\sff(Z)=H^0(Y,\Omega^j_{Y_s}\otimes\so_Z)$, for $Z\in F^r_s$.
      We will use for that an analog of Grosse-Kl\"onne's acyclicity condition.  For a lattice chain in ${\rm BT}_s$
  $$
  \varpi L_r\subsetneq L_1\subsetneq \cdots\subsetneq L_r
  $$
  we call the ordered $r$-tuple $([L_1],\ldots,[L_r])$,  a pointed $(r-1)$-simplex (with underlying $(r-1)$-simplex the unordered set $\{[L_1],\ldots,[L_r]\}$). Denote it by $\wh{\eta}$
  and consider the set $$
  N_{\wh{\eta}}=\{[L]\mid\varpi L_r\subsetneq L\subsetneq L_1\}.
  $$
  We note that $N_{\wh{\eta}}$ is a subset of vertices of ${\rm BT}_s$. 
  A subset $M_0$ of $N_{\wh{\eta}}$ is called {\em stable} if, for all $L,L^{\prime}\in M_0$, the intersection   $L\cap L^{\prime}$ also lies in $M_0$.

\begin{lemma}Let $\sff$ be a cohomological coefficient system on ${\rm BT}_s$.
Let $1\leq r \leq d$. Suppose that for any pointed $(r-1)$-simplex $\wh{\eta}\in {\rm BT}_s$ with underlying $(r-1)$-simplex $\eta$ and for any stable subset $M_0$ of $N_{\wh{\eta}}$  the following subquotient complex of the cochain complex $C({\rm BT}_s,\sff)$ with values in $\sff$ is exact
$$
\sff(\eta)\to \prod_{z\in M_0}\sff(\{z\}\cup\eta)\to\prod_{z,z^{\prime}\in M_0,\{z,z^{\prime}\}\in\F^1_s}\sff(\{z,z^{\prime}\}\cup \eta).
$$
Then the $r$-th cohomology group $H^s({\rm BT}_s,\sff)$ of $C({\rm BT}_s,\sff)$ vanishes. 
\end{lemma}
\begin{proof}
For ${\rm BT}$ this is the main theorem of \cite{GKA}. The argument used in its proof \cite[Theorem 1.2]{GKA} carries over to our case: when applied to a cocycle from ${\rm BT}_s$, the recursive procedure of producing a coboundary in the proof of Theorem 1.2 in  loc. cit. ``does not leave'' ${\rm BT}_s$.
\end{proof}
Hence it suffices to check that the above condition is satisfied for our $\sff$. But this was checked in \cite[Cor. 1.6]{GKI}.

 The second claim of the proposition follows from the diagram 
 $$
 \xymatrix{
  H^0(Y_s, \Omega^j_{Y_s})\ar@{^(->}[r] &  \bigoplus_{Z\in F^0_s}H^0(Z,\Omega^j_{Y_s}\otimes\so_Z)  
  }
  $$ 
   and Proposition \ref{GK5}.
\end{proof}
\subsubsection{Ordinary log-schemes}
A quick review of basic facts concerning   ordinary log-schemes.

  Let  $W_n\Omega_{Y}\kr$ denotes the de Rham-Witt complex of $Y/k^0$ \cite{Hy}.
Recall first \cite[Prop. II.2.1]{Ill} that if $T$ is a log-smooth and proper log-scheme over $k^0$, for a perfect field $k$ of positive characterstic $p$,  then $H^i_{\eet}(T,W_n\Omega^j)$ is of finite length and we have
  $
  \rg_{\eet}(T,W\Omega^j)  \stackrel{\sim}{\to} \holim_n\rg_{\eet}(T,W_n\Omega^j)$ for $W\Omega^j:=\varprojlim_n W_n\Omega^j$.   It follows that
    $H^i_{\eet}(T,W\Omega^j)  \stackrel{\sim}{\to} \varprojlim_nH^i_{\eet}(T,W_n\Omega^j).$  The module $M_{i,j}$ of $p$-torsion of this group is annihilated by a power of $p$ and $H^i_{\eet}(T,W\Omega^j)/M_{i,j}$ is a free $\so_F$-module of finite type \cite[Theorem II.2.13]{Ill}. However, $H^0_{\eet}(T,W\Omega^j)$ is itself a  free $\so_F$-module of finite type
     \cite[Cor. II.2.17 ]{Ill}.
   On the other hand, the complex $\rg_{\eet}(T,W\Omega\kr)$ is perfect and 
    $\rg_{\eet}(T,W\Omega\kr)\otimes^L_{\so_F}\so_{F,n}\simeq \rg_{\eet}(T,W_n\Omega\kr)$ \cite[Theorem II.2.7]{Ill}.


Let $V$ be  a fine   (idealized) log-scheme over $k^0$ that is of Cartier type. 
We have  the subsheaves  of boundaries and cocycles of $\Omega^j_V$ (thought of as sheaves on  $V_{\eet}$) 
$$
  B^j_V:=\im(d:\Omega^{j-1}_V\to \Omega^j_V),\quad  Z^j_V:=\ker(d:\Omega^j_V\to\Omega^{j+1}_V).
$$
{\em Assume now that $V$ is proper and log-smooth}. Recall  that  it is called {\em ordinary} if for all $i,j\geq 0$, $H^i_{\eet}(V,B^j)=0$ (see \cite{BK}, \cite{IR}). 

  We write
$W_n\Omega^r_{{-},\log}$ for  the de Rham-Witt sheaf of logarithmic forms.
\begin{proposition}{\rm (\cite[Theorem 4.1]{Lor})}
\label{rzeszow5}
  The following conditions are equivalent (we write $\overline{V}$ for $V_{\overline{k}}$).
  \begin{enumerate}
  \item $V/k^0$ is ordinary.
  \item For $i,j\geq 0$, the inclusion $\Omega^j_{\overline{V},\log} \subset \Omega^j_{\overline{V}}$ induces a canonical isomorphism of  $\overline{k}$-vector spaces
  $$
  H^i_{\eet}(\overline{V},
  \Omega^j_{\log})\otimes_{{\mathbf F}_p}\overline{k}\stackrel{\sim}{\to} H^i_{\eet}(\overline{V},\Omega^j).
  $$
  \item For $i,j,n\geq 0$, the canonical maps 
  \begin{align*}
  H^i_{\eet}(\overline{V},W_n\Omega^j_{\log})\otimes_{\Z/p^n}W_n(\overline{k})   & \to H^i_{\eet}(\overline{V},W_n\Omega^j),\\
    H^i_{\eet}(\overline{V},W\Omega^j_{\log})\otimes_{\Z_p}W(\overline{k})  & \to H^i_{\eet}(\overline{V},W\Omega^j),
\end{align*}
where $W\Omega^r_{\log}:=\varprojlim_nW_n\Omega^r_{\log}$,
  are  isomorphisms.
  \item For $i,j\geq 0$, the de Rham-Witt Frobenius 
  $$
  {\rm F}: H^i_{\eet}(V,W\Omega^j)\to H^i_{\eet}(V,W\Omega^j)
  $$
  is an isomorphism.
  \end{enumerate}
  \end{proposition}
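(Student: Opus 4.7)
The plan is to establish the cyclic chain $(1)\Rightarrow(2)\Rightarrow(3)\Rightarrow(4)\Rightarrow(1)$, following the pattern of Bloch--Kato and Illusie--Raynaud for ordinary schemes, with the logarithmic/idealized input coming from the work of Hyodo, Kato and Lorenzon. The engine throughout is the logarithmic Cartier isomorphism
$$C^{-1}\colon \Omega^j_V \stackrel{\sim}{\to} \mathcal{H}^j(\Omega^{\scriptscriptstyle\bullet}_V)=Z^j_V/B^j_V,$$
which holds because $V/k^0$ is of Cartier type, together with the two short exact sequences of \'etale sheaves
$$0\to B^j_V\to Z^j_V\xrightarrow{\;C\;}\Omega^j_V\to 0,\qquad 0\to \Omega^j_{V,\log}\to Z^j_V\xrightarrow{1-C}\Omega^j_V\to 0,$$
(the second being the Kato--Illusie sequence for logarithmic de Rham--Witt).

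For $(1)\Leftrightarrow(2)$: using the first sequence and $H^\ast_{\eet}(V,B^j)=0$, one gets $H^i_{\eet}(\overline V,Z^j)\stackrel{\sim}{\to}H^i_{\eet}(\overline V,\Omega^j)$ (over $\overline k$), and then the second sequence identifies $1-C$ with a Frobenius-semilinear automorphism on $H^i_{\eet}(\overline V,\Omega^j)$; the kernel of $1-C$ is $H^i_{\eet}(\overline V,\Omega^j_{\log})$, which Artin--Schreier descent turns into (2). Conversely, if (2) holds the same computation forces $H^i_{\eet}(\overline V,B^j)=0$, and by faithfully flat descent also $H^i_{\eet}(V,B^j)=0$.

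For $(2)\Rightarrow(3)$ I would induct on $n$ using the standard exact sequences of pro-sheaves on $V_{\eet}$
$$0\to W_n\Omega^{j-1}/(F^nV+dV^{n-1})\xrightarrow{dV^{n-1}} W_{n+1}\Omega^j\xrightarrow{R} W_n\Omega^j\to 0$$
and the analogous sequence for the logarithmic part (where $W_{n+1}\Omega^j_{\log}\twoheadrightarrow W_n\Omega^j_{\log}$ has kernel $\Omega^j_{\log}$); five-lemma together with the case $n=1$, which is (2), gives (3) at finite level. Passing to the limit $W\Omega^j=\varprojlim W_n\Omega^j$ is harmless because all the groups are of finite length (ideal log-smoothness plus properness as in Illusie \cite[Prop.~II.2.1]{Ill}), so the transition maps satisfy Mittag-Leffler. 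The derivation $R\varprojlim$ of the tensorization with $W(\overline k)$ also vanishes because $W_n(\overline k)$ is $W(\overline k)$-flat and the cohomology groups are compact.

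For $(3)\Rightarrow(4)$: the de Rham--Witt Frobenius $F$ acts as the identity on $W\Omega^j_{\log}$ (this is how the logarithmic subsheaf is cut out in one of its equivalent definitions), so via the isomorphism of (3) it is identified with the $\sigma$-semilinear extension $1\otimes\sigma$ on $H^i_{\eet}(\overline V,W\Omega^j_{\log})\otimes_{\Z_p}W(\overline k)$, which is bijective. Galois descent from $\overline V$ to $V$ gives (4) on $V$ itself. For $(4)\Rightarrow(1)$ I would compare $F$ with the canonical filtration on $W\Omega^j$ whose graded pieces involve $B^j$ and $\Omega^j/B^j$ (Illusie \cite[I.3]{Ill}); surjectivity and injectivity of $F$ on each graded piece, read off from (4) using the perfectness of $\R\Gamma_{\eet}(V,W\Omega^{\scriptscriptstyle\bullet})$, force $H^i_{\eet}(V,B^j)=0$.

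The main obstacle is not any single step but the bookkeeping needed to make sure that all the classical arguments of Bloch--Kato and Illusie--Raynaud (stated for smooth proper schemes in characteristic $p$) transport correctly to the ideally log-smooth, properly logarithmic setting of Cartier type, so that the Cartier isomorphism, the structure theorems for $W\Omega^j$, and the exactness of the sequences above are all available; once one grants the logarithmic de Rham--Witt formalism of Hyodo--Kato and Lorenzon \cite{Lor}, the equivalences follow formally as sketched.
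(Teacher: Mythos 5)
The paper does not prove this proposition --- it is quoted verbatim from Lorenzon \cite[Theorem 4.1]{Lor}, and no proof is given in the text; so there is no ``paper's own proof'' to compare against. Your sketch follows the classical Bloch--Kato / Illusie--Raynaud circle of ideas (Cartier isomorphism, Artin--Schreier, induction up the de Rham--Witt tower, and descent along the canonical filtration), which is indeed the route Lorenzon takes in the logarithmic setting, so the overall plan is the right one.

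One local inaccuracy: in the step $(2)\Rightarrow(3)$ the displayed sequence
\[
0\to W_n\Omega^{j-1}/(F^nV+dV^{n-1})\xrightarrow{dV^{n-1}} W_{n+1}\Omega^j\xrightarrow{R} W_n\Omega^j\to 0
\]
is not quite the right description of $\ker R$: the filtration piece $\mathrm{Fil}^n W_{n+1}\Omega^j$ is $V^n\Omega^j+dV^n\Omega^{j-1}$, so both a $V^n$- and a $dV^n$-contribution occur, and neither is killed by the quotient you wrote. The clean version for the induction is the Hyodo--Kato sequence
\[
0\to \frac{\Omega^j\oplus\Omega^{j-1}}{R_n^j}\xrightarrow{(V^n,dV^n)} W_{n+1}\Omega^j\xrightarrow{R} W_n\Omega^j\to 0,
\]
with $R_n^j$ cut out by $B_{n+1}^j$, $Z^{j-1}_n$ and $B_1^j$; this is exactly the sequence the paper uses in the proof of Lemma \ref{degenerate}, and its cohomology is controlled by $H^*(B^j)$, which (1) kills. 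With that substitution your induction runs as intended, and the remaining steps ($F=1$ on $W\Omega^j_{\log}$, hence $F=1\otimes\sigma$ under (3), and the descent to $(1)$ via the slope/canonical filtration and perfectness of $\R\Gamma_{\eet}(V,W\Omega^{\scriptscriptstyle\bullet})$) are the standard Illusie--Raynaud arguments, which transport to the ideally log-smooth proper Cartier-type setting exactly because Hyodo--Kato and Lorenzon have set up the de Rham--Witt formalism there.
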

  \begin{example}
  The above result  implies that, by the Projective Space Theorem, projective spaces are ordinary, and, more generally, so are projectivizations of vector bundles  \cite[Prop. 1.4]{Ill1}. This implies, by the blow-up diagram,  the following:
  \begin{proposition}{\rm (\cite[Prop. 1.6]{Ill1})}
  \label{Illusie1}
Let $X$ be a proper smooth scheme over $k$. Let $Y\subset X$ be a smooth closed subscheme, $\wt{X}$ the blow-up of $Y$ in $X$. Then $X$ and $Y$ are ordinary if and only if $\wt{X}$ is ordinary.
\end{proposition}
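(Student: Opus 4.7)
\textbf{Proof plan for Proposition \ref{Illusie1}.}
My plan is to use the characterization of ordinariness given in Proposition \ref{rzeszow5}(4): a proper smooth scheme $V$ over $k$ (with trivial log-structure, as here) is ordinary if and only if the de Rham--Witt Frobenius $\F$ is a bijection on every $H^i_{\eet}(V,W\Omega^j)$. Reducing the question to this one statement avoids dealing directly with the boundary sheaves $B^j_V$, which are awkward to handle through a blow-up because they are not coherent.

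The main technical input I would invoke is the blow-up formula for Hodge--Witt cohomology. Write $\pi:\wt X\to X$ for the blow-up along $Y$, and let $c=\operatorname{codim}_X Y$. There is a canonical decomposition
\[
H^n_{\eet}(\wt X,W\Omega^j)\;\simeq\;H^n_{\eet}(X,W\Omega^j)\;\oplus\;\bigoplus_{i=1}^{c-1} H^{n-i}_{\eet}(Y,W\Omega^{j-i}),
\]
analogous to the classical projective-bundle / blow-up formula, and this decomposition is Frobenius-equivariant. The summand corresponding to $X$ is cut out by $\pi^{*}$, while the $Y$-summands are produced using the class of the exceptional divisor $E\hookrightarrow\wt X$ (equivalently, the projective-bundle formula applied to $E=\mathbf P(N_{Y/X})\to Y$, whose Hodge--Witt cohomology is compatible with $\F$ because the Chern class of $\so(1)$ on a projective bundle is $\F$-compatible). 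This is the Hodge--Witt analog of the formula used in the proof of Proposition \ref{rzeszow5} and is established in Illusie's work on the de Rham--Witt complex.

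Granting the Frobenius-equivariant decomposition, both implications are immediate. If $X$ and $Y$ are ordinary, then $\F$ is bijective on every $H^n_{\eet}(X,W\Omega^j)$ and on every $H^{n-i}_{\eet}(Y,W\Omega^{j-i})$, hence bijective on the direct sum, so $\F$ is bijective on $H^n_{\eet}(\wt X,W\Omega^j)$, and $\wt X$ is ordinary. Conversely, if $\wt X$ is ordinary, $\F$ is bijective on $H^n_{\eet}(\wt X,W\Omega^j)$; restricting to each Frobenius-stable direct summand gives that $\F$ is bijective on every $H^n_{\eet}(X,W\Omega^j)$ and on every $H^{n-i}_{\eet}(Y,W\Omega^{j-i})$, so both $X$ and $Y$ are ordinary.

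The only genuine obstacle is verifying the blow-up decomposition together with its Frobenius compatibility; the argument for the latter is routine but non-trivial, reducing via the projective-bundle formula to the fact that the Hodge--Witt class of $c_1(\so(1))$ is stable under $\F$ (up to the expected factor $p$, which is absorbed by the shift in the summand $W\Omega^{j-i}$). Once that is in place, the two equivalences are formal.
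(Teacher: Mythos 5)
Your proof is correct and follows essentially the route the paper points to (and that Illusie's cited argument takes): the Frobenius-equivariant blow-up/projective-bundle decomposition of Hodge--Witt cohomology, combined with the criterion of Proposition \ref{rzeszow5}(4) that ordinarity is bijectivity of $\F$ on each $H^i_{\eet}(-,W\Omega^j)$. The decomposition you invoke is established in Gros's m\'emoire \cite{Gro}, and its $\F$-equivariance holds for exactly the reason you give (the Gysin summands are cup products with logarithmic classes, on which $\F$ acts trivially, the twist being absorbed by the shift $W\Omega^{j-i}$); the only implicit hypothesis is $\operatorname{codim}_X Y\geq 2$, without which the exceptional summands indexed by $1\leq i\leq c-1$ are absent and the ``only if'' direction (hence the statement itself) degenerates.
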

And this, in turn, by the weight spectral sequence, implies the following: 
\begin{proposition}{\rm (\cite[Prop. 1.10]{Ill1})}
\label{Illusie2}
Assume that $k=\overline{k}$. Let $Y$ be a semistable scheme over $k$.
Assume that it is a union of irreducible components  $Y_i$,  $1 \leq  i  \leq r$ such that 
for all  $I \subset \{1,\cdots,r\}$, the intersection $Y_I$ is smooth and ordinary.
Then $Y$, as a log-scheme over $k^0$,  is ordinary.
\end{proposition}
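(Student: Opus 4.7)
I would prove this by reducing the ordinariness of $Y$ to that of the strata $Y_I$ via the (Mokrane) weight spectral sequence. Since ordinariness is a bigraded statement, I would work with criterion (4) of Proposition~\ref{rzeszow5}, namely that the de Rham--Witt Frobenius $F$ is bijective on each $H^i_{\eet}(Y, W\Omega^j_Y)$, because this formulation is well behaved with respect to the filtration on $W\Omega^\bullet_Y$ whose graded pieces live on the strata.

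Concretely, for each fixed $j \geq 0$, Mokrane's weight filtration on $W\Omega^\bullet_Y$, refined to the $j$-th Hodge piece, yields a spectral sequence of the shape
$$E_1^{-r,q+r}(j) = \bigoplus_{i \geq \max(0,-r)} H^{q-2i-r}_{\eet}(Y^{(2i+r+1)}, W\Omega^{j-i-r})(-i-r) \;\Longrightarrow\; H^q_{\eet}(Y, W\Omega^j_Y),$$
where $Y^{(m)}$ is the disjoint union of the $|I|=m$ intersections $Y_I$. By hypothesis each such $Y_I$ is smooth, proper, and ordinary, so criterion (4) applied on the strata shows that $F$ acts bijectively on each $H^{q-2i-r}_{\eet}(Y^{(2i+r+1)}, W\Omega^{j-i-r})$; after accounting for the Tate twist $(-i-r)$ (which multiplies $F$ by $p^{i+r}$) the Newton slopes of every $E_1$-term collapse to the single value $j$. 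Propagating this through the spectral sequence shows that all slopes of $F$ on $H^q_{\eet}(Y, W\Omega^j_Y)$ equal $j$, which is precisely bijectivity of $F$ on that Hodge-Witt cohomology group, hence ordinariness of $Y/k^0$.

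To dispense with torsion subtleties, I would actually run the argument through criterion (3) of Proposition~\ref{rzeszow5} and use the Artin--Schreier-type sequence
$$0 \to W_n\Omega^j_{Y,\log} \to W_n\Omega^j_Y \xrightarrow{\,1-F\,} W_n\Omega^j_Y \to 0,$$
reducing by induction on $n$ to $n=1$. In the $n=1$ case, $F$-bijectivity is equivalent to a dimension equality between $H^i(Y, \Omega^j_Y)$ and $H^i(Y, \Omega^j_{Y,\log}) \otimes_{\mathbf{F}_p} k$, and both sides can be computed from the strata via the Mayer--Vietoris/residue filtration on $\Omega^j_Y$ analogous to the one used in Proposition~\ref{general-princeton}. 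The main obstacle is constructing and verifying Frobenius-equivariance of the weight spectral sequence at the level of the individual Hodge-Witt sheaves $W\Omega^j_Y$ (Mokrane's original setting is log crystalline hypercohomology): this requires checking that his filtration is strictly compatible with the Hodge grading, which in turn relies on the fact that the slope spectral sequence on each ordinary stratum degenerates modulo torsion. Once this compatibility is in place, the remainder of the argument is a routine bookkeeping of slopes and Tate twists.
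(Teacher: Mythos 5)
Your proposal is correct and follows essentially the same route as the paper: the weight spectral sequence of Mokrane/Nakkajima, Frobenius equivariance of that spectral sequence on the Hodge--Witt level, $F$-bijectivity on the $E_1$-terms from ordinarity of the strata (criterion (4) of Proposition~\ref{rzeszow5}), and propagation to the abutment. The only small slip is terminological: bijectivity of the de~Rham--Witt $F$ on $H^q_{\eet}(Y,W\Omega^j_Y)$ corresponds to slope~$0$ for $F$ (equivalently slope~$j$ for the arithmetic Frobenius $\phi=p^jF$), not slope $j$ for $F$; the conclusion is unaffected.
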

\begin{proof}As suggested by Illusie in \cite[Rem. 2.8]{Ill1}, this can be proved using the weight spectral sequence
\begin{align*}
\label{weights-crr}
E^{-k,i+k}_1 & =\bigoplus_{j\geq 0, j\geq -k}H^{i-s-j}_{\eet}(Y_{2j+k+1},W\Omega^{s-j-k})(-j-k)\Rightarrow H^{i-s}_{\eet}(Y,W\Omega^s).
\end{align*}
Here  $Y_{t}$ denotes the intersection of $t$ different irreducible components of $Y$ that are equipped with the trivial log-structure. 
 Such spectral sequences were constructed in  \cite[3.23]{Mok}, \cite[4.1.1]{Nak}.  They are Frobenius equivariant (the Tate twist $(-j-k)$ refers to the twist of Frobenius by $p^{j+k}$) \cite[Theorem 9.9]{Nak};
 hence, without the Twist twist,  compatible with the de Rham-Witt Frobenius $F$. 
 
 Now, by assumptions, all the schemes $Y_t$ are smooth and  ordinary. It follows, by Proposition \ref{rzeszow5},  that the Frobenius $F$ induces an isomorphism on $E^{-k,i+k}_1$. Hence also on the abutment $H^{i-s}_{\eet}(Y,W\Omega^s_Y)$, as wanted.
\end{proof}
\end{example}

 {\em We drop now the assumption that $V$ is proper.} Recall that we have   the Cartier isomorphism
$$
C: Z^j/  B^j \stackrel{\sim}{\to}\Omega^j,\quad 
x^p\dlog y_1\wedge\ldots\wedge\dlog y_j \mapsto x\dlog y_1\wedge\ldots\wedge\dlog y_j.
$$
\begin{lemma}
\label{final1}Assume that $H^i_{\eet}(V,\Omega^j)=0$ and that $d=0$ on $H^0_{\eet}(V,\Omega^j)$ for all $i\geq 1$ and $j\geq 0$.
Then $V$ is   ordinary \cite[4]{Lor}, i.e., for $i,j\geq 0$, we have 
$H^i_{\eet}(V,  B^j)=0.$ 
 \end{lemma}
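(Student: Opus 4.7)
The plan is to establish $H^i_{\eet}(V, B^j) = 0$ by descending induction on $j$, leveraging the two fundamental short exact sequences of \'etale sheaves
\begin{equation*}
(\ast)\ \ 0 \to Z^j \to \Omega^j \xrightarrow{d} B^{j+1} \to 0, \qquad (\ast\ast)\ \ 0 \to B^j \to Z^j \xrightarrow{C} \Omega^j \to 0,
\end{equation*}
the latter being the Cartier sequence (the surjection $Z^j \twoheadrightarrow \Omega^j$ factoring as the quotient $Z^j \to Z^j/B^j$ followed by the Cartier isomorphism). The base case $j > \dim V$ is automatic since then $\Omega^j = 0$, so also $B^j = 0$.

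For the inductive step I would assume $H^i(V, B^{j+1}) = 0$ for all $i \geq 0$. Feeding $(\ast)$ into the long exact sequence and using the hypothesis $H^{\geq 1}(V, \Omega^j) = 0$ gives isomorphisms $H^{i+1}(V, Z^j) \cong H^i(V, B^{j+1})$ for $i \geq 0$, which vanish by the inductive hypothesis; in particular $H^{\geq 1}(V, Z^j) = 0$. In degree zero, the assumption $d|_{H^0(V,\Omega^j)} = 0$, together with the inclusion of sheaves $B^{j+1} \hookrightarrow \Omega^{j+1}$ (which gives an injection on global sections), forces the boundary map $H^0(V, \Omega^j) \to H^0(V, B^{j+1})$ to vanish, whence $H^0(V, Z^j) \xrightarrow{\sim} H^0(V, \Omega^j)$ via the natural inclusion.

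Plugging these vanishings into the long exact sequence of $(\ast\ast)$ immediately yields $H^i(V, B^j) = 0$ for $i \geq 2$, and leaves the four-term sequence
\begin{equation*}
0 \to H^0(V, B^j) \to H^0(V, Z^j) \xrightarrow{C^*} H^0(V, \Omega^j) \to H^1(V, B^j) \to 0.
\end{equation*}

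The remaining task, and what I expect to be the main obstacle, is to prove that $C^*$ is an isomorphism. Under the identification $H^0(V, Z^j) = H^0(V, \Omega^j)$ above, the operator $C^*$ becomes a Frobenius-semilinear endomorphism of the finite-dimensional $k$-vector space $H^0(V, \Omega^j)$, so bijectivity amounts to either injectivity or surjectivity. In the intended application $V = Y_s$ this is transparent: Proposition \ref{GK5} together with Proposition \ref{standard} imply that $H^0(V, \Omega^j)$ is spanned by standard logarithmic forms $\dlog y_{i_1} \wedge \cdots \wedge \dlog y_{i_j}$, on which the Cartier operator acts as the identity; hence $C^*$ is the identity on a spanning set. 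For the lemma as stated one invokes the argument of \cite[4]{Lor}.
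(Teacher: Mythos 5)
Your proposal uses the same two exact sequences as the paper but runs the induction in the wrong direction, and this creates precisely the obstruction you flag at the end. The paper argues by \emph{increasing} induction on $j$, starting from the trivial base case $B^0 = 0$. In the step from $j$ to $j+1$, the inductive hypothesis already supplies $H^0_{\eet}(V, B^j) = H^1_{\eet}(V, B^j) = 0$; feeding this into the Cartier sequence $0 \to B^j \to Z^j \to \Omega^j \to 0$ gives $H^i_{\eet}(V, Z^j) = 0$ for $i \geq 1$, and then the sequence $0 \to Z^j \to \Omega^j \to B^{j+1} \to 0$ (using $d = 0$ on $H^0(\Omega^j)$) gives $H^i_{\eet}(V, B^{j+1}) \cong H^{i+1}_{\eet}(V, Z^j) = 0$ for all $i \geq 0$. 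The question of whether $C^*$ is bijective on $H^0$ never arises: it is part of the inductive hypothesis (being equivalent to $H^0(B^j) = H^1(B^j) = 0$), never something to prove. Your \emph{descending} induction, by contrast, must \emph{establish} $\ker C^* = H^0_{\eet}(V, B^j) = 0$ and $\operatorname{coker} C^* = H^1_{\eet}(V, B^j) = 0$ at each step, and the inductive hypothesis on $B^{j+1}$ gives no leverage: unwinding it only tells you things about $Z^j$, not about the Cartier operator. This is a genuine gap, and reversing the direction of the induction removes it entirely.

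Your proposed remedies do not close the gap in the generality needed. First, $H^0_{\eet}(V, \Omega^j)$ is \emph{not} finite-dimensional in the case $V = Y = Y_\infty$, which is precisely one of the schemes to which the lemma is applied in Corollary~\ref{gen-ordinary}; so the injective-iff-surjective reduction for semilinear endomorphisms is simply unavailable there. Second, the observation that for $V = Y_s$, $s < \infty$, the space $H^0(V, \Omega^j)$ is spanned by logarithmic forms fixed by $C$ is a correct statement (and is in fact used elsewhere in the paper), but as a proof of the lemma it abandons the stated hypotheses in favor of the explicit structure of the Drinfeld model — it is a different argument, and it still leaves $s = \infty$ untreated. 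Invoking \cite[4]{Lor} is not a proof either. The paper's increasing induction is the right move because it makes the whole lemma follow formally from the two hypotheses, with no Cartier or finiteness input.
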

\begin{proof}
Consider  the exact sequences
\begin{equation}
\label{sweden1}
0\to    B^j\to  Z^j\stackrel{f}{\to} \Omega^j\to 0,\quad
0\to   Z^j\to\Omega^j\to  B^{j+1}\to 0,
\end{equation}
where the map $f$ is the composition $Z^j\to Z^j/B^j\stackrel{\sim}{\to} \Omega^j$ of the natural projection and the Cartier isomorphism.
Since
$H^i_{\eet}(V,\Omega^j)=0,i >0,$
the first exact sequence yields  the isomorphisms
 \begin{equation}
 \label{sweden0}
H^i_{\eet}(V,  B^j)\stackrel{\sim}{\to} H^i_{\eet}(V, Z^j),\quad i\geq 2.
\end{equation}
It also yields the long exact sequence
\begin{align}
\label{sweden3}
0\to H^0_{\eet}(V,  B^j) {\to} H^0_{\eet}( V,Z^j){\to} H^0_{\eet}(V,\Omega^j)\stackrel{\partial}{\to} H^1_{\eet}(V,  B^j){\to} H^1_{\eet}(V, Z^j)\to 0.
\end{align}

   Since $d=0$ on $H^0_{\eet}(V,\Omega^j)$ and hence the natural map $H^0_{\eet}(V, Z^j)\to H^0_{\eet}(V,\Omega^j)$ is an isomorphism, 
   the second exact sequence from (\ref{sweden1}) yields 
the isomorphisms (since we assumed $H^i_{\eet}(V,\Omega^j)=0$ for $i>0$)
\begin{equation}
\label{sweden2}
H^i_{\eet}(V,  B^{j+1})\stackrel{\sim}{\to} H^{i+1}_{\eet}( V,Z^j),\quad i\geq 0.
\end{equation}

   To prove the lemma, we will argue by increasing induction on $j$; the case of $j=0$ being trivial  since $  B^0=0$. 
Assume thus that our lemma is true for $j$ and all $i\geq 0$.  We will show that this implies that it is true for $j+1$ and all $i\geq 0$. 
   Since $H^1_{\eet}( V, B^j)=0$ by assumption, the exact sequence (\ref{sweden3}) implies that
$H^1_{\eet}(V, Z^j)=0$.  And this implies, by (\ref{sweden0}), that $H^i_{\eet}(V, Z^r)=0,i\geq 1$.
 This, in turn,  yields, by (\ref{sweden2}), that 
   $H^i_{\eet}( V, B^{j+1})=0$, $i\geq 0$. 
   This concludes the proof of  the  lemma.
   \end{proof}

\subsubsection{${\mathbb H}_K$ as a pro-ordinary log-scheme}
It follows from Lemma~\ref{final1} and Proposition~\ref{general-princeton} that:
\begin{corollary}
\label{gen-ordinary}
The idealized log-schemes $Y_s, s\in\N\cup \{\infty\}$, are ordinary.
\end{corollary}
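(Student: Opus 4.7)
The plan is to verify the hypotheses of Lemma~\ref{final1} for each $Y_s$ and then invoke it directly. Thus for every $s\in\N\cup\{\infty\}$ and every $i\geq 1$, $j\geq 0$, I need to check that
\[
H^i_{\eet}(Y_s,\Omega^j_{Y_s})=0\quad\text{and}\quad d=0\text{ on }H^0_{\eet}(Y_s,\Omega^j_{Y_s}).
\]
Since $\Omega^j_{Y_s}$ is coherent, its \'etale cohomology agrees with Zariski cohomology, so the statements already established in the Zariski topology can be transferred without change.

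For finite $s$, both vanishings are exactly the content of Proposition~\ref{general-princeton}: that proposition says $H^i(Y_s,\Omega^j)=0$ for $i>0$ and that the de Rham differential is zero on $H^0(Y_s,\Omega^j)$. So the hypotheses of Lemma~\ref{final1} are immediately satisfied for $Y_s$, $s\in\N$, giving ordinarity in these cases.

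For $s=\infty$, that is for $Y_\infty=Y=X_0$ with its induced log-structure, I will deduce the analogous statements from Proposition~\ref{GK01}, which concerns the formal model $X$. Indeed $\Omega^j_X\otimes_{\so_K}k$ identifies with $\Omega^j_Y$, so the vanishing $H^i(X,\Omega^j_X\otimes_{\so_K}k)=0$ for $i>0$ given in part~(1) of that proposition yields $H^i(Y,\Omega^j_Y)=0$ for $i>0$. Moreover, the same part~(1) gives the identification $H^0(X,\Omega^j_X)\otimes_{\so_K}k\stackrel{\sim}{\to}H^0(Y,\Omega^j_Y)$, and part~(2) asserts $d=0$ on $H^0(X,\Omega^j_X)$; tensoring with $k$ transports this vanishing to $H^0(Y,\Omega^j_Y)$. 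With both hypotheses verified, Lemma~\ref{final1} applies to $Y_\infty$ as well.

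There is no real obstacle here: the entire argument is an assembly of results already proved above, the only minor point requiring care being the passage between the formal model $X$ and its special fibre $Y$, which is handled by reducing modulo a uniformizer and using the compatibility asserted in Proposition~\ref{GK01}(1). Once this is observed, Lemma~\ref{final1} produces the desired vanishing $H^i_{\eet}(Y_s,B^j)=0$ in all cases, i.e.\ ordinarity.
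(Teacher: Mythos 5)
Your proof is correct and follows essentially the same route as the paper: the corollary is stated there as an immediate consequence of Lemma~\ref{final1} combined with Proposition~\ref{general-princeton} for finite $s$, the case $s=\infty$ resting on Proposition~\ref{GK01} via the identification $\Omega^j_X\otimes_{\so_K}k\simeq\Omega^j_Y$, exactly as you do. The points you flag (\'etale versus Zariski cohomology of coherent sheaves, and the reduction from the formal model to the special fibre) are handled the same way in the paper.
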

\begin{remark}
Proposition \ref{Illusie2} and Proposition \ref{Illusie1} show that the underlying scheme of $Y_s,$ for $s<\infty,$ is (classically) ordinary by using the weight spectral sequence. One should be able to prove Corollary \ref{gen-ordinary} in an analogous way.
\end{remark}

\begin{lemma}\label{degenerate}
For $i\geq 1, j\geq 0$, we have 
\begin{enumerate}
\item $H^i_{\eet}(Z, W_n\Omega^j)=0$, for $Z=Y,T$,
\item $d=0$  on $H^0_{\eet}(T,W_n\Omega^j)$.
\item For $V=Y,\overline{Y}$, 
the following sequence is strictly exact\footnote{Do not confuse $V$ with the Verschiebung in $V^n$.}
$$0\to H^0(V, \Omega^j)\stackrel{V^n}{\to} H^0(V, W_{n+1}\Omega^j)\to H^0(V, W_n\Omega^j)\to 0,
$$
\end{enumerate}
\end{lemma}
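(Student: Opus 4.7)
The plan is to treat all three parts by induction on $n$, using the standard short exact sequence of sheaves
$$
0 \to \Fil^n W_{n+1}\Omega^j \to W_{n+1}\Omega^j \stackrel{R}{\to} W_n\Omega^j \to 0,
$$
where $\Fil^n W_{n+1}\Omega^j = V^n\Omega^j + dV^n\Omega^{j-1}$ is the kernel of the restriction map. The base case $n=1$ reduces to statements about $\Omega^j$ already in hand: for part (1), the vanishing $H^i(Z,\Omega^j)=0$ for $i\geq 1$ follows from Proposition~\ref{GK01} when $Z=Y$ (via $\Omega^j_Y = \Omega^j_X\otimes_{\so_K}k$) and from Proposition~\ref{GK5} when $Z=T$. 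Since $\Fil^n W_{n+1}\Omega^j$ is a subquotient of $\Omega^j \oplus \Omega^{j-1}$, its higher cohomology vanishes by the base case, and the inductive step then follows from the associated long exact sequence.

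For part (2), I would bypass the induction and invoke the ordinarity of $T = Y_0$ (Corollary~\ref{gen-ordinary}). Proposition~\ref{rzeszow5}(3) gives the isomorphism
$$
H^0_{\eet}(\overline{T}, W_n\Omega^j_{\log}) \otimes_{\Z/p^n} W_n(\overline{k}) \stackrel{\sim}{\to} H^0_{\eet}(\overline{T}, W_n\Omega^j),
$$
and since $W_n\Omega^j_{\log}$ lies in the kernel of $d$ by definition, this forces $d=0$ on $H^0(\overline{T}, W_n\Omega^j)$; faithful flat descent along $\overline{k}/k$ then extends the vanishing to $H^0(T, W_n\Omega^j)$. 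For part (3), taking global sections of the sheaf SES and using part (1) yields the surjection $H^0(V, W_{n+1}\Omega^j) \twoheadrightarrow H^0(V, W_n\Omega^j)$ with kernel $H^0(V, \Fil^n W_{n+1}\Omega^j)$. This kernel should equal $V^n H^0(V, \Omega^j)$, the point being that the $dV^n$ contribution vanishes on global sections: this will follow from the analog of part (2) for $V$ itself, using that $Y$ and $\overline{Y}$ are ordinary by Corollary~\ref{gen-ordinary}, so $d=0$ on $H^0(V, W_{n+1}\Omega^{j-1})$. Injectivity of $V^n: H^0(V, \Omega^j) \to H^0(V, W_{n+1}\Omega^j)$ follows from the sheaf-level injectivity of $V^n$. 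Strictness is automatic since the cohomology groups in question are pro-finite (as limits over the proper truncations $Y_s$ of finite-length modules), so the Open Mapping Theorem in the category of pro-discrete $\Z_p$-modules applies.

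The main obstacle will be in part (3): the clean identification of $H^0(V, \Fil^n W_{n+1}\Omega^j)$ with $V^n H^0(V,\Omega^j)$ alone (rather than also containing a $dV^n H^0(V, \Omega^{j-1})$ piece) requires extending the ordinarity-based argument of part (2) from the proper $T$ to the non-proper schemes $Y$ and $\overline{Y}$, presumably by writing $Y = \bigcup_s Y_s$ and passing to a limit using the compatibility of $W_n\Omega^j$ and its filtration with the closed immersions $Y_s \hookrightarrow Y$, together with careful bookkeeping of the de Rham--Witt relations such as $FV=VF=p$ and $FdV=d$ to handle the $dV^n$ contributions.
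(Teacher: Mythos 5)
Your outline for part (2) matches the paper's argument (ordinarity of $\overline{T}$ via Proposition \ref{rzeszow5}, followed by the embedding $H^0_{\eet}(T,W_n\Omega^j)\hookrightarrow H^0_{\eet}(\overline{T},W_n\Omega^j)$), so that part is fine. But parts (1) and (3) each have a real gap.

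For part (1), you assert that $\Fil^n W_{n+1}\Omega^j$ has vanishing higher cohomology ``since it is a subquotient of $\Omega^j\oplus\Omega^{j-1}$''. That implication is false in general: subquotients of acyclic sheaves need not be acyclic. What is true (Hyodo--Kato) is that $\Fil^n W_{n+1}\Omega^j\cong (\Omega^j\oplus\Omega^{j-1})/R_n^j$ for a specific kernel sheaf $R_n^j$, and one must separately establish that $R_n^j$ itself is acyclic before the long exact sequence gives what you want. The paper does this by building the Cartier filtration $0=B_0^j\subset B_1^j\subset\cdots\subset Z_1^j\subset Z_0^j=\Omega^j$, using Lemma \ref{final1} to get $H^i(Z,B_1^j)=H^i(Z,Z_1^j)=0$ for $i>0$, bootstrapping to all $B_n^j,Z_n^j$ via the Cartier isomorphisms, and finally fitting $R_n^j$ into the exact sequence $0\to R_n^j\to B_{n+1}^j\oplus Z_n^{j-1}\to B_1^j\to 0$. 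None of this is automatic from the base case; it is the substantive content of the argument, and your proposal skips it.

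For part (3), your strategy is genuinely different from the paper's, and it is the riskier route. You propose to kill the $dV^n$ contribution by showing $d=0$ on $H^0(V,W_{n+1}\Omega^{j-1})$ for $V=Y,\overline{Y}$, invoking ordinarity via Corollary \ref{gen-ordinary} and Proposition \ref{rzeszow5}. But Proposition \ref{rzeszow5} is stated for proper \emph{log-smooth} schemes, whereas the truncations $Y_s$ are only \emph{ideally} log-smooth; applying Lorenzon's theorem there requires an extension not proved in the paper at that point. Moreover, $d=0$ on $H^0(V,W_{n+1}\Omega^{j-1})$ is essentially the content of Proposition \ref{lyon11}, which is proved \emph{after} and \emph{using} Lemma \ref{degenerate}, so your route flirts with circularity. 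The paper sidesteps this entirely: it never asserts $d=0$ on $H^0$ at the $W_{n+1}$-level. Instead it shows that the natural map $H^0(V,R_n^j)\to H^0(V,\Omega^{j-1})$ is an isomorphism, by first reducing to $H^0(V,Z_n^{j-1})\to H^0(V,\Omega^{j-1})$ via the defining sequence of $R_n^j$, and then running an induction on $n$ using the Cartier operator $C$ on the $Z_n^{j-1}$ and the fact that $d=0$ on $H^0(V,\Omega^{j-1})$ (which is established at the $W_1$-level only, where it is accessible). This absorbs the entire $\Omega^{j-1}$-direction of the kernel into $R_n^j$ and needs no statement about $d$ on $H^0$ of higher de Rham--Witt sheaves. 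Also note that the short exactness of the $H^0$-sequence (the surjectivity of restriction) requires $H^1(V,R_n^j)=0$, so the gap from part (1) propagates into part (3) as well.

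In short: the high-level skeleton (restriction SES plus induction on $n$, ordinarity for part (2)) is right, but the load-bearing step in both (1) and (3) is the Cartier-filtration analysis of $R_n^j$, which you do not supply, and your workaround for (3) leans on a statement that is both harder than what is needed and logically downstream of the lemma itself.
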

\begin{proof}
For claim (1), we start with $Z=Y$.  We have subsheaves 
$$0=B_0^j\subset B^j_1\subset...\subset Z^j_1\subset Z^j_0=\Omega^j_Y$$
such that $B_1^j=B^j_Y$, $Z_0^j=\Omega^j_Y$,  
$Z_1^j=Z^j_Y$ and for all $n$ we have inverse Cartier isomorphisms
$$C^{-1}:B_n^j\stackrel{\sim}{\to} B^j_{n+1}/B^j_1, \quad C^{-1}: Z^j_n\stackrel{\sim}{\to} Z^j_{n+1}/B^j_1.$$
By Proposition \ref{GK01} and Lemma \ref{final1}, we have  $H^i_{\eet}(Y,B^j_1)=H^i_{\eet}(Y,Z^j_1)=0$ for $i>0$, thus 
the same holds with $B^j_1$ and $Z^j_1$ replaced by $B^j_n$ and $Z^j_n$. 
On the other hand, define $R_n^j$ by the exact sequence 
\begin{equation} \label{eyes0}
0\to R_n^j\to B_{n+1}^j\oplus Z^{j-1}_n\to B^j_1\to 0,
\end{equation}
the last map being $(C^n, dC^{n-1})$. By the previous discussion, we have $H^i_{\eet}(Y,R_n^j)=0$ for $i>0$.
      Hyodo and Kato prove \cite[Theorem 4.4]{HK} that we have an exact sequence 
           \begin{equation}
           \label{eyes1}
           0\to \frac{\Omega^j\oplus \Omega^{j-1}}{R_n^j}\to W_{n+1}Ê\Omega^j\to W_n\Omega^j\to 0.
           \end{equation}
           Note that $ \frac{\Omega^j\oplus \Omega^{j-1}}{R_n^j}$ does not have higher cohomology since 
           each of $\Omega^j, \Omega^{j-1}, R_n^j$ has this property (use Proposition \ref{GK01}). Using the previous exact sequence, the result follows by induction on $n$ (using that $W_1\Omega^j\simeq \Omega^j$).
           
           In the case of $Z=T$ we argue in a similar way using Proposition \ref{GK5} instead of Proposition \ref{GK01}.
           
           For claim (2), since 
$\Gamma_{\eet}(T,W_n\Omega^j)\hookrightarrow \Gamma_{\eet}(T_{\overline{k}},W_n\Omega^j)$, we can pass to $T_{\overline{k}}$. But then, by ordinarity of $T_{\overline{k}}$, we have (see Proposition \ref{rzeszow5})
$$
H^0_{\eet}(T_{\overline{k}},W_n\Omega^j)\simeq H^0_{\eet}(T_{\overline{k}},W_n\Omega^j_{\log})\otimes_{\Z/p^n}W_n(\overline{k})
$$
and the latter group  clearly has a trivial differential. 

 To prove claim (3), we note first that Lemma \ref{final1} applies to both $Y$ and $\overline{Y}$. For $Y$ this follows from  Proposition \ref{GK01}. For $\overline{Y}$, we use Corollary \ref{gen-ordinary} to write down a sequence of quasi-isomorphisms
 \begin{align*}
 \R\Gamma(\overline{Y},\Omega^j) & \simeq \holim_s\R\Gamma(\overline{Y}^{\circ}_s,\Omega^j)\simeq \holim_s\R\Gamma(\overline{Y}_s,\Omega^j)\\
 & \simeq  \holim_sH^0(\overline{Y}_s,\Omega^j)\simeq \varprojlim_sH^0(\overline{Y}_s,\Omega^j).
 \end{align*}
 It follows that $H^i(\overline{Y},\Omega^j)=0$ for $i>0$. To see that $d=0$ on $H^0(\overline{Y},\Omega^j)$ we use the embedding
 $H^0(\overline{Y},\Omega^j)\hookrightarrow \prod_{C\in F^0}H^0(\overline{C},\Omega^j)$ and Proposition \ref{GK5}. 
 
   Now, set $V=Y,\overline{Y}$. By Lemma \ref{final1}, we have
   $H^i_{\eet}(V,B^j)=0$ $i,j\geq 0$.
   Note that,  by (\ref{eyes1}), 
  we have the exact sequence
$$0\to (H^0_{\eet}(V, \Omega^j)\oplus H^0_{\eet}(V, \Omega^{j-1}))/H^0_{\eet}(V, R_n^j)\verylomapr{(V^n,dV^n)}  H^0_{\eet}(V, W_{n+1}\Omega^j)\to H^0_{\eet}(V, W_n\Omega^j)\to 0.$$
 It remains to show that the natural map from $H^0_{\eet}(V, \Omega^j)$  to the leftmost term is an isomorphism, or that, the natural map
 $H^0_{\eet}(V, R_n^j)\to H^0_{\eet}(V, \Omega^{j-1})$ is an isomorphism.
 The exact sequence (\ref{eyes0})
                   yields that the natural map $H^0_{\eet}(V, R_n^j)\to  H^0_{\eet}(V, Z_n^{j-1})$ is an isomorphism. It remains thus to show  that so is the natural map
                   $ H^0_{\eet}(V, Z_n^{j-1})\to H^0_{\eet}(V, \Omega^{j-1})$.
                   
                   For that it suffices to show that the natural maps
                   $ H^0_{\eet}(V, Z_{n+1}^{j-1})\to H^0_{\eet}(V, Z^{j-1}_n)$, $n\geq 0$, are isomorphisms.  
We will argue by induction 
                   on $n\geq 0$. Since $d=0$ on $H^0_{\eet}(V, \Omega^{j-1})$ this is clear for $n=0$. Assume now that this is true for $n-1$. We will show it for $n$ itself. 
                  Consider
 the commutative diagram
  $$
  \xymatrix{
  H^0_{\eet}(V,Z^{j-1}_{n+1})\ar[r]^{C}_{\sim} \ar[d]_{\can} & H^0_{\eet}(V,Z^{j-1}_{n})\ar[d]^{\wr}_{\can}\\
  H^0_{\eet}(V,Z^{j-1}_n)\ar[r]^{C}_{\sim} & H^0_{\eet}(V,Z^{j-1}_{n-1})
  }
  $$
  The top and bottom  isomorphisms follow from the isomorphism $C^{-1}: Z^j_i\stackrel{\sim}{\to}Z^j_{i+1}/B^j_1$, $i\geq 0$. The right vertical map is an isomorphism by the inductive assumption. We get that the left vertical map is an isomorphism, as wanted.
  
   Finally, to see that the exact sequence in claim (3) is strictly exact note that for $Y$ this follows from compactness of $H^0(Y,\Omega^j)$ and $H^0(Y,W_{n+1}\Omega^j)$ and for $\overline{Y}$ this follows from the case of $Y$ by \'etale base change.
 \end{proof}

   \subsubsection{Cohomology of differentials II}
We will need a generalization of the above results and a more careful discussion of topological issues. 
\begin{proposition} \label{lyon11}
Let  $j\geq 0$. Let $S$ be a topological  $\so_K$-module and let $R$ be a topological $W(k)$- or $W(\overline{k})$-module. Assume that $S$ and $R$ are topologically orthonormalizable.
\begin{enumerate}
\item The following natural maps are strict quasi-isomorphisms (in $\sd(\Ind(PD_?))$, $?=K,F,\Q_p$)
\begin{align*}
& H^0(X,\Omega^j_{X,n})\wh{\otimes}_{\so_{K,n}} S_n\stackrel{\sim}{\to}
\R\Gamma(X,\Omega^j_{X,n})\wh{\otimes}_{\so_{K,n}} S_n,\\
&    H^0_{\eet}(Y,W_n\Omega^j)   \wh{\otimes}_{\so_{F,n}}R_n
      \overset{\sim}{\to }\R\Gamma_{\eet}(Y,W_n\Omega^j)   \wh{\otimes}_{\so_{F,n}}R_n,\\   
&  \R\Gamma_{\eet}(\overline{Y},W_n\Omega^j_{\log})   \wh{\otimes}_{\Z/p^n}R_n
   \overset{\sim}{\to }\R\Gamma_{\eet}(\overline{Y},W_n\Omega^j)   \wh{\otimes}_{W_n(\overline{k})}R_n,\\
    & H^0_{\eet}(\overline{Y},W_n\Omega^j_{\log})   \wh{\otimes}_{\Z/p^n}R_n\overset{\sim}{\to }
   \R\Gamma_{\eet}(\overline{Y},W_n\Omega^j_{\log})   \wh{\otimes}_{\Z/p^n}R_n.
    \end{align*}
\item  $d=0$ on
 $H^0_{\eet}({X},\Omega^j_{{X},n})\wh{\otimes}_{\so_{K,n}} S_n$ and on $H^0_{\eet}(Y,W_n\Omega^j_{Y}) \wh{\otimes}_{\Z/p^n}R_n . $
 \item The following natural map is a strict quasi-isomorphism
 $$
 \bigoplus_{j\geq r}H^0(X,\Omega^j_{X,n})[-j]\stackrel{\sim}{\to}F^r\R\Gamma_{\dr}(X_n), \quad r\geq 0.
 $$
\end{enumerate}
\end{proposition}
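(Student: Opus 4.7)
The plan is to deduce each of the three claims from vanishing results already in hand (Proposition~\ref{GK01}, Lemma~\ref{degenerate}, Corollary~\ref{gen-ordinary}, Proposition~\ref{rzeszow5}), with the bookkeeping amounting mainly to checking that completed tensor product with a topologically orthonormalizable module preserves strict quasi-isomorphisms. Throughout I will use that orthonormalizability of $S$ over $\so_{K,n}$ (resp.\ $R$ over $W_n(k)$ or $W_n(\overline{k})$) makes $-\wh{\otimes}\,S_n$ (resp.\ $-\wh{\otimes}\,R_n$) exact and continuous in the pro-discrete setting.

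For the first map of (1), I would invoke Proposition~\ref{GK01}(1): the vanishing $H^i(X,\Omega^j_X)=0$ for $i>0$ together with the stated identification of the mod-$p$ reduction forces the same vanishing for $\Omega^j_{X,n}$ by an easy induction, so $\R\Gamma(X,\Omega^j_{X,n})\simeq H^0(X,\Omega^j_{X,n})$ strictly; tensoring with $S_n$ preserves this. The second map of (1) is identical in shape, using Lemma~\ref{degenerate}(1) in place of Proposition~\ref{GK01}. The third map of (1) is the crucial one and uses pro-ordinarity: by Corollary~\ref{gen-ordinary}, each idealized log-scheme $Y_s$ is ordinary, so Proposition~\ref{rzeszow5}(3) gives strict isomorphisms
$$
\R\Gamma_{\eet}(\overline{Y_s},W_n\Omega^j_{\log})\wh{\otimes}_{\Z/p^n}W_n(\overline{k})\stackrel{\sim}{\to}\R\Gamma_{\eet}(\overline{Y_s},W_n\Omega^j)
$$
at each finite level. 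Using the exhaustion $\overline{Y}=\bigcup_s\overline{U_s}$ from Section~\ref{assumptions} together with the Mayer--Vietoris isomorphism $\R\Gamma_{\eet}(\overline{Y},-)\simeq\holim_s\R\Gamma_{\eet}(\overline{Y_s},-)$ (whose derived correction vanishes since each term in the inverse system is of finite length, so the Mittag-Leffler criterion applies in the category of pro-discrete modules), I would pass to the limit and then tensor with $R_n$. The fourth map of (1) follows by combining the third map with the analog of the second map over $\overline{Y}$ (which holds because Lemma~\ref{degenerate}(1) together with pro-ordinarity gives concentration of $\R\Gamma_{\eet}(\overline{Y},W_n\Omega^j)$ in degree $0$), using faithful flatness of $W_n(\overline{k})$ over $\Z/p^n$ to descend the concentration from $W_n\Omega^j$ to $W_n\Omega^j_{\log}$.

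Part (2) is quick: the vanishing of $d$ on $H^0(X,\Omega^j_{X,n})\wh{\otimes} S_n$ is immediate from Proposition~\ref{GK01}(2) since tensoring respects the zero differential; for $H^0_{\eet}(Y,W_n\Omega^j)\wh{\otimes} R_n$ I would use the embedding $H^0_{\eet}(Y,W_n\Omega^j)\hookrightarrow\prod_{C\in F^0}H^0_{\eet}(C,W_n\Omega^j)$ (each irreducible component $C$ of $Y$ being a $G$-translate of the central component $T$) and apply Lemma~\ref{degenerate}(2) factor by factor. Part (3) is then formal: (1) and (2) together show that the de Rham complex $\R\Gamma(X,\Omega^{\cdot}_{X,n})$ splits as a direct sum of its cohomology sheaves $H^0(X,\Omega^j_{X,n})[-j]$, and the naive filtration $F^r$ on the de Rham complex picks out the summands with $j\geq r$.

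The main obstacle will be the third isomorphism of (1): controlling topologies through the Stein limit requires a careful argument that the inverse systems of profinite $\Z/p^n$-modules arising from the $\overline{Y_s}$ satisfy Mittag-Leffler in a way compatible with both the Frobenius and the tensor product with $R_n$, so that the strict isomorphism at each finite level promotes to a strict isomorphism in the limit. Everything else, including (2) and (3), then follows essentially formally.
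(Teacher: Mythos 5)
Your proposal follows the same route as the paper: the key inputs are Proposition~\ref{GK01}, Lemma~\ref{degenerate}, Corollary~\ref{gen-ordinary}, and Proposition~\ref{rzeszow5}, threaded through the Stein exhaustion of $Y$ with Mittag--Leffler handling the derived limit, exactly as in the paper, which makes the holim over the pieces $\overline{Y}^{\circ}_s\subset\overline{Y}_s$ explicit via a spectral sequence. Two of your sub-steps differ from the paper but are valid and arguably cleaner. For part (2) on $H^0_{\eet}(Y,W_n\Omega^j)\wh{\otimes}R_n$, you embed into $\prod_{C\in F^0}H^0_{\eet}(C,W_n\Omega^j)\wh{\otimes}R_n$ (all components being $G$-translates of $T$) and invoke Lemma~\ref{degenerate}(2) componentwise, whereas the paper embeds into $\prod_{\lambda}H^0_{\eet}(\overline{Y},W_n\Omega^j_{\log})e_{\lambda}$ via a topological basis of $R_n$ and uses the already-established log-de Rham--Witt identification; both work because $\wh{\otimes}R_n$ commutes with products and preserves strict monomorphisms for orthonormalizable $R_n$, and because the de Rham--Witt $d$ is $\so_{F,n}$-linear over a perfect base. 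For the fourth map of (1), you phrase the concentration-in-degree-zero step as a faithful-flatness descent along $\Z/p^n\to W_n(\overline{k})$, while the paper reduces to matching $H^0$-terms via Proposition~\ref{rzeszow5} and a limit; these amount to the same thing. One small thing to flag: your ``easy induction'' passing from Proposition~\ref{GK01}'s vanishing of $H^i(X,\Omega^j_X)$ and $H^i(X,\Omega^j_X\otimes_{\so_K}k)$ to $H^i(X,\Omega^j_{X,n})$ should be run along the $\varpi$-adic filtration of $\so_{K,n}$, since $\so_K/p$ and $\so_K/\varpi$ differ when $K/\Q_p$ is ramified; the paper sidesteps this by working directly with the finite-type cohomologies of the proper ideally log-smooth $Y_s$ inside the spectral sequence.
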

\begin{remark}
The completed tensor products for the above complexes of pro-discrete modules can be made
more explicit using a Stein covering $\{U_i\},\,i\in\N$ of $Y$. For example:
$$
\R\Gamma_{\crr}(Y/\so_{F,n}^0)  \wh{\otimes}_{\so_{F,n}}\widehat{\A}_{\st,n}\simeq\holim_i (\R\Gamma_{\crr}(U_{i}/\so_{F,n}^0)  {\otimes}_{\so_{F,n}}\widehat{\A}_{\st,n}).
$$
Note that $\widehat{\A}_{\st,n}$ has discrete topology. 
\end{remark}

\begin{proof}
Note that the last claim follows from the previous two claims. 

 In the rest of the  proof, to lighten the notation, we will write simply  $\R\Gamma(Z,\Omega\kr_n):=\R\Gamma(Z,\Omega\kr_{Z,n})$ for the de Rham cohomology of the log-scheme $Z_n$.
 We have the spectral sequence
  $$
  E_2^{q,i}=\wt{H}^q\holim_s (\wt{H}^i_{}({Y}^{\circ}_s,\Omega^j_n){\otimes}_{\so_{K,n}} S_n)\Rightarrow \wt{H}^{q+i}_{}(\R\Gamma(X,\Omega^j_n)\wh{\otimes}_{\so_{K,n}} S_n).
  $$
Since the pro-systems
 $$\{\wt{H}^i_{}({Y}^{\circ}_s,\Omega^j_{n}){\otimes}_{\so_{K,n}} S_n\},s\geq 0,\quad
\{\wt{H}^i_{}({Y}_s,\Omega^j_{n}){\otimes}_{\so_{K,n}} S_n\},s\geq 0,$$
are equivalent (and  $\wt{H}^i_{}({Y}_s,\Omega^j_{n})$ is classical and of finite type since $Y_s$ is  ideally log-smooth and proper over $k^0$), they both  have  trivial $\wt{H}^q\holim_s$, $q>0$. Hence the spectral sequence degenerates and we have
\begin{align*}
\wt{H}^i_{}(\R\Gamma(X,\Omega^j_{n})\wh{\otimes}_{\so_{K,n}} S_n)  \simeq \varprojlim_s(\wt{H}^i_{}({Y}^{\circ}_s,\Omega^j_{n}){\otimes}_{\so_{K,n}} S_n)\simeq \varprojlim_s(H^i_{}({Y}_s,\Omega^j_{n}){\otimes}_{\so_{K,n}} S_n).
\end{align*}
In particular, it is classical.

Moreover, using a basis $\{e_{\lambda}\}, \lambda\in I,$ of $S_n$ over $\so_{K,n}$, we get an  embedding 
 \begin{align*}
 \varprojlim_s(\wt{H}^{i}(Y^{\circ}_s,\Omega^j_{n}){\otimes}_{\so_{K,n}}S_n) \hookrightarrow   \prod_{\lambda\in I} H^{i}_{}(X,\Omega^j_{n})e_{\lambda}
\end{align*}
Since the latter groups are trivial for $i>0$, by Proposition \ref{GK01}, the vanishing of  $\wt{H}^i_{}(\R\Gamma(X,\Omega^j_{n})\wh{\otimes}_{\so_{K,n}} S_n)$ follows. This embedding also shows that  $d=0$ on $H^0$ in part (2) of the proposition.

 The proof for the second map in part (1) of the proposition is analogous with Lemma \ref{degenerate} replacing Proposition \ref{GK01}.
 
  For the proof for the third map in part (1) of the proposition,  consider now the sequence of strict quasi-isomorphisms
\begin{align*}
 \R\Gamma_{\eet}(\overline{Y},W_n\Omega^j_{\log})  \wh{\otimes}_{\Z/p^n}R_n  & =
\holim_s(\rg_{\eet}(\overline{Y}^{\circ}_s,W_n\Omega^j_{\log})  {\otimes}_{\Z/p^n}R_n)\simeq \holim_s(\rg_{\eet}(\overline{Y}_s,W_n\Omega^j_{\log})  {\otimes}_{\Z/p^n}R_n) \\
& \overset{\sim}{\to} \holim_s(\rg_{\eet}(\overline{Y}_s,W_n\Omega^j)  {\otimes}_{W_n(\overline{k})}R_n)\simeq 
\holim_s(\rg_{\eet}(\overline{Y}^{\circ}_s,W_n\Omega^j_{})  {\otimes}_{W_n(\overline{k})}R_n)\\
& =  \R\Gamma_{\eet}(\overline{Y},W_n\Omega^j)    \wh{\otimes}_{W_n(\overline{k})}R_n.
\end{align*}
The second and the fourth strict quasi-isomorphisms  are clear. The third strict quasi-isomorphism follows from the fact that, by Corollary \ref{gen-ordinary},  the log-scheme 
$\overline{Y}_s$ is ordinary and we have Proposition \ref{rzeszow5}.


 For the fourth strict quasi-isomorphism in part (1) of the proposition, use  the second and the third one to reduce to showing that we have a natural topological isomorphism
 $$
 H^0_{\eet}(\overline{Y},W_n\Omega^j_{\log})   \wh{\otimes}_{\Z/p^n}R_n\simeq  H^0_{\eet}(\overline{Y},W_n\Omega^j)   \wh{\otimes}_{W_n(\overline{k})}R_n.
  $$
  But, by Proposition \ref{rzeszow5}, we have topological isomorphisms
  \begin{align*}
   H^0_{\eet} & (\overline{Y},W_n\Omega^j_{\log})   \wh{\otimes}_{\Z/p^n}R_n=\varprojlim_s (H^0_{\eet}(\overline{Y}^{\circ}_s,W_n\Omega^j_{\log})   {\otimes}_{\Z/p^n}R_n)
   \simeq    \varprojlim_s (H^0_{\eet}(\overline{Y}_s,W_n\Omega^j_{\log})   {\otimes}_{\Z/p^n}R_n)\\
 &   
   \simeq \varprojlim_s (H^0_{\eet}(\overline{Y}_s,W_n\Omega^j)   {\otimes}_{W_n(\overline{k})}R_n)\simeq
   \varprojlim_s (H^0_{\eet}(\overline{Y}^{\circ}_s,W_n\Omega^j)   {\otimes}_{W_n(\overline{k})}R_n)      =
    H^0_{\eet}(\overline{Y},W_n\Omega^j)   \wh{\otimes}_{W_n(\overline{k})}R_n.
        \end{align*}

  It remains to show that $d=0$ on   
   $ H^0_{\eet}({Y},W_n\Omega^j)  \wh{\otimes}_{\so_{F,n}}R_n $. Assume first that $R$ is a $W(\overline{k})$-module. Arguing as above we obtain the embedding (notation as above)
  $$
       H^0_{\eet}(Y,W_n\Omega^j)    \wh{\otimes}_{\so_{F,n}}R_n  \overset{\sim}{\to}H^0_{\eet}({\overline{Y}},W_n\Omega^j_{\log}) \wh{\otimes}_{\Z/p^n}\R_n\hookrightarrow\prod_{\lambda\in I} H^0_{\eet}({\overline{Y}},W_n\Omega^j_{\log}) e_{\lambda}.       
$$
$d=0$ follows. If $R$ is only a $W(k)$-module, we 
 write
$$  H^0_{\eet}({Y},W_n\Omega^j) \wh{\otimes}_{\so_{F,n}}R_n\hookrightarrow 
 H^0_{\eet}(Y,W_n\Omega^j)  \wh{\otimes}_{\so_{F,n}}(W_n(\overline{k})\otimes_{\so_{F,n}} R_n)$$  to  obtain $d=0$ in this case as well. 
   \end{proof}
   
\begin{corollary}
\label{amtrak}
\begin{enumerate}
  \item For $j\geq 0$, we have a canonical topological isomorphism\footnote{More specifically, topological isomorphism of  projective limits of $\overline{k}$-vector spaces of finite rank.}
  $$
  H^0_{\eet}(\overline{Y},
  \Omega^j_{\log})\wh{\otimes}_{{\mathbf F}_p}\overline{k}\stackrel{\sim}{\to} H^0_{\eet}(\overline{Y},\Omega^j).
  $$
  \item For $j,n\geq 0$, the canonical maps 
  \begin{align*}
  H^0_{\eet}(\overline{Y},W_n\Omega^j_{\log})\wh{\otimes}_{\Z/p^n}W_n(\overline{k})   & \to H^0_{\eet}(\overline{Y},W_n\Omega^j),\\
    H^0_{\eet}(\overline{Y},W\Omega^j_{\log})\wh{\otimes}_{\Z_p}W(\overline{k})  & \to H^0_{\eet}(\overline{Y},W\Omega^j)
\end{align*}
  are  topological isomorphisms\footnote{More specifically, topological isomorphisms of  projective limits of $W_?(\overline{k})$-modules, free and of finite rank.}. In higher degrees all the above cohomology groups are trivial.
  \item The cohomologies $\wt{H}^i(X, \Omega^j_X)$ and $\wt{H}^j_{\dr}(X)$ are classical, 
  $
  {H}^i(X, \Omega^j_X)=0 $ for $i>0$, and 
  $ {H}^j_{\dr}(X)\simeq {H}^0(X, \Omega^j_X). $
   \end{enumerate}
\end{corollary}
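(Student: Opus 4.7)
The plan is to deduce Corollary \ref{amtrak} directly from Proposition \ref{lyon11}, by specializing the coefficient modules $S$ and $R$ and inspecting the resulting strict quasi-isomorphisms degree-wise. Since every statement of the corollary is a cohomological shadow of an item already proved in Proposition \ref{lyon11}, the work reduces to verifying that the relevant coefficient modules are topologically orthonormalizable and then reading off what happens in cohomological degree zero.

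For parts (1) and (2), I would plug $R=\overline{k}$ (a $W(\overline k)$-module, trivially topologically orthonormalizable) and $n=1$ into the third and fourth strict quasi-isomorphisms of Proposition~\ref{lyon11}(1). The fourth gives $H^0_{\eet}(\overline Y,\Omega^j_{\log})\wh{\otimes}_{{\mathbf F}_p}\overline k\stackrel{\sim}{\to}\R\Gamma_{\eet}(\overline Y,\Omega^j_{\log})\wh{\otimes}_{{\mathbf F}_p}\overline k$, and the third identifies the right-hand side with $\R\Gamma_{\eet}(\overline Y,\Omega^j)$. The second quasi-isomorphism of Proposition~\ref{lyon11}(1) (with $R=\overline k$, $n=1$) shows that $\R\Gamma_{\eet}(\overline Y,\Omega^j)$ is concentrated in degree $0$, which both yields the isomorphism in part (1) and disposes of the higher degree vanishing claim in part (2). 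For the two displayed isomorphisms in part (2), I would simply repeat the argument with $R=W_n(\overline k)$ and $R=W(\overline k)$ respectively (taking $\holim_n$ in the latter case, noting that the pro-systems are Mittag-Leffler because each $H^0_{\eet}(\overline Y_s,W_n\Omega^j_{\log})$ is profinite of finite length on each truncation level).

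For part (3), take $S=\so_K$ in the first strict quasi-isomorphism of Proposition~\ref{lyon11}(1): modulo $p^n$ this gives $H^0(X,\Omega^j_{X,n})\stackrel{\sim}{\to}\R\Gamma(X,\Omega^j_{X,n})$ as a strict quasi-isomorphism, hence $\wt H^i(X,\Omega^j_{X,n})=0$ for $i>0$ and classical cohomology in degree $0$. Passing to the homotopy limit over $n$ (which is well-behaved thanks to Mittag-Leffler on the projective systems of finite-length $\so_{K,n}$-modules appearing on each truncation $Y_s$) yields the integral statement ${H}^i(X,\Omega^j_X)=0$ for $i>0$ together with classicality of $\wt H^0$. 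For the de Rham part, I would apply Proposition~\ref{lyon11}(3) with $r=0$: this gives $\bigoplus_{j\geq 0}H^0(X,\Omega^j_{X,n})[-j]\stackrel{\sim}{\to}\R\Gamma_{\dr}(X_n)$, i.e. the Hodge-to-de Rham spectral sequence degenerates modulo $p^n$ and $H^j_{\dr}(X_n)\simeq H^0(X,\Omega^j_{X,n})$. Taking $\holim_n$ produces the topological isomorphism $H^j_{\dr}(X)\simeq H^0(X,\Omega^j_X)$, and classicality of $\wt H^j_{\dr}(X)$ is inherited from classicality of $\wt H^0(X,\Omega^j_X)$.

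The only genuinely delicate point, and the one I expect to require care, is ensuring that each of these deductions respects the topology, so that the isomorphisms really take place in the category of (projective limits of) pro-discrete modules and the classicality claims are not spoiled by the passage to limits. The safeguard is that on each truncation $Y_s$ all relevant cohomology groups are finite over an Artinian base and therefore carry their canonical Hausdorff topology, so the $\holim_s$ and $\holim_n$ encountered above are acyclic and the strict quasi-isomorphisms of Proposition~\ref{lyon11} survive the limit. No further input is needed.
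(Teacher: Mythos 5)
Your proposal is correct and follows essentially the same route as the paper: parts (1) and the finite-level half of (2) are read off from Proposition \ref{lyon11} with the coefficients you indicate, and the passage to $W\Omega^j$ and to the integral and de Rham statements in (3) is carried out by a Mittag--Leffler limit over $n$, exactly as in the text. The one place where the paper is more precise is the source of the Mittag--Leffler property over $n$ for the full (non-quasi-compact) $\overline{Y}$ and $X$: rather than per-truncation finiteness (which still requires commuting the limits over $s$ and $n$), it uses the surjectivity of the transition maps $H^0(\cdot\,,W_{n+1}\Omega^j)\to H^0(\cdot\,,W_n\Omega^j)$ supplied by Lemma \ref{degenerate}(3), which makes that interchange automatic.
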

\begin{proof}
The first two quasi-isomorphisms are actually included in the above proposition. For the third quasi-isomorphism, both sides are nontrivial only in degree zero: 
by Lemma \ref{degenerate} and the second isomorphism of this corollary, the projective systems  $\{H^0_{\eet}(\overline{Y},W_n\Omega^j_{\log})\wh{\otimes}_{\Z/p^n}W_n(\overline{k})\}_n$   and $\{H^0_{\eet}(\overline{Y},W_n\Omega^j)\}_n$  are Mittag-Leffler.   In degree zero we pass, as usual, to the limit over the truncated subschemes of the special fiber and there, since these subschemes are ordinary,  we have a term-wise isomorphism,  as wanted.

 For the cohomology $\wt{H}^i(X, \Omega^j_X)$, the fact that it is classical follows from the fact that the cohomology $\wt{H}^i(X_n, \Omega^j)$ is classical and nontrivial only for $i=0$, which was proved in 
 Proposition \ref{lyon11}, and the fact that the natural maps $H^0(X_{n+1},\Omega^j)\to H^0(X_n,\Omega^j)$ are surjective: a direct consequence, via Proposition \ref{lyon11}, of Lemma \ref{degenerate}.

 For the cohomology $\wt{H}^j_{\dr}(X)$, by Proposition \ref{lyon11}, we have
 $
 \wt{H}^j_{\dr}(X_n)\stackrel{\sim}{\leftarrow} H^0(X_n,\Omega^j).
 $ It follows that, since the maps $H^0(X_{n+1},\Omega^j)\to H^0(X_n,\Omega^j)$ are surjective,   $\wt{H}^j_{\dr}(X)$ is classical (by a Mittag-Leffler argument). 
 \end{proof}
\begin{remark}There is an alternative argument which proves Proposition \ref{lyon11} and which does not use 
 ordinarity of the truncated log-scheme $Y_s$.  It starts with proving the above corollary. We present it in the Appendix.
\end{remark}

\subsubsection{de Rham cohomologies of the model and the generic fiber}
Proposition~\ref{injectdR} below will be crucial in understanding the de Rham cohomology of the model and
its variants.

 Define the map 
$$
\iota_Y: H^i_{\eet}(Y,W\Omega\kr_Y)\simeq H^i_{\crr}(Y/\so^0_F)\to H^i_{\crr}(Y/\so^0_F,F)\stackrel{\sim}{\leftarrow}H^i_{\rig}(Y/\so^0_F)\stackrel{\iota_{\hk}}{\longrightarrow}H^i_{\rig}(Y/\so^{\times}_K).
$$
\begin{proposition}\label{injectdR} 
\begin{enumerate}
\item The above map induces an injection
$$
\iota_Y: H^i_{\eet}(Y,W\Omega\kr){\otimes}_{\so_F}K\hookrightarrow H^i_{\rig}(Y/\so^{\times}_K).
$$
\item The canonical map
$$
 H^i_{\dr}(X){\otimes}_{\so_K}K\rightarrow H^i_{\dr}(X_K)
$$
is injective.
\end{enumerate}
\end{proposition}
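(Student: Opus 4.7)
The plan is to reduce both statements to injectivity on the $H^0$ level of the relevant sheaves, using the explicit cohomological descriptions already established in Corollary~\ref{amtrak}, together with the Hyodo-Kato isomorphism and the symbol computations of Lemma~\ref{genIS} and Theorem~\ref{IS}.

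For part (2), I would first invoke Corollary~\ref{amtrak}(3) to identify $H^i_{\dr}(X) \simeq H^0(X, \Omega^i_X)$, with trivial differential. The map in question then becomes $H^0(X, \Omega^i_X) \otimes_{\so_K} K \to H^i_{\dr}(X_K)$. The key observation is that standard logarithmic forms built from $\mathrm{dlog}(\wt{\theta}_i/\wt{\theta}_j)$ on the central component $T$ lift (by Proposition~\ref{GK01} combined with Proposition~\ref{standard}) to generators of $H^0(X, \Omega^i_X)$, and after inverting $p$ they correspond to symbols $[H_0, \ldots, H_i]$ associated to $K$-rational hyperplanes in ${\mathbb H}^d_K$. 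By Theorem~\ref{IS} the cohomology classes of these symbols in $H^i_{\dr}(X_K) \simeq {\rm Sp}_i(K)^*$ are precisely the image of the regulator map on the integral distributions, and by Corollary~\ref{Gbounded} the $K$-span of the integral symbols realizes the $G$-bounded subspace ${\rm Sp}_i^{\cont}(K)^* \otimes_{\Q_p} K$, which injects canonically into ${\rm Sp}_i(K)^*$.

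For part (1), I would show that $\iota_Y$ factors through the map of part (2). Using Corollary~\ref{amtrak}(1)-(2) together with the vanishing of higher cohomology and $d=0$ on $H^0$, one obtains $H^i_{\eet}(Y, W\Omega\kr) \simeq H^0_{\eet}(Y, W\Omega^i) \simeq (H^0_{\eet}(\overline{Y}, W\Omega^i_{\log}) \wh{\otimes}_{\Z_p} W(\overline{k}))^{{\rm Gal}(\overline{k}/k)}$. Tensoring with $K$ and using the compatibility of standard logarithmic symbols under the canonical comparisons (integral de Rham-Witt to integral de Rham on $X$, crystalline to rigid, and Hyodo-Kato on the generic fiber), the map $\iota_Y$ becomes the composition of the integral comparison $H^i_{\eet}(Y, W\Omega\kr) \otimes K \to H^i_{\dr}(X) \otimes K$ with the injection established in part (2). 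The first map here is injective because, under the above identifications with $H^0$ of logarithmic sheaves, it is induced by the embedding of logarithmic-symbol lattices in both $H^0(Y, W\Omega^i)$ and $H^0(X, \Omega^i_X)$, which are torsion-free by Corollary~\ref{amtrak} and compatible via the standard symbol correspondence.

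The main obstacle is the compatibility bookkeeping, not injectivity per se: one must check that the various symbol identifications (de Rham, Hyodo-Kato, de Rham-Witt) line up under the comparison maps, and that passage to the exhaustion $Y = \bigcup_s Y_s$ commutes with tensoring by $K$ without loss of information. Both aspects should follow from the Mittag-Leffler property of the projective systems $\{H^0(Y_s, W\Omega^i)\}$ and $\{H^0(X_s, \Omega^i)\}$ (whose terms are finitely generated torsion-free modules over $\so_F$ resp.\ $\so_K$ by ordinarity of $Y_s$, Corollary~\ref{gen-ordinary}), together with the symbol compatibility already used in Lemma~\ref{compatibility} and Lemma~\ref{genIS}.
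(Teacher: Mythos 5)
There is a genuine gap: your argument is circular. For part (2) you invoke the identification of $H^0(X,\Omega^i_X)\otimes_{\so_K}K$ with the $G$-bounded subspace ${\rm Sp}_i^{\cont}(K)^*$ of $H^i_{\dr}(X_K)$, i.e.\ that the integral de Rham cohomology is spanned by (limits of) symbols and that the symbol map is injective on it. But this is exactly Theorem~\ref{Steinberg}(1c)/(2c), whose proof in the paper \emph{uses} Proposition~\ref{injectdR}: the broken arrow in diagram~(\ref{zima1}) is constructed from the injection $H^r_{\dr}(X)\otimes_{\so_K}K\hookrightarrow H^r_{\dr}(X_K)$. What is available independently (Proposition~\ref{GK5}, Theorem~\ref{IS}, Corollary~\ref{Gbounded}) tells you that certain symbol classes are linearly independent in the target; it says nothing about the kernel of the map on all of $H^0(X,\Omega^i_X)\otimes K$ unless you already know the source is generated by symbols, which is not established at this stage. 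The same circularity affects part (1): the ``integral comparison'' $H^i_{\eet}(Y,W\Omega\kr)\otimes K\to H^i_{\dr}(X)\otimes K$ you want to factor through is the integral Hyodo--Kato map of Remark~\ref{noc1}, which is \emph{defined} only after, and by means of, the Steinberg identifications. You have also inverted the logical order of the two parts: in the paper (2) is deduced from the mechanism used for (1), not the other way around.

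The missing idea is the reduction to the irreducible components. Since $H^i_{\eet}(Z,W\Omega\kr)\simeq H^0_{\eet}(Z,W\Omega^i)=\varprojlim_n H^0_{\eet}(Z,W_n\Omega^i)$ is torsion free (Proposition~\ref{lyon11} and Lemma~\ref{degenerate}), one embeds $H^0_{\eet}(Y,W\Omega^i)$ into $\prod_{j}H^0_{\eet}(C_j,W\Omega^i)$ over the irreducible components $C_j$ of $Y$ (injectivity is checked by restricting further to the nonsingular locus $Y_{\tr}$). Each $C_j$ is proper and smooth, so $H^i_{\eet}(C_j,W\Omega\kr)$ is a finite free $\so_F$-lattice and the component-wise maps $\iota_{C_j}$ become isomorphisms after $\otimes K$; compatibility of the Hyodo--Kato map with Zariski localization makes the square commute, giving (1). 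This computation also yields the injection $H^i_{\crr}(Y/\so_F^0)\otimes_{\so_F}F\hookrightarrow H^i_{\crr}(Y/\so_F^0,F)$, and (2) follows by transporting it through the bounded and overconvergent Hyodo--Kato isomorphisms via diagram~(\ref{symbols-diag}) — no appeal to Steinberg representations or to generation by symbols is needed.
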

\begin{proof}

   For the first claim, it suffices to show that we have a commutative diagram
$$
\xymatrix{
H^i_{\eet}(Y,W\Omega\kr)\ar@{^(->}[r]^-{\alpha}\ar[d]^{\iota_Y} & \prod_{j\in\N} H^i_{\eet}(C_j,W\Omega\kr)\ar@{^(->}[d]^{\prod_j\iota_{C_j}}\\
H^i_{\rig}(Y/\so^{\times}_K)\ar[r] & \prod_{j\in\N} H^i_{\rig}(C_j/\so^{\times}_K),
}
$$
where ${C_j},j\in\N$, is the set of irreducible components of $Y$ and  the map $\iota_{C_j}$ is defined in an analogous way to the map $\iota_Y$ but by replacing the Hyodo-Kato map by the composition
$$
H^i_{\rig}(C_j/\so^0_F)\stackrel{\sim}{\to} H^i_{\rig}(C_j^0/\so^0_F)\lomapr{\iota_{\hk}} H^i_{\rig}(C_j^0/\so^{\times}_K)\stackrel{\sim}{\leftarrow} H^i_{\rig}(C_j/\so^{\times}_K).
$$
Since the Hyodo-Kato map is compatible with Zariski localization the above diagram commutes.

  We claim that we have  natural isomorphisms 
$$
H^0_{\eet}(Y,W\Omega^i)\stackrel{\sim}{\to} H^i_{\eet}(Y,W\Omega\kr),\quad H^0_{\eet}(C_j,W\Omega^i)\stackrel{\sim}{\to} H^i_{\eet}(C_j,W\Omega\kr).
$$
Indeed, set $Z=Y,C_j$. We have $H^0_{\eet}(Z,W\Omega^i)=\varprojlim_nH^0_{\eet}(Z,W_n\Omega^i)$. Since, by Proposition \ref{lyon11} and Lemma \ref{degenerate},
  $$\rg_{\eet}(Z,W_n\Omega\kr)\simeq
 \oplus_jH^0_{\eet}(Z,W_n\Omega^j)[-j],
 $$
this implies that
 $$
 H^i_{\eet}(Z,W\Omega\kr)\simeq \varprojlim_nH^i_{\eet}(Z,W_n\Omega\kr)\simeq \varprojlim_nH^0_{\eet}(Z,W_n\Omega^i), 
 $$
 as wanted. In particular, there is no torsion. 

   It follows that the maps $\iota_{C_j}$ in the above diagram are injections: they are isomorphisms after tensoring the domains with $K$ and the domains are torsion-free.
The map $\alpha$ is an injection because so is, by definition, the map $\alpha^{\prime}$ in the commutative diagram
$$
\xymatrix{
H^0_{\eet}(Y,W\Omega^i)\ar[drr]_-{\alpha^{\prime}}\ar[r]^-{\alpha} &  \prod_{j\in\N} H^0_{\eet}(C_j,W\Omega^i)\ar[r] &  \prod_{j\in\N} H^0_{\eet}(C^0_j,W\Omega^i)\\
 & & H^0_{\eet}(Y_{\tr},W\Omega^i_{Y_{\tr}})\ar[u]^{\wr},
}
$$
where $Y_{\tr}$ denotes the nonsingular locus of $Y$.

  We note that the above computation shows also that the natural map $H^i_{\crr}(Y/\so^0_F){\otimes}_{\so_F}F\to H^i_{\crr}(Y/\so^0_F,F)$
  is an injection. This will be useful in proving the second claim of the proposition. 
  Using the diagram (\ref{symbols-diag}) we can form a commutative diagram
  $$
  \xymatrix{
  H^i_{\dr}(X){\otimes}_{\so_K}K\ar[rr]^{\can} & & H^i_{\dr}(X_K)\\
  H^i_{\crr}(Y/\so_F^0){\otimes}_{\so_F}K\ar[u]^{\iota_{\hk}}_{\wr} \ar@{^(->}[r] &  H^i_{\crr}(Y/\so_F^0,F)\otimes_{F}K &  H^i_{\rig}(Y/\so_F^0)\otimes_{F}K\ar[u]^{\iota_{\hk}}_{\wr}\ar[l]_-{\sim}
  }
  $$
  Here the first map $\iota_{\hk}$ is the bounded Hyodo-Kato isomorphism described in the Appendix. Since the first bottom map is an injection so is the top map, as wanted.
\end{proof}
\subsection{Relation to Steinberg representations}
We proved in the previous section that, for all $i>0$, the spaces $H^i_{\eet}(Y,W\Omega^r)$ and  $H^i_{\eet}(\overline{Y},W\Omega^r_{\log})$ vanish. The purpose of this section is to prove the following result describing the corresponding spaces for $i=0$ in terms of generalized Steinberg representations. 
\begin{theorem}
\label{Steinberg}
Let $r\geq 0$.
  \begin{enumerate}
  \item We have  natural  isomorphisms of locally convex topological $\Q_p$-vector  spaces (more precisely, weak duals of Banach spaces)
  \begin{enumerate}
  \item $H^0(Y, W\Omega^r){\otimes}_{\so_F}F\simeq H^r(Y, W\Omega\kr){\otimes}_{\so_F}F\simeq {\rm Sp}^{\cont}_r(F)^*$,
  \item  $H^0_{\eet}(Y, W\Omega^r_{{\rm log}}){\otimes}\Q_p\simeq {\rm Sp}^{\cont}_r(\Q_p)^*$,
  \item $ H^0(X,\Omega^r){\otimes}_{\so_K}K\simeq H^r_{\dr}(X){\otimes}_{\so_K}K\simeq {\rm Sp}_r^{\cont}(K)^*$,
\item $H^0_{\eet}(\overline{Y},W\Omega^r_{{\rm log}}){\otimes}\Q_p\simeq {\rm Sp}_r^{\cont}(\Q_p)^*.
$
 \end{enumerate}
    They are compatible with the canonical maps between Steinberg representations and with the isomorphisms 
  $$
  H^r_{\dr}(X_K)\simeq {\rm Sp}_r(K)^*,\quad H^r_{\hk}(X)\simeq {\rm Sp}_r(F)^*
  $$
  from Theorem \ref{IS} and Lemma \ref{genIS}. 
  \item We have  natural  isomorphisms of pro-discrete $\Z_p$-modules
  \begin{enumerate}
  \item $H^0_{\eet}(Y,W\Omega^r)\simeq H^r_{\eet}(Y,W\Omega\kr)\simeq {\rm Sp}^{\cont}_r(\so_F)^*$ and $H^0(Y, \Omega^r)\simeq {\rm Sp}_r(k)^*$,
\item  $H^0_{\eet}(Y,W\Omega^r_{\log})\simeq {\rm Sp}^{\cont}_r(\Z_p)^*$ and $H^0_{\eet}(Y, \Omega^r_{{\rm log}})\simeq {\rm Sp}_r(\mathbf{F}_p)^*$,
\item $
H^0(X,\Omega^r)\simeq H^r_{\dr}(X)\simeq {\rm Sp}^{\cont}_r(\so_K)^*,
$
\item  $H^0_{\eet}(\overline{Y},W\Omega^r_{{\rm log}})\simeq {\rm Sp}_r^{\cont}({\mathbf Z}_p)^*$ and 
$H^0_{\eet}(\overline{Y},\Omega^r_{{\rm log}})\simeq {\rm Sp}_r({\mathbf F}_p)^*$.
\end{enumerate}
They are compatible with the canonical maps between Steinberg representations and with the above isomorphisms.
\end{enumerate}
\end{theorem}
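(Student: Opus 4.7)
The plan is to establish part (1) first and then deduce part (2) by a combination of a Nakayama-type argument and the uniqueness of $G$-stable lattices (Corollary \ref{Uniquelattice}). All four cases in (1) will follow the same three-step pattern: construct a natural $G$-equivariant map into the appropriate Steinberg dual, show it is injective with image in the $G$-bounded vectors (equivalently the continuous Steinberg dual by Corollary \ref{Uniquelattice1}(b) and Corollary \ref{Gbounded}), and deduce surjectivity from Grosse-Kl\"onne's irreducibility of the mod-$p$ Steinberg (Proposition \ref{GK irred}).

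For case (1)(c), I would compose Corollary \ref{amtrak}(3), Proposition \ref{injectdR}(2), and Theorem \ref{SSmain}(b) into an injection
$$H^0(X,\Omega^r)\otimes_{\so_K}K\simeq H^r_{\dr}(X)\otimes_{\so_K}K\hookrightarrow H^r_{\dr}(X_K)\simeq {\rm Sp}_r(K)^*,$$
whose image lies in ${\rm Sp}^{\cont}_r(K)^*$ because $H^0(X,\Omega^r)$ is a compact $G$-stable $\so_K$-module. Case (1)(a) is parallel, replacing the canonical de Rham map by the map $\iota_Y$ of Proposition \ref{injectdR}(1), which, using Proposition \ref{lyon11} and Lemma \ref{degenerate} to rewrite $H^r_{\eet}(Y,W\Omega\kr)\simeq H^0(Y,W\Omega^r)$, then factors through $H^r_{\hk}(Y)\simeq {\rm Sp}_r(F)^*$ of Lemma \ref{genIS}. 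For (1)(d), I would combine Corollary \ref{amtrak}(2) with the $\overline{k}$-version of (1)(a) to get an embedding
$$H^0_{\eet}(\overline{Y},W\Omega^r_{\log})\wh{\otimes}_{\Z_p}W(\overline{k})\simeq H^0_{\eet}(\overline{Y},W\Omega^r)\hookrightarrow {\rm Sp}^{\cont}_r(F^{\rm un})^*\simeq {\rm Sp}^{\cont}_r(\Q_p)^*\wh{\otimes}_{\Q_p}W(\overline{k});$$
since $W\Omega^r_{\log}$ is the de Rham-Witt Frobenius fixed subsheaf of $W\Omega^r$ and Frobenius acts trivially on the Steinberg factor on the right, taking $F=1$ fixed points isolates ${\rm Sp}^{\cont}_r(\Q_p)^*$. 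Case (1)(b) then descends from (1)(d) via Galois invariance, since $\Gal(\overline{k}/k)$ acts trivially on ${\rm Sp}^{\cont}_r(\Q_p)^*$.

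Surjectivity will come from the following argument: the image is a non-zero closed $G$-stable $\so_?$-submodule of ${\rm Sp}^{\cont}_r(?)^*$, so its reduction modulo $\mathfrak{m}_?$ is a non-zero $G$-stable subspace of ${\rm Sp}_r(\mathbf{F}_p)^*$, which is irreducible by Proposition \ref{GK irred} and therefore equals the whole thing. A Nakayama-style argument in the compact pro-discrete setting promotes this to surjectivity onto ${\rm Sp}^{\cont}_r(\so_?)^*$, up to a scalar in $\so_?^\times$ that is pinned down by a single-symbol calculation using Theorem \ref{IS}, Lemma \ref{genIS}, and the Chern-class compatibility of Lemma \ref{compatibility} and Appendix \ref{symbols}. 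This simultaneously proves the integral statements of (2), since the sources $H^0_{\eet}(Y,W\Omega^r)$, $H^0(X,\Omega^r)$, $H^0_{\eet}(Y,W\Omega^r_{\log})$, $H^0_{\eet}(\overline{Y},W\Omega^r_{\log})$ are all torsion-free by Lemma \ref{degenerate} and Proposition \ref{GK01}. The main obstacle I anticipate is the delicate bookkeeping needed to match the Frobenius twist (the linearized Frobenius on $H^r_{\hk}(Y)$ is multiplication by $q^r$ by \cite[Cor. 6.6]{GK2}) with the normalization of symbols on both the logarithmic and non-logarithmic sides, and to verify that all four isomorphisms fit together compatibly with the canonical maps between Steinberg duals over the different coefficient rings $\Z_p, \so_F, \so_K, \mathbf{F}_p, k$.
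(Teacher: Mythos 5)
Your proposal is correct in outline and its skeleton coincides with the paper's: injectivity of the maps from the (log) de Rham--Witt and integral de Rham cohomologies into the Steinberg duals via Proposition \ref{injectdR} (together with the reductions $H^r\simeq H^0$ of Proposition \ref{lyon11} and Lemma \ref{degenerate}), the observation that compactness forces the image into the $G$-bounded vectors (Corollary \ref{Gbounded}), and surjectivity resting ultimately on Grosse-Kl\"onne's mod-$p$ irreducibility. You diverge in two places. First, the logarithmic cases: the paper proves (1b) directly over $k$, by producing a regulator $r_{\log}:{\rm Sp}^{\cont}_r(\Z_p)^*\to H^r_{\eet}(Y,W\Omega\kr_{\log})$ from a diagram chase against the $\phi=p^r$-eigenspace of $H^r_{\eet}(Y,W\Omega\kr)$, and then gets (1d) for free from the base-change isomorphism $H^0_{\eet}(Y,W\Omega^r_{\log})\simeq H^0_{\eet}(\overline{Y},W\Omega^r_{\log})$ of Lemma \ref{change-of-fields}; you instead go to $\overline{k}$ first and extract ${\rm Sp}^{\cont}_r(\Q_p)^*$ as the de Rham--Witt Frobenius fixed points. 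Your route can be made to work, but it carries two burdens the paper avoids: you need the $\overline{k}$-analogue of (1a) (available by \'etale base change, but nowhere stated), and you must check that your isomorphism intertwines $F$ with $1\otimes\sigma$ --- exactly the normalization bookkeeping ($\phi=q^r$ on $H^r_{\hk}$ versus $\phi=p^rF$ on $W\Omega^r$) that you flag as the main risk, and the place where a slip is most likely. Second, surjectivity: the paper's rational argument does not reduce mod $\mathfrak{m}$ at all --- it observes that the composite ${\rm Sp}^{\cont}_r(\so_F)^*\otimes F\to H^0(Y,W\Omega^r)\otimes F\to{\rm Sp}_r(F)^*$ is the canonical embedding onto the $G$-bounded vectors while the second map already lands in them, forcing both to be isomorphisms; the mod-$\mathfrak{m}$ irreducibility plus uniqueness of lattices (Corollary \ref{Uniquelattice}) is reserved for the integral statements (2). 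Your Nakayama argument is fine, but only after rescaling so that the image is a lattice not contained in $\mathfrak{m}\cdot{\rm Sp}^{\cont}_r(\so_?)^*$ (a nonzero closed submodule can have zero reduction), which is precisely what your single-symbol computation must supply. Finally, the paper gets naturality and all the stated compatibilities for free because every isomorphism is induced from the universal surjection $D(\sh^{r+1},\cdot)\twoheadrightarrow{\rm Sp}_r(\cdot)^*$ via Chern classes; with your maps going in the opposite direction you still need those symbol compatibilities to identify your isomorphisms with the canonical ones, so the Appendix computations are not optional in either approach.
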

\begin{proof}
Consider the  following diagram
\begin{equation}
\label{zima2}
\xymatrix{  & H^r_{\eet}(Y,W\Omega\kr)\ar@{^(->}[r]^-{\iota_Y} & H^r_{\hk}(Y)\\
 D(\mathcal{H}^{r+1}, \so_F)\ar[ru]^{r_{\hk}} \ar@{^(->}[r]^{\can}\ar@{->>}[d]_{\can} & D(\mathcal{H}^{r+1}, F)\ar[ru]^{r_{\hk}} \ar@{->>}[d]_{\can} \\
 {\rm Sp}^{\cont}_r(\so_F)^*\ar@{-->}[uur]_{r_{\hk}}\ar@{^(->}[r]^{\can}  & {\rm Sp}_r(F)^*\ar[uur]_{\sim}
}
\end{equation}
The bottom square clearly commutes. The first (continuous) regulator $r_{\hk}$ is defined by integrating the crystalline Hyodo-Kato Chern classes $c_1^{\hk}$ defined in the Appendix.  By Section \ref{diffsymbols} it makes the top square commute.
 The right triangle commutes by Lemma \ref{genIS}. 
It follows that there exists a broken arrow (we will call it $r_{\hk}$ as well) that makes the left triangle commute. This map is continuous and  also clearly makes the adjacent square commute. Hence it is an injection. 
We will prove that it is an isomorphism after inverting $p$.

 The above combined with Proposition \ref{injectdR} and Theorem \ref{genIS} gives the   embeddings 
             $$     
               {\rm Sp}_r^{\rm cont}(\so_F)^*{\otimes}_{\so_F} F\stackrel{r_{\hk}}{\hookrightarrow} H^0_{\eet}(Y, W\Omega^r){\otimes}_{\so_F} F\stackrel{f}{\hookrightarrow } H^r_{\rm \hk}(Y)\simeq {\rm Sp}_r(F)^*.$$
   Their composite is the canonical embedding.
     The image of the map $f$ must be in the subspace of $G$-bounded vectors of ${\rm Sp}_r(K)^*$, since 
     $H^0_{\eet}(Y, W\Omega^r)$ is compact (it is naturally an inverse limit of 
             finite free $\so_F$-modules). That  subspace is identified with 
       ${\rm Sp}_r^{\rm cont}(F)^*\simeq  {\rm Sp}_r^{\rm cont}(\so_F)^*{\otimes}_{\so_F} F$ by Corollary \ref{Gbounded}. It follows that the map $r_{\hk}$ is an isomorphism.

         In fact, the above map $r_{\hk}$ is already an integral isomorphism (as stated in part (2a)). To see this, consider the commutative diagram
  $$
  \xymatrix{
   {\rm Sp}_r^{\rm cont}(\so_F)^*\ar@{->>}[d] \ar@{^(->}[r]^{r_{\hk}} & H^0_{\eet}(Y, W\Omega^r)\ar@{->>}[d]\\
    {\rm Sp}_r^{\rm cont}(k)^* \ar[r]^{r_{\hk}} & H^0_{\eet}(Y, \Omega^r)
  }
  $$
  $H^0_{\eet}(Y, W\Omega^r)$ is a $G$-equivariant lattice in 
  $H^0_{\eet}(Y, W\Omega^r)_{\Q_p}\simeq {\rm Sp}_r^{\rm cont}(F)^*$ hence, by Corollary \ref{Uniquelattice}, it is homothetic  to ${\rm Sp}_r^{\rm cont}(\so_F)^*$. It follows that
  $H^0_{\eet}(Y, \Omega^r)\simeq {\rm Sp}_r^{\rm cont}(k)^*$ is irreducible. Moreover, the bottom map $r_{\hk}$ is nonzero: by construction of the top map $r_{\hk}$, the symbol $\dlog z_1\wedge\cdots\wedge \dlog z_r$ for coordinates $z_1,\ldots, z_r$ of ${\mathbb P}^d_K$ is in the image. 
 It follows that it is an isomorphism hence so is the top map $r_{\hk}$ as well. Moreover, the latter is a topological isomorphism since the domain is compact. It follows that its
  rational version is a topological isomorphism as well, which proves part (1a) of the theorem.
 
 The proof of part (1b) is very similar to the proof of part (1a), so we will be rather brief.  Consider the commutative diagram
\begin{equation}
\label{zima3}
\xymatrix{  H^r_{\eet}(Y,W\Omega\kr_{\log})\ar@{^(->}[rr]^{f}  & & H^r_{\eet}(Y,W\Omega\kr)^{\phi=p^r} \\
& D(\mathcal{H}^{r+1}, \Z_p)\ar[ru]^{r_{\hk}}\ar@{->>}[d]_{\can}\ar[lu]_{r_{\log}}  \\
  & {\rm Sp}^{\cont}_r(\Z_p)^*\ar[uur]_{\sim}\ar@{-->}[uul]^{r_{\log}}
}
\end{equation}
Here the (continuous) regulator $r_{\log}$ is defined by integrating the crystalline logarithmic de Rham-Witt  Chern classes $c_1^{\log}$ defined in the Appendix.  
Arguing as above we can construct the broken arrow, which is again a continuous map,   making the whole diagram commute. 
 It easily follows that both maps $f$ and $r_{\log}$ are isomorphisms.  Now, to prove that they are topological isomorphisms we argue  first integrally, as for part (2b), and then rationally as for part (2a).
 
 For part (1c) one repeats the argument starting with  the  following commutative diagram
\begin{equation}
\label{zima1}
\xymatrix{ & H^r_{\dr}(X)\ar@{^(->}[r]^{\can}& H^r_{\dr}(X_K)\ar@{-}[ddl]^{\sim}\\
 D(\mathcal{H}^{r+1}, \so_K)\ar[ru]^{r_{\dr}} \ar@{^(->}[r]^{\can}\ar@{->>}[d]_{\can} & D(\mathcal{H}^{r+1}, K)\ar[ru]^{r_{\dr}}\ar@{->>}[d]_{\can}\\
{\rm Sp}_r(\so_K)^*\ar@{-->}[uur]_{r_{\dr}}\ar@{^(->}[r]^{\can} & {\rm Sp}_r(K)^*,
}
\end{equation}
where the continuous (bounded) regulator $r_{\dr}$ is defined by integrating the integral de Rham  Chern classes $c_1^{\dr}$ defined in the Appendix,
and using the fact that, 
  by Proposition \ref{injectdR}, we have
  $$
  H^r_{\dr}(X){\otimes}_{\so_K}K\hookrightarrow H^r_{\dr}(X) \simeq {\rm Sp}_r(K)^* 
  $$
  and, 
  by Proposition \ref{lyon11}, we have $H^r_{\dr}(X)\simeq H^0(X,\Omega^r)$. The integral part (2c) follows as above.

Parts (1d) and (2d) follow from parts (1b) and (2b) and the following lemma. 
\begin{lemma}
\label{change-of-fields}
For $n\geq 1$, we have  canonical topological isomorphisms
$$H^0_{\eet}(Y,W_n\Omega^r_{{\rm log}})\stackrel{\sim}{\to}H^0_{\eet}(\overline{Y},W_n\Omega^r_{{\rm log}}),\quad H^0_{\eet}(Y,W\Omega^r_{{\rm log}})\stackrel{\sim}{\to}H^0_{\eet}(\overline{Y},W\Omega^r_{{\rm log}}).
$$
\end{lemma}
\begin{proof} It suffices to prove the first isomorphism and, since both sides satisfy $p$-adic devissage, it suffices to do it for $n=1$. 
We have $H^0_{\eet}(\overline{Y}, \Omega^r_{\log})\stackrel{\sim}{\to}H^0_{\eet}(\overline{Y}, \Omega^r)^{C=1}$. On the other hand, by \'etale base change, we have a topological isomorphism 
 $H^0_{\eet}(\overline{Y}, \Omega^r)\simeq H^0_{\eet}(Y, \Omega^r)\wh{\otimes}_{k}\overline{k}$.
And parts  (2a) and (2b) of the theorem show that the natural map
    $H^0_{\eet}(Y, \Omega^r_{\log})\wh{\otimes}_{\mathbf{F}_p} k
    \to H^0_{\eet}(Y, \Omega^r)$ is a topological  isomorphism. Hence, since $C=1$ on $H^0_{\eet}(Y, \Omega^r_{\log})$, we obtain topological isomorphisms
    $$H^0_{\eet}(\overline{Y}, \Omega^r_{\log}) \stackrel{\sim}{\leftarrow}H^0_{\eet}(\overline{Y}, \Omega^r)^{C=1}\stackrel{\sim}{\leftarrow}(H^0_{\eet}(Y, \Omega^r_{\log})\wh{\otimes}_{\mathbf{F}_p} \overline{k})^{C=1}\stackrel{\sim}{\leftarrow}H^0_{\eet}(Y, \Omega^r_{\log}),$$
    as wanted.
  \end{proof}
\end{proof}
\begin{remark}
\label{noc1}
Consider the commutative diagram
$$
\xymatrix{ & H^r_{\eet}(Y,W\Omega\kr)\ar@{-->}[r]^-{\iota_{\hk}} & H^r_{\dr}(X)\\
 D(\mathcal{H}^{r+1}, \so_F)\ar[ru]^{r_{\hk}} \ar@{^(->}[r]^{\can}\ar@{->>}[d]_{\can} & D(\mathcal{H}^{r+1}, \so_K)\ar[ru]^{r_{\dr}} \ar@{->>}[d]_{\can} \\
 {\rm Sp}^{\cont}_r(\so_F)^*\ar[uur]_{\sim}\ar@{^(->}[r]^{\can}  & {\rm Sp}^{\cont}_r(\so_K)^*\ar[uur]_{\sim}
}
$$
The dashed arrow is defined to make the diagram commute. It is continuous.  It can be thought of as an integral Hyodo-Kato map. Compatibilities used in the proof of Theorem \ref{Steinberg} ensure that it is compatible  with the bounded and the overconvergent Hyodo-Kato maps. 
Because the natural map 
$ {\rm Sp}^{\cont}_r(\so_F)^*\otimes_{\so_F}\so_K\stackrel{\sim}{\to} {\rm Sp}^{\cont}_r(\so_K)^*$ is an isomorphism, 
we get the integral Hyodo-Kato (topological) isomorphism
$$
\iota_{\hk}: H^r_{\eet}(Y,W\Omega\kr)\otimes_{\so_F}\so_K\stackrel{\sim}{\to}  H^r_{\dr}(X).
$$
\end{remark}
\subsection{Computation of syntomic cohomology}\label{ZURICH}
We will prove in this section that the geometric syntomic cohomology of $X$ can be computed using the logarithmic de Rham-Witt cohomology.  To simplify the notation we will write $(-)_{\Q_p}$ for $(-){\otimes}^L\Q_p$.
\subsubsection{Simplification of syntomic cohomology}
Let $X$ now be a semistable Stein formal scheme over $\so_K$. 
\begin{lemma}\label{zurich11}
Let $r\geq 0$. There exist compatible natural strict quasi-isomorphisms
\begin{align*}
\iota_{\crr}: \quad & [\R\Gamma_{\crr}(\overline{X})_{\Q_p}]^{\phi=p^r}\stackrel{\sim}{\to}[(\R\Gamma_{\crr}(X_0/\so_{F}^0)  \wh{\otimes}_{\so_{F}}\widehat{\A}_{\st})_{\Q_p}]^{N=0,\phi=p^r},\\
\iota_{\crr}: \quad & [\R\Gamma_{\crr}(X)_{\Q_p}]^{\phi=p^r}\stackrel{\sim}{\to}[\R\Gamma_{\crr}(X_0/\so_{F}^0)_{\Q_p}]  ^{N=0,\phi=p^r}.
\end{align*}
\end{lemma}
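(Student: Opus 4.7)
The plan is to construct the maps $\iota_{\crr}$ by mimicking the K\"unneth-type construction of the map $\iota^1_{\rm BK}$ in the proof of Proposition \ref{fini2}, then to handle the topological issues using the Stein property of $X$. For the first isomorphism, one exploits the identification $\widehat{\A}_{\st}\simeq \R\Gamma_{\crr}(\so_C^{\times}/r^{\rm PD}_\varpi)$ from (\ref{isom1}) to produce a cup product morphism
$$\R\Gamma_{\crr}(X_0/\so_F^0)\wh{\otimes}^R_{\so_F}\widehat{\A}_{\st}\stackrel{\cup}{\to}\R\Gamma_{\crr}(\overline{X}_0/r^{\rm PD}_\varpi),$$
and then combines this with the distinguished triangle (\ref{int2})
$$\R\Gamma_{\crr}(\overline{X})\to\R\Gamma_{\crr}(\overline{X}_0/r^{\rm PD}_\varpi)\stackrel{N}{\to}\R\Gamma_{\crr}(\overline{X}_0/r^{\rm PD}_\varpi),$$
which identifies, after applying the (derived) $N=0$ functor, the right-hand side with $\R\Gamma_{\crr}(\overline{X})$. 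The cup product is a quasi-isomorphism after $(-)_{\Q_p}$ by the same flatness and log-syntomicity argument used for (\ref{int1}) in the proof of Proposition \ref{fini2}: one reduces modulo $p^n$, uses flatness of $\widehat{\A}_{\st,n}$ over $r^{\rm PD}_{\varpi,n}$ and the log-syntomicity of $X_{\so_{\overline K,n}}\to \so_{K,n}^\times$, and then passes back to the limit.

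For the second isomorphism, I would run the same argument but with $r^{\rm PD}_\varpi$ replaced by $\so_F$ (and without the auxiliary period ring factor $\widehat{\A}_{\st}$). The relevant distinguished triangle is
$$\R\Gamma_{\crr}(X)\to\R\Gamma_{\crr}(X_0/\so_F^0)\stackrel{N}{\to}\R\Gamma_{\crr}(X_0/\so_F^0),$$
and its derived $N=0$ part yields the comparison map $\iota_{\crr}$. Compatibility between the two statements follows from the functoriality of the K\"unneth map and of the distinguished triangles with respect to the projection $\widehat{\A}_{\st}\to \so_F$ (sending $u\mapsto 0$).

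The topological input: I would first work on quasi-compact affine pieces, where both sides are (limits of) complexes of Banach spaces over $\Q_p$ (use Proposition \ref{acyclic-integral} together with the fact that $\so_K$ is syntomic over $\so_F$ so the integral complexes are $p$-torsion free), and then strictness of algebraic quasi-isomorphisms follows from Lemma \ref{kolobrzeg-winter}. For the Stein case, one fixes a Stein covering $\{U_i\}_{i\in\N}$ of $X$ and applies $\holim_i$; the resulting strictness is controlled, as in the proof of Lemma \ref{Lyon-again}, by a Mittag-Leffler argument on the projective systems $\{H^j_{\crr}(U_{i,0}/\so_F^0)\wh{\otimes}_{\so_F}\wh{\A}_{\st}\}_i$ combined with the Banach-Colmez Dimension argument used there to kill the $\wt H^1\holim$ of the $\phi=p^r$-eigenspaces.

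The main obstacle I expect is precisely this last step: the $\phi=p^r$ functor is derived, and one needs a uniform control (across the Stein exhaustion) of both the Banach-Colmez Dimensions of the $\phi=p^r$-parts and the compatibility between the integral completed tensor product $\wh{\otimes}_{\so_F}\widehat{\A}_{\st}$ and the derived version $\wh{\otimes}^R_{\so_F}\widehat{\A}_{\st}$ after inverting $p$. Both reduce to the finite-dimensionality of $H^i_{\hk}$ of the Stein pieces (Proposition \ref{rain1}) together with Lemma \ref{bios2}, so in the end the argument should go through, but the verification requires the same careful bookkeeping as in Example \ref{LF-tensor} and Lemma \ref{Lyon-again}.
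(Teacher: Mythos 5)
Your proposed construction has a genuine gap at the very first step. The cup product morphism you write down,
\begin{equation*}
\R\Gamma_{\crr}(X_0/\so_F^0)\wh{\otimes}^R_{\so_F}\widehat{\A}_{\st}\stackrel{\cup}{\to}\R\Gamma_{\crr}(\overline{X}_0/r^{\rm PD}_\varpi),
\end{equation*}
does not exist as a natural K\"unneth map. The left factor is crystalline cohomology over the base $\so_F^0$; the middle factor $\widehat{\A}_{\st}$ is, by (\ref{isom1}), crystalline cohomology over $r^{\rm PD}_\varpi$; and the target is again over $r^{\rm PD}_\varpi$, with the tensor product taken over $\so_F$. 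These bases do not match, and the natural base-change map runs in the opposite direction: $p_0\colon r^{\rm PD}_\varpi\to\so_F^0$ (sending $T\mapsto 0$) induces $\R\Gamma_{\crr}(X_0/r^{\rm PD}_\varpi)\to\R\Gamma_{\crr}(X_0/\so_F^0)$, not the reverse. In particular you cannot invoke ``the same flatness and log-syntomicity argument used for (\ref{int1})'': the cup product in (\ref{int1}) has all three terms over the \emph{same} base $r^{\rm PD}_{\varpi,n}$, which is exactly what makes it a K\"unneth map. Your left factor is not an $r^{\rm PD}_\varpi$-algebra.

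The missing ingredient is the change-of-base from $r^{\rm PD}_\varpi$ to $\so_F^0$, which is precisely the nontrivial content of the lemma. The paper first applies the genuine K\"unneth map $\iota^1_{\rm BK}$ (both factors over $r^{\rm PD}_\varpi$, cf.\ (\ref{zurich1})) to identify $\R\Gamma_{\crr}(\overline{X})_{\Q_p}$ with $[\R\Gamma_{\crr}(X/r^{\rm PD}_\varpi,\Q_p)\wh{\otimes}^R_{r^{\rm PD}_{\varpi,\Q_p}}\widehat{\B}^+_{\st}]^{N=0}$, and only then passes to $\so_F^0$ via the map $h_{\crr}$: the zigzag (\ref{zigzag}) becomes a strict quasi-isomorphism after applying the derived $\phi=p^r$-eigenspace functor because the intermediate complexes (\ref{acyclic}), built from $J=\ker(p_0)$, have cohomology killed by a bounded power of $p$ (Frobenius being highly topologically nilpotent on $J$). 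That $\phi$-eigenspace trick is where the direction of the base-change map gets reversed ``for free''; without it your morphism is not even defined, let alone a quasi-isomorphism. Your discussion of topology (Banach pieces, Mittag--Leffler on the Stein exhaustion, Lemma~\ref{kolobrzeg-winter}) is in the right spirit and matches the paper's bookkeeping, but it only becomes relevant once the algebraic map exists.
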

\begin{proof}By (\ref{comp1}),  (\ref{int1}),
we have a natural strict quasi-isomorphism 
$$
\iota_{\rm BK}^1:\quad    [\R\Gamma_{\crr}(X_n/r^{\rm PD}_{\varpi,n}) \wh{\otimes}_{r^{\rm PD}_{\varpi,n}}\wh{\A}_{\st,n}]^{N=0} \stackrel{\sim}{\to} \R\Gamma_{\crr}(\overline{X}_n).
$$
To check the strictness one can look locally and there everything is discrete. 
We can also adapt the proof of Theorem \ref{fini3} (and proceed as in the proof of Proposition \ref{fini11}: note that both crystalline cohomology complexes are in the bounded derived category) to construct a natural strict quasi-isomorphism
$$
h_{\crr}: \quad  
 [(\R\Gamma_{\crr}(X/r^{\rm PD}_{\varpi})\widehat{\otimes}_{r^{\rm PD}_{\varpi}}\widehat{\A}_{\st})_{\Q_p}]^{\phi=p^r}\stackrel{\sim}{\to}
 [(\R\Gamma_{\crr}(X_0/\so_{F}^0)  \wh{\otimes}_{\so_{F}}\widehat{\A}_{\st})_{\Q_p}]^{\phi=p^r}. 
$$
In fact, it suffices to note that the complexes (\ref{acyclic}) used in that proof have cohomology annihilated by $p^N$, for a constant $N=N(d,r)$, $d=\dim X_0$.

   Define the first map in the lemma by $\iota_{\crr}:=h_{\crr}\iota_{\rm BK}^{-1}$. The definition of the second map $\iota_{\crr}$ is analogous (but easier: there is no need for the zigzag in the definition of $h_{\crr}$).
\end{proof}
Let $r\geq 0$. 
From the maps in (\ref{paris11}) we induce  a natural strict quasi-isomorphism 
 \begin{align}
 \label{sunny}
  \iota_{\rm BK}^2:   (\R\Gamma_{\dr}(X)\wh{\otimes}_{\so_K}\A_{\crr,K})_{\Q_p}/F^r
   \stackrel{\sim}{\to}\R\Gamma_{\crr}(\overline{X})_{\Q_p}/F^r.
 \end{align}
Set $\gamma_{\hk}:=(\iota_{\rm BK}^2)^{-1}\iota_{\crr}^{-1}$. The above discussion  yields the following strict quasi-isomorphism 
\begin{align}
\label{reduction}
\R\Gamma_{\synt}(\overline{X},\Z_p(r))_{\Q_p}
 & \stackrel{\sim}{\to}
   \xymatrix{[[(\R\Gamma_{\crr}(X_0/\so_{F}^0)  \wh{\otimes}_{\so_{F}}\widehat{\A}_{\st})_{\Q_p}]^{N=0,\phi=p^r}\ar[r]^-{\gamma_{\hk}}
    &  (\R\Gamma_{\dr}({X})\wh{\otimes}_{\so_K}{\A}_{\crr,K})_{\Q_p}/F^r]}.
\end{align}
By construction,  it is compatible with its pro-analog (\ref{map2}), i.e., we have a natural continuous map of distinguished triangles, where all the vertical maps are the canonical maps
$$
\xymatrix{
 \R\Gamma_{\synt}(\overline{X},\Z_p(r))_{\Q_p}\ar[d]
 \ar[r] & [(\R\Gamma_{\crr}(X_0/\so_{F}^0)  \wh{\otimes}_{\so_{F}}\widehat{\A}_{\st})_{\Q_p}]^{N=0,\phi=p^r}\ar[d]\ar[r]^-{\gamma_{\hk}}
    &  (\R\Gamma_{\dr}({X})\wh{\otimes}_{\so_K}{\A}_{\crr,K})_{\Q_p}/F^r\ar[d]\\
 \R\Gamma_{\synt}(\overline{X},\Q_p(r))
 \ar[r] & [\R\Gamma_{\crr}(X_0/\so_{F}^0,F)  \wh{\otimes}^R_{{F}}\widehat{\B}^+_{\st}]^{N=0,\phi=p^r}\ar[r]^-{\gamma_{\hk}}
    &  (\R\Gamma_{\dr}({X_K})\wh{\otimes}^R_{K}{\B}^+_{\dr})/F^r.
    }
$$
\subsubsection{Computation of the Hyodo-Kato part}
We come back now to the Drinfeld half-space.
\begin{lemma}
\label{formulas1}The cohomology of $[(\R\Gamma_{\crr}(Y/\so_{F}^0)   \wh{\otimes}_{\so_{F}}\widehat{\A}_{\st})_{\Q_p}]^{N=0,\phi=p^r}$ is classical and we have 
  the natural topological isomorphisms
  \begin{align}
  \label{formulas}
   H^r([(\R\Gamma_{\crr}(Y/\so_{F}^0)  & \wh{\otimes}_{\so_{F}}\widehat{\A}_{\st})_{\Q_p}]^{N=0,\phi=p^r})\simeq H^0_{\eet}(\overline{Y},W\Omega^r_{\log}){\otimes}{\Q_p},\\
  H^{r-1}([(\R\Gamma_{\crr}(Y/\so_{F}^0)  & \wh{\otimes}_{\so_{F}}\widehat{\A}_{\st})_{\Q_p}]^{N=0,\phi=p^r})\simeq  (H^0_{\eet}(\overline{Y},W\Omega^{r-1}_{\log})
  \wh{\otimes}_{\Z_p}{\A}^{\phi=p}_{\crr}){\otimes}{\Q_p}.\notag
  \end{align}
  \end{lemma}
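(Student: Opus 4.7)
The plan is to decompose the crystalline complex via the acyclicity results of the previous subsection, reduce the $N=0$ condition to an ordinary tensor product with $\A_{\crr}$ using Breuil's exact sequence (\ref{Breuil}), and then carry out the Frobenius eigenspace computation explicitly using pro-ordinarity (Corollary \ref{amtrak}).

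First, Proposition \ref{lyon11} together with Lemma \ref{degenerate} shows that the sheaves $W_n\Omega^j$ on $Y$ are Zariski acyclic and that $d=0$ on their global sections. Combined with the de Rham-Witt comparison, this yields a strict, Frobenius-equivariant quasi-isomorphism
\[
\R\Gamma_{\crr}(Y/\so_F^0)\simeq \bigoplus_{j\geq 0}H^0_{\eet}(Y,W\Omega^j)[-j],
\]
with $\phi$ acting as $p^jF$ on the $j$-th summand, where $F$ is the de Rham-Witt Frobenius. Pro-ordinarity (Corollary \ref{amtrak} and Lemma \ref{change-of-fields}) together with \'etale base change for the proper truncations $Y_s$ identifies
\[
H^0_{\eet}(Y,W\Omega^j)\wh{\otimes}_{\so_F}W(\overline{k})\simeq H^0_{\eet}(\overline{Y},W\Omega^j_{\log})\wh{\otimes}_{\Z_p}W(\overline{k}),
\]
under which $F$ corresponds to $1\otimes\sigma$, since $F=\id$ on logarithmic forms.

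Next, tensor with $\wh{\A}_{\st}$ and apply the Breuil sequence (\ref{Breuil}), which remains strict exact after tensoring with the pro-free $\so_F$-module $M_j:=H^0_{\eet}(Y,W\Omega^j)$. This gives, in degree $0$,
\[
[M_j\wh{\otimes}_{\so_F}\wh{\A}_{\st}]^{N=0}\simeq M_j\wh{\otimes}_{\so_F}\A_{\crr}.
\]
Propagating the $W(\overline{k})$-factor as above yields
\[
M_j\wh{\otimes}_{\so_F}\A_{\crr}\simeq H^0_{\eet}(\overline{Y},W\Omega^j_{\log})\wh{\otimes}_{\Z_p}\A_{\crr},
\]
on which the total Frobenius $(p^jF)\otimes\phi$ identifies with $1\otimes p^j\phi_{\A_{\crr}}$. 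Therefore
\[
(M_j\wh{\otimes}_{\so_F}\A_{\crr})^{\phi=p^r}\simeq H^0_{\eet}(\overline{Y},W\Omega^j_{\log})\wh{\otimes}_{\Z_p}\A_{\crr}^{\phi=p^{r-j}},
\]
while the cokernel of $\phi-p^r$ (sitting in degree $1$ of the fiber) vanishes rationally by the surjectivity in the fundamental exact sequence (\ref{seqq1}).

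Assembling these pieces, the summand $M_j[-j]$ contributes to the total cohomology in degree $j$ (kernel of $\phi-p^r$) and in degree $j+1$ (cokernel). After inverting $p$ only the kernel survives, so $H^r$ picks up precisely the $j=r$ contribution, equal to $H^0_{\eet}(\overline{Y},W\Omega^r_{\log})_{\Q_p}$ since $\A_{\crr}^{\phi=1}=\Z_p$, while $H^{r-1}$ picks up the $j=r-1$ contribution, equal to $(H^0_{\eet}(\overline{Y},W\Omega^{r-1}_{\log})\wh{\otimes}_{\Z_p}\A_{\crr}^{\phi=p})_{\Q_p}$. The main obstacle is topological bookkeeping: one must verify that the completed tensor product of pro-discrete modules commutes appropriately with the inverse limit defining $\R\Gamma_{\crr}$, with the Breuil sequence, and with the formation of derived $(\phi,N)$-eigenspaces, and that the resulting cohomology is classical. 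This is handled by working integrally with pro-free modules, where tensor products are well behaved, and then invoking Proposition \ref{acyclic-integral} and Lemma \ref{kolobrzeg-winter} to upgrade algebraic to strict isomorphisms in the Fr\'echet category upon inverting $p$.
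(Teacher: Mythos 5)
Your argument follows the paper's proof essentially step for step: decompose $\R\Gamma_{\crr}(Y/\so_F^0)$ via Proposition \ref{lyon11} into shifted copies of $H^0_{\eet}(\overline{Y},W\Omega^j)$, kill $N$ via the Breuil sequence so as to replace $\wh{\A}_{\st}$ by $\A_{\crr}$, and compute the $\phi=p^r$-eigenspaces using that $\phi$ acts by $p^j$ on $H^0_{\eet}(\overline{Y},W\Omega^j_{\log})$ together with the rational surjectivity of $\phi-p^r$ coming from the fundamental exact sequence. The only detail to add is that the identification $[M_j\wh{\otimes}\wh{\A}_{\st}]^{N=0}\simeq M_j\wh{\otimes}\A_{\crr}$ does not follow from the Breuil sequence alone when $N$ is merely nilpotent (rather than zero) on $M_j$, since $N$ acts on the tensor product by $N\otimes 1+1\otimes N$; the paper handles this by induction on the order of nilpotency.
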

  \begin{proof}
    By Proposition \ref{lyon11},  we have  strict quasi-isomorphisms\footnote{Strictly speaking, the quasi-isomorphisms in that proposition are modulo $p^n$ but it is easy to get the $p$-adic result by going to the limit and using Mittag-Leffler as in Corollary \ref{amtrak}.}
 \begin{align*}
     \R\Gamma_{\crr}(Y/\so_{F}^0)   \wh{\otimes}_{\so_{F}}\widehat{\A}_{\st} & \simeq \R\Gamma_{\eet}(Y,W\Omega\kr)   \wh{\otimes}_{\so_{F}}\widehat{\A}_{\st}
  \overset{\sim}{\leftarrow} H^0_{\eet}(Y,W\Omega\kr)   \wh{\otimes}_{\so_{F}}\widehat{\A}_{\st}\\
  & \overset{\sim}{\leftarrow}\bigoplus_{i\geq 0}H^0_{\eet}(Y,W\Omega^i)   \wh{\otimes}_{\so_{F}}\widehat{\A}_{\st}[-i]
   \simeq  \bigoplus_{i\geq 0}H^0_{\eet}(\overline{Y},W\Omega^i)   \wh{\otimes}_{W(\overline{k})}\widehat{\A}_{\st}[-i].
   \end{align*}
The first claim of the lemma follows.

 Let $M$ be a finite type free $(\phi,N)$-module over  $W(\overline{k})$. Note that $N$ is nilpotent.  We claim
 that we have a short exact sequence
 $$
 0\to M\otimes_{W(\overline{k})} \A_{\crr}\to M\otimes_{W(\overline{k})}\wh{\A}_{\st}\stackrel{N}{\to}M\otimes_{W(\overline{k})}\wh{\A}_{\st}\to 0.
 $$
Indeed,  if $N=0$, this is clear from the short exact sequence (\ref{Breuil}). For a general $M$, we argue by induction on $m$ such that $N^m=0$ using the short exact sequence
 $$
 0\to M_0\to M\stackrel{N}{\to }M_1\to 0,
 $$
 where $M_0=\ker N, M_1=\im N$. $M_0,M_1$ 
are  finite type free $(\phi,N)$-modules such that $N^{m-1}=0$. It follows that we have  strict quasi-isomorphisms (reduce to the truncated log-schemes $Y_s$ and pass to the limit)
   \begin{align*}
  [\R\Gamma_{\crr}(Y/\so_{F}^0)    \wh{\otimes}_{\so_{F}}\widehat{\A}_{\st}]^{N=0} & \simeq \bigoplus_{i\geq 0}[H^0_{\eet}(\overline{Y},W\Omega^i)   \wh{\otimes}_{W(\overline{k})}\widehat{\A}_{\st}]^{N=0}[-i]\simeq 
   \bigoplus_{i\geq 0}H^0_{\eet}(\overline{Y},W\Omega^i)   \wh{\otimes}_{W(\overline{k})}{\A}_{\crr}[-i]\\
  & \simeq 
   \bigoplus_{i\geq 0}H^0_{\eet}(\overline{Y},W\Omega^i_{\log})   \wh{\otimes}_{\Z_p}{\A}_{\crr}[-i] .
   \end{align*}

  We will show now that we have natural strict quasi-isomorphisms
   \begin{align}
   \label{swieczki}
   [H^0_{\eet}(\overline{Y},W\Omega^r_{\log})   \wh{\otimes}_{\Z_p}{\A}_{\crr}]^{\phi=p^r}_{\Q_p} & \simeq H^0_{\eet}(\overline{Y},W\Omega^r_{\log})_{\Q_p}\\
   [H^0_{\eet}(\overline{Y},W\Omega^{r-1}_{\log})   \wh{\otimes}_{\Z_p}{\A}_{\crr}]^{\phi=p^r}_{\Q_p} & \simeq (H^0_{\eet}(\overline{Y},W\Omega^{r-1}_{\log})
  \wh{\otimes}_{\Z_p}{\A}^{\phi=p}_{\crr})_{\Q_p}.\notag
\end{align}
   For $i\geq 0$, 
   set $C_i:=
   H^0_{\eet}(\overline{Y},W\Omega^i_{\log})   \wh{\otimes}_{\Z_p}{\A}_{\crr}.$ We claim that, for $j\geq i$,  the classical eigenspace
\begin{equation}
\label{swieczka1}
C_i^{\phi=p^j}=
H^0_{\eet}(\overline{Y},W\Omega^i_{\log})   \wh{\otimes}_{\Z_p}{\A}^{\phi=p^{j-i}}_{\crr}. 
\end{equation}
To see that, write, using the notation from the proof of Proposition \ref{lyon11}, $H^0_{\eet}(\overline{Y},W\Omega^i_{{\rm log}})\simeq \varprojlim_sH^0_{\eet}(\overline{Y}_s,W\Omega^i_{{\rm log}})$ or, to simplify the notation, $A^i:=A=\varprojlim_sA_s$. Note that $A_s$ is a finite type free $\Z_p$-module. Replace $A_s$ with $B_s:=\cap_{s^{\prime}>s}\im(A_{s^{\prime}}\to A_s)$. Then the maps $B_{s+1}\to B_s$ are surjective and $A=\varprojlim_s B_s$. Choose basis of $B_s$, $s\geq 1$, compatible with the projections, i.e., the chosen basis of $B_{s+1}$ includes a lift of the chosen basis of $B_s$. Using this basis 
 we can write 
  \begin{align}
  \label{bases}
  A\simeq  & \Z_p^{I_1}\times\Z_p^{I_2}\times\Z_p^{I_3}\times\cdots,\quad 
    A\wh{\otimes}_{\Z_p}\A_{\crr}\simeq \A_{\crr}^{I_1}\times\A_{\crr}^{I_2}\times\A_{\crr}^{I_3}\times\cdots.
\end{align}
Since the Frobenius on $ A^i=H^0_{\eet}(\overline{Y},W\Omega^i_{\log})  $ is equal to the multiplication by  $p^i$ we obtain the equality we wanted.

   Consider now the following  exact sequences
\begin{align*}
0  \to C^{\phi=p^r}_{r}\to C_r \lomapr{\phi-p^r} C_r,\quad \quad
0  \to  C^{\phi=p^r}_{r-1}\to C_{r-1}\lomapr{\phi-p^r}C_{r-1}.
\end{align*}
Since the map $\A_{\crr}\lomapr{p^i-\phi} \A_{\crr}$, $i\geq 0$,  is $p^i$-surjective,  the maps $\phi-p^r$ above are rationally surjective
(use the basis (\ref{bases}) and the fact that the Frobenius on $A^i$ is equal to the multiplication by  $p^i$). 
Hence, rationally, the derived eigenspaces $[C_i]^{\phi=p^r}$, $i=r,r-1$, are equal to the classical ones $C_i^{\phi=p^r}$, $i=r,r-1.$ Since, by (\ref{swieczka1}), we have 
$C_r^{\phi=p^r}=H^0_{\eet}(\overline{Y},W\Omega^r_{\log})  $ and $C_{r-1}^{\phi=p^r}=H^0_{\eet}(\overline{Y},W\Omega^{r-1}_{\log}) \wh{\otimes}_{\Z_p}\A_{\crr}^{\phi=p}$,
the strict quasi-isomorphisms in (\ref{swieczki}) follow. 

It remains to show that the natural maps
\begin{align*}
H^0_{\eet}(\overline{Y},W\Omega^r_{\log})_{\Q_p} & \to H^0_{\eet}(\overline{Y},W\Omega^r_{\log}){\otimes}{\Q_p},\\
(H^0_{\eet}(\overline{Y},W\Omega^{r-1}_{\log}) \wh{\otimes}_{\Z_p}{\A}^{\phi=p}_{\crr})_{\Q_p} & \to 
(H^0_{\eet}(\overline{Y},W\Omega^{r-1}_{\log}) \wh{\otimes}_{\Z_p}{\A}^{\phi=p}_{\crr}){\otimes}{\Q_p}.
\end{align*}
are strict quasi-isomorphisms but this follows from Proposition \ref{acyclic-integral}.

\end{proof}
\subsubsection{Computation of syntomic cohomology}
\begin{corollary} \label{synt11}
Let $r\geq 0$.  The $r$-th cohomology of $\rg_{\synt}(\overline{X},\Z_p(r))_{\Q_p}$ is classical and 
there exists a natural topological  isomorphism
$$ H^r(\rg_{\synt}(\overline{X},\Z_p(r))_{\Q_p})\simeq      H^0_{\eet}(\overline{Y},W\Omega^r_{\log}){\otimes}{\Q_p}.
$$
\end{corollary}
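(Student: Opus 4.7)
The plan is to feed the strict quasi-isomorphism~(\ref{reduction}) into its defining long exact sequence, identify the Hyodo-Kato and de Rham terms in degrees $r-1$ and $r$ using the explicit formulas already established, and then pin down the connecting map as a concrete tensor of the integral Hyodo-Kato isomorphism with $\theta$.

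First, write ${\rm HK}_r:=[(\R\Gamma_{\crr}(Y/\so_F^0)\wh{\otimes}_{\so_F}\widehat{\A}_{\st})_{\Q_p}]^{N=0,\phi=p^r}$ and ${\rm DR}_r:=(\R\Gamma_{\dr}(X)\wh{\otimes}_{\so_K}\A_{\crr,K})_{\Q_p}/F^r$. By~(\ref{reduction}) the syntomic cohomology is the fiber of $\gamma_{\hk}:{\rm HK}_r\to {\rm DR}_r$, so the long exact sequence reads
\begin{equation*}
H^{r-1}({\rm HK}_r)\xrightarrow{\gamma_{\hk}} H^{r-1}({\rm DR}_r)\to H^r(\R\Gamma_{\synt}(\overline X,\Z_p(r))_{\Q_p})\to H^r({\rm HK}_r)\xrightarrow{\gamma_{\hk}} H^r({\rm DR}_r).
\end{equation*}
Next, I would invoke Corollary~\ref{amtrak} and Proposition~\ref{lyon11} to identify ${\rm DR}_r\simeq \bigoplus_{0\le i\le r-1}(H^0(X,\Omega^i)\wh{\otimes}_{\so_K}\A_{\crr,K})/F^{r-i}[-i]$, which in particular forces $H^r({\rm DR}_r)=0$ and yields $H^{r-1}({\rm DR}_r)\simeq (H^0(X,\Omega^{r-1})\wh{\otimes}_{\so_K}\so_C)_{\Q_p}$ (since $\A_{\crr,K}/F^1\simeq \so_C$). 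Combined with Lemma~\ref{formulas1}, which computes $H^r({\rm HK}_r)\simeq H^0_{\eet}(\overline Y,W\Omega^r_{\log})\otimes\Q_p$ and $H^{r-1}({\rm HK}_r)\simeq (H^0_{\eet}(\overline Y,W\Omega^{r-1}_{\log})\wh{\otimes}_{\Z_p}\A^{\phi=p}_{\crr})_{\Q_p}$, the exact sequence collapses to
\begin{equation*}
(H^0_{\eet}(\overline Y,W\Omega^{r-1}_{\log})\wh{\otimes}_{\Z_p}\A^{\phi=p}_{\crr})_{\Q_p}\xrightarrow{\gamma'_{\hk}}(H^0(X,\Omega^{r-1})\wh{\otimes}_{\so_K}\so_C)_{\Q_p}\to H^r(\R\Gamma_{\synt}(\overline X,\Z_p(r))_{\Q_p})\to H^0_{\eet}(\overline Y,W\Omega^r_{\log})\otimes\Q_p\to 0.
\end{equation*}

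The core of the argument is then to show that $\gamma'_{\hk}$ is surjective, from which the desired isomorphism immediately follows. Here I would trace through the construction of $\gamma_{\hk}$ in Lemma~\ref{zurich11} and the K\"unneth identification~(\ref{sunny}) in the degree corresponding to $H^0_{\eet}(\overline Y,W\Omega^{r-1}_{\log})$: on the Hyodo-Kato factor one obtains, using the integral Hyodo-Kato isomorphism of Remark~\ref{noc1} together with Corollary~\ref{amtrak}, a topological isomorphism $H^0_{\eet}(\overline Y,W\Omega^{r-1}_{\log})\wh{\otimes}_{\Z_p}W(\overline k)\otimes_{W(\overline k)}\so_{\widehat{K^{\nr}}}\simeq H^0(X,\Omega^{r-1})\wh{\otimes}_{\so_K}\so_{\widehat{K^{\nr}}}$; on the period-ring factor, $\gamma'_{\hk}$ becomes, after this identification, $\id\wh{\otimes}\,\theta$ with $\theta:\A^{\phi=p}_{\crr}\to\so_C$. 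Thus $\gamma'_{\hk}=\iota_{\hk}\otimes\theta$. The map $\theta:\A_{\crr}^{\phi=p}\to\so_C$ is surjective after inverting $p$ by the fundamental exact sequence $0\to\Q_p(1)\to\B^{+,\phi=p}_{\crr}\to C\to 0$, and using the basis decomposition~(\ref{bases}) of $H^0_{\eet}(\overline Y,W\Omega^{r-1}_{\log})$ (a countable product of copies of $\Z_p$) the completed tensor product preserves this surjectivity rationally. This gives classicality of $H^r$ and the algebraic isomorphism $H^r(\R\Gamma_{\synt}(\overline X,\Z_p(r))_{\Q_p})\simeq H^0_{\eet}(\overline Y,W\Omega^r_{\log})\otimes\Q_p$.

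The main obstacle is the explicit identification $\gamma'_{\hk}=\iota_{\hk}\otimes\theta$: one must check that the zigzag of K\"unneth and change-of-base maps defining $\gamma_{\hk}$ in~(\ref{reduction}), when restricted to the summand $H^0_{\eet}(\overline Y,W\Omega^{r-1}_{\log})\wh{\otimes}_{\Z_p}\A^{\phi=p}_{\crr}[-(r-1)]$ of ${\rm HK}_r$, factors through the integral Hyodo-Kato isomorphism of Remark~\ref{noc1} on the first factor and through the projection $\theta:\A^{\phi=p}_{\crr}\to\so_C$ on the second; this is where the careful bookkeeping of Proposition~\ref{lyon11} (decomposition of crystalline cohomology into de Rham-Witt pieces), Lemma~\ref{formulas1}, and the definition of $\iota_{\rm BK}^2$ must be combined. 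Finally, the topological statement follows because every identification entering the argument (Lemma~\ref{formulas1}, Corollary~\ref{amtrak}, Remark~\ref{noc1}) is topological, the map $\theta$ is continuous and strict, and the short exact sequence above is a sequence of pro-discrete modules whose last two terms are Fr\'echet (weak duals of Banach by Theorem~\ref{Steinberg}), so the Open Mapping Theorem (Section~\ref{OMT}) promotes the algebraic isomorphism to a topological one.
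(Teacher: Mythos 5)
Your proposal follows essentially the same route as the paper: the long exact sequence of the fiber defining syntomic cohomology, the identifications of the Hyodo--Kato and de Rham terms via Lemma~\ref{formulas1} and Proposition~\ref{lyon11}, and the reduction of surjectivity of $\gamma'_{\hk}$ to the identification $\gamma'_{\hk}=\iota_{\hk}\otimes\theta$ via the integral Hyodo--Kato isomorphism of Remark~\ref{noc1}, with strictness coming from the continuous section of $\theta$. The step you flag as the main obstacle (tracing the zigzag defining $\gamma_{\hk}$ through the K\"unneth identifications) is exactly what the paper carries out with its diagram chase, so your outline is correct and complete in strategy.
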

\begin{proof}
 We note that, by Proposition \ref{lyon11},   there exist natural strict quasi-isomorphisms 
\begin{align*}
\oplus_{i\geq 0}H^0(X,\Omega^i)\wh{\otimes}_{\so_{K}}F^{r-i}\A_{\crr,K}[-i] & \stackrel{\sim}{\to} F^r(\R\Gamma_{\dr}(X)\wh{\otimes}_{\so_K}\A_{\crr,K}),\\
\oplus_{r-1\geq i\geq 0}H^0(X,\Omega^i)\wh{\otimes}_{\so_{K}}(\A_{\crr,K}/F^{r-i})[-i] & \stackrel{\sim}{\to} (\R\Gamma_{\dr}(X)\wh{\otimes}_{\so_K}\A_{\crr,K})/F^r.
\end{align*}
This, combined with
 the strict  quasi-isomorphisms    (\ref{formulas}),  changes the map $\gamma_{\hk}$ from (\ref{reduction}) into 
 $$\gamma_{\hk}^{\prime}: \quad (H^0_{\eet}({Y},W\Omega^{r-1}_{\log})
  \wh{\otimes}_{\Z_p}{\A}^{\phi=p}_{\crr}){\otimes}{\Q_p}\to 
(H^0(X,\Omega^{r-1})\wh{\otimes}_{\so_{K}}\so_C){\otimes}{\Q_p}.
$$
Hence we obtain the long exact sequence
\begin{align*}
(H^0_{\eet}({Y},W\Omega^{r-1}_{\log})
  \wh{\otimes}_{\Z_p}{\A}^{\phi=p}_{\crr}){\otimes}{\Q_p} & \lomapr{\gamma^{\prime}_{\hk}}
(H^0(X,\Omega^{r-1})\wh{\otimes}_{\so_{K}}\so_C){\otimes}{\Q_p}\to\\
 &  \wt{H}^r(\rg_{\synt}(\overline{X},\Z_p(r))_{\Q_p})\to H^0_{\eet}(\overline{Y},W\Omega^r_{\log}){\otimes}{\Q_p}  \to 0
\end{align*}

 It suffices to show  that  $\gamma^{\prime}_{\hk}$ is strictly surjective.   For that we will need to trace carefully its definition. Consider thus the following  commutative diagram
  \begin{equation}
  \label{quiet}
\xymatrix{(H^0_{\eet}({Y},W\Omega^{r-1}_{\log})
  \wh{\otimes}_{\Z_p}{\A}^{\phi=p}_{\crr})_{\Q_p}[-r+1]\ar[d]\ar[r]^-{\gamma^{\prime}_{\hk}} &
(H^0(X,\Omega^{r-1})\wh{\otimes}_{\so_{K}}\so_C)_{\Q_p}[-r+1]\ar[d]\\
 [(\R\Gamma_{\crr}(Y/\so_{F}^0)  \wh{\otimes}_{\so_{F}}\widehat{\A}_{\st})_{\Q_p}]^{N=0,\phi=p^r}\ar[d]^{\can}\ar[r]^-{\gamma_{\hk}}
    &  (\R\Gamma_{\dr}({X})\wh{\otimes}_{\so_K}{\A}_{\crr,K})_{\Q_p}/F^r\ar[d]^{\can}\\
     [\R\Gamma_{\crr}(Y/\so_{F}^0,F)  \wh{\otimes}^R_{{F}}\widehat{\B}^+_{\st}]^{N=0,\phi=p^r}\ar[r]^-{\gamma_{\hk}}
    &  (\R\Gamma_{\dr}({X_K})\wh{\otimes}^R_{K}{\B}^+_{\dr})/F^r.\\
  [\R\Gamma_{\hk}(Y)  \wh{\otimes}^R_{{F}}{\B}^+_{\st}]^{N=0,\phi=p^r}\ar[u]_{\wr}\ar[r]^-{\iota_{\hk}\otimes\iota} &
     (\R\Gamma_{\dr}({\wt{X}_K})\wh{\otimes}^R_{K}{\B}^+_{\dr})/F^r\ar[u]^{\iota^2_{\rig}}_{\wr}
     }
     \end{equation}
    Here, the fact that the bottom square commutes follows from the  proofs of Proposition \ref{fini2} and Theorem \ref{fini3}.
     Taking $\wt{H}^{r-1}$  of the above diagram we obtain the outer diagram in the commutative diagram
     $$
     \xymatrix{
     (H^0_{\eet}({Y},W\Omega^{r-1}_{\log})
  \wh{\otimes}_{\Z_p}{\A}^{\phi=p}_{\crr}){\otimes}{\Q_p}\ar[d]\ar[r]^-{\gamma^{\prime}_{\hk}} &
(H^0(X,\Omega^{r-1})\wh{\otimes}_{\so_{K}}\so_C){\otimes}{\Q_p}\ar@{-->}[d]\ar[dr]\\
     H^{r-1}_{\hk}(Y)\wh{\otimes}_F\B^{+,\phi=p}_{\crr}\ar[r]^-{\iota_{\hk}\otimes\iota} & H^{r-1}_{\dr}(\wt{X}_K)\wh{\otimes}_KC\ar@{^(->}[r] & H^0(\wt{X}_K,\Omega^{r-1})\wh{\otimes}_KC.
     }
$$
Since $d=0$ on $(H^0(X,\Omega^{r-1})\wh{\otimes}_{\so_{K}}\so_C){\otimes}{\Q_p}$, we get the shown factorization of the slanted map and the commutative square. 
This square is seen (by a careful chase of the diagram (\ref{quiet})) to 
be compatible with the projections $\theta: \A^{\phi=p}_{\crr}\to\so_C,$ $\theta: \B^{+,\phi=p}_{\crr}\to C$.  Using them, we obtain the commutative diagram
 $$\xymatrix{
  (H^0_{\eet}({Y},W\Omega^{r-1}_{\log})\wh{\otimes}_{\Z_p}\so_C){\otimes}{\Q_p}\ar[d]^{\can}\ar[r]^-{\gamma_{C}} &
(H^0(X,\Omega^{r-1})\wh{\otimes}_{\so_{K}}\so_C){\otimes}{\Q_p}\ar[d]^{\can}\\
H^{r-1}_{\hk}(Y)\wh{\otimes}_FC\ar[r]^{\iota_{\hk}\otimes\iota} &  H^{r-1}_{\dr}(\wt{X}_K)\wh{\otimes}_KC
}
$$
and reduce our problem to showing that the induced map $\gamma_C$ is surjective. 

 We will, in fact, show that $\gamma_C$ is an isomorphism. Note that the right vertical map in the above diagram is injective: use that the natural map $H^{r-1}_{\dr}({X})\to H^{r-1}_{\dr}({X}_K)$ is an injection (in particular the domain is torsion-fee). 
 By Remark \ref{noc1}, the above diagram yields that $\gamma_C=(\iota_{\hk}\otimes 1){\otimes}{\Q_p}$. Since the integral Hyodo-Kato map is a topological isomorphism
 $\iota_{\hk}: H^0_{\eet}({Y},W\Omega^{r-1}_{\log})\otimes_{\Z_p}\so_K\stackrel{\sim}{\to} H^0(X,\Omega^{r-1})$, so is $\gamma_C$, as wanted.
 
 To check that the map $\gamma^{\prime}_{\hk}$ is strict, consider the  factorization
$$
  \xymatrix{(H^0_{\eet}({Y},W\Omega^{r-1}_{\log})
  \wh{\otimes}_{\Z_p}{\A}^{\phi=p}_{\crr}){\otimes}{\Q_p}\ar@{->>}[d]^-{1\otimes\theta}\ar[rd]^-{\gamma^{\prime}_{\hk}} \\
     (H^0_{\eet}({Y},W\Omega^{r-1}_{\log})
  \wh{\otimes}_{\Z_p}\so_C){\otimes}{\Q_p}\ar[r]^-{\gamma_{C}}_{\sim} &
(H^0(X,\Omega^{r-1})\wh{\otimes}_{\so_{K}}\so_C){\otimes}{\Q_p}
}
$$
As mentioned above, the map $\gamma_{C}$ is a topological isomorphism. Hence  it suffices to show that the vertical map is strict. But this is clear because the surjection $\theta: {\A}^{\phi=p}_{\crr}\to \so_C$ has a $\Z_p$-linear continuous section. 

 \end{proof}
\subsection{Main theorem}
Before proving the main theorem of this section let us state the following  corollary. Recall that  ${X}$, resp. $\wt{X}$,  is  the standard formal, resp. weak formal, model  of the Drinfeld half-space ${\mathbb H}^d_K$, $Y=X_0$, $\overline{Y}=Y_{\overline{k}}$.  
\begin{corollary}
\label{zima5}Let  $r\geq 0$. The cohomology $ \wt{H}^r_{\eet}({X}_C,\Q_p(r))$ is classical and there is a natural topological isomorphism
$$
 H^r_{\eet}({X}_C,\Q_p(r))\simeq H^0_{\eet}(\overline{Y}, W\Omega^r_{\log}){\otimes}\Q_p.
$$
Moreover, there exists an \'etale regulator
$$
r_{\eet}: M(\sh^{r+1},\Z_p) \to H^r_{\eet}(X_C,\Q_p(r))
$$
compatible with the \'etale Chern classes and, via  the above isomorphism, with  the log de Rham-Witt Chern classes.
\end{corollary}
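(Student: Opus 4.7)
The plan is to compose the two main comparison results already established. Proposition~\ref{fini11}, after taking $H^r$ of the truncated Fontaine--Messing period map, yields a natural topological isomorphism
$H^r(\R\Gamma_{\synt}(\overline{X},\Z_p(r))_{\Q_p}) \stackrel{\sim}{\to} H^r_{\eet}(X_C,\Q_p(r))$, while Corollary~\ref{synt11} provides a topological isomorphism of the left-hand side with $H^0_{\eet}(\overline{Y},W\Omega^r_{\log})\otimes\Q_p$. Composing these two gives the desired identification. Classicality of $\wt{H}^r_{\eet}(X_C,\Q_p(r))$ is inherited from the classicality of the syntomic cohomology in degree $r$ established in Corollary~\ref{synt11}, via the strict quasi-isomorphism supplied by the period map.

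For the regulator, recall from diagram~\eqref{zima3} in the proof of Theorem~\ref{Steinberg} the continuous map $r_{\log}: D(\sh^{r+1},\Z_p)\to H^0_{\eet}(Y,W\Omega^r_{\log})$ obtained by integrating the crystalline logarithmic de Rham--Witt Chern classes $c_1^{\log}(\ell_{H_i}/\ell_{H_0})$. Since $H^0_{\eet}(Y,W\Omega^r_{\log})\simeq {\rm Sp}^{\rm cont}_r(\Z_p)^*$ is a compact $\Z_p$-module and $M(\sh^{r+1},\Z_p)$ embeds into $D(\sh^{r+1},\Z_p)$, restriction yields a well-defined continuous map on measures. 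Identifying $H^0_{\eet}(Y,W\Omega^r_{\log})$ with $H^0_{\eet}(\overline{Y},W\Omega^r_{\log})$ via Lemma~\ref{change-of-fields} and composing with the isomorphism above, I would set
$$
r_{\eet}: M(\sh^{r+1},\Z_p) \stackrel{r_{\log}}{\longrightarrow} H^0_{\eet}(\overline{Y},W\Omega^r_{\log}) \longrightarrow H^0_{\eet}(\overline{Y},W\Omega^r_{\log})\otimes\Q_p \stackrel{\sim}{\longrightarrow} H^r_{\eet}(X_C,\Q_p(r)).
$$

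Compatibility of $r_{\eet}$ with the log de Rham--Witt Chern classes is then automatic from the construction, since $r_{\eet}$ factors through $r_{\log}$. Compatibility with the \'etale Chern classes $c_1^{\eet}(\ell_{H_i}/\ell_{H_0})$ comes from two facts: the Fontaine--Messing period map of Proposition~\ref{fini11} is normalized (via the $p^{-r}$ twist) so as to carry the syntomic Chern class of a unit to its \'etale Chern class, and the zigzag identifying the Fontaine--Messing syntomic cohomology with $H^0_{\eet}(\overline{Y},W\Omega^r_{\log})\otimes\Q_p$ in Corollary~\ref{synt11} sends the syntomic Chern class of $\ell_{H_i}/\ell_{H_0}$ to $c_1^{\log}$ of the same unit. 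The main obstacle here is not conceptual but the careful bookkeeping needed to track these Chern class compatibilities through the chain of quasi-isomorphisms $\iota_{\rm BK}^1$, $\iota_{\rm BK}^2$, $h_{\crr}$, $\iota_{\crr}$ used in Lemma~\ref{zurich11} and Corollary~\ref{synt11}; the requisite compatibilities among $c_1^{\crr}$, $c_1^{\synt}$, $c_1^{\log}$, and $c_1^{\eet}$ are reviewed in the Appendix and glue together to yield the compatibility claimed.
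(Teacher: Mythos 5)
Your proposal is correct and follows essentially the same route as the paper: the first claim is obtained by composing the truncated Fontaine--Messing period isomorphism of Proposition~\ref{fini11} with the computation of syntomic cohomology in Corollary~\ref{synt11}, and the regulator $r_{\eet}$ is defined by transporting $r_{\log}$ through the resulting isomorphism, with the Chern class compatibilities delegated to the Appendix exactly as in the paper's proof.
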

\begin{proof}The first claim  follows from the comparison between \'etale and syntomic cohomologies in Proposition \ref{fini11} and the computation of syntomic cohomology in Corollary \ref{synt11}.

  For the second claim,  consider  the diagram
  \begin{equation*}
  \xymatrix@C=60pt{
  M(\sh^{r+1},\Z_p)\ar@{-->}[r]^{r_{\eet}}  \ar[rd]^-{r_{\log}} & H^r_{\eet}(X_C,\Z_p(r))_{\Q_p}\ar[d]^{\wr}\\
  & H^r_{\eet}(\overline{Y},W\Omega\kr_{\log})_{\Q_p}.
  }
  \end{equation*}Define $r_{\eet}$ to make this diagram commute. It is compatible with the \'etale Chern classes because
 we have a compatibility of the \'etale and the de logarithmic Rham-Witt Chern classes.  This compatibility  follows from 
  the compatibility of the following Chern classes:  \'etale,   syntomic, crystalline,   classes  $c_1^{\st}$,  crystalline Hyodo-Kato, and logarithmic de Rham-Witt. These compatibilities are discussed in the Appendix. 
\end{proof}

 We are now ready  to prove the following result. 
\begin{theorem}\label{comp19}
Let  $r\geq 0$. 
\begin{enumerate}
\item 
 There is a natural topological isomorphism of $G\times \sg_K$-modules
$$
 H^r_{\eet}({X}_C,\Q_p(r))\simeq {\rm Sp}^{\cont}_r(\Q_p)^*\simeq  {\rm Sp}_r(\Z_p)^*{\otimes}\Q_p.
 $$
\item There are  natural topological isomorphisms of $G$-modules
\begin{align*}
 & H^r_{\dr}({X})\otimes_{\so_K}K\simeq {\rm Sp}^{\cont}_r(K)^*,\quad H^r_{\crr}(Y/\so^0_F)_{\Q_p}\simeq {\rm Sp}^{\cont}_r(F)^*,\\
& H^i_{\eet}(\overline{Y},W\Omega^r_{\overline{Y},\log})_{\Q_p}\simeq
\begin{cases}
{\rm Sp}^{\cont}_r(\Q_p)^* & \mbox{if } i=0,\\
0  & \mbox{if }i>0.
\end{cases}
\end{align*}
\item The regulator maps
\begin{align*}
 & r_{\eet}: M(\sh^{d+1},\Q_p)\to H^r_{\eet}({X}_C,\Q_p(r)),\quad 
  r_{\dr}: M(\sh^{d+1},K)\to H^r_{\dr}({X}){\otimes}_{\so_K}K,\\
 & r_{\hk}: M(\sh^{d+1},F)\to H^r_{\crr}(Y/\so^0_F)_{\Q_p},\quad 
  r_{\log}: M(\sh^{d+1},\Q_p)\to H^0_{\eet}(\overline{Y},W\Omega^r_{\log}){\otimes}{\Q_p}
\end{align*}
are strict surjective maps (of weak duals of Banach spaces), $G\times\sg_K$-equivariant, compatible with the isomorphisms in (1) and (2), and their kernels are equal to the space of degenerate measures (defined as the intersection of the space of measures with the set of degenerate distributions).
\item The natural map $$ H^r_{\eet}({X}_C,\Q_p(r)) \to H^r_{\proeet}({X}_C,\Q_p(r))$$ 
is an injection and identifies $ H^r_{\eet}({X}_C,\Q_p(r))$ with the $G$-bounded vectors of $ H^r_{\proeet}({X}_C,\Q_p(r))$. 
\end{enumerate}
\end{theorem}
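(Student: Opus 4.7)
The plan is to deduce parts (1) and (2) essentially for free from the ingredients already established, to obtain (3) by chasing compatibilities, and to treat (4) as the only substantive new step.

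Part (1) follows by combining the syntomic-to-\'etale comparison Corollary \ref{zima5} with the Steinberg identification Theorem \ref{Steinberg}(1d); the second isomorphism ${\rm Sp}^{\cont}_r(\Q_p)^*\simeq {\rm Sp}_r(\Z_p)^*{\otimes}\Q_p$ is tautological. Part (2) is similar: the de Rham statement is Theorem \ref{Steinberg}(1c); the crystalline statement follows by chaining Theorem \ref{Steinberg}(1a) with the identifications $H^r_{\crr}(Y/\so_F^0)_{\Q_p}\simeq H^r_{\eet}(Y,W\Omega\kr)_{\Q_p}\simeq H^0_{\eet}(Y,W\Omega^r)_{\Q_p}$ from Proposition \ref{lyon11} and Lemma \ref{degenerate}; and the vanishing of $H^i_{\eet}(\overline Y, W\Omega^r_{\log})_{\Q_p}$ for $i>0$ is obtained by the Mittag-Leffler/truncation arguments of the proof of Lemma \ref{formulas1} applied at the sheaf level, using ordinarity of $\overline Y_s$ (Corollary \ref{gen-ordinary}) and Proposition \ref{rzeszow5} to reduce $W\Omega^r_{\log}$-cohomology to $W\Omega^r$-cohomology.

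For part (3), the four regulators have already been produced as broken arrows in the compatibility diagrams (\ref{zima1}), (\ref{zima2}), (\ref{zima3}), and the analogous square in the proof of Corollary \ref{zima5}. Surjectivity in each case is a formal consequence of parts (1)--(2) together with the surjections $M(\sh^{r+1},\Lambda)\twoheadrightarrow {\rm Sp}^{\cont}_r(\Lambda)^*$. The kernel identification reduces to the Iovita--Spiess theorem: by Theorem \ref{IS} the kernel of $D(\sh^{r+1},K)\twoheadrightarrow {\rm Sp}_r(K)^*$ is $D(\sh^{r+1},K)_{\deg}$ by definition, so intersecting with $M(\sh^{r+1},K)\subset D(\sh^{r+1},K)$ gives exactly the degenerate measures. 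For the other three regulators, the kernel has the same description by compatibility with the inclusions ${\rm Sp}^{\cont}_r(\Lambda_1)^*\hookrightarrow {\rm Sp}^{\cont}_r(\Lambda_2)^*$ for $\Lambda_1\subseteq \Lambda_2$ among $\Q_p, F, K$.

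Part (4) is the only substantive step. Compatibility of $r_{\eet}$ and $r_{\proeet}$ under the natural map $\can\colon H^r_{\eet}(X_C,\Q_p(r))\to H^r_{\proeet}(X_C,\Q_p(r))$ (both being defined by integration of hyperplane symbols) yields a commutative square
$$
\xymatrix{
H^r_{\eet}(X_C,\Q_p(r))\ar[r]^-{\can}\ar[d]_-{\wr} & H^r_{\proeet}(X_C,\Q_p(r))\ar[d]^-{\pi}\\
{\rm Sp}^{\cont}_r(\Q_p)^*\ar@{^(->}[r] & {\rm Sp}_r(\Q_p)^*,
}
$$
where $\pi$ is the surjection from Theorem \ref{PROET}. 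This at once gives injectivity of $\can$, and the containment of its image in the $G$-bounded subspace of $H^r_{\proeet}$ (the image is ${\rm Sp}^{\cont}_r(\Q_p)^*$, which carries the compact $G$-stable lattice ${\rm Sp}^{\cont}_r(\Z_p)^*$). Conversely, for $w\in H^r_{\proeet}(X_C,\Q_p(r))^{G\text{-bd}}$, the image $\pi(w)$ is $G$-bounded in ${\rm Sp}_r(\Q_p)^*$ hence lies in ${\rm Sp}^{\cont}_r(\Q_p)^*$ by Corollary \ref{Uniquelattice1}(b); choosing $v\in \im(\can)$ with $\pi(v)=\pi(w)$ reduces the problem to showing that the $G$-bounded element $w-v\in\ker\pi=\Omega^{r-1}(X_C)/\ker d$ vanishes. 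The main obstacle is therefore the assertion
$$(\Omega^{r-1}(X_C)/\ker d)^{G\text{-bd}}=0,$$
which I would prove in parallel with the de Rham analogue $H^r_{\dr}(X_K)^{G\text{-bd}}\simeq H^r_{\dr}(X)\otimes_{\so_K}K$ sketched in the introduction: since $X$ is covered by $G$-translates of a single quasi-compact open $U\subset X$ with $U_K$ affinoid, a $G$-bounded class in $\Omega^{r-1}(X_C)/\ker d$ has representatives with uniformly bounded sup-norm on the covering $\{gU_C\}_{g\in G}$, and the Fr\'echet structure of $\Omega^{r-1}(X_C)$ as an inverse limit over a Stein covering of $\mathbb{H}^d_K$ (whose norms grow without bound along the covering) forces such a class to be represented by a closed form, hence zero in the quotient.
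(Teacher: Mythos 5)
Your parts (1)--(3) match the paper's route: part (1) is Corollary \ref{zima5} plus Theorem \ref{Steinberg}(1d) (with Lemma \ref{change-of-fields} implicitly used to pass between $Y$ and $\overline{Y}$), part (2) is Theorem \ref{Steinberg} with Proposition \ref{lyon11}, and your treatment of part (3) --- surjectivity formal from the Steinberg identifications and surjectivity of $M(\mathcal{H}^{r+1},\Lambda)\to {\rm Sp}_r^{\cont}(\Lambda)^*$, kernels identified with degenerate measures via Theorem \ref{IS}, plus compatibility diagrams --- is a reasonable elaboration of the paper's terser ``follows from the construction and the Appendix''.

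The gap is in part (4), at the step you yourself flag as the ``main obstacle'': showing that the $G$-bounded vectors of $\ker\pi = \Omega^{r-1}(X_C)/\ker d \cong d\Omega^{r-1}(X_C)$ vanish. Your argument --- that a $G$-bounded class has representatives with uniformly bounded sup-norm on $\{gU_C\}$, and that the Fr\'echet structure of $\Omega^{r-1}(X_C)$ ``forces such a class to be represented by a closed form'' --- is not a proof. Nothing about the Fr\'echet topology alone forces a bounded form to be closed; that is precisely the content of Grosse-Kl\"onne's theorem (Proposition \ref{GK01}(2)) that $d=0$ on $H^0(X,\Omega^j_X)$, a deep combinatorial/geometric fact about the standard model, and you never invoke it. Moreover, even granting the identification $\Omega^{j}(X_C)^{G\text{-bd}}\simeq \Omega^{j}(X)\wh{\otimes}_{\so_K}K\wh{\otimes}_KC$ and the vanishing of $d$ there, one still needs to know that a $G$-bounded class in the quotient $\Omega^{r-1}/\ker d$ comes from a $G$-bounded element in the correct sense; the paper sidesteps the lifting issue by working with the element $d\omega\in\Omega^r(X_C)$ directly and then applying the injectivity of $H^{j}_{\dr}(X)\otimes_{\so_K}K\to H^{j}_{\dr}(X_K)$ from Proposition \ref{injectdR}, which in turn rests on the integral Hyodo--Kato comparison and the embedding of $H^0_{\eet}(Y,W\Omega^j)$ into a product over irreducible components. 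Both of these inputs --- Grosse-Kl\"onne's $d=0$ result and Proposition \ref{injectdR} --- are essential and absent from your sketch, so the final step of part (4) is not established.
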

\begin{proof}  
Point (2) follows from Theorem \ref{Steinberg} and Proposition \ref{lyon11}. Point (1) follows from Corollary \ref{zima5}, Lemma \ref{change-of-fields}, and the computation of the logarithmic de Rham-Witt cohomology in Theorem \ref{Steinberg}.

  To prove point (4) consider the commutative diagram, where the bottom sequence is strictly exact:
$$
\xymatrix{
 & & H^r_{\eet}({X}_C,\Q_p(r))\ar@{^(->}[d]^{\epsilon} \ar[r]^{\sim} &   {\rm Sp}^{\cont}_r(\Q_p)^*\ar@{^(->}[d]^{\can}\\
0\ar[r] & d\Omega^{r-1}(X_C)\ar[r] &   H^r_{\proeet}({X}_C,\Q_p(r))\ar[r] & {\rm Sp}_r(\Q_p)^*\ar[r] & 0
}
$$
Commutativity can be checked easily by looking at symbols. 
The change of topology map $\epsilon$ has image in $H^r_{\proeet}({X}_C,\Q_p(r))^{\text{$G$-bd}}$ (since  ${\rm Sp}^{\cont}_r(\Z_p)^*$ is compact).  We need to show that
this image is the whole of $H^r_{\proeet}({X}_C,\Q_p(r))^{\text{$G$-bd}}$. For that, since $ ({\rm Sp}_r(\Q_p)^*)^{\text{$G$-bd}}\simeq  {\rm Sp}^{\cont}_r(\Q_p)^*$, it suffices to show that 
$(d\Omega^{r-1}(X_C))^{\text{$G$-bd}}= 0$ or, equivalently,  that the map $(\Omega^{r-1}(X_C)^{d=0})^{\text{$G$-bd}}\to H^{r-1}_{\dr}(X_C)$ is an injection. 
It is enough to show  that the map $\Omega^{r-1}(X_C)^{\text{$G$-bd}}\to H^{r-1}_{\dr}(X_C)$ is injective
or that, by an analogous argument to the one we used in the proof of Proposition \ref{lyon11},  so is the  map $\Omega^{r-1}(X_K)^{\text{$G$-bd}}\to H^{r-1}_{\dr}(X_K)$.

 Now,  since, the  map 
$\Omega^{r-1}(X)\wh{\otimes}_{\so_K}K\to \Omega^{r-1}(X_K)^{\text{$G$-bd}} $ is an isomorphism (use the fact that $X$ can be covered by  $G$-translates of an open subscheme $U$ such that $U_K$ is an affinoid), it suffices to show  that the  map 
$\Omega^{r-1}(X)\wh{\otimes}_{\so_K}K\to H^{r-1}_{\dr}(X_K)$ is an injection. But this we have done in Proposition \ref{injectdR}.

Point (3) follows  from the construction and the Appendix. More precisely, the fact that the regulator maps $r_{\dr}, r_{\hk}, r_{\log}$ are compatible with the isomorphisms in point (2) follow from diagrams
  \ref{zima1}, \ref{zima2}, \ref{zima3}, respectively. Concerning the \'etale regulator, we claim that we have a commutative diagram
  \begin{equation*}
  \xymatrix@C=60pt{
  M(\sh^{r+1},\Z_p)\ar[r]^{r_{\eet}}  \ar[rd]^-{r_{\log}}\ar@{->>}[d]^-{\can} & H^r_{\eet}(X_C,\Z_p(r))_{\Q_p}\ar[d]^{\wr}\\
  {\rm Sp}_r(\Z_p)^* \ar@{-->}[r]^-{r_{\log}} \ar@{-->}[ru]_-{r_{\eet}}& H^r_{\eet}(\overline{Y},W\Omega\kr_{\log})_{\Q_p},
  }
  \end{equation*}
  where the vertical isomorphism is the one from Corollary \ref{synt11}. For the solid arrows this was shown in Corollary \ref{zima5}.  The bottom dashed regulator $r_{\log}$ was defined in diagram \ref{zima3}; it is a rational isomorphism. We define the dashed regulator $r_{\eet}$ to make this diagram commute; it is a rational isomorphism.

  Finally, the fact that the regulators in point (3) are strict follows
 from the fact that so are the corresponding maps $M(\sh^{r+1},\Z_p) \to {\rm Sp}_r(\Z_p)^*$, etc,  as surjective continuous maps of profinite modules, and tensoring with $\Q_p$ is right exact.
\end{proof}
   \appendix
   \section{Symbols}
   \label{symbols}
   We gather in this appendix a few basic facts concerning symbol maps and their compatibilities that we need in this paper. 
   We use the notation from Chapter \ref{etale}.
    \subsection{Hyodo-Kato isomorphisms}\label{HK-Lyon}
    Let $X$ be a semistable Stein weak formal scheme over $\so_K$.  In the first part of this paper we have used  the Hyodo-Kato isomorphism
  as   defined by Grosse-Kl\"onne in \cite{GK2}, $ \iota_{\hk}: H^r_{\hk}(X_0)\otimes_FK\stackrel{\sim}{\to} H^r_{\dr}(X_K)$. 
  But one can  use the  original Hyodo-Kato isomorphism defined for quasi-compact formal schemes in \cite{HK}. Doing that we obtain two 
    Hyodo-Kato isomorphisms. One that, modulo canonical identifications, turns out to be  the same as the one of Grosse-Kl\"onne, the other identifies bounded Hyodo-Kato and de Rham cohomologies.
    \begin{proposition}
    \label{Fontaine-Messing}
We have  compatible Hyodo-Kato (topological) isomorphisms 
    \begin{align}
    \label{rain12}
   \quad \iota_{\hk}: H^r_{\crr}(X_0/\so^0_F,F){\otimes}_FK\stackrel{\sim}{\to} H^r_{\dr}(\wh{X}_K),\quad
    \iota_{\hk}: (H^r_{\crr}(X_0/\so^0_F)\otimes_{\so_F}\so_K){\otimes}{K}\stackrel{\sim}{\to} H^r_{\dr}(\wh{X}){\otimes}{K}.           
 \end{align}
\end{proposition}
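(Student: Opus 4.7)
The plan is to bootstrap both isomorphisms from the classical Hyodo-Kato construction \cite{HK} for quasi-compact semistable formal schemes by applying it piece-by-piece along a Stein covering and then patching, and to obtain the compatibility with Grosse-Kl\"onne's overconvergent Hyodo-Kato map by factoring through convergent cohomology.

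First I would fix a Stein covering $\{U_s\}_{s\in\N}$ of $X_K$ arising from the closed subschemes $Y_s\subset X_0$ of Section \ref{assumptions}, so that each $\wh{U}_s$ is a quasi-compact semistable formal scheme with proper special fiber. For each $s$ the original  Hyodo-Kato theorem \cite{HK} produces functorial isomorphisms
$$
H^r_{\crr}(Y_{s}/\so_F^0)\otimes_{\so_F}\so_K{\otimes}K \stackrel{\sim}{\to} H^r_{\dr}(\wh{U}_s){\otimes}K,
\qquad
H^r_{\crr}(Y_{s}/\so_F^0,F)\otimes_F K \stackrel{\sim}{\to} H^r_{\dr}(\wh{U}_{s,K}),
$$
and the two are compatible via the canonical maps (the bounded one being the restriction of the rational one to the image of a finite-type $\so_K$-lattice). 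I would then pass to the inverse limit in $s$: the projective systems $\{H^r_{\crr}(Y_s/\so_F^0,F)\}_s$ and $\{H^r_{\dr}(\wh{U}_{s,K})\}_s$ are strict (their transition maps are continuous between finite-dimensional $F$-, resp.\ $K$-Banach spaces with canonical topology), so taking $\varprojlim$ and then the (exact) tensor product with the finite extension $K/F$ gives the first isomorphism in (\ref{rain12}) at the level of complete locally convex vector spaces. The integral version (second isomorphism of (\ref{rain12})) is obtained in the same way, but taken in $\Ind(PD_{\so_K})$ and then tensored with $K$ via the functor $(-){\otimes}K$ of Section \ref{pro-discrete}; the required exactness of $(-){\otimes}K$ along the relevant projective systems uses Proposition \ref{acyclic-integral} applied to the torsion-free pro-discrete lattices $\varprojlim_s H^r_{\crr}(Y_s/\so_F^0)/\mathrm{tors}\otimes_{\so_F}\so_K$.

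Next, I would verify compatibility with Grosse-Kl\"onne's overconvergent Hyodo-Kato map. For each $s$ the change-of-convergence morphism $\R\Gamma_{\rig}(Y_s/\so_F^0)\to\R\Gamma_{\crr}(Y_s/\so_F^0,F)$ factors through convergent cohomology as in (\ref{morph02}), and by Proposition \ref{rig-conv} (applied to each $Y_s$, whose components are proper) this factorization is a quasi-isomorphism. The Hyodo-Kato map is built on both sides from the same zigzag involving the deformation spaces $(P^{J'}_M,V^{J'}_M)$ of Proposition \ref{Lyonnn}, which makes sense equally well in the weakly complete and $p$-adically complete settings; hence the square
$$
\xymatrix{
H^r_{\rig}(Y_s/\so_F^0)\otimes_F K \ar[r]^-{\iota_{\hk}^{GK}}\ar[d] & H^r_{\dr}(U_{s,K})\ar[d]\\
H^r_{\crr}(Y_s/\so_F^0,F)\otimes_F K\ar[r]^-{\iota_{\hk}} & H^r_{\dr}(\wh{U}_{s,K})
}
$$
commutes. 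Passing to the limit in $s$, using the identification  $H^r_{\dr}(X_K)\simeq H^r_{\dr}(\wh{X}_K)$ for partially proper dagger spaces (Theorem 2.26 of \cite{GK0}) established in the proof of Theorem \ref{fini3}, gives the desired compatibility of the two rational isomorphisms; the integral one is then compatible by construction since it is defined as the restriction of the rational one to the $G$-invariant $\so_F$-lattice $\varprojlim_s H^r_{\crr}(Y_s/\so_F^0)/\mathrm{tors}$.

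The main obstacle is the topological bookkeeping at the last step: for the second isomorphism in (\ref{rain12}) one must ensure that the inverse limit $H^r_{\crr}(X_0/\so_F^0)\otimes_{\so_F}\so_K$ behaves well enough under $(-){\otimes}K$ for the level-wise Hyodo-Kato isomorphisms to assemble into a topological isomorphism rather than merely an abstract one. This reduces, by Proposition \ref{acyclic-integral}, to checking that the relevant transition maps are surjective modulo torsion, which in turn follows from the fact that each $Y_s$ is ideally log-smooth and proper and that Hyodo-Kato commutes with restriction along the open immersions $Y_s\hookrightarrow Y_{s+1}$ after inverting $p$; once this is in place the commutativity with the canonical maps between the bounded and unbounded versions, and with Grosse-Kl\"onne's map, is a straightforward chase following the template of the proof of Theorem \ref{fini3}.
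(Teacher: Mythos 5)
Your overall strategy---apply the classical Hyodo--Kato isomorphism of \cite{HK} on each quasi-compact piece $\wh U_s$ of a Stein covering, pass to inverse limits with topological care, and check compatibility with Grosse-Kl\"onne's overconvergent map by factoring through convergent cohomology---matches the paper's approach (proof of Proposition \ref{Fontaine-Messing} plus Corollary \ref{almost-there}). However, there is a real gap in your compatibility step, and a smaller one in the topological bookkeeping.

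The gap: you justify the commutativity of the square
$$
\xymatrix{
H^r_{\rig}(Y_s/\so_F^0)\otimes_F K \ar[r]\ar[d] & H^r_{\dr}(U_{s,K})\ar[d]\\
H^r_{\crr}(Y_s/\so_F^0,F)\otimes_F K\ar[r] & H^r_{\dr}(\wh{U}_{s,K})
}
$$
by asserting that ``the Hyodo--Kato map is built on both sides from the same zigzag involving the deformation spaces $(P^{J'}_M,V^{J'}_M)$ of Proposition \ref{Lyonnn}.'' This is false. Only Grosse-Kl\"onne's overconvergent construction uses the projective compactification $(P_{\jcdot},V_{\jcdot})$ and the zigzag through $\R\Gamma_{\rig}(\overline{X}_0/r^{\dagger})$; the classical formal Hyodo--Kato map of \cite{HK} is instead built from a $p^{N_\iota}$-\emph{section} $\iota_\varpi$ of the projection $p_0:\rg_{\crr}(Y/r^{\rm PD}_\varpi)\to\rg_{\crr}(Y_0/\so_F^0)$, produced by a Frobenius argument, and has nothing to do with the projective compactification. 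These are genuinely different constructions, and the claimed commutativity therefore does not follow from ``the same zigzag'' --- that reasoning would stall. What is actually needed is a diagram linking $\iota_\varpi$, the maps $p_0,p_\varpi$ on $r^{\rm PD}_\varpi$-crystalline cohomology, the $r^\dagger$-rigid cohomology, and the projective compactification, held together by the change-of-convergence maps (this is what the paper's diagram (\ref{symbols-diag}) in Corollary \ref{almost-there} does). You correctly identify the change-of-convergence factorization (\ref{morph02}) and Proposition \ref{rig-conv} as inputs, but the step where the two kinds of Hyodo--Kato maps are reconciled is missing from your proposal.

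Two smaller issues. First, you pass from the rational to the integral statement by ``restriction to a finite-type $\so_K$-lattice,'' but $\iota_\varpi$ is only a $p^{N_\iota}$-section, so what one actually has level-by-level is a $p^N$-isomorphism for a uniform constant $N=N(d)$; the paper therefore builds the twisted map $\wt\iota_{\hk}$ first (which respects integrality up to $p^N$), takes $\varprojlim_s$, and only then normalizes by $p^{-N_\iota}$. Your write-up does not track this, and without a uniform constant the integral statement could fail to glue. Second, the inclusions $Y_s\hookrightarrow Y_{s+1}$ are \emph{closed} immersions of subschemes of the special fiber, not open immersions; and the mechanism by which the de Rham pro-system $\{H^{r}_{\dr}(\wh X|_{U_s})\}_s$ becomes Mittag--Leffler after inverting $p$ is precisely the level-wise $p^N$-isomorphism $\wt\iota_{\hk}$ transferring the visibly Mittag--Leffler property of $\{H^{r}_{\crr}(Y_s/\so_F^0)\}_s$ across --- a point your proposal gestures at but does not make explicit.
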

\begin{proof}
As mentioned above they are induced by  the original Hyodo-Kato isomorphism \cite{HK}. We will  describe them in more detail.

   To start, assume that we have a quasi-compact semistable formal scheme $Y$ over $\so_K$. We will work in the classical derived category. 
   Recall  that  the Frobenius
   $$
   r^{\rm PD}_{\varpi,n,\phi}\otimes^L_{r^{\rm PD}_{\varpi,n}}\R\Gamma_{\crr}(Y_1/r^{\rm PD}_{\varpi,n})\to \rg_{\crr}(Y_1/r^{\rm PD}_{\varpi,n}),\quad 
   \so_{F,n,\phi}\otimes^L_{\so_{F,n}}\rg_{\crr}(Y_0/\so_{F,n})\to \rg_{\crr}(Y_0/\so_{F,n}^0)
   $$
   has a $p^N$-inverse, for  $N=N(d)$, $d=\dim Y_0$. This is proved in \cite[2.24]{HK}.
   Recall also that   the projection $p_0:  \rg_{\crr}(Y/r^{\rm PD}_{\varpi,n})\to \rg_{\crr}(Y_0/\so_{F,n}^0)$
has a functorial (for maps between formal schemes and a change of $n$) and Frobenius-equivariant $p^{N_{\iota}}$-section
   $$\iota_{\varpi}: 
   \rg_{\crr}(Y_0/\so_{F,n}^0)\to  \rg_{\crr}(Y/r^{\rm PD}_{\varpi,n}),
    $$
 i.e., $p_0\iota_{\varpi}=p^{N_{\iota}}$, $N_{\iota}=N(d)$. This  follows easily from the proof of Proposition 4.13 in \cite{HK}; 
 the key point being that the Frobenius on $\rg_{\crr}(Y_0/\so_{F,n}^0)$ is close to a quasi-isomorphism and the Frobenius on the ${\rm PD}$-ideal of $r^{\rm PD}_{\varpi}$ is close to zero. Moreover, the resulting map 
 $$\iota_{\varpi}: 
   \rg_{\crr}(Y_0/\so_{F,n}^0)\otimes^L_{\so_{F,n}}r^{\rm PD}_{\varpi,n} \to  \rg_{\crr}(Y/r^{\rm PD}_{\varpi,n})
    $$
 is a $p^N$-quasi-isomorphism, $N=N(d)$,  \cite[Lemma 5.2]{HK} and so is the composite
 $$
 p_{\varpi} \iota_{\varpi}: \rg_{\crr}(Y_0/\so_{F,n}^0)\otimes^L_{\so_{F,n}}\so_{K,n} \to  \rg_{\crr}(Y/\so^{\times}_{K,n}).
 $$
 Taking $\holim_n$ of the last map we obtain a map 
 $$
  p_{\varpi} \iota_{\varpi}: \rg_{\crr}(Y_0/\so_{F}^0)\otimes^L_{\so_{F}}\so_{K} \to  \rg_{\crr}(Y/\so^{\times}_{K})
 $$
 that  is a $p^N$-quasi-isomorphism, $N=N(d)$.
 The twisted Hyodo-Kato map is defined as 
 $\wt{\iota}_{\hk}=\rho^{-1} p_{\varpi}\iota_{\varpi}$.  
 We have the  commutative diagram
 $$
 \xymatrix{\rg_{\crr}(Y/r^{\rm PD}_{\varpi})\ar[d]^{p_0}\ar[r]^-{p_{\varpi}} & \rg_{\crr}(Y/\so_K^{\times}) & \rg_{\dr}(Y)\ar[l]_-{\rho}^-{\sim}\\
 \rg_{\crr}(Y_0/\so_{F}^0)\ar@/^40pt/[u]^-{\iota_{\varpi}}\ar@/_10pt/[rru]^{\wt{\iota}_{\hk}}
 }
 $$
 We note that the map 
 $\wt{\iota}_{\hk}:\rg_{\crr}(Y_0/\so_{F}^0)\otimes_{\so_F}\so_K\to \rg_{\dr}(Y)$ 
 is a  $p^N$-quasi-isomorphism, $N=N(d)$. Modulo $p^N, N=N(d)$, it is independent of the choices made. The Hyodo-Kato map 
 $$
 {\iota}_{\hk}:  H^i_{\crr}(Y_0/\so_{F}^0){\otimes}F\to H^i_{\dr}(Y){\otimes}K
 $$
 is defined as ${\iota}_{\hk}:=p^{-N_{\iota}}\wt{\iota}_{\hk}$.  We have obtained the Hyodo-Kato (topological) isomorphism
 $$
  {\iota}_{\hk}:  (H^i_{\crr}(Y_0/\so_{F}^0)\otimes_{\so_F}\so_K){\otimes}K\stackrel{\sim}{\to} H^i_{\dr}(Y){\otimes}K.
 $$

   For a Stein semistable weak formal scheme ${X}$, we choose a Stein covering $\{U_s\}$, $s\in\N$, and define the compatible Hyodo-Kato maps
   \begin{align}
   \label{rain13}
  &  \iota_{\hk}:=\varprojlim_s\iota_{\hk,U_s}: H^i_{\crr}(X_0/\so_{F}^0,F)\to H^i_{\dr}(\wh{X}_K),\\
    & \iota_{\hk}:=p^{-N_{\iota}}(\varprojlim_s\wt{\iota}_{\hk,U_s}{\otimes}{\Q_p}): H^i_{\crr}(X_0/\so_{F}^0){\otimes}{\Q_p}\to H^i_{\dr}(\wh{X}){\otimes}{\Q_p}.\notag
   \end{align}
   We used here the fact that (topologically)
   \begin{align}
   \label{facts1}
    & H^i_{\crr}(X_0/\so_{F}^0,F)\simeq \varprojlim_sH^{i}_{\crr}(U_{s}/\so_{F}^0,F),\quad  H^i_{\crr}(X_0/\so_{F}^0)\simeq 
    \varprojlim_sH^{i}_{\crr}(U_{s}/\so_{F}^0), \\
    &  H^i_{\dr}(\wh{X}_K)\simeq \varprojlim_sH^{i}_{\dr}(]U_s[_{\wh{X}_K}), \quad
  H^i_{\dr}(\wh{X})_{\Q_p}\simeq  (\varprojlim_sH^{i}_{\dr}(\wh{X}|_{U_s}))_{\Q_p}.\notag
   \end{align}
   The third isomorphism, since $X_K$ is Stein,  is standard. The first two follow from the vanishing of the derived functors
   $$
   H^1\holim_sH^{i-1}_{\crr}(U_{s}/\so_{F}^0,F),\quad  H^1\holim_sH^{i-1}_{\crr}(U_{s}/\so_{F}^0),
   $$ which, in turn, follow
   from the fact that the pro-systems ($s\in\N$)
   \begin{align*}
     \{H^{i-1}_{\crr}(U_{s}/\so_{F}^0,F)\}=\{H^{i-1}_{\crr}(Y_{s}/\so_{F}^0,F)\},\quad  \{H^{i-1}_{\crr}(U_{s}/\so_{F}^0)\}=\{H^{i-1}_{\crr}(Y_{s}/\so_{F}^0)\}
     \end{align*}
         are Mittag-Leffler. 
       To show the last isomorphism in (\ref{facts1}), it suffices to show the
       vanishining of $ (H^1\holim_sH^{i-1}_{\dr}(\wh{X}|_{U_s}))_{\Q_p}$. For that, we will  use the fact that the twisted Hyodo-Kato map
       $$
     \wt{\iota}_{\hk}:   H^{i-1}_{\crr}(U_{s}/\so_{F}^{0})\otimes_{\so_F}\so_K\to H^{i-1}_{\dr}(\wh{X}|_{U_s})
       $$
       is a $p^N$-isomorphism. Which implies that so is the induced  map 
      $$
    H^1\holim_s(H^{i-1}_{\crr}(U_s/\so_F^0)\otimes_{\so_F}\so_K)\to 
   H^1\holim_sH^{i-1}_{\dr}(\wh{X}|_{U_s}).
   $$ But, since  the pro-system 
   $  \{H^{i-1}_{\crr}(U_{s}/\so_{F}^0)\}=\{H^{i-1}_{\crr}(Y_{s}/\so_{F}^0)\} $
   is Mittag-Leffler, the first derived limit made rational is trivial, as wanted.  
   
  Now, by definition, the Hyodo-Kato maps from (\ref{rain13}) are topological isomorphisms.
 \end{proof}
 \begin{corollary}
  \label{almost-there}
 The Hyodo-Kato isomorphisms from Proposition \ref{Fontaine-Messing} are compatible with the Grosse-Kl\"onne Hyodo-Kato isomorphism. That is, we have a commutative diagram
 $$
 \xymatrix{
 H^i_{\rig}(X_0/\so_F^0)  \ar[d]^{\wr}\ar[r]^-{\iota_{\hk}}  &  H^i_{\dr}(X_K)\ar[d]^{\wr}\\
 H^i_{\crr}(X_0/\so_F^0,F)\ar[r]^{\iota_{\hk}} & H^i_{\dr}(\wh{X}_K)\\
 H^i_{\crr}(X_0/\so_F^0)_{\Q_p}\ar[u]\ar[r]^{\iota_{\hk}}  & H^i_{\dr}(\wh{X})_{\Q_p}\ar[u]
 }
 $$
 \end{corollary}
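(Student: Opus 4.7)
The plan is to reduce both squares to the quasi-compact (in fact affine) situation via the Stein covering $\{U_s\}_{s\in\mathbb{N}}$ and then exploit the fact that, in all three rows, the Hyodo-Kato map is constructed from the same underlying datum: a section of the projection $p_0$ from cohomology over the deformation space (either $r^{\rm PD}_\varpi$ or $r^\dagger$, or its projective compactification $(P_\jcdot,V_\jcdot)$) down to cohomology over $\so_F^0$. All the vertical arrows in the diagram are change-of-topology/change-of-convergence maps. What saves us is that these are induced, at the level of local embedding systems, by the natural inclusions $\so_F\{T\}\hookleftarrow\so_F[T]^\dagger\hookrightarrow r^{\rm PD}_{\varpi}$ (after inverting $p$ and passing through convergent cohomology), and all the constructions are functorial in the embedding data.

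For the bottom square, I would argue as follows. Working on a quasi-compact affine $U=U_s$ with a fixed log-smooth lifting $\sy/\so_F^0$, the integral map $\wt\iota_{\hk}$ is $p_\varpi\iota_\varpi$, while the middle map $\iota_{\hk}:H^i_{\crr}(U_0/\so_F^0,F)\to H^i_{\dr}(\wh U_K)$ can be computed using the same section $\iota_\varpi$ after inverting $p$. The right-hand vertical arrow is induced by the canonical map $\R\Gamma_{\dr}(\wh U)\otimes_{\so_K}\Q_p\to \R\Gamma_{\dr}(\wh U_K)$ (which is a strict quasi-isomorphism because $\wh U_K$ is an affinoid), and the left-hand vertical arrow is just the change of coefficients. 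Since $\iota_\varpi$ is functorial and Frobenius-equivariant and since any two such $p^N$-sections differ, modulo $p^N$, by a Frobenius-equivariant map into the PD-ideal (on which Frobenius is topologically nilpotent), the compatibility holds. Then pass to the inverse limit over the Stein covering using the isomorphisms~(\ref{facts1}).

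For the top square, the point is to compare the Grosse-Kl\"onne construction (which, after extracting $\iota_{\hk}$ from Proposition~\ref{Lyonnn}, factors through rigid cohomology over $r^\dagger$ and the projective compactification $(P_\jcdot,V_\jcdot)$) with the Fontaine-Messing construction (which factors through crystalline cohomology over $r^{\rm PD}_\varpi$). I would factor the vertical arrow $H^i_{\rig}(X_0/\so_F^0)\xrightarrow{\sim}H^i_{\crr}(X_0/\so_F^0,F)$ through convergent cohomology as in~(\ref{morph02}), and use the commutative diagram already assembled in the proof of Theorem~\ref{fini3} relating the maps $f_2\colon\R\Gamma_{\rig}(X_0/r^\dagger)\to\R\Gamma_{\crr}(X/r^{\rm PD}_\varpi,\Q_p)$ and the projections $p_0,p_\varpi$. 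The commutativity of the top square then comes from the commutative triangle on the right edge of the big diagram in the proof of Theorem~\ref{fini3}: both Hyodo-Kato maps are the outer edges of a single commutative pentagon whose inner vertex is $\R\Gamma_{\rig}(\overline{X}_0/r^\dagger)$ (via the map $f_1$), and the de Rham sides match by the canonical identification $\R\Gamma_{\dr}(X_K)\simeq\R\Gamma_{\dr}(\wh X_K)$ from~\cite[Theorem 2.26]{GK0}.

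The main obstacle is bookkeeping, not mathematics: one must carefully check that the various sections $\iota_\varpi$ used in the different rows can be chosen compatibly (or at least differ by Frobenius-equivariant maps killed after projecting to the $\phi$-semisimple quotient), and that the change-of-topology maps intertwine the two constructions of $\iota_{\hk}$ at the level of embedding systems and not merely up to a universal $p^N$. Once the compatibility holds on each quasi-compact $U_s$ up to the natural $p^N$-factor, the rational maps glue via $\holim$ to yield the commutativity on $X$, using the Mittag-Leffler property of the pro-systems $\{H^j_{\crr}(Y_s/\so_F^0)\}$ and $\{H^j_{\rig}(Y_s/\so_F^0)\}$ already invoked in the proof of Proposition~\ref{Fontaine-Messing}.
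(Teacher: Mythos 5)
Your proposal is correct and follows essentially the same route as the paper: the paper reduces to the quasi-compact case and verifies commutativity via exactly the diagram you describe, with $H^i_{\rig}(\overline{X}_0/r^{\dagger})$ and $H^i_{\rig}(X_0/r^{\dagger})$ as intermediate vertices connected by $f_1$ and $f_2$ to $H^i_{\crr}(X/r^{\rm PD}_{\varpi},\Q_p)$, all compatibly with the projections $p_0$, $p_{\varpi}$ and the section $p^{-N_{\iota}}\iota_{\varpi}$. The points you flag as "bookkeeping" (compatibility of the sections up to Frobenius-equivariant ambiguity, and the identification $H^i_{\dr}(X_K)\simeq H^i_{\dr}(\wh X_K)$) are precisely what the paper relies on implicitly.
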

 \begin{proof}We may argue locally and assume that $X$ is quasi-compact. Then, 
 this can be checked  by  the commutative diagram (we use   the notation from Section \ref{compmorph})
     \begin{equation}
     \label{symbols-diag}
\xymatrix@C=30pt{ H^i_{\rig}(X_0/\so_F^0)    \ar[ddd]\ar[rr]^-{\iota_{\hk}}   &  &  H^i_{\dr}(X_K)\ar[ddd]\\
 &     
 H^i_{\rig}(\overline{X}_0/r^{\dagger})\ar[lu]_{p_0}^{\sim}\ar[d]^{f_1}     \ar[ru]^-{p_{\varpi}}     \\
& 
 H^i_{\rig}({X}_0/r^{\dagger}) \ar[luu]^{p_0}\ar[d]^{f_2}  \ar[ruu]_-{p_{\varpi}}  \\
H^i_{\crr}(X_0/\so_F^0,F)\ar@/^20pt/[r]^{p^{-N_{\iota}}\iota_{\varpi}}\ar@/_20pt/[rr]^{\iota_{\hk}}  &  H^i_{\crr}(X/r^{\rm PD}_{\varpi},\Q_p)\ar[l]^{p_0} \ar[r]^-{p_{\varpi}}& H^i_{\dr}(\wh{X}_K)\\
H^i_{\crr}(X_0/\so_F^0)_{\Q_p}\ar[u]\ar@/^20pt/[r]_{p^{-N_{\iota}}\iota_{\varpi}}\ar@/_20pt/[rr]^{\iota_{\hk}}  &  H^i_{\crr}(X/r^{\rm PD}_{\varpi})_{\Q_p}\ar[u]\ar[l]^{p_0} \ar[r]^-{p_{\varpi}}& H^i_{\dr}(\wh{X})_{\Q_p}\ar[u]
}
\end{equation}
\end{proof}

   \subsection{Definition of symbols}
   We define now various symbol maps and show that they are compatible.
   
   Let $X$ be a semistable formal scheme over $\so_K$. Let $M$ be the sheaf of monoids on $X$ defining the log-structure, $M^{\gp}$ its group completion. This log-structure is canonical, in the terminology of Berkovich \cite[2.3]{BerL}, i.e., $M(U)=\{x\in\so_X(U)| x_K\in\so^{*}_{X_K}(U_K)\}$. This is shown in \cite[Theorem 2.3.1]{BerL}, \cite[Theorem 5.3]{BerN} and applies also to semistable formal schemes with self-intersections. It follows that  $M^{\gp}(U)=\so(U_K)^*$. 
      \subsubsection{Differential symbols}
      \label{diffsymbols}

     We have the  crystalline first Chern class maps of complexes of sheaves on $X_{\eet}$ 
     \cite[2.2.3]{Ts}
\begin{align*}
 c_1^{\st}:   M^{\gp}\to M^{\gp}_n\to R\varepsilon_*\sj^{[1]}_{X_n/r^{\rm PD}_{\varpi,n}}[1],\quad
  c_1^{\hk}:  M^{\gp}\to M^{\gp}_0\to R\varepsilon_*\sj^{[1]}_{X_0/\so_{F,n}^0}[1].
  \end{align*}
Here, the map $\varepsilon$ is the projection from the corresponding crystalline-\'etale  site to the \'etale site. These maps  are clearly compatible. 
We get the induced  functorial maps
\begin{align*}
   \quad c_1^{\st}:H^0(X_K,\so_{X_K}^*)  \to \R\Gamma_{\crr}(X/r^{\rm PD}_{\varpi},
 \sj^{[1]})[1],\quad
  c_1^{\hk}:H^0(X_K,\so_{X_K}^*) \to \R\Gamma_{\crr}(X_0/\so_{F}^0,\sj^{[1]})[1].
\end{align*}
The Hyodo-Kato classes above can be also defined using the de Rham-Witt complexes. That is, one can define (compatible) Hyodo-Kato Chern class maps \cite[2.1]{des}
   $$
     c_1^{\log}:H^0(X_K,\so_{X_K}^*) \to \R\Gamma_{\eet}(X_{0},W\Omega\kr_{X_0,\log})[1],\quad 
 c_1^{\hk}:H^0(X_K,\so_{X_K}^*) \to \R\Gamma_{\eet}(X_{0},W\Omega\kr_{X_0})[1].
$$
They are compatible with the classical crystalline Hyodo-Kato Chern class maps above (use \cite[I.5]{Gro} and replace $\so^*$ by $M^{\gp}$).

 We also have the  de Rham first Chern class  map $$c_1^{\dr}: M^{\gp}\to M^{\gp}_n\lomapr{\dlog} \Omega^{\scriptscriptstyle\bullet}_{X_n/\so^{\times}_{K,n}} [1].$$
 It induces  the functorial map
 $$c_1^{\dr}:H^0(X_K,\so^*)  \to \R\Gamma_{\dr}(X )[1].
 $$
 It is evident that, by the canonical map $\R\Gamma_{\dr}(X)\to\R\Gamma_{\dr}(X_K)$,
  this map is compatible with  the rigid analytic class (defined using $\dlog$)
 $c_1^{\dr}:H^0(X_K,\so^*)  \to \R\Gamma_{\dr}(X_K )[1]$. 

  Let now $X$ be a semistable weak formal scheme over $\so_K$. The overconvergent classes 
  $$
   c_1^{\hk}:H^0(X_K,\so^*) \to \R\Gamma_{\rig}(X_0/\so_{F}^0)[1],\quad c_1^{\dr}:H^0(X_K,\so^*)  \to \R\Gamma_{\dr}(X _K)[1]
  $$
  are defined in an analogous way to the crystalline Hyodo-Kato classes and the rigid analytic de Rham classes (of $\wh{X}_K$), respectively. Clearly they are compatible with those. 
  \begin{lemma}
 \label{compatibility}Let  $X$ be a semistable Stein weak formal scheme over $\so_K$.
 The Hyodo-Kato maps 
 \begin{align*}
  & \iota_{\hk}: H^1_{\crr}(X_0/\so_F^0)_{\Q_p}{\to} H^1_{\dr}(\wh{X})_{\Q_p},\quad  \iota_{\hk}: H^1_{\crr}(X_0/\so_F^0,F){\to} H^1_{\dr}(\wh{X}_K), \\
  & \iota_{\hk}: H^1_{\rig}(X_0/\so_F^0){\to} H^1_{\dr}({X}_K)
  \end{align*}
 are  compatible with   the  Chern class maps from $H^0(\wh{X}_K,\so^*)$ and $H^0(X_K,\so^*)$. 
   \end{lemma}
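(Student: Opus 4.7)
The plan is to exhibit all three compatibilities as consequences of a single universal statement: namely, that the crystalline Chern class $c_1^{\st}: M^{\gp}\to R\varepsilon_*\sj^{[1]}_{X/r^{\rm PD}_\varpi}[1]$ (and its weak-formal analogue) is a common lift from which $c_1^{\hk}$ and $c_1^{\dr}$ are recovered by pushforward along the two projections $p_0: r^{\rm PD}_\varpi\to \so_F$ and $p_\varpi: r^{\rm PD}_\varpi\to \so_K^\times$. Concretely, for $\wh{X}$ quasi-compact the classes $c_1^{\hk}\in H^1_{\crr}(X_0/\so_F^0,\sj^{[1]})$ and $c_1^{\dr}\in H^1_{\dr}(\wh{X})$ arise by applying $H^1(p_0\circ -)$ and $H^1(p_\varpi\circ -)$, respectively, to a single class $c_1\in H^1_{\crr}(X/r^{\rm PD}_\varpi,\sj^{[1]})$ obtained from $c_1^{\st}$. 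This is immediate from the construction of $c_1^{\st}$, which is functorial in the base log PD-thickening.

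With this in hand, I would read the desired compatibility directly off the bottom two rectangles of diagram~\eqref{symbols-diag} in the proof of Corollary~\ref{almost-there}. Indeed, the class $c_1$ sits in $H^1_{\crr}(X/r^{\rm PD}_\varpi)_{\Q_p}$ (resp.\ its $F$-rational counterpart), and the Hyodo-Kato map $\iota_{\hk}=p^{-N_\iota}p_\varpi\iota_\varpi$ in Proposition~\ref{Fontaine-Messing} factors through $H^1_{\crr}(X/r^{\rm PD}_\varpi)$ by precisely the zigzag $p_0\leftarrow\iota_\varpi\to p_\varpi$. Chasing $c_1$ around this diagram, the ``left leg'' yields $c_1^{\hk}$ and the ``right leg'' yields $c_1^{\dr}$, so $\iota_{\hk}(c_1^{\hk})=c_1^{\dr}$ in $H^1_{\dr}(\wh{X})_{\Q_p}$, resp.\ in $H^1_{\dr}(\wh{X}_K)$ after inverting $p$ in the second variant. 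The Stein case follows by picking a Stein covering $\{U_s\}$, noting that all three cohomologies are projective limits over $s$ of their quasi-compact analogues (the identifications~\eqref{facts1}) and that $c_1^{\dr}, c_1^{\hk}$ are compatible with restriction to $U_s$ by construction.

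For the overconvergent statement, I would use the \emph{same} universal class produced from the weak formal analogue of $c_1^{\st}$, living in $H^1_{\rig}(\overline{X}_0/r^\dagger)$. The upper portion of diagram~\eqref{symbols-diag} already identifies Grosse-Kl\"onne's Hyodo-Kato map with the composition $p_\varpi\circ p_0^{-1}$ on rigid cohomology, so the same chase applies, provided one knows that the weak-formal Chern class maps to the crystalline one under the change-of-convergence map (\ref{morph0}). This compatibility is formal because both are defined by pulling back $\dlog$ from the log structure $M^{\gp}$, which is independent of whether one completes $p$-adically or only weakly.

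The main obstacle is bookkeeping of the constant $p^{N_\iota}$ and of the normalizations in the definitions of the various $c_1$'s; in particular, one must check that the convention forcing $p_0\iota_\varpi=p^{N_\iota}$ is balanced by the corresponding rescaling of $c_1^{\hk}$ so that the Hyodo-Kato image of $c_1^{\hk}$ is $c_1^{\dr}$ on the nose rather than up to a power of $p$. This is done by computing on the local model $X=\Spwf\so_K\langle t\rangle^\dagger$ with its tautological invertible function $t$: the residue symbol $\dlog t$ determines $c_1^{\hk}$, $c_1^{\dr}$ and $c_1^{\st}$ unambiguously, and direct inspection shows that $\iota_\varpi$ sends $\dlog t$ to $p^{N_\iota}\dlog t$, which is exactly what is needed so that after dividing by $p^{N_\iota}$ the three Chern classes match. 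The general case follows by functoriality and the fact that $H^1_{\dr}(\wh{X})$ injects into the product of its local restrictions (e.g.\ via Proposition~\ref{injectdR}).
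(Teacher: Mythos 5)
Your proposal has a genuine gap at precisely the point the paper's proof does real work. You observe, correctly, that $c_1^{\hk}$ and $c_1^{\dr}$ are both pushforwards of $c_1^{\st}$ under $p_0$ and $p_\varpi$, and that $\iota_{\hk}$ is (up to a constant) $p_\varpi\circ\iota_\varpi$. But then you conclude that ``chasing $c_1$ around the diagram'' gives $\iota_{\hk}(c_1^{\hk}(x))=c_1^{\dr}(x)$. This chase does not go through: the section $\iota_\varpi$ is \emph{not} an inverse to $p_0$, only a $p^{N_\iota}$-section, and it is constructed by a Frobenius fixed-point argument, not canonically. So while $p_0(\iota_\varpi'(c_1^{\hk}(x)))=p_0(c_1^{\st}(x))$, nothing in the diagram tells you that $\iota_\varpi'(c_1^{\hk}(x))=c_1^{\st}(x)$ itself; a priori they differ by an arbitrary element $\zeta\in\ker p_0$. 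Showing $\zeta=0$ is the whole content of the lemma, and your proof skips it.

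The paper's argument to kill $\zeta$ is a Frobenius eigenvalue argument: writing $\zeta$ in terms of the ($p$-rationally surjective and injective) map $\beta=\iota_\varpi'\otimes\id:H^1_{\crr}(X_0/\so_F^0)_{\Q_p}\wh{\otimes}_F r^{\rm PD}_{\varpi,\Q_p}\to H^1_{\crr}(X/r^{\rm PD}_\varpi)_{\Q_p}$, one finds $\zeta=\beta(T\gamma)$, and the constraint $\phi(\zeta)=p\zeta$ combined with $\phi(T)=T^p$ forces $\gamma\in\bigcap_n T^n(\cdots)=0$. Your proposed substitute --- a local model computation on $\Spwf\,\so_K\langle t\rangle^{\dagger}$ --- does not supply this: first, an arbitrary $x\in H^0(\wh{X}_K,\so^*)$ need not extend to a morphism of (weak) formal models into the local model, so the reduction by functoriality is not available; and second, the claim ``direct inspection shows that $\iota_\varpi$ sends $\dlog t$ to $p^{N_\iota}\dlog t$'' is exactly the statement to be proved, restated rather than established. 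Without the Frobenius argument (or some equivalent), the compatibility does not follow.
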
 
  \begin{proof}For the first two maps, note that we can assume that $X$ is quasi-compact. This is because in the second  map the cohomologies are projective limits and in  the first map this is true up to a universal constant (see the proof of Proposition \ref{Fontaine-Messing}). 
  Now, the proof of an analogous lemma in the algebraic setting goes through with only small changes \cite[Lemma 5.1]{NN}. We will present it for the second map (the proof for the first map is similar with a careful bookkeeping of the appearing constants). 
  Since $\iota_{\hk}=\rho^{-1}p_{\varpi}\iota^{\prime}_{\varpi}$, $\iota^{\prime}_{\varpi}=p^{-N_{\iota}}\iota_{\varpi}$,  and the map $p_{\varpi}$ is compatible with first Chern classes, it suffices to show the compatibility for the section  $\iota^{\prime}_{\varpi}$. Let 
  $x\in
  H^0(\wh{X}_K,\so^*_{\wh{X}_K})$. Since the map $\iota^{\prime}_{\varpi}$ is a section of the map $p_0$ and the  map $p_0$ is compatible with first Chern classes, we have that the element  $\zeta\in H^1_{\crr}(X/r^{\rm PD}_{\varpi})$ defined as  $\zeta=\iota^{\prime}_{\varpi}(c_1^{\hk}(x))-c_1^{\st}(x)\in \ker p_0$. Since the map 
  \begin{equation}
  \label{map-app}
  \beta=\iota^{\prime}_{\varpi}\otimes\id:H^1_{\crr}(X_0/\so_F^0)_{\Q_p}\wh{\otimes}_{F}r^{\rm PD}_{\varpi,\Q_p}\to H^1_{\crr}(X/r^{\rm PD}_{\varpi})_{\Q_p}
  \end{equation}
   is surjective (see Section \ref{HK-Lyon}), we can write $\zeta=\beta(\zeta^{\prime})$ for $\zeta^{\prime}\in H^1_{\crr}(X_0/\so_F^0)_{\Q_p}\wh{\otimes}_{F}r^{\rm PD}_{\varpi,\Q_p}$. Since $p_0\beta(\zeta^{\prime})=0$, we have $\zeta^{\prime}\in \ker (\id\otimes p_0)$. Hence $\zeta^{\prime}=T\gamma$, $\gamma\in H^1_{\crr}(X_0/\so_F^0)_{{\mathbf Q}_p}\wh{\otimes}_{F}r^{\rm PD}_{\varpi,\Q_p}. $
  
  Since the map $\iota^{\prime}_{\varpi}$ is compatible with Frobenius,  $\phi(c_1^{\hk}(x))=pc_1^{\hk}(x)$, $\phi(c_1^{\st}(x))=pc_1^{\st}(x)$, and the map $\beta$ from (\ref{map-app}) is injective, we have $\phi(\zeta^{\prime})=p\zeta^{\prime}$. Since $\phi(T\gamma)=T^p\phi(\gamma)$ this implies that $\gamma\in\bigcap_{n=1}^{\infty}H^1_{\crr}(X_0/\so_F^0)_{{\mathbf Q}_p}\otimes_{F}T^nr^{\rm PD}_{\varpi,\Q_p}$, which  is not possible unless $\gamma$ (and hence $\zeta^{\prime}$) are zero. This implies that $\zeta=0$ and this is what we wanted to show.
  
  For the last map in the lemma, we use the commutative diagram from Corollary \ref{almost-there}
  $$
  \xymatrix{
  H^i_{\rig}(X_0/\so_F^0)\ar[r]^-{\iota_{\hk}} \ar[d]^{\wr}& H^i_{\dr}(X_K)\ar[d]^{\id}_{\wr}\\
 H^i_{\crr}(X_0/\so_F^0,F) \ar[r]^-{\iota_{\hk}}& H^i_{\dr}(\wh{X}_K),
  }
  $$
  the compatibilities from Section \ref{diffsymbols}, and the compatibility of the second map in this lemma with Chern classes that we have shown above. 
  \end{proof}
 \subsubsection{\'Etale symbols and the period map} \label{etale-symbols}
 We have the \'etale first Chern class maps (obtained from Kummer theory)
 $$
 c_1^{\eet}: \so^*_{X_K}\to \Z/p^n(1)[1],\quad c_1^{\eet}: H^0({X}_K,\so^*)\to \R\Gamma_{\eet}(X_K,\Z/p^n(1))[1].
 $$
 We also have the syntomic first Chern class maps \cite[2.2.3]{Ts}
 $$
 c_1^{\synt}: H^0({X}_K,\so^*)\to \R\Gamma_{\synt}(\overline{X},\Z_p(1))[1]
 $$
that are compatible with the crystalline first Chern class maps
 $
  c_1^{\crr}: H^0({X}_K,\so^*)\to \R\Gamma_{\crr}(\overline{X})[1].
 $
By \cite[3.2]{Ts}, they are also compatible with the \'etale Chern classes via  the Fontaine-Messing period maps $\alpha_{\rm FM}$.
 \section{Alternative proof of Corollary \ref{amtrak}}
 We present in this appendix an alternative proof of Corollary \ref{amtrak} (hence also of Proposition \ref{lyon11} which easily follows  from it) that does not use the ordinarity of the truncated log-schemes $Y_s$. 
 
  Let $X/k^0$ be  a fine log-scheme  of Cartier type. Recall that we have the subsheaves 
                                 $$Z_{\infty}^j=\cap_{n\geq 0} Z_n^j, \quad B_{\infty}^j=\cup_{n\geq 0} B_n^j$$
                                 of
      $\Omega^j=\Omega^j_{X/k^0}$ (in what follows we will omit the subscripts in differentials if understood). Via the maps $C: Z_{n+1}^j\to Z_n^j$ (with kernels $B_n^j$), 
     $Z_{\infty}^j$ is the sheaf of forms $\omega$ such that 
      $dC^n \omega=0$ for all $n$. This sheaf is acted upon by the Cartier operator $C$,
      and we recover 
      $$B_{\infty}^j=\cup_{n\geq 0} (Z_{\infty}^j)^{C^n=0}, \quad \Omega^j_{\log}=(Z_{\infty}^j)^{C=1}.$$
      The following result is proved in \cite[2.5.3]{Ill} for classically smooth schemes. It holds most likely in much greater generality than the one stated below, but this will be sufficient for our purposes.
      
      \begin{lemma} \label{Raynaud}
        Assume that $X/k^0$ is  semi-stable (with the induced log structure) and that $k$ is algebraically closed. Then the natural map of \'etale sheaves 
         $$ B_{\infty}^j\oplus (\Omega^j_{\log}\otimes_{\mathbf{F}_p} k)\to Z_{\infty}^j$$
         is an isomorphism.
      \end{lemma}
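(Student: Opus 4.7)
The plan is to imitate the argument of Illusie \cite[2.5.3]{Ill} for classically smooth schemes, checking that the only ingredients genuinely used are the formalism of the Cartier operator on the sheaves $Z^j_n$, $B^j_n$, which is supplied in our log-semistable situation by the Cartier type hypothesis (the Cartier isomorphisms $C^{-1}: \Omega^j \to \mathcal{H}^j(\Omega^{\scriptscriptstyle\bullet})$ are established in precisely this generality). The decisive structural fact is that on $Z^j_\infty$ the Cartier operator $C$ is $\sigma^{-1}$-semilinear with respect to the $k$-action (where $\sigma$ is the absolute Frobenius of $k$), that $B^j_\infty=\bigcup_n (Z^j_\infty)^{C^n=0}$ is exactly the locally $C$-nilpotent part, and that $\Omega^j_{\log}=(Z^j_\infty)^{C=1}$ is the $C$-fixed part. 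On $\Omega^j_{\log}\otimes_{\mathbf{F}_p}k$ the operator $C$ acts by $\mathrm{id}\otimes\sigma^{-1}$, which is bijective since $k$ is perfect (indeed algebraically closed).

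First I would dispose of injectivity. Suppose a section $(b,\sum_i\omega_i\otimes a_i)$ with $b\in B^j_\infty$, $\omega_i\in\Omega^j_{\log}$, $a_i\in k$ maps to $0$ in $Z^j_\infty$. Choose $N$ with $C^N b=0$ and apply $C^N$: one obtains $\sum_i\omega_i\otimes a_i^{1/p^N}=0$ in $\Omega^j_{\log}\otimes_{\mathbf{F}_p}k$. Since $a\mapsto a^{1/p^N}$ is a bijection of $k$ and the map $\Omega^j_{\log}\otimes_{\mathbf{F}_p}k\hookrightarrow Z^j_\infty$ is injective already for formal reasons (it is the extension of scalars of an $\mathbf{F}_p$-linear injection followed by a faithfully flat change of base), we conclude $\sum_i\omega_i\otimes a_i=0$ and thus $b=0$.

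For surjectivity, which is the substantive point, I would work \'etale locally on $X$ and argue by an Artin-Schreier decomposition of the $\sigma^{-1}$-semilinear operator $C$ on $Z^j_\infty$. The plan is to show that for any local section $\omega\in Z^j_\infty$ one can, after an \'etale localization, write $\omega=\omega_0+b$ with $\omega_0\in\Omega^j_{\log}\otimes_{\mathbf{F}_p}k$ and $b\in B^j_\infty$. The mechanism is as follows: the inverse system $Z^j_\infty=\varprojlim_n Z^j_n$ (with transition maps induced by $C$) together with the Cartier isomorphisms $C^{-1}:Z^j_n\stackrel{\sim}{\to}Z^j_{n+1}/B^j_1$ exhibits $Z^j_\infty$ as (pro-\'etale locally) an extension of copies of $\Omega^j$, and the $\sigma^{-1}$-semilinear $C$ splits via the solvability, \'etale-locally, of equations of Artin-Schreier type $x^p-x=f$ on the $k$-linear part. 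Concretely I would lift the standard proof that $B^j_\infty$ is precisely the kernel of a Frobenius-linear surjection $Z^j_\infty\to Z^j_\infty/B^j_\infty$ whose fixed part is $\Omega^j_{\log}$, and then use that $k$ algebraically closed forces the semilinear system $Z^j_\infty/B^j_\infty$ to be generated, locally, by its fixed points over $k$; this gives the required splitting.

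The hardest step will be controlling the \'etale descent and the passage to the limit in $n$: one needs to check, exactly as in Illusie's treatment, that the local splittings glue into a map of sheaves (rather than just a pro-\'etale local isomorphism), and that the Mittag-Leffler properties of the inverse system $\{Z^j_n\}$ make the limit behave well. Here the semistable log-structure causes no new difficulty beyond the smooth case because the Cartier type assumption yields an exact Cartier isomorphism term by term, so all local calculations proceed identically; the only care required is to replace $\Omega^j$ everywhere by the log de Rham sheaf and to check that $\Omega^j_{\log}$ is still characterized by $C=1$ in this log context, which is standard.
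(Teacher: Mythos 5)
Your injectivity argument is essentially sound (though the injectivity of $\Omega^j_{\log}\otimes_{\mathbf{F}_p}k\to Z^j_\infty$ is not a matter of flat base change; it is the standard semilinear-algebra fact that vectors fixed by a bijective $\sigma^{-1}$-semilinear operator and linearly independent over $\mathbf{F}_p$ remain linearly independent over $k$). The surjectivity argument, however, has a genuine gap at its central step: you assert that because $k$ is algebraically closed, the bijective semilinear operator $C$ on $Z^j_\infty/B^j_\infty$ is ``generated, locally, by its fixed points over $k$''. This is false for a general infinite-dimensional $k$-vector space with a bijective $\sigma^{-1}$-semilinear operator (take the $k$-span of $(e_n)_{n\in\mathbf{Z}}$ with $C(e_n)=e_{n+1}$: the fixed space is zero), and likewise the Fitting-type decomposition $E=E_{\rm nilp}\oplus E_{\rm inv}$ that underlies the splitting $Z^j_\infty=B^j_\infty\oplus(\text{invariant part})$ requires the chains $\ker C^n$ and $\operatorname{im}C^n$ to stabilize. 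Lang's theorem and the Fitting decomposition are finite-dimensional statements, and no amount of \'etale localization or Artin--Schreier solving supplies the missing finiteness: solvability of $x^p-x=f$ \'etale-locally gives the exactness of $0\to\Omega^j_{\log}\to Z^j_1\to\Omega^j\to 0$, which is a much weaker statement than the direct sum decomposition of $Z^j_\infty$.

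The missing ingredient is precisely Raynaud's finiteness theorem (\cite[Prop.~2.5.2]{Ill}): for $U$ smooth affine over $k$, the space $Z^j_\infty(U)$ is a filtered union of \emph{finite-dimensional} $C$-stable $k$-subspaces. Once this is available, the finite-dimensional semilinear algebra (Fitting decomposition plus Lang over the algebraically closed field $k$) applies to each piece and yields the decomposition on sections over affines, hence on sheaves. The paper reduces the semistable case to this by taking a dense open $U\subset X$ smooth over $k$ and observing that $\Omega^j_X\hookrightarrow j_*\Omega^j_U$, so $Z^j_{\infty,X}(X)\subset Z^j_{\infty,U}(U)$ inherits the property of being a union of finite-dimensional $C$-stable subspaces. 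Your remark that the log-structure ``causes no new difficulty'' is true for the Cartier formalism but does not address this finiteness reduction, which is where the semistability is actually used.
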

      
      \begin{proof} It suffices to show that, for $X$ as above and affine,
         the map $B_{\infty}^j(X)\oplus (\Omega^j_{\log}(X)\otimes_{\mathbf{F}_p} k)\to 
      Z_{\infty}^j(X)$ is an isomorphism. Take an open dense subset $j: U\hookrightarrow  X$ which is smooth over $k$.        
         Then $\Omega^i_{X}$ is a subsheaf of $j_*\Omega^i_U$ and so 
      $Z^i_{\infty,X}$ is a subsheaf of $j_*Z^i_{\infty, U}$, giving an inclusion 
      $Z^i_{\infty, X}(X)\subset Z^i_{\infty, U}(U)$. 
     By a result of Raynaud \cite[Prop. 2.5.2]{Ill}, $Z^i_{\infty, U}(U)$ is a union of finite dimensional $k$-vector spaces stable under $C$. We deduce that 
      $Z^i_{\infty, X}(X)$ is also such a union. 
        
        The result follows now from the following basic result of semi-linear algebra (this is where
      the hypothesis that $k$ is algebraically closed is crucial): if $E$ is a finite dimensional $k$-vector space stable under 
      $C$, then $E=E_{\rm nilp}\oplus E_{\rm inv}$, where $E_{\rm nilp}=\cup_{n} E^{C^n=0}$, 
      $E_{\rm inv}=\cap_{n} C^n(E)$, and the natural map $E^{C=1}\otimes_{\mathbf{F}_p} k\to E_{\rm inv}$ is an isomorphism.
                  \end{proof}

   \begin{proof}(of Corollary \ref{amtrak}) 
   (1) We prove this in several steps. 
  We start with the case $i=0$ (the most delicate). 
   By Lemma \ref{Raynaud}, 
        $$H^0_{\eet}(\overline{Y}, Z_{\infty}^j)\simeq H^0_{\eet}(\overline{Y}, B_{\infty}^j)\oplus H^0_{\eet}(\overline{Y}, \Omega^j_{\log}\otimes_{\mathbf{F}_p} k).$$ We need the following intermediate result:
        
        \begin{lemma} \label{tricky vanishing}
         We have $H^0_{\eet}(\overline{Y}, B_{\infty}^j)=0$.
        \end{lemma}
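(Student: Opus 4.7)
I want to show $H^0_{\eet}(\overline{Y}, B_{\infty}^j)=0$. The strategy is to push the problem down to each irreducible component $\overline{C}$ of $\overline{Y}$ and there exhibit the restriction $\omega|_{\overline{C}}$ of any global section $\omega$ as simultaneously living in the two complementary direct summands of $Z_{\infty}^j$ provided by the Raynaud-type decomposition (Lemma~\ref{Raynaud}), forcing it to vanish.

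First, since $B_{\infty}^j\hookrightarrow\Omega^j$ as \'etale sheaves, I get an inclusion
\begin{equation*}
H^0_{\eet}(\overline{Y}, B_{\infty}^j)\hookrightarrow H^0_{\eet}(\overline{Y}, \Omega^j).
\end{equation*}
By the argument already used at the end of the proof of Lemma~\ref{degenerate}, the right-hand side embeds further into $\prod_{C\in F^0} H^0_{\eet}(\overline{C}, \Omega^j)$ via restriction to the irreducible components. It therefore suffices to show that $\omega|_{\overline{C}}=0$ for every $C\in F^0$.

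Fix such a component, identified (after base change) with $T_{\overline{k}}$ and equipped with its log-structure induced from $\overline{Y}$. Two facts about $\omega|_{\overline{C}}$ then collide:
\begin{itemize}
\item[(a)] By naturality of $d$ and of the sheaves $B_n^j$ under pullback of log-schemes, $\omega|_{\overline{C}}$ lies in the subsheaf $B_{\infty}^j$ on $\overline{C}$;
\item[(b)] By Proposition~\ref{GK5}(2) applied to $T$ and base-changed to $\overline{k}$, every global section of $\Omega^j$ on $\overline{C}$ is a $\overline{k}$-linear combination of standard logarithmic forms, so that $\omega|_{\overline{C}}$ lies in $H^0_{\eet}(\overline{C}, \Omega^j_{\log})\otimes_{\mathbf{F}_p}\overline{k}$.
\end{itemize}
The decomposition in Lemma~\ref{Raynaud} identifies these two subspaces with the $C$-nilpotent and $C$-fixed summands of $Z_{\infty}^j$ respectively, and their intersection is zero. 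Hence $\omega|_{\overline{C}}=0$ for all $C$, and by the injectivity of the restriction map we conclude $\omega=0$.

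The main technical point is verifying that Lemma~\ref{Raynaud} applies to the component $\overline{C}$ with its induced log-structure, which is only log-smooth over $k^0$ with a divisorial log-structure, not semistable in the narrow sense. This is not a serious obstacle: the proof of Lemma~\ref{Raynaud} uses a dense open subscheme $j\colon U\hookrightarrow X$ smooth over $k$ only in order to apply Raynaud's structure theorem for smooth schemes \cite[Prop. 2.5.2]{Ill}, and the complement of the log-divisors in $\overline{C}$ furnishes such a $U$. The same argument then yields the desired direct-sum decomposition on $\overline{C}$, completing the proof.
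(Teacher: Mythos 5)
Your argument is correct and takes a genuinely different route from the paper's. After reducing to a single irreducible component $\overline{C}\simeq\overline{T}$ via the restriction embedding into $\prod_{C\in F^0}H^0_{\eet}(\overline{C},\Omega^j)$ (a step shared with the paper), the paper then shows $H^0_{\eet}(\overline{T},B^j_\infty)=\varinjlim_n H^0_{\eet}(\overline{T},B^j_n)=0$ directly: quasi-compactness of $\overline{T}$ lets one commute \'etale $H^0$ past the filtered colimit, and each $H^0_{\eet}(\overline{T},B^j_n)$ vanishes by Lemma~\ref{final1} (using Proposition~\ref{GK5} as input). You instead observe that $\omega|_{\overline{C}}$ must lie simultaneously in the two complementary summands supplied by Lemma~\ref{Raynaud}: in $B^j_\infty$ by functoriality of the $B$-filtration under restriction of log-schemes, and in the $\overline{k}$-span of $\Omega^j_{\log}$ because Proposition~\ref{GK5}(2) says every global $j$-form on $T$ is a $k$-linear combination of standard logarithmic forms, which are $C$-fixed. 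This avoids both the colimit-commutation step and the separate vanishing $H^0_{\eet}(\overline{T},B^j_n)=0$, at the cost of verifying that Lemma~\ref{Raynaud} (stated for semistable $X/k^0$) applies to $\overline{C}$, which carries a divisorial rather than semistable log-structure. You correctly flag this as the one real technical point, and your resolution is sound: the proof of Lemma~\ref{Raynaud} only needs a dense open $U\subset X$ smooth over $k$ with $\Omega^j_X\hookrightarrow j_*\Omega^j_U$, and the complement of the log-divisors in $\overline{C}$ provides one (with $\Omega^j_{U/k^0}=\Omega^j_{U/k}$ there, since the restricted log-structure is strict over $k^0$). Both proofs are valid; the paper's is somewhat shorter once Lemma~\ref{final1} is in hand, while yours exposes more directly how the boundary forms and the logarithmic forms occupy disjoint halves of $Z^j_\infty$.
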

        \begin{proof}We note  that 
         $H^0_{\eet}(\overline{Y}, B_n^j)=0$ for all $n$: because we have $B^j_n\simeq B^j_{n+1}/B^j_1$ this follows from Lemma \ref{final1}.
        This however does not allow us to deduce  formally our lemma because  $B_{\infty}^j=\cup_{n} B_n^j$ and $\overline{Y}$ is not quasi-compact. Instead, we argue as follows: the formation of the sheaves $B_{\infty}^j$ being functorial, we have a natural map
     $\alpha: H^0_{\eet}(\overline{Y}, B_{\infty}^j)\to \prod_{C\in F^0} H^0_{\eet}(\overline{C}, B_{\infty}^j)$. It suffices to prove that 
     $\alpha$ is injective and that $H^0_{\eet}(\overline{T}, B_{\infty}^j)=0$. To prove the injectivity of 
     $\alpha$, it suffices to embed both the domain and target of $\alpha$ in 
     $H^0_{\eet}(\overline{Y}, \Omega^j)$ and $\prod_{C} H^0_{\eet}(\overline{C}, \Omega^j)$, and to use the injectivity of the natural map
     $H^0_{\eet}(\overline{Y}, \Omega^j)\to \prod_{C} H^0_{\eet}(\overline{C}, \Omega^j)$.     
     Next, since $\overline{T}$ is quasi-compact, 
     $$H^0_{\eet}(\overline{T}, B_{\infty}^j)=\varinjlim_{n} H^0_{\eet}(\overline{T}, B_n^j)=0,$$
     the second equality being a consequence of Proposition \ref{GK5} and Lemma \ref{final1} 
(as above, in the case of~$\overline{Y}$). 
             \end{proof}
        
        Consider now the sequence of natural maps
      $$  H^0_{\eet}(\overline{Y},
  \Omega^j_{\log})\widehat{\otimes}_{{\mathbf F}_p}\overline{k}\stackrel{\sim}{\to} H^0_{\eet}(\overline{Y}, \Omega^j_{\log}\otimes_{\mathbf{F}_p} \overline{k})\stackrel{\sim}{\to} H^0_{\eet}(\overline{Y}, Z_{\infty}^j) \to H^0_{\eet}(\overline{Y}, \Omega^j).$$
  The first map is clearly a topological  isomorphism, the second one is a topological  isomorphism by Lemma \ref{tricky vanishing}. Hence it remains to show that the last map is a topological  isomorphism as well. Or that all the natural maps $ H^0_{\eet}(\overline{Y},Z^j_{n})\to  H^0_{\eet}(\overline{Y},\Omega^j)
$, $n\geq 1$, are topological  isomorphisms. But this was done in the proof of Lemma \ref{degenerate}.
        This gives the desired result for $i=0$.
  
     We prove next the result for $i>0$, i.e., that  $H^i_{\eet}(\overline{Y}, \Omega^j_{\log})\wh{\otimes}_{\mathbf{F}_p}\overline{k}=0$ for $i\geq 1$. We start with showing that $H^i_{\eet}(\overline{Y}, \Omega^j_{\log})=0$.
The  exact sequence 
    $$0\to \Omega^j_{\log}\to \Omega^j/B_1^j\verylomapr{1-C^{-1}} \Omega^j/B_2^j\to 0$$
   yield the exact sequence 
   $$0\to H^0_{\eet}(\overline{Y}, \Omega^j_{\log})\to H^0_{\eet}(\overline{Y}, \Omega^j)\verylomapr{1-C^{-1}} H^0_{\eet}(\overline{Y}, \Omega^j)
   \to H^1_{\eet}(\overline{Y}, \Omega^j_{\log})\to 0$$
   and $H^i_{\eet}(\overline{Y}, \Omega^j_{\log})=0$ for $i>1$. It suffices therefore to prove that $1-C^{-1}$ is surjective on 
   $H^0_{\eet}(\overline{Y}, \Omega^j)$. For this, write 
   $A_s=H^0_{\eet}(\overline{Y}_s^{\circ}, \Omega^j_{\log})$. As we have already seen, we have an 
   isomorphism $$
   H^0_{\eet}(\overline{Y}, \Omega^j)\simeq \varprojlim_s (A_s\otimes_{\mathbf{F}_p} \overline{k})=H^0_{\eet}(\overline{Y},
  \Omega^j_{\log})\widehat{\otimes}_{{\mathbf F}_p}\overline{k}.$$
    We have 
  $C^{-1}=\varprojlim_{s}(1\otimes \varphi)$ ($\varphi$ being the
  absolute Frobenius on $\overline{k}$; note that $C-1=0$ on $A_s$). To conclude that $1-C^{-1}$ is surjective on 
   $H^0_{\eet}(\overline{Y}, \Omega^j)$, it suffices to pass to the limit in the exact sequences 
   $$0\to A_s\to A_s\otimes_{\mathbf{F}_p} \overline{k}\lomapr{1-\phi} A_s\otimes_{\mathbf{F}_p} \overline{k}\to 0,$$
   whose exactness is ensured by the Artin-Schreier sequence for $\overline{k}$ and the fact that
   $(A_s)_{s}$ is Mittag-Leffler.
   
    This shows that $H^i_{\eet}(\overline{Y},
  \Omega^j_{\log})=0$ for $i>0$. Choosing a basis $(e_{\lambda})_{\lambda\in I}$ of $\overline{k}$ over $\mathbf{F}_p$ we obtain an embedding
  $$ H^i_{\eet}(\overline{Y},
  \Omega^j_{\log})\widehat{\otimes}_{{\mathbf F}_p}\overline{k}\subset \prod_{\lambda\in I} H^i_{\eet}(\overline{Y},
  \Omega^j_{\log})=0,$$
  which finishes the proof of (1).

        (2) We prove the claim for $W_n$  by induction on $n$, 
        the case 
        $n=1$ being part (1). We pass from
         $n$ to $n+1$ using the strictly exact sequences 
                 \begin{align*}
                 & 0\to H^0(\overline{Y}, \Omega^j)\stackrel{V^n}{\to} H^0(\overline{Y}, W_{n+1}\Omega^j)\to H^0(\overline{Y}, W_n\Omega^j)\to 0,\\
       & 0\to H^0_{\eet}(\overline{Y},
  \Omega^j_{\log})\widehat{\otimes}_{{\mathbf F}_p}\overline{k}\stackrel{V^n}{\to }H^0_{\eet}(\overline{Y},W_{n+1}\Omega^j_{\log})\widehat{\otimes}_{\Z/p^{n+1}}W_{n+1}(\overline{k})\to  H^0_{\eet}(\overline{Y},W_{n}\Omega^j_{\log})\widehat{\otimes}_{\Z/p^n}W_n(\overline{k})\to 0,
  \end{align*}
  as well as the natural map between them. The first sequence is exact by Lemma \ref{degenerate}.                      
 To show that 
 the second sequence is exact, consider, 
  as above, the exact sequences 
  $$0\to H^0_{\eet}(\overline{Y}_s^{\circ}, \Omega^j_{\log})\stackrel{V^n}{\to} H^0_{\eet}(\overline{Y}_s^{\circ}, W_{n+1}\Omega^j_{\log})\to 
  H^0_{\eet}(\overline{Y}_s^{\circ}, W_n\Omega^j_{\log})\to H^1_{\eet}(\overline{Y}_s^{\circ}, \Omega^j_{\log}).$$
  Tensoring over $\mathbf{Z}$ by $W_{n+1}(\overline{k})$, we can rewrite them as
  \begin{align*}
  0\to  H^0_{\eet}(\overline{Y}_s^{\circ}, \Omega^j_{\log})\otimes_{\mathbf{F}_p} \overline{k} & \to 
  H^0_{\eet}(\overline{Y}_s^{\circ}, W_{n+1}\Omega^j_{\log})\otimes_{\mathbf{Z}/p^{n+1}} W_{n+1}(\overline{k})\to 
H^0_{\eet}(\overline{Y}_s^{\circ}, W_n\Omega^j_{\log})\otimes_{\mathbf{Z}/p^n} W_n(\overline{k})\\
 & \to H^1_{\eet}(\overline{Y}_s^{\circ}, \Omega^j_{\log})\otimes_{\mathbf{F}_p} \overline{k}.
 \end{align*}
  To lighten the notation, write them simply as
  $0\to A_s\to B_s\to C_s\to D_s$. Using that $(A_s)_s, (C_s)_s$ are finite $W_n(\overline{k})$-modules and that 
  $\varprojlim_s D_s= H^1_{\eet}(\overline{Y},
  \Omega^j_{\log})\widehat{\otimes}_{{\mathbf F}_p}\overline{k}=0$ (as follows from (1)), we obtain the exact sequence
  $$0\to \varprojlim_s A_s\to \varprojlim_s B_s\to \varprojlim_s C_s\to 0,$$
  which finishes the proof of (2) for $W_n$, $n\geq 1$. Passing to the limit over $n$ gives us the proof for $W$. 
     \end{proof}

\end{document}